\def\today{${\scriptscriptstyle\number\day-\number\month-\number\year}$}
\newtheorem{theorem}{Theorem}[section]
\newtheorem*{thm}{Main Theorem}
\newtheorem{lemma}[theorem]{Lemma}
\newtheorem{corollary}[theorem]{Corollary}
\newtheorem{definition}[theorem]{Definition}
\newtheorem{remark}[theorem]{Remark}
\renewcommand{\theequation}{\thesection.\arabic{equation}}
\def\address#1{{\center{#1}}}
\date{}
\def\m@th{\mathsurround=0pt}
\def\eqal#1{\null\,\vcenter{\openup\jot\m@th
 \ialign{\strut\hfil$\displaystyle{##}$&&$\displaystyle{{}##}$\hfil
 \crcr#1\crcr}}\,}
\def\matrix#1{\null\,\vcenter{\normalbaselines\m@th
 \ialign{\hfil$##$\hfil&&\quad\hfil$##$\hfil\crcr
 \mathstrut\crcr\noalign{\kern-\baselineskip}
 #1\crcr\mathstrut\crcr\noalign{\kern-\baselineskip}}}\,}
\def\lc#1{\hbox to .89\hsize{\qquad$\displaystyle{#1}\hfill$}}
\def\Om{\Omega}
\def\ro{\varrho}
\def\nb{\nabla}
\def\eps{\varepsilon}
\def\Om{\Omega}
\def\si{\sigma}
\def\La{\Lambda}
\def\al{\alpha}
\def\pa{\partial}
\newcommand{\benn}{\begin{eqnarray*}}
\newcommand{\eenn}{\end{eqnarray*}}
\newcommand{\ben}{\begin{eqnarray}}
\newcommand{\een}{\end{eqnarray}}
\newcommand{\bal}{\begin{aligned}}
\newcommand{\eal}{\end{aligned}}
\def\D{{\mathbb D}}
\def\I{{\mathbb I}}
\def\N{{\mathbb N}}
\def\R{{\mathbb R}}
\def\T{{\mathbb T}}
\def\divv{{\rm div}\,}
\def\rot{{\rm rot}\,}
\def\dist{{\rm dist\,}}
\def\inf{{\rm inf\,}}
\def\supp{{\rm supp\,}}
\def\diam{{\rm diam\,}}
\def\const{{\rm const}}
\def\mize{\begin{itemize}\parskip-4pt}
\def\iintop{\mathop{\intop\!\!\!\intop}}
\def\bye{\end{document}}
\numberwithin{equation}{section}
\title{Global motions for nonhomogeneous Navier-Stokes equations with large flux}
\author{Joanna Renc\l awowicz$^{1)}$\quad Wojciech M. Zaj\c{a}czkowski$^{1),2)}$}
\begin{document}
\input amssym.def
\input amssym.tex
\maketitle
\thispagestyle{fancy}

\address{$^{1)}$\ Institute of Mathematics, Polish Academy of Sciences,\\
\'Sniadeckich 8, 00-656 Warsaw, Poland e-mail: jr@impan.pl\\
{\it corresponding autor}\\
$^{1)}$\ Institute of Mathematics, Polish Academy of Sciences,\\
\'Sniadeckich 8, 00-656 Warsaw, Poland e-mail: wz@impan.pl\\
$^{2)}$\ Institute of Mathematics and Cryptology, Cybernetics Faculty, \\
Military University of Technology,\\
S. Kaliskiego 2, 00-908 Warsaw, Poland\\}

\begin{abstract}
The nonhomogeneous Navier-Stokes equations are considered in a cylindrical domain in $\R^3$, parallel to the $x_3$-axis with large inflow and outflow on the top and the bottom. Moreover, on the lateral part of the cylinder the slip boundary conditions are assumed. The global existence of regular solutions is proved under assumptions that inflow and outflow are close to homogeneous and norms of derivative with respect to $x_3$ of the external force and initial velocity are sufficiently small. The key point of this paper is to verify that $x_3$-coordinate of velocity remains positive.

\noindent
{\bf Key words and phrases:} nonhomogeneous Navier-Stokes equations, inflow, outflow, regular solutions, long time existence.

\noindent
MSC:35Q30; Secondary 76D03, 76D05
\end{abstract}

\section{Introduction}\label{s1}

We consider motions to the nonhomogeneous Navier-Stokes equations in cylindrical domains with large inflow and outflow. With "nonhomogeneous" we mean a density dependent system. Our aim is to prove the existence of global strong solutions without smallness restrictions on velocity and flux. In the proof we follow ideas and techniques from \cite{RZ1}.

 We introduce Cartesian coordinates $(x_1,x_2,x_3)$. Let $\Om\subset\R^3$ be a cylindrical domain parallel to the $x_3$-axis located inside. The boundary of $\Om$ denoted by $S$ is composed of two parts $S_1$ and $S_2$, where $S_1$ is parallel to the $x_3$-axis and $S_2$ is perpendicular to it. Moreover, $S_2=S_2(-a)\cup S_2(a)$, where $a\in\R$ is given and $S_2(a_i)$ meets the $x_3$-axis at $a_i$, $i=1,2$. It is assumed that $a_1=-a$, $a_2=a$.

Finally, $S_1$ and $S_2(a_i)$ meet along a curve $L(a_i)$, $i=1,2$.

Let $T>0$ be given. We consider in $\Om^T=\Om\times(0,T)$ the following initial-boundary value problem
\begin{equation}\eqal{
&\ro(v_t+v\cdot\nabla v)-\divv\T(v,p)=\ro f\quad &{\rm in}\ \ \Om^T,\cr
&\divv v=0\quad &{\rm in}\ \ \Om^T,\cr
&\ro_t+v\cdot\nabla\ro=0\quad &{\rm in}\ \ \Om^T,\cr
&v\cdot\bar n=0\quad &{\rm on}\ \ S_1^T=S_1\times(0,T),\cr
&\nu\bar n\cdot\D(v)\cdot\bar\tau_\alpha+\gamma v\cdot\bar\tau_\alpha=0,\ \ \alpha=1,2\quad &{\rm on}\ \ S_1^T,\cr
&v\cdot\bar n=-d_1,\ \ d_1>0\quad &{\rm on}\ \ S_2^T(-a),\cr
&v\cdot\bar n=d_2,\ \ d_2>0\quad &{\rm on}\ \ S_2^T(a),\cr
&\ro=\ro_1\quad &{\rm on}\ \ S_2^T(-a),\cr
&\bar n\cdot\D(v)\cdot\bar\tau_\alpha=0,\ \ \alpha=1,2\quad &{\rm on}\ \ S_2^T,\cr
&v|_{t=0}=v_0\quad &{\rm in}\ \ \Om,\cr
&\ro|_{t=0}=\ro_0\quad &{\rm in}\ \ \Om,\cr}
\label{1.1}
\end{equation}
where $\ro=\ro(x,t)\in\R^1$  denotes the density of the fluid, $v$ is the velocity with
$v(x,t)=(v_1(x,t),v_2(x,t),v_3(x,t))\in\R^3,$
$p=p(x,t)\in\R^1$ denotes the pressure, ,
$f=f(x,t)=(f_1(x,t),f_2(x,t),f_3(x,t))\in\R^3$ -- the external
force field, $x=(x_1, x_2, x_3)$ are the Cartesian coordinates.

By $\nu>0$ we denote the constant viscosity coefficient, $\gamma>0$ is the slip coefficient, $\bar n$ is the unit outward vector normal to $S$, $\bar\tau_\alpha$, $\alpha=1,2$, are vectors tangent to $S$.
Moreover, $I$ is the unit matrix, $\D(v)$ is the dilatation tensor of the form
$$
\D(v)=\{v_{i,x_j}+v_{j,x_i}\}_{i,j=1,2,3}
$$ and $\T(v,p)$ is the stress tensor
$$
\T(v,p)=\nu\D(v)-pI.
$$
Using Cartesian coordinates and assuming that with a given constant $c_0$, $\varphi_0(x_1,x_2)=c_0$ is a sufficiently smooth closed curve in the plane described with $x_3=\const\in(-a,a)$ located around the $x_3$-axis, we define
\begin{equation}\eqal{
&\Om=\{x\in\R^3\colon\varphi_0(x_1,x_2)\le c_0,-a<x_3<a\},\cr
&S_1=\{x\in\R^3\colon\varphi_0(x_1,x_2)=c_0,-a<x_3<a\},\cr
&S_2(-a)=\{x\in\R^3\colon\varphi_0(x_1,x_2)<c_0,x_3=-a\},\cr
&S_2(a)=\{x\in\R^3\colon\varphi_0(x_1,x_2)<c_0,x_3=a\}.\cr}
\label{1.2}
\end{equation}
To describe inflow and outflow we recall
\begin{equation}
d_1=-v\cdot\bar n|_{S_2(-a)},\quad d_2=v\cdot\bar n|_{S_2(a)},
\label{1.3}
\end{equation}
with $d_i>0$, $i=1,2$ and $\bar n$ is the unit outward vector normal to $S_2$.

Since incompressible motions are considered the following compatibility condition
\begin{equation}
\intop_{S_2(-a)}d_1dS_2=\intop_{S_2(a)}d_2dS_2
\label{1.4}
\end{equation}
holds.

Now, we formulate the main result: global existence of regular solutions to problem (\ref{1.1}). In order to prove that we need the existence of local regular solutions (theorem cited below, established in \cite{RZ3}) and global estimate for regular solutions. Then by standard argument local solutions can be extended in time as long as the global estimate holds.

\begin{theorem}\label{t1.1} (local existence, see \cite{RZ3})
Assume
\mize
\item[1.] Parameters $s,\sigma, r$ satisfy $s\in(0,1)$, ${3}/{s}<\sigma$, ${5}/{s}<r,$ $\sigma<r$.
\item[2.] Data functions are such that
\begin{itemize}
\item the initial density $\varrho_0\in W_r^1(\Omega)$ and $\varrho_1\in W_r^{1,1}(S_2^t(-a))$,
\item the initial velocity $v_0\in W_{\si}^{2+s-{2}/{\si}}(\Omega),$
\item the inflow $d_1\in W_{\si}^{2+s-{1}/{\si},1+{s}/{2}-{1}/{2\sigma}}(S_2^t(-a)),$
\item the outflow $d_2\in W_{\si}^{2+s-{1}/{\sigma},1+{s}/{2}-{1}/{2\sigma}}(S_2^t(a))$,
    \item    the external force $f\in W_{\si}^{s,{s}/{2}}(\Om^t)$.
\end{itemize}
\item[3.] There exist positive constants $\bar d_0$, $d_0$, $\bar d_0>d_0$, $d_\infty$ and $b_0$, $b_1$ $\bar b_0$, $\bar b_1$ such that $\bar d_0\ge v_3(0)\ge d_0$, $d_i\ge d_\infty$, $i=1,2$, $\bar b_0\ge\varrho(0)\ge b_0$, $\bar b_1\ge\varrho_1\ge b_1$.
\item[4.] The following quantities are finite
$$\eqal{
c_1 & = \|d_1\|_{L_{\infty}(S_2^t(-a))}(\|\ro_{1,x'}\|_{L_r(S_2^t(-a))}+\|\ro_{1,t}\|_{L_r(S_2^t(-a))})\cr & \quad +\|\ro_{0,x}\|_{L_r(\Om)}, \cr
c_2 & = \|f\|_{W_{\si}^{s,s/2}(\Om^t)}+ \sum_{i=1}^2 \|d_i\|_{W_{\si}^{2+s-{1}/{\si},1+{s}/{2}-{1}/{2\sigma}}(S_2^t(a_i))}\cr & \quad +\|\ro_1\|_{W_r^{1,1}(S_2^t(-a))} + \|v_0\|_{W_{\si}^{2+s-{2}/{\si}}(\Om)}.
}$$

\end{itemize}
Then there exists a local solution $(v,p,\ro)$ to the nonhomogeneous Navier-Stokes problem (\ref{1.1}) such that
$$
v\in W_{\si}^{2+s,1+s/2}(\Omega^t),\quad \nabla p\in W_{\si}^{s,s/2}(\Omega^t),\quad \varrho\in W_{r,\infty}^{1,1}(\Omega^t).
$$
Moreover, the density remains bounded
$$
\varrho_*\equiv{b_1b_0\over b_1+b_0}\le\varrho(x,t)\le\bar b_0+\bar b_1\equiv\varrho^*
$$
and the velocity and the pressure satisfy
$$
\|v\|_{W_{\si}^{2+s,1+s/2}(\Om^t)}+\|\nabla p\|_{W_{\si}^{s,s/2}(\Omega^t)}\le\phi({\rm data}),
$$
where data are described by assumptions 1,2 and 3, $\phi$ is an increasing positive function, $t\le T$ and $T$ is sufficiently small.

Finally, the $x_3$-coordinate of velocity is positive since there exists a positive constant $d_* = d_*(d_0,\bar{d}_0, \bar{c}_1, \bar{c}_2, d_{\infty}, \|f_3\|_{L_1(0,t;L_\infty(\Om))})$ such that
$$\eqal{
v_3&\ge d_*.\cr}
$$
\end{theorem}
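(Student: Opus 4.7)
The plan is a Banach fixed-point iteration in the anisotropic Solonnikov class $W_{\si}^{2+s,1+s/2}(\Om^T)\times W_{r,\infty}^{1,1}(\Om^T)$, decoupling transport of the density from the linearized Stokes-type problem for the velocity. Given a pair $(\bar v,\bar\ro)$ in a suitable ball $\mathcal B_M$ of this class (shrinking $T$ so that $\bar v_3>0$ on the closure), I define the next iterate $(v,p,\ro)$ in two stages. First, solve $\ro_t+\bar v\cdot\nb\ro=0$ with $\ro|_{t=0}=\ro_0$ and $\ro|_{S_2^T(-a)}=\ro_1$ by the method of characteristics; this is well-posed precisely because $\bar v_3>0$ guarantees that every characteristic through $\Om^T$ originates at either $t=0$ or on $S_2(-a)$, and never on $S_2(a)$ or $S_1$. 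Second, solve the linear inhomogeneous Stokes-type problem
$$\ro(v_t+\bar v\cdot\nb v)-\divv\T(v,p)=\ro f,\qquad\divv v=0,$$
with the slip condition of \eqref{1.1} on $S_1$, the Dirichlet data $v\cdot\bar n=\mp d_{1,2}$ on $S_2$ (compatible by \eqref{1.4}), and $v|_{t=0}=v_0$.

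The main analytic ingredient is a Solonnikov-type $W_{\si}^{2+s,1+s/2}$ estimate for this linear Stokes problem with Navier-slip boundary. The inhomogeneous flux data $d_1,d_2$ are lifted to a divergence-free field $V\in W_{\si}^{2+s,1+s/2}(\Om^T)$ compatible with the slip condition on $S_1$ via the trace theory in Solonnikov spaces and the compatibility \eqref{1.4}; Solonnikov's theory applied to $u=v-V$ then yields
$$\|v\|_{W_{\si}^{2+s,1+s/2}(\Om^T)}+\|\nb p\|_{W_{\si}^{s,s/2}(\Om^T)}\le\phi(\|\ro\|_{L_\infty},\|\ro\|_{W_{r,\infty}^{1,1}},\mathrm{data}),$$
with the lower and upper density bounds providing uniform ellipticity. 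For the transport step, propagation of spatial and time derivatives along characteristics of $\bar v$ gives $\ro\in W_{r,\infty}^{1,1}$ with norm controlled by $c_1$, the weight $\|d_1\|_{L_\infty}$ entering through the chain rule when one pulls $\ro_1$ back from $S_2^T(-a)$. These two bounds show that the map $(\bar v,\bar\ro)\mapsto(v,\ro)$ sends $\mathcal B_M$ into itself for $T$ small, and a standard contraction argument in a lower-order norm ($L_2$ of the differences, closed by Gronwall) produces a unique fixed point, the desired local strong solution with the stated regularity.

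The density bounds follow from the transport equation: $\ro$ is constant along characteristics of $v$, so its values are inherited from $\ro_0$ on internally originating characteristics and from $\ro_1$ on those entering through $S_2(-a)$. The slightly conservative forms $\ro_*=b_1b_0/(b_1+b_0)$ and $\ro^*=\bar b_0+\bar b_1$ absorb losses incurred by the extension $V$ and by mollification/approximation in the iteration, and pass cleanly to the limit. Substituting these into the Solonnikov estimate yields the asserted bound on $v$ and $\nb p$ in terms of data alone.

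The last and technically most delicate item is the pointwise positivity $v_3\ge d_*$, which is where the $x_3$-axis geometry is used essentially. The third component of \eqref{1.1} reads
$$\ro(v_{3,t}+v\cdot\nb v_3)-\nu\Delta v_3+p_{,x_3}=\ro f_3,$$
with $v_3|_{S_2(-a)}=d_1\ge d_\infty$, $v_3|_{S_2(a)}=d_2\ge d_\infty$, and $v_3|_{t=0}=v_{0,3}\ge d_0$. On $S_1$ the normal $\bar n$ is horizontal and $x_3$-independent, so differentiating $v\cdot\bar n=0$ in $x_3$ gives $\sum_i\bar n_iv_{i,3}=0$; the slip condition with $\bar\tau_1=e_3$ then collapses to the Robin relation $\nu\bar n\cdot\nb v_3+\gamma v_3=0$ with $\gamma>0$. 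Testing the $v_3$-equation against the negative part of $w:=v_3-d_*e^{-\lambda t}$ and using that the Robin term contributes nonnegatively on $S_1$, that the Dirichlet traces on $S_2$ are positive for $d_*\le d_\infty$, and that the initial datum is positive for $d_*\le d_0$, one absorbs the source $p_{,x_3}-\ro f_3$ through a Gronwall factor involving $\|p_{,x_3}\|_{L_1(0,t;L_\infty)}+\|f_3\|_{L_1(0,t;L_\infty)}$; the pressure contribution is itself bounded by $\bar c_1,\bar c_2$ through the Solonnikov estimate above. Choosing $d_*=d_*(d_0,\bar d_0,\bar c_1,\bar c_2,d_\infty,\|f_3\|_{L_1(0,t;L_\infty)})$ and $\lambda$ accordingly, and shrinking $T$ once more so that $d_*e^{-\lambda T}>0$, the maximum principle delivers $v_3\ge d_*$. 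This final step, with its careful reduction of the slip condition on $S_1$ to a Robin condition and its absorption of the pressure, is the main technical obstacle of the theorem.
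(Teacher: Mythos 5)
Your overall scheme (fixed point in $W_{\si}^{2+s,1+s/2}\times W_{r,\infty}^{1,1}$, transport of $\ro$ along characteristics entering only through $t=0$ and $S_2(-a)$, a Solonnikov-type estimate for the slip Stokes problem with frozen-coefficient perturbation in the H\"older norm of $\ro$, and the harmonic-mean type density bounds) is consistent with how the cited local-existence result is obtained and with Lemma \ref{l2.4} and the Appendix of this paper; likewise your reduction of the slip condition on $S_1$ to the Robin relation $\nu\partial_n v_3+\gamma v_3=0$ is exactly the reduction used in Lemma \ref{l7.1}.

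The genuine gap is in the last step, the lower bound $v_3\ge d_*$, which is precisely the point the paper singles out as delicate. In your truncation argument the source $p_{x_3}-\ro f_3$ enters the energy identity for $w_-=(v_3-d_*e^{-\lambda t})_-$ linearly, as $\intop_\Om(p_{x_3}-\ro f_3)w_-\,dx$, so it cannot be ``absorbed through a Gronwall factor'' against $\intop\ro w_-^2$; to keep $w_-\equiv 0$ you must dominate it pointwise in time by the barrier term $\lambda d_*e^{-\lambda t}\ro$, which requires $\sup_{t\le T}\|p_{x_3}(t)\|_{L_\infty(\Om)}$, a quantity not controlled by the available regularity ($\nabla p\in W_\si^{s,s/2}$ only gives $L_\si(0,t;L_\infty)$, and the theorem's hypothesis is $L_1(0,t;L_\infty)$). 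If instead you let the decay rate be time dependent, $\psi'=-\|(p_{x_3}-\ro f_3)/\ro\|_{L_\infty}\psi/\psi$-type, the needed rate involves division by the unknown lower bound itself, and you land on the self-referential relation $d_*=b\exp(-a(t)/d_*)$ of (\ref{7.15}); its solvability is not automatic but requires the smallness condition $e^{b}a(t)\le b^2$ of (\ref{7.17}), i.e.\ $t$ small, which your write-up never surfaces (``shrinking $T$ so that $d_*e^{-\lambda T}>0$'' is vacuous, since that quantity is always positive). The paper avoids the truncation route altogether: Lemma \ref{l7.1} estimates $\intop_\Om\ro v_3^{-s+1}dx$ by testing (\ref{7.2}) with $v_3/|v_3|^{s+1}$, lets $s\to\infty$ to bound $\sup_\Om|1/v_3|$, and then resolves the resulting transcendental equation for $d_*$, obtaining $d_*=d_*(d_0,\bar d_0,d_\infty,\|q\|_{L_1(0,t;L_\infty)},\|f_3\|_{L_1(0,t;L_\infty)},\ro_*)$ under the smallness of $a(t)$, with $\|q\|_{L_1(0,t;L_\infty)}$ in turn supplied by the a priori estimate for the solution. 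To repair your argument you would either have to reproduce this fixed-point analysis for $d_*$, or use an additive barrier $\psi(t)=\min(d_0,d_\infty)-\ro_*^{-1}\intop_0^t\|p_{x_3}-\ro f_3\|_{L_\infty}dt'$ and verify, from the local estimate and H\"older in time, that this stays positive for $T$ small; either way the smallness condition and the dependence of $d_*$ on the pressure estimate must be made explicit.
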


\begin{thm}(global existence)
\label{mainthm}
Let $s\in (0,1)$, ${3}/{s}<\sigma$, ${5}/{s}<r,$ $\sigma<r$. Assume that:
\begin{itemize}
\item[1.] $f\in W_\sigma^{s,s/2}(\Om^t)$, $d\in W_\sigma^{2+s-1/\sigma,1+s/2-1/2\sigma}(S_2^t)$, $v_0\in W_\sigma^{2+s-2/\sigma}(\Om)$, $d=(d_1,d_2),$
\item[2.] $h(0)=v_{0,x_3}\in W_{5/3}^{4/5}(\Om)$, $g\in L_{5/3}(\Om^t)$, $h=v_{x_3}$, $q=p_{x_3},$ $g=f_{x_3},$
\item[3.] $\ro_0\in L_\infty(\Om)$, $\ro_1\in L_\infty(S_2^t(-a)),$ $1/\ro_0\in L_\infty(\Om)$, $1/\ro_1\in L_\infty(S_2^t(-a))$, $1/d_1\in L_\infty(S_2^t(-a))$, $1/d_2\in L_\infty(S_2^t(a))$.
\item[4.] The considered domain $\Om$ contains edges, $L_1$, $L_2$ such that $\bar S_1\cap\bar S_2\in\{L_1,L_2\}$. On the edges the following compatibility conditions hold
$$
\nu n_\alpha d_{i,x_\alpha}+\gamma d_i=0\quad {\rm on}\ \ L_i,\ \ i=1,2,
$$
when $\bar n=(n_1,n_2)$ is the normal vector to $S_1$.
\item[5.] Compatibility conditions:
$$
\intop_{S_2(-a)}d_1dS_2=\intop_{S_2(a)}d_2dS_2,
$$
$$
\pa_{x'}^\al v_0|_{S_2}=\pa_{x'}^\al d|_{t=0},\ \ \al = 0,1,2.
$$
\item[6.] The following quantities are finite
$$\eqal{
c_1 & = \|d_1\|_{L_{\infty}(S_2^t(-a))}(\|\ro_{1,x'}\|_{L_r(S_2^t(-a))}+\|\ro_{1,t}\|_{L_r(S_2^t(-a))})\cr & \quad +\|\ro_{0,x}\|_{L_r(\Om)}, \cr
c_2 & = \|f\|_{W_{\si}^{s,s/2}(\Om^t)}+ \sum_{i=1}^2 \|d_i\|_{W_{\si}^{2+s-{1}/{\si},1+{s}/{2}-{1}/{2\sigma}}(S_2^t(a_i))}\cr & \quad +\|\ro_1\|_{W_r^{1,1}(S_2^t(-a))} + \|v_0\|_{W_{\si}^{2+s-{2}/{\si}}(\Om)}.
}$$
\end{itemize}
We define finite parameters:
\begin{equation}\eqal{
 \La_1 & =\|d_{x'}\|_{L_2(0,t;W^1_3(S_2))}+\|d_{x'}\|_{L_\infty(0,t;L_2(S_2))}+ \|d_t\|_{L_2(0,t;H^1(S_2))}\cr & \quad +\|f_3\|_{L_2(0,t;L_{4/3}(S_2))} +\|f_{x_3}\|_{L_2(\Om^t)}, \ x'=(x_1,x_2),\cr
 \La_2 & \equiv \La_2(r) =\|\ro_{1,x'}\|_{L_r(S_2^t(-a))}+\|\ro_{1,t}\|_{L_r(S_2^t(-a))}+\|\ro_{0,x}\|_{L_r(\Om)},  \cr
 \La & =\La_1+\La_2+\|v_{0,x_3}\|_{L_2(\Om)}.\cr}
\label{Lambda}
\end{equation}

\noindent
{\bf Thesis 1}(upper and lower estimates for density): Let
$$\eqal{
&\ro_*= {1\over\left\|{1\over\ro_0}\right\|_{L_{\infty}(\Om)}+ \left\|{1\over\ro_1}\right\|_{L_{\infty}(S_2^t(-a))}},\cr
&\ro^*=\|\ro_1\|_{L_{\infty}(S_2^t(-a))}+\|\ro_0\|_{L_{\infty}(\Om)}.\cr}
$$
Then
\begin{equation}
\ro_*\le\ro(x,t)\le\ro^*.
\label{1.5}
\end{equation}
{\bf Thesis 2}(positivity of $v_3$): \\ Let $d_0$, $\bar d_0$, $d_\infty,$ $\bar{d}$ be positive constants with $\bar d_0\ge v_3(0)\ge d_0$, $ d_\infty \le d_i \le \bar{d},$  $i=1,2$.
Assume $f_3\in L_1(0,t;L_\infty(\Om))$.

\noindent Then there exists such a positive constant $d_*$ that
\begin{equation}
v_3\ge d_*
\label{1.6}
\end{equation}
where $d_*=d_*(d_0,\bar{d}_0, {c}_1, {c}_2, d_{\infty}, \|f_3\|_{L_1(0,t;L_\infty(\Om))}).$

\noindent{\bf Thesis 3}(energy estimate for $v$):

\noindent Let $\ro_1\in L_\infty(S_2^t)$, $d_1\in L_6(0,t;L_3(S_2)),$ $f\in L_2(0,t;L_{6/5}(\Om))$, $v_0\in L_2(\Om)$. Let $\tilde d_i$ be an extension of $d_i$, $i=1,2$, such that $\tilde d_i|_{S_2(a_i)}=d_i$ and assume $\tilde d_i\in L_\infty(0,t;W_{3,\infty}^1(\Om))\cap L_2(0,t;W_{3,\infty}^1(\Om))$, $\tilde d_{i,t}\in L_2(0,t;W_{6/5}^1(\Om))$.\\
Then
\begin{equation}\eqal{
& \|v\|^2_{L_{\infty}(0,t;L_2(\Om))} + \|\nb v\|^2_{L_2(\Om^t)} \equiv \|v\|^2_{V(\Om^t)}\le A_1^2, \cr
{\rm and} \quad  A_1^2 & = \phi(\|\ro_1\|_{L_{\infty}(S_2^t)},\|d_1\|_{L_6(0,t;L_3(S_2(-a)))}, \ro^*,\ro_*)\cdot\cr
&\quad\cdot\{\phi(\sup_t\|\tilde d\|_{W_{3,\infty}^1(\Om)},\ro^*)[\|\tilde d\|_{L_2(0,t;W_{3,\infty}^1(\Om))}^2\cr
&\quad+\|\tilde d_t\|_{L_2(0,t;W_{6/5}^1(\Om))}^2+\|f\|_{L_2(0,t;L_{6/5}(\Om))}^2]+\ro^* \|v_0\|_{L_2(\Om)}^2\},\cr}
\label{1.7}
\end{equation}
where $\tilde d$ replaces $\tilde d_1$, $\tilde d_2$ and $\phi$ is an increasing positive function.

\noindent
{\bf Thesis 4}(global estimate and existence):

\noindent Let
$$\eqal{
D&=\|f\|_{W_\sigma^{s,s/2}(\Om^t)}+ \|d\|_{W_\sigma^{2+s-1/\sigma,1+s/2-1/2\sigma}(S_2^t)}\cr
&\quad+\|v_0\|_{W_\sigma^{2+s-2/\sigma}(\Om)}+\|f_{x_3}\|_{L_{5/3}(\Om^t)}+ \|d_{x'}\|_{W_{5/3}^{7/5,7/10}(S_2^t)}\cr
&\quad+\|v_{0,x_3}\|_{W_{5/3}^{4/5}(\Om)}+ \|f_{2,x_1}-f_{1,x_2}\|_{L_2(0,t;L_{6/5}(\Om))}<\infty.\cr}
$$
Then for sufficiently small $\La$ solutions to problem (\ref{1.1}) exist and satisfy the following global estimate
\begin{equation}\eqal{
&\|v\|_{W_\sigma^{2+s,1+s/2}(\Om^t)}+\|\nabla p\|_{W_\sigma^{s,s/2}(\Om^t)}+\|v_{x_3}\|_{W_{5/3}^{2,1}(\Om^t)}\cr
&\quad+\|\nabla p_{x_3}\|_{L_{5/3}(\Om^t)} \le\phi\left(\ro^*,\frac{1}{\ro_*}, A_1,D\right),\cr
&\|\ro_x\|_{L_{\infty}(0,t;L_r(\Om))}+\|\ro_t\|_{L_{\infty}(0,t;L_r(\Om))} \le\phi\left(\ro^*,\frac{1}{\ro_*}, A_1,D\right)\cdot \La_2\cr}
\label{1.8}
\end{equation}
\end{thm}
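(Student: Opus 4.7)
The overall strategy is standard: combine Theorem~\ref{t1.1} with an a priori global estimate of the form $(\ref{1.8})$. Since the norms in $(\ref{1.8})$ dominate the data norms required to re-apply the local theorem, once the a priori bound is established the local solution may be continued up to an arbitrary $T$. The four theses then reduce to four a priori estimates, to be proved in the stated order with each relying on the previous.

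For Thesis 1 I would use the characteristic form of $\ro_t+v\cdot\nb\ro=0$. Because Thesis 2 gives $v_3\ge d_*>0$, every characteristic starting in $\bar\Om$ either remains in $\Om$ (carrying the value $\ro_0$) or exits through $S_2(a)$, while every characteristic appearing at later times enters through $S_2(-a)$ carrying the value $\ro_1$; the upper bound $\ro^*$ is immediate, and applying the same argument to the transport equation $(1/\ro)_t+v\cdot\nb(1/\ro)=0$ yields the lower bound $\ro_*$. Thesis 2 is essentially inherited from Theorem~\ref{t1.1}, the key point being that the dependence $d_*=d_*(\ldots,\|f_3\|_{L_1(0,t;L_\infty(\Om))})$ is integrable in time in $f_3$, so the bound is uniform on any fixed interval where the local-theory data norms are controlled. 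Thesis 3 is a nonhomogeneous energy estimate: extend $d_1,d_2$ to an interior divergence-free field $\tilde d$, test the momentum equation against $\ro(v-\tilde d)$, exploit the slip condition on $S_1$ together with the density bounds from Thesis 1, and control the remaining inflow/outflow integrals on $S_2$ by the norms of $\tilde d$ listed in the hypothesis.

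The principal effort is Thesis 4. Following \cite{RZ1}, I would differentiate $(\ref{1.1})_1$ and $(\ref{1.1})_2$ with respect to $x_3$ to obtain a Navier-Stokes-like system for $h=v_{x_3}$ with pressure $q=p_{x_3}$ and external force $g=f_{x_3}$, supplemented by commutator source terms arising from $v\cdot\nb v$ and from the $\ro$-dependence, and by inhomogeneous boundary data involving $d_{x'}$ on $S_2$. Applying $L_{5/3}$-parabolic theory for the resulting perturbed Stokes system yields a bound of the form
$$
\|h\|_{W_{5/3}^{2,1}(\Om^t)}+\|\nb q\|_{L_{5/3}(\Om^t)}\le C\La+\hbox{quadratic terms},
$$
and I would close the estimate by a bootstrap that uses the smallness of $\La$ to absorb the quadratic terms. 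Once $h$ is controlled in this norm, the full $W_\si^{2+s,1+s/2}$ bound for $v$ follows from Solonnikov-type estimates for the nonhomogeneous Stokes system, with right-hand side dominated jointly by $A_1$ (from Thesis 3) and by $h$; the density estimate in $L_r$ then comes from differentiating the transport equation and running $L_r$ characteristics with $\nb v$ controlled by the previous step.

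The main obstacle is closing the bootstrap for $h$. The quadratic terms $h\cdot\nb v$ and $v\cdot\nb h$ appearing in the equation for $h$ involve factors $v_{x_1},v_{x_2}$ that are controlled only by the large quantity $A_1$, not by the small $\La$, so the necessary smallness must come from the other factor $h$ itself, extracted through an interpolation inside the $L_{5/3}$ parabolic scale that keeps the large factor at a low exponent. This is precisely what dictates the choice of the exponent $5/3$ and the specific norms appearing in $\La_1$ and in $D$, and is the step where essentially all the delicate work of the paper concentrates.
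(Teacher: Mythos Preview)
Your outline misses the two structural ideas on which the paper's proof of Thesis~4 actually rests. First, when you differentiate $(\ref{1.1})_1$ in $x_3$, the right-hand side of the $h$-equation (see $(\ref{2.6})$) contains the term $\ro_{x_3}(v_t+v\cdot\nabla v-f)$. This is \emph{not} a quadratic term in $h$; it involves a full space derivative of $\ro$, which is a priori large. The paper's key observation is that, thanks to $v_3\ge d_*>0$, one can rewrite the continuity equation as $\ro_{x_3}=-\tfrac{1}{v_3}(\ro_t+v_\alpha\ro_{x_\alpha})$, so that $\|\ro_{x_3}\|_{L_\infty(0,t;L_r)}$ is controlled by the quantity $X_r(t)=(\|\ro_{x'}\|_{L_r}^r+\|\ro_t\|_{L_r}^r)^{1/r}$, and $X_r$ in turn satisfies a Gronwall inequality with data bounded by the small parameter $\La_2$ (Lemma~\ref{l5.1}, Remark~\ref{r5.2}). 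This is precisely where Thesis~2 enters Thesis~4, and without it the $\ro_{x_3}$ term cannot be made small. Your proposal never mentions this mechanism, and your claimed ``main obstacle'' (the bilinear terms $h\cdot\nabla v$, $v\cdot\nabla h$) is in fact not the hard part.

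Second, the passage from the $h$-estimate to the full $W_\sigma^{2+s,1+s/2}$ bound on $v$ is not a single Solonnikov step. The Stokes theory with variable $\ro$ (Appendix, Lemma~\ref{lA.1}) produces on the right a term $\|v\cdot\nabla v\|_{L_r}$, which requires control of $v'=(v_1,v_2)$ in norms \emph{stronger} than the energy norm $A_1$. The paper obtains this by estimating the third vorticity component $\chi=v_{2,x_1}-v_{1,x_2}$ (Lemma~\ref{l8.2}), then solving the two-dimensional rot--div system $(\ref{1.15})$ on each cross-section to recover $\|v'\|_{V^1(\Om^t)}$ (Lemma~\ref{l10.1}), and finally climbing the regularity ladder $W^{2,1}_{5/3}\to W^{2,1}_2\to W^{2,1}_{5/2}\to W^{2,1}_{5'}\to W^{2+s,1+s/2}_\sigma$ (Lemmas~\ref{l10.2}--\ref{l10.5}). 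Only then does one arrive at an inequality $X\le\phi(X)\La+\phi(\text{data})$ that closes for small $\La$. Your single bootstrap in $W^{2,1}_{5/3}$ for $h$ alone cannot close because the right-hand side still contains large norms of $v'$ that are not absorbed by $\La$. As minor points: Thesis~1 in the paper is proved independently of Thesis~2 by an $L_p$ argument with $p\to\infty$ (Lemma~\ref{l2.4}), so your use of Thesis~2 there is unnecessary and circular; and Thesis~2 is not simply inherited from Theorem~\ref{t1.1} but requires the separate analysis of Lemma~\ref{l7.1}, whose closure in turn depends on the global pressure estimate.
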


\begin{remark}
We define Sobolev and other spaces in Section~\ref{s2}, in both Theorems appear a bit untypical norms, so we refer to Definitions~\ref{d2.1} and \ref{d2.3} to check on $W^{1,1}_r, W^{1,1}_{r,\infty}$ and $W^1_{p,\infty}.$

\end{remark}

\begin{proof}{\it (of Thesis 1)}
The lower and upper bounds for density described in (\ref{1.5}) are proved in Lemma \ref{l2.4}. We want to underline that this result can be established independently and the density in Thesis 1 is bounded by data only.\end{proof}

\begin{proof}{\it (of Thesis 2)}
Positivity of $v_3$ is analyzed and proved in Lemma~\ref{l7.1}.  \end{proof}

\begin{proof}{\it (of Thesis 3)}
The global energy estimate (\ref{1.7}) is proved in Section \ref{s3} and formulated in Theorem~\ref{3.4}. In order to establish this theorem we use that density has both lower and upper bounds, namely $\ro^*$ and $\ro_*$ (as in Thesis~1). \end{proof}

\begin{proof}{\it (of Thesis 4)}
The global existence to problem (\ref{1.1}) is proved step by step in time using local existence: by the standard argument we can extend in time local regular solutions as long as the global estimate holds, under assumption that $v_3\ge d_*>0$. As we can see, the energy estimate does not give enough regularity to accomplish such a goal, thus, we have to study derivatives of velocity and find appropriate bounds and inequalities in order to establish global estimate (\ref{1.8}). Therefore, we consider $h=v_{x_3}$ which is a solution to the problem (\ref{2.6}) and we derive Corollary~4.3 and inequality (\ref{4.19}):
$$ \eqal{
\|h\|_{V(\Om^t)}&\le c\phi(\ro_*,\ro^*,\mathcal{D}_1,V_1,A_1)[\La_1+ \|\ro_{x_3}\|_{L_\infty(0,t;L_3(\Om))}(\|f\|_{L_2(\Om^t)}\cr
&\quad+\|v_t\|_{L_2(\Om^t)}+\|v\|_{L_\infty(0,t;L_\infty(\Om))})]+\|h(0)\|_{L_2(\Om)},\cr}
$$
where  $\mathcal{D}_1 \equiv D_1=\|d_1\|_{L_2(0,t;L_3(S_2))},$ $V_1=\|v\|_{L_2(0,t;W^1_3(\Om))}$ are introduced in Le\-mma~4.2 and the norm $V$ in Definition~2.1.
The new difficulty arises: we have some norms of velocity $v$ to deal with but also $\ro_{x_3}$.
Fortunately, we can use the equation of continuity in the form
$$
\ro_t+v_3\ro_{x_3}+v_\alpha\ro_{x_\alpha}=0,
$$
where $\alpha=1,2$ and the summation convention with respect to repeated $\alpha$ is assumed, to conclude
$$\ro_{x_3}=-\frac{1}{v_3}(\ro_t+v_\alpha\ro_{x_\alpha})
\label{5.10}$$ as long as (\ref{1.6}) in Thesis~2 holds, i.e. $v_3\ge d_*>0.$
Then, in Section 5, Lemma~\ref{5.1}, the following density relation is found, with $x'=(x_1,x_2)$:
$$\eqal{
(\|\ro_{x'}(t)\|_{L_r(\Om)}^r+\|\ro_t(t)\|_{L_r(\Om)}^r)^{1/r}\le & \phi(v)\cdot\|d_1\|_{L_\infty(S_2^t)} \La_2. \cr}
\label{5.1}$$
where $\phi(v) = \phi(\|v\|_{L_{\infty}(\Om^t)},\|v_x\|_{{L_1(0,t;L_\infty(\Om))}},\|v_t\|_{L_1(0,t;L_\infty(\Om))})$ and  $\La_2$ is a small parameter defined in (\ref{Lambda}).

We apply some imbeddings for $v$  norms in $\phi(v).$ Let $t_0$ be a given positive number and $t \ge t_0$. Then 
\begin{equation}\eqal{
& \|v\|_{L_{\infty}(\Om^t)}\le c \sup_t\|v\|_{W^{2+s-2/\si}_\si(\Om)} \le c t^{1/\si'}\|v\|_{W^{2+s,1+s/2}_\si(\Om^t)}, \quad \frac{5}{\si} < 2+s, \cr
& \|v_x\|_{{L_1(0,t;L_\infty(\Om))}} \le t^{1/\si'} \left(\int_0^t \|v_x(t')\|^{\si}_{L_{\infty}(\Om)}dt'\right)^{1/\si} \cr & \le 
c t^{1/\si'} \left(\int_0^t \|v(t')\|_{W^{s}_{\si}(\Om)}dt'\right)^{1/\si}  \le c t^{1/\si'}\|v\|_{W^{2+s,1+s/2}_\si(\Om^t)}, \quad \frac{3}{\si} < 1+s, \cr
& \|v_t\|_{L_1(0,t;L_\infty(\Om))} \le t^{1/\si'} \left(\int_0^t \|v_t(t')\|^{\si}_{L_{\infty}(\Om)}dt'\right)^{1/\si} \cr & \le 
c t^{1/\si'} \left(\int_0^t \|v(t')\|_{W^s_{\si}(\Om)}dt'\right)^{1/\si}  \le c t^{1/\si'}\|v\|_{W^{2+s,1+s/2}_\si(\Om^t)}, \quad \frac{3}{\si} < s,\cr
& \|p_{x_3}\|_{{L_1(0,t;L_\infty(\Om))}} \le t^{1/\si'} \left( \int_0^t \|p_{x_3}(t')\|_{L_{\infty(\Om)}}^\si dt' \right)^{1/\si} \cr 
& \le c t^{1/\si'} \left(\int_0^t \|p_{x_3}(t')\|_{W_{\si}^s(\Om)}^\si dt' \right)^{1/\si} \le c t^{1/\si'}\|p_{x_3}\|_{W^{s,s/2}_\si(\Om^t)}, \quad \frac{3}{\si} < s,  \cr}
\end{equation}
where $1/\si+1/\si' = 1.$
Then, in view of above facts, we prove (see (\ref{5.15}))
$$\eqal{\|\ro_{x_3}\|_{L_\infty(0,t;L_r(\Om^t))} \le{1\over d_*}(1+\|v'\|_{L_\infty(\Om^t)})(\|\ro_{x'}(t)\|_{L_r(\Om)}^r+\|\ro_t(t)\|_{L_r(\Om)}^r)^{1/r} \cr  \!\le \!
\frac{1}{d_*}(1+\|v'\|_{L_\infty(\Om^t)})\phi(\|v\|_{W_\sigma^{2+s,1+\frac{s}{2}}(\Om^t)} ) \|d_1\|_{L_{\infty}(S_2^t(-a))} \La_2, \cr}$$
where $v'=(v_1,v_2)$.

In order to increase regularity of $v'$ we consider the third component of vorticity: $\chi=v_{2,x_1}-v_{1,x_2}$ and the following elliptic rot-div problem relating $\chi$ and $v'$
\begin{equation}\eqal{
&v_{2,x_1}- v_{1,x_2}=\chi\quad &{\rm in}\ \ \Om',\cr
&v_{1,x_1}+v_{2,x_2}=-h_3\quad &{\rm in}\ \ \Om',\cr
&v'\cdot\bar n'=0\quad &{\rm on}\ \ S'_1,\cr}
\label{1.15}
\end{equation}
where $x_3$ is treated as a parameter, $\Om'$ is the cross-section of $\Om$ with the plane perpendicular to the $x_3$-axis and passing through the point $x_3\in(-a,a)$ and $S'_1$ is the cross-section of $S_1$ with the same plane. Then $S'_1$ is the boundary of $\Om'$.
In Section~\ref{s8}, we analyze $\chi$ and we come up with the following estimate
\begin{equation}\eqal{
\|\chi\|_{V(\Om^t)}&\le\phi(\|v\|_{W_\sigma^{2+s,1+s/2}(\Om^t)})(1+\mathcal{D}_3)\La_2\cr
&\quad+\phi(\frac{1}{\ro_*},\ro^*,A_1,\bar{d})(\|v'\|_{W_2^{1,1/2}(\Om^t)}+ \|v'\|_{L_\infty(0,t;W_2^{5/6}(\Om))}\cr
&\quad+\|h\|_{L_\infty(0,t;L_3(\Om))}+\mathcal{D}_2),\cr}
\label{1.14}
\end{equation}
where  $\bar{d}$ is $L_\infty$ estimate for $d=(d_1,d_2)$, $\mathcal{D}_2=D_2, \mathcal{D}_3=D_3$ are constants defined in (\ref{9.15}), depending on data ($f$, $\chi_0$- initial data for $\chi$ and r.h.s. in $\chi$ problem, i.e. $F=(\rot f)_3$ and $A_1$ - the estimate for  $\|v\|_{V(\Om^t)}$ defined in (\ref{1.7})) with norm $V$ given by Definition~\ref{2.1}.

Then, in Section~\ref{s10}, we make use of rot-div problem (\ref{1.15}) to deal with $v'$ terms and we conclude the estimate (\ref{10.1}) for $v'$ in higher norms (see Definition~\ref{2.1}):
\begin{equation}\eqal{
\|v'\|_{V^1(\Om^t)}&\le\bar\phi\cdot[\|v'\|_{L_2(\Om;H^{1/2}(0,t))}+\|h\|_{L_\infty(0,t;L_3(\Om))}\cr
&\quad+\mathcal{D}_2+(1+\mathcal{D}_3)\phi(\|v\|_{W_\sigma^{2+s,1+s/2}(\Om^t)})\La].\cr}
\label{v'-v1}
\end{equation}
where $\bar\phi \equiv \phi\left(\frac{1}{\ro_*},\ro^*,A_1,\mathcal{D}_1,\bar{d},\|v_0\|_{L_{\infty}(\Om)},\|f\|_{L_2(\Om^t)}\right).$

We emphasize that in the r.h.s. of (\ref{v'-v1}) only terms $\|v'\|_{L_2(\Om;H^{1/2}(0,t))}$ and $\|h\|_{L_\infty(0,t;L_3(\Om))}$ are possibly multiplied by large parameters, whereas other norms of $v$ and $p$ are multiplied by a small parameter $\La.$
For $h$ term, we postpone the discussion (briefly, we estimate this through $v$ norms at the end). For $v'$ term, we could incorporate the interpolation
$$
\|v'\|_{L_2(\Om,H^{1/2}(0,t))}\le\eps\|v\|_{W_{5/3}^{2,1}(\Om^t)}+ c(1/\varepsilon)A_1
$$
and deal with $W_{5/3}^{2,1}$ norm instead.

In order to derive estimates for $\|v\|_{W_{5/3}^{2,1}(\Om^t)}$ and next, to increase regularity, we are going to apply the regularity theory for Stokes problem of the form:
\begin{equation}\eqal{
&\ro v_t-\divv\T(v,p)=-\ro v'\cdot\nabla v-\ro v_3h+\ro f\quad &{\rm in}\ \ \Om^T,\cr
&\divv v=0\quad &{\rm in}\ \ \Om^T,\cr
&v\cdot\bar n=0,\ \ \nu\bar n\cdot\D(v)\cdot\bar\tau_\alpha+\gamma v\cdot\bar\tau_\alpha=0,\ \ \alpha=1,2\quad &{\rm on}\ \ S_1^T,\cr
&v\cdot\bar n=d,\ \ \bar n\cdot\D(v)\cdot\bar\tau_\alpha=0,\ \ \alpha=1,2\quad &{\rm on}\ \ S_2^T,\cr
&v|_{t=0}=v_0\quad &{\rm in}\ \ \Om,\cr}
\label{stokes}
\end{equation}
where $\ro$ is treated as given.

However, our coefficients depend on density $\ro$ and are not constant. Thus, for variable $\ro$ we need the H\"older continuity. Appropriate estimates for solutions to the Stokes system can be found in Appendix A. To show the H\"older continuity of $\ro$ we need that $\ro$ belongs to $W_{r,\infty}^{1,1}(\Om^t),$  $r>3$ and use appropriate imbedding, namely,
$$ \|\ro\|_{\dot C^\al(\Om^t)} \le \|\ro\|_{C^\al(\Om^t)}\le\|\ro\|_{W_{r,\infty}^{1,1}(\Om^t)}$$ which holds for
$ \frac{3}{r}+\al<1.$ The estimate for $\ro$ in the norm of Sobolev space $
\|\ro\|_{W_{r,\infty}^{1,1}(\Om^t)}$
is shown in Section \ref{s5}.


 For $\ro=\const$ the problem was considered in \cite{RZ4}. In this paper we find estimates and prove existence of solutions in Besov spaces. It is clear that the existence of solutions in anisotropic Sobolev spaces can also be proved.

For the norm of velocity $v$ in  ${W_{5/3}^{2,1}(\Om^t)}$ we show in Lemma~\ref{l10.2} that (see (\ref{10.1}))
$$\eqal {\|v\|_{W_{5/3}^{2,1}(\Om^t)}+ \|\nb p\|_{L_{5/3}(\Om^t)} & \le c(H + \mathcal{D}_4)+ \cr & +\bar{\phi}\cdot\phi(\|v\|_{W_\si^{2+s,1+s/2}(\Om^t)},\|\nb p\|_{W_\si^{s,s/2}(\Om^t)})\La, \cr}
$$
with
$$
H=\|h\|_{L_\infty(0,t;L_3(\Om))}+\|h\|_{L_{10/3}(\Om^t)}+ \|h(0)\|_{L_2(\Om)}.
$$
and $\mathcal{D}_4 = D_2 + D_4$ defined with formula (\ref{10.9}) is the constant dependent on norms of $f, d, v_0.$

We continue increasing of regularity in Lemmas \ref{l10.3}--\ref{l10.5} and derive similar inequalities for the following norms:
$$ \eqal{ &\|v\|_{W_2^{2,1}(\Om^t)},\|\nb p\|_{L_2(\Om^t)} \quad {\rm in} \ (\ref{10.22}) \quad {\rm with}\  D_2,D_5, \cr
&\|v\|_{W_{5/2}^{2,1}(\Om^t)},\|\nb p\|_{L_{5/2}(\Om^t)} \quad {\rm in} \ (\ref{10.26}) \quad {\rm with}\  D_7,  \cr
& \|v\|_{W_{5}^{2,1}(\Om^t)},\|\nb p\|_{L_5(\Om^t)} \quad {\rm in} \ (\ref{10.32}) \quad {\rm with}\ D_8, \cr
}$$
where $D_5,D_8$ are constant dependent on norms of $f, d, v_0,$ and $D_7$ additionally depends on norms of $\chi_0$, $F=(\rot f)_3$ and $A_1.$
Finally, (\ref{10.34}) in Lemma~\ref{l10.5} yields
\begin{equation} \eqal{
&\|v\|_{W_\si^{2+s,1+s/2}(\Om^t)}+\|\nb p\|_{W_\sigma^{s,s/2}(\Om^t)}  \le c(H,\mathcal{D}_5)+ \cr & \quad + \bar{\phi}\phi(\|v\|_{W_\si^{2+s,1+s/2}(\Om^t)},\|p\|_{W_\si^{s,s/2}(\Om^t)})\cdot
\La\cr}
\label{1.28}
\end{equation}
with $\mathcal{D}_5= D_9$ dependent on norms of $f, d, v_0.$
Since $$ H\le c \|h\|_{W^{2,1}_{5/3}(\Om^t)} $$
we get as well the relation, with $h=v_{x_3}, q=p_{x_3}$ and $\mathcal{D}_6(g,d,h_0)=D_{10},$  $g=f_{x_3},$ given in (\ref{D10}) in Lemma~\ref{r11.2}:
\begin{equation} \eqal{
\|h\|_{W^{2,1}_{5/3}(\Om^t)} + \|\nb q\|_{L_{5/3}(\Om^t)}  \le c \mathcal{D}_6 + \cr \bar{\phi}\phi(\|v\|_{W_\si^{2+s,1+s/2}(\Om^t)},\|\nb p\|_{W_\si^{s,s/2}(\Om^t)},\|h\|_{W^{2,1}_{5/3}(\Om^t)},\|\nb q\|_{L_{5/3}(\Om^t)})\cdot \La. \cr}
\label{1.30}
\end{equation}
Let
$$ X = \|v\|_{W_\si^{2+s,1+s/2}(\Om^t)}+\|\nb p\|_{W_\si^{s,s/2}(\Om^t)}+\|h\|_{W^{2,1}_{5/3}(\Om^t)}+\|\nb q\|_{L_{5/3}(\Om^t)},$$ then (\ref{1.28}) and (\ref{1.30}) imply (see Remark~\ref{r11.3}):
$$ \begin{eqal}{ X & \le \phi(X)\La + \phi(\frac{1}{\ro_*},\ro^*,\mathcal{D}_{7},\bar{\phi}), \cr \mathcal{D}_{7} & =\mathcal{D}_5+\mathcal{D}_{6}+\|f_{2,x_1}-f_{1,x_2}\|_{L_2(0,t;L_{6/5}(\Om))}+A_1 \cr}
 \end{eqal} $$
and for $\La$ sufficiently small in Lemma~\ref{11.3} we conclude the estimate
$$ X \le \phi\left(\frac{1}{\ro_*},\ro^*,\mathcal{D}_{7},\bar{\phi}\right). $$
\end{proof}

We refer here to some results related to the global or long time existence of regular solutions to nonhomogeneous Navier-Stokes equations. In \cite{DZ}, the global existence and uniqueness of solutions to nonhomogeneous Navier-Stokes system in the half-space $\R_+^n$, $n\ge 2$, has been established, with the initial density bounded and close enough to a positive constant, the initial velocity belonging to some critical Besov space and some smallness of data. Namely, $L_\infty$ norm of the inhomogeneity and the critical norm  to the horizontal components of the initial velocity has been assumed very  small compared to the exponential of the norm to the vertical component of the initial velocity. In the paper \cite{DM} the boundary value problem for the incompressible inhomogeneous Navier-Stokes equations in the half-space in the case of small data with critical regularity is analyzed. It is shown, in dimension $n\ge 3$, that if the initial density is close to a positive constant in $L_\infty\cap\dot W_n^1(\R_+^n)$ and the initial velocity is small with respect to the viscosity in the homogeneous Besov space $\dot B_{n,1}^0(\R_+^n)$ then the equations have a unique global solution. In \cite{Z3}, the author considered the equations in a bounded cylinder under boundary slip conditions. Assuming that the derivatives of density, velocity, external force with respect to the third co-ordinate are sufficiently small in some norms, the existence of large time regular solutions in Sobolev spaces has been proved, namely $v\in H^{s+2,s/2+1}(\Om^t)$, $s\in(1/2,1)$. In \cite{LS}, Ladyzhenskaya and Solonnikov have obtained existence results to nonhomogeneous Navier-Stokes equations for $v\in W_q^{2,1}$, $\nabla p\in L_q$, $q>n$ and $\ro\in C^1$, for small times with arbitrary $v_0$ and $f$ and for any given time interval with sufficiently small $v_0$ and $f$. The problem was analyzed in a bounded domain in $\R^n$ with boundary $S\in C^2$ and $v|_{S^T}=0$. We mention here some papers concerning nonhomogeneous magnetohydrodynamics equations, concerning Navier-Stokes equations and magnetohydrodynamics, see \cite{Z}, \cite{CLX}, \cite{BWY}.

\section{Notation and auxiliary results}\label{s2}

First we introduce the simplified notation.

\begin{itemize}
\item[$\bullet$] By $\phi$ and $\phi_k$, $k\in\N$, we denote increasing positive functions depending on quantities and norms of data which are not assumed to be small and $\phi$ contains some constants $c.$
\item[$\bullet$] $D_k$, $k\in\N$, depends linearly on norms of data which are not assumed to be small.
\item[$\bullet$] $\La_1$, $\La_2$, $\La$ are small parameters that depend on norms of data assumed to be small.
\item[$\bullet$] By dot $\cdot$ we mean a multiplication of functions.
\item[$\bullet$] Exponent $a>0$ can change its value from formula to formula.
\item[$\bullet$] $\Phi'_k$, $\phi'_k$ appear in proofs only and play an auxiliary, temporary role.
\end{itemize}

\begin{definition}\label{d2.1}
Let $Q$ be either $\Om$ or $S\subset\partial\Om$ or $\R^3$. For Lebesque and Sobolev spaces we set the notation
$$\eqal{
&\|u\|_{L_p(Q)}=|u|_{p,Q},\quad \|u\|_{L_p(Q^t)}=|u|_{p,Q^t},\cr
&\|u\|_{L_q(0,t;L_p(Q))}=|u|_{p,q,Q^t},\cr}
$$
where $p,q\in[1,\infty]$, $Q^t=Q\times(0,t)$.

Let $W_p^s(\Om)$, $s\in\N$, $\Om\subset\R^3$ be the Sobolev space with the finite norm
$$
\|u\|_{W_p^s(\Om)}=\bigg(\sum_{|\alpha|\le s}\intop_\Om|D_x^\alpha u|^pdx\bigg)^{1/p},
$$
where $D_x^\alpha= \partial_{x_1}^{\alpha_1}\partial_{x_2}^{\alpha_2}\partial_{x_3}^{\alpha_3}$, $|\alpha|=\alpha_1+\alpha_2+\alpha_3$, $\alpha_i\in\N_0=\N\cup\{0\}$, $i=1,2,3$ and $p\in[1,\infty]$.

Let $H^s(\Om)=W_2^s(\Om)$. Then we denote
$$
\|u\|_{H^s(\Om)}=\|u\|_{s,\Om},\quad \|u\|_{W_p^s(\Om)}=\|u\|_{s,p,\Om}.
$$
To define space $W_p^s(S)$ we need an appropriate partition of unity.

We have the compatibility between spaces
$$
H^0(\Om)=L_2(\Om),\quad W_p^0(\Om)=L_p(\Om).
$$
We also apply the notation
$$\eqal{
&\|u\|_{L_q(0,t;W_p^k(Q))}=\|u\|_{k,p,q,Q^t},\cr
&\|u\|_{L_q(0,t;H^k(Q))}=\|u\|_{k,q,Q^t}.\cr}
$$
and (we use these norms for density)
$$
\|u\|_{W_r^{1,1}(Q^t)}=\|u\|_{L_r(Q^t)}+\|u_{x}\|_{L_r(Q^t)}+\|u_{t}\|_{L_r(Q^t)},
$$
where $r\in[1,\infty]$ and $Q$ is equal either $\Omega$ or $S$. Moreover,
$$
\|u\|_{W_{r,s}^{1,1}(Q^t)}=\|u\|_{L_s(0,t;L_r(Q))}+\|u_{x}\|_{L_s(0,t;L_r(Q))}+ \|u_{t}\|_{L_s(0,t;L_r(Q))},
$$
where $r,s\in[1,\infty]$. \\
Finally, we introduce spaces appropriate for energy type estimates for solutions to the Navier-Stokes equations
$$
\|u\|_{V^k(\Om^t)}=\|u\|_{L_\infty(0,t;H^k(\Om))}+\|\nabla u\|_{L_2(0,t;H^k(\Om))},
$$
where $V^0(\Om^t)=V(\Om^t)$ and
$$
\|u\|_{V(\Om^t)}=\|u\|_{L_\infty(0,t;L_2(\Om))}+\|\nabla u\|_{L_2(\Om^t)}.
$$
\end{definition}

\begin{definition}\label{d2.2}
Anisotropic Sobolev-Slobodetskii spaces $W_p^{k,l}(\Om^T)$, $k,l\in\R_+$, $p\in[1,\infty]$ are defined in the following way,
$$
\|u\|_{W_p^{k,l}(\Om^T)}=\|u\|_{W_p^{k,0}(\Om^T)}+\|u\|_{W_p^{0,l}(\Om^T)}
$$
and
$$\eqal{
&\|u\|_{W_p^{k,0}(\Om^T)}=\bigg(\intop_0^T\|u(t)\|_{W_p^k(\Om)}dt\bigg)^{1/p},\cr
&\|u\|_{W_p^{0,l}(\Om^T)}=\bigg(\intop_\Om\|u(x)\|_{W_p^l(0,T)}dx\bigg)^{1/p}.\cr}
$$
Next, we have
$$\eqal{
\|u\|_{W_p^k(\Om)}&=\sum_{|\alpha|\le[k]}\|D_x^\alpha u\|_{L_p(\Om)}\cr
&\quad+\bigg(\sum_{|\alpha|=[k]}\intop_\Om\intop_\Om{|D_x^\alpha u(x,t)-D_{x'}^\alpha u(x',t)|^p\over|x-x'|^{s+p(k-[k])}}dxdx'\bigg)^{1/p}\cr}
$$
and
$$\eqal{
\|u\|_{W_p^l(0,T)}&=\sum_{i\le[l]}\|\partial_t^iu\|_{L_p(0,T)}\cr
&\quad+\bigg(\intop_0^T\intop_0^T {|\partial_t^{[l]}u(x,t)-\partial_{t'}^{[l]}u(x,t')|^p\over|t-t'|^{1+p(l-[l])}} \bigg)^{1/p},\cr}
$$
where $s=\dim\Om$, $[m]$ is the integer part of $m$, $D_x^\alpha=\partial_{x_1}^{\alpha_1}\dots\partial_{x_s}^{\alpha_s}$, $\alpha=(\alpha_1,\dots,\alpha_s)$ is a multiindex.

Finally, we introduce the following homogeneous spaces
$$
\|u\|_{\dot W_{r,s}^{1,1}(\Om^t)}=|u_{x}|_{r,s,\Om^t}+|u_{t}|_{r,s,\Om^t}
$$
By $C^\alpha(\Omega^T)$, $\alpha\in(0,1)$ we denote the H\"older space with the norm
$$\eqal{
\|u\|_{C^{\alpha}(\Omega^T)}&=\sup_{x,x',t} {|u(x,t)-u(x',t)|\over|x-x'|^\alpha}\cr
&\quad+\sup_{x,t,t'}{|u(x,t)-u(x,t')|\over|t-t'|^\alpha}+|u|_{\infty,\Omega^T}\cr}
$$
and anisotropic homogeneous H\"{o}lder spaces
$$
\|u\|_{\dot C^{\alpha}(\Om^t)}=\sup_{t,x',x''} {|u(x',t)-u(x'',t)|\over|x'-x''|^\alpha}+\sup_{x,t',t''} {|u(x,t')-u(x,t'')|\over|t'-t''|^\alpha}.
$$

Next, we introduce weighted spaces.

Let $\ro=\min_{i\in\{1,2\}}\dist\{x,S_2(a_i)\}$. Then weighted spaces $L_{p,\mu}(\Om)$ and $V_{p,\mu}^2(\Om)$ are defined by
$$
\|u\|_{L_{p,\mu}(\Om)}=\bigg(\intop_\Om|u|^p\ro^{p\mu}dx\bigg)^{1/p},\quad p\in(1,\infty),\ \ \mu\in\R
$$
and
$$
\|u\|_{V_{p,\mu}^2(\Om)}=\bigg(\intop_\Om(|\nabla^2u|^p+|\nabla u|^p\ro^{-p}+|u|^p\ro^{-2p})\ro^{p\mu}dx\bigg)^{1/p}.
$$
\end{definition}

\begin{definition}\label{d2.3}
Anisotropic Lebesque and Sobolev spaces $L_{p,\infty}(\Om)$ and\break $W_{p,\infty}^1(\Om)$, $p\in(1,\infty]$, are spaces with the following finite norms
$$
\|u\|_{L_{p,\infty}(\Om)}=\bigg(\sup_{x_3\in(-a,a)}\intop_{S_2(x_3)} |u(x',x_3)|^pdx'\bigg)^{1/p},
$$
where $S_2(x_3)$ is a cross-section of $\Om$ with the plane $x_3=\const\in(-a,a)$ and $x'=(x_1,x_2)$.

Moreover,
$$
\|u\|_{W_{p,\infty}^1(\Om)}=\sup_{x_3\in(-a,a)}\bigg(\intop_{S_2(x_3)} (|u(x',x_3)|^p+|\nabla u(x',x_3)|^p)dx'\bigg)^{1/p}.
$$
\end{definition}

We consider the problem
\begin{equation}\eqal{
&\ro_t+v\cdot\nabla\ro=0\ \quad {\rm in}\  \Om^T, \cr
&\ro|_{t=0}=\ro_0,\ \cr
&\ro|_{S_2(-a)}=\ro_1 \quad {\rm on}\ S_2, \cr
&\divv v=0,\quad {\rm in}\  \Om^T, \cr
&v\cdot\bar n|=-d_1,\ d_1>0 \quad {\rm on}\  S_2(-a).\cr}
\label{2.1}
\end{equation}

\begin{lemma}\label{l2.4}
Assume that $\ro_0\in L_\infty(\Om)$, $\ro_1\in L_\infty(S_2^t(-a))$. Assume that $\ro$ is a solution to (\ref{2.1}). Then
\begin{equation}
|\ro(t)|_{\infty,\Om}\le|\ro_1|_{\infty,S_2^t(-a)}+ |\ro_0|_{\infty,\Om}\equiv\ro^*.
\label{2.2}
\end{equation}
Assume that $1/\ro_0\in L_\infty(\Om)$, $1/\ro_1\in L_\infty(S_2^t(-a))$. Then
\begin{equation}
\ro_*\equiv{1\over\left|{1\over\ro_0}\right|_{\infty,\Om}+ \left|{1\over\ro_1}\right|_{\infty,S_2^t(-a)}}= {\inf\ro_1\cdot\inf\ro_0\over\inf\ro_1+\inf\ro_0}\le\ro.
\label{2.3}
\end{equation}
\end{lemma}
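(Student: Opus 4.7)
The plan is to use a standard $L_p$-energy estimate on the transport equation and pass to $p\to\infty$ to obtain the upper bound (\ref{2.2}); the lower bound (\ref{2.3}) then follows by applying the same upper bound to $w=1/\ro$, which satisfies the same transport equation with data $1/\ro_0$ and $1/\ro_1$. Morally this is just the maximum principle for transport along characteristics: a backward characteristic from any $(x,t)$ must either reach $\{t=0\}$ in $\Om$ (where $\ro=\ro_0$) or reach the inflow face $S_2(-a)$ at some earlier time (where $\ro=\ro_1$), since $v\cdot\bar n=0$ on $S_1$ and $v\cdot\bar n>0$ on $S_2(a)$ rule out backward exit through the other faces.

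Concretely, I would multiply $\ro_t+v\cdot\nabla\ro=0$ by $p|\ro|^{p-2}\ro$, integrate over $\Om$, and use $\divv v=0$ together with integration by parts to convert the convective term into a pure boundary integral. The decomposition $\partial\Om=S_1\cup S_2(-a)\cup S_2(a)$, combined with $v\cdot\bar n=0$ on $S_1$, $v\cdot\bar n=-d_1$ with $\ro=\ro_1$ on $S_2(-a)$, and $v\cdot\bar n=d_2>0$ on $S_2(a)$, produces
$$
\frac{d}{dt}\intop_\Om|\ro|^p\,dx+\intop_{S_2(a)}|\ro|^p d_2\,dS=\intop_{S_2(-a)}|\ro_1|^p d_1\,dS.
$$
Dropping the nonnegative outflow term, integrating in time, taking $p$-th roots, and using $(a^p+b^p)^{1/p}\le a+b$ together with the fact that $\bigl(\intop_0^t\intop_{S_2(-a)} d_1\,dS\,d\tau\bigr)^{1/p}\to1$ as $p\to\infty$ yields (\ref{2.2}). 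The same argument applied to $w=1/\ro$ (which satisfies $w_t+v\cdot\nabla w=0$ by the chain rule, with data $1/\ro_0$ and $1/\ro_1$ in $L_\infty$ by hypothesis) gives $|1/\ro|_{\infty,\Om}\le|1/\ro_0|_{\infty,\Om}+|1/\ro_1|_{\infty,S_2^t(-a)}$, which rearranges to (\ref{2.3}).

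The main subtleties are two. First, the $p\to\infty$ passage requires the total inflow $\intop_0^t\intop_{S_2(-a)}d_1\,dS\,d\tau$ to be finite, which is ensured by the regularity of $d_1$. Second, working with $w=1/\ro$ presupposes that $\ro$ stays strictly positive; this is preserved by the transport structure because $\ro_0,\ro_1$ are bounded below by positive constants, and can be made rigorous by regularizing with $\ro+\epsilon$ and letting $\epsilon\to0$, or equivalently by first observing via characteristics that pointwise $\ro(x,t)$ is attained by either some $\ro_0(y)$ or some $\ro_1(z,s)$.
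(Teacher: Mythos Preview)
Your proof is correct and follows essentially the same route as the paper: multiply the transport equation by $|\ro|^{p-2}\ro$, integrate over $\Om$, use $\divv v=0$ and the boundary conditions to reduce to an inflow term, integrate in time, and pass $p\to\infty$. For the lower bound the paper multiplies directly by $\ro|\ro|^{-p-2}$ rather than introducing $w=1/\ro$, but this is the same computation in slightly different notation.
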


\begin{proof}
Multiply $(\ref{2.1})_1$ by $\ro|\ro|^{p-2}$, $p\in\R_+$ and integrate over $\Om$. Then we obtain
$$
{d\over dt}|\ro|_{p,\Om}^p+\intop_\Om v\cdot\nabla|\ro|^pdx=0.
$$
In view of properties of $v$ and boundary conditions for $\ro$, we have
$$
{d\over dt}|\ro|_{p,\Om}^p\le\intop_{S_2(-a)}d_1|\ro_1|^pdS_2.
$$
Integrating with respect to time yields
$$
|\ro(t)|_{p,\Om}^p\le\intop_{S_2^t(-a)}d_1|\ro_1|^pdS_2dt'+ |\ro_0|_{p,\Om}^p.
$$
Hence
$$\eqal{
|\ro(t)|_{p,\Om}&\le\bigg(\intop_{S_2^t(-a)}d_1|\ro_1|^pdS_2dt'\bigg)^{1/p} +|\ro_0|_{p,\Om}\cr
&\le|d_1|_{\infty,S_2^t(-a)}^{1/p}|\ro_1|_{p,S_2^t(-a)}+ |\ro_0|_{p,\Om}.\cr}
$$
Passing with $p\to\infty$ implies (\ref{2.2}).

Multiply $(\ref{2.1})_1$ by $\ro|\ro|^{-p-2}$, $p\in\R_+$, and integrate over $\Om$. Then we have
$$
{d\over dt}\bigg|{1\over\ro}\bigg|_{p,\Om}^p\le\intop_{S_2(-a)}d_1 \bigg|{1\over\ro_1}\bigg|^pdS_2.
$$
Integrating with respect to time gives
$$
\bigg|{1\over\ro}\bigg|_{p,\Om}^p\le\intop_{S_2^t(-a)}d_1 \bigg|{1\over\ro_1}\bigg|^pdS_2dt'+\bigg|{1\over\ro_0}\bigg|_{p,\Om}^p.
$$
Taking the above inequality to the power $1/p$ implies
$$\eqal{
\bigg|{1\over\ro}\bigg|_{p,\Om}&\le\bigg(\intop_{S_2^t(-a)}d_1 \bigg|{1\over\ro_1}\bigg|^pdS_2dt'\bigg)^{1/p}+ \bigg|{1\over\ro_0}\bigg|_{p,\Om}\cr
&\le|d_1|_{\infty,S_2^t(-a)}^{1/p}\bigg|{1\over\ro_1}\bigg|_{p,S_2^t(-a)}+ \bigg|{1\over\ro_0}\bigg|_{p,\Om}.\cr}
$$
Passing with $p\to\infty$ yields
$$
\bigg|{1\over\ro}\bigg|_{\infty,\Om}\le \bigg|{1\over\ro_1}\bigg|_{\infty,S_2^t(-a)}+ \bigg|{1\over\ro_0}\bigg|_{\infty,\Om}.
$$
The above inequality implies (\ref{2.3}) and concludes the proof.
\end{proof}

\begin{lemma}[The Korn inequality (see \cite{SS})]\label{l2.5}
Assume that
$$
E_\Om(w)=|\D(w)|_{2,\Om}^2<\infty,\quad \divv w=0,\ \ w\cdot\bar n|_S=0.
$$
Assume that $\Om$ is not axially symmetric. Then there exists a constant $c$ independent of $w$ such that
\begin{equation}
\|w\|_{H^1(\Om)}^2\le cE_\Om(w).
\label{2.4}
\end{equation}
\end{lemma}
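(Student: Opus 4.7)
My plan is the classical compactness-contradiction argument, built on the first Korn inequality
$$\|w\|_{H^1(\Om)}^2\le c\bigl(E_\Om(w)+\|w\|_{L_2(\Om)}^2\bigr),$$
which is available on any bounded Lipschitz domain without further hypotheses. The task then reduces to absorbing the $L_2$ term on the right by exploiting the constraints $\divv w=0$, $w\cdot\bar n|_S=0$, and the non-axial-symmetry of $\Om$.

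I would argue by contradiction: assuming no such constant $c$ exists, I extract a sequence $\{w_n\}\subset H^1(\Om)$ satisfying $\divv w_n=0$, $w_n\cdot\bar n|_S=0$, $\|w_n\|_{H^1(\Om)}=1$ and $E_\Om(w_n)\to 0$. Rellich compactness delivers a subsequence with $w_n\rightharpoonup w$ weakly in $H^1(\Om)$ and $w_n\to w$ strongly in $L_2(\Om)$. Weak lower semicontinuity of $E_\Om$ forces $\D(w)=0$ a.e., so $w$ is an infinitesimal rigid motion $w(x)=a+b\times x$ for some constants $a,b\in\R^3$. The constraints on $w_n$ pass to the limit (the trace operator is continuous in $H^1$), so $w\cdot\bar n=0$ on $S$ as well.

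The key step, which I expect to be the main obstacle, is translating non-axial-symmetry into the vanishing of $a$ and $b$. On the caps $S_2(\pm a)$ the outward normal is $\pm e_3$, so $w_3=a_3+b_1x_2-b_2x_1$ must vanish identically on the whole cross-section at $x_3=\pm a$, yielding $a_3=b_1=b_2=0$. On $S_1$ the outward normal is of the form $\bar n=(n_1,n_2,0)$, so $w\cdot\bar n=0$ reads
$$(a_1-b_3x_2)n_1+(a_2+b_3x_1)n_2=0$$
at every point of the closed curve $\{\varphi_0(x_1,x_2)=c_0\}$. The cleanest way to conclude is to note that the rotational field $(-x_2,x_1)$ can be tangent to $\{\varphi_0=c_0\}$ at every point only if the curve is a circle centred on the $x_3$-axis; non-axial-symmetry rules this out, forcing $b_3=0$. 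What remains is the translation $(a_1,a_2)$, which would then have to be tangent to a non-trivial closed planar curve everywhere, which is impossible. Hence $a_1=a_2=0$ and $w\equiv 0$.

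Once $w\equiv 0$ is established, the strong $L_2$-convergence $w_n\to 0$ combined with $E_\Om(w_n)\to 0$ and the first Korn inequality applied to $w_n$ yields $\|w_n\|_{H^1(\Om)}\to 0$, contradicting the normalisation $\|w_n\|_{H^1(\Om)}=1$. The only non-routine ingredient is the geometric step above, where non-axial-symmetry of the cross-section $\varphi_0=c_0$ is genuinely used; every other piece (first Korn, Rellich, lower semicontinuity, the structure theorem for $\D=0$) is standard soft analysis.
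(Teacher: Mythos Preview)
Your argument is correct and is the standard compactness--contradiction proof of this Korn inequality. The paper does not supply its own proof of the lemma; it simply quotes the result from Solonnikov--Shchadilov \cite{SS}, and what you have written is essentially the argument one finds there.

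One small imprecision worth noting: when $b_3\neq 0$, the planar field $(a_1-b_3x_2,\,a_2+b_3x_1)$ is a rotation about the point $(-a_2/b_3,\,a_1/b_3)$, which need not lie on the $x_3$-axis. Hence the resulting circle is centred at that point rather than at the origin. This does not affect the conclusion, since the cylinder is then axially symmetric about the axis through $(-a_2/b_3,\,a_1/b_3)$ parallel to $x_3$, and this is still excluded by the hypothesis that $\Om$ is not axially symmetric. The divergence-free constraint in the hypothesis is in fact not used in the argument (rigid motions are automatically divergence free); it is presumably included only because that is the class of vector fields to which the lemma is applied later.
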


To prove the existence of solutions to problem (\ref{1.1}) with large data we follow the ideas developed in \cite{RZ1}, \cite{RZ2}, \cite{Z1}, \cite{Z2}. To present them we introduce the quantities
\begin{equation}
h=v_{x_3},\quad q=p_{x_3},\quad g=f_{x_3},\quad \chi=(\rot v)_3,\quad F=(\rot f)_3.
\label{2.5}
\end{equation}

\begin{lemma}\label{l2.6}
Let $(\ro,v,p)$ be a solution to problem (\ref{1.1}). Then $(\ro,h,q)$ is a solution to the problem
\begin{equation}\eqal{
&\ro h_t-\divv\T(h,q)\cr
&=-\ro(v\cdot\nabla h+h\cdot\nabla v-g)-\ro_{x_3}(v_t+v\cdot\nabla v-f)\quad &{\rm in}\ \ \Om^T,\cr
&\divv h=0\quad &{\rm in}\ \ \Om^T,\cr
&h\cdot\bar n=0,\ \ \nu \bar n\cdot\T(h,q)\cdot\bar\tau_\alpha+\gamma h\cdot\bar\tau_\alpha=0,\ \ \alpha=1,2\quad &{\rm on}\ \ S_1^T,\cr
&h_i=-d_{1,x_i},\ \ i=1,2,\ \ h_{3,x_3}=\Delta'd_1\quad &{\rm on}\ \ S_2^T(-a),\cr
&h_i=-d_{2,x_i},\ \ i=1,2,\ \ h_{3,x_3}=\Delta'd_2\quad &{\rm on}\ \ S_2^T(a),\cr
&h|_{t=0}=h(0)\quad &{\rm in}\ \ \Om,\cr}
\label{2.6}
\end{equation}
where $\Delta'=\partial_{x_1}^2+\partial_{x_2}^2$ and $\ro$ is a solution to (\ref{2.1}).
\end{lemma}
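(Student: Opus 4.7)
The strategy is to differentiate each relation in (\ref{1.1}) with respect to $x_3$, exploiting the fact that on $S_1$ the outward normal $\bar n$ and the tangent vectors $\bar\tau_\alpha$ are independent of $x_3$ (since $S_1$ is parallel to the $x_3$-axis), while on $S_2$ we simply have $\bar n=(0,0,\mp 1)$ and we may take $\bar\tau_1=e_1$, $\bar\tau_2=e_2$.

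First I would handle the interior PDE. Apply $\partial_{x_3}$ to the momentum equation $\ro(v_t+v\cdot\nabla v)-\divv\T(v,p)=\ro f$. Since $\T$ is linear in $(v,p)$ with constant coefficients, $\partial_{x_3}\divv\T(v,p)=\divv\T(h,q)$. The product-rule terms gather into $\ro_{x_3}(v_t+v\cdot\nabla v)+\ro(h_t+h\cdot\nabla v+v\cdot\nabla h)$ on the left, and $\ro_{x_3}f+\ro g$ on the right. Rearranging gives the equation in (\ref{2.6}). Differentiating $\divv v=0$ in $x_3$ and interchanging partials gives $\divv h=0$.

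Next, the boundary conditions on $S_1^T$. Because $\bar n$ depends only on $(x_1,x_2)$ there, we may apply $\partial_{x_3}$ to $v\cdot\bar n=0$ termwise to obtain $h\cdot\bar n=0$; likewise, differentiating $\nu \bar n\cdot\D(v)\cdot\bar\tau_\alpha+\gamma v\cdot\bar\tau_\alpha=0$ yields $\nu \bar n\cdot\D(h)\cdot\bar\tau_\alpha+\gamma h\cdot\bar\tau_\alpha=0$, which is the desired slip condition with $h$ and $q$ replacing $v$ and $p$ (the pressure trace is of course absorbed into $\T(h,q)$). The boundary conditions on $S_2^T(a_i)$ are the most delicate, and they are the step I would be most careful with. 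On $S_2(-a)$ we have $v_3=d_1$ and the tangential slip condition $\bar n\cdot\D(v)\cdot\bar\tau_\alpha=0$ reads $v_{\alpha,x_3}+v_{3,x_\alpha}=0$, $\alpha=1,2$. Substituting the trace $v_3|_{S_2(-a)}=d_1$ gives $h_\alpha=v_{\alpha,x_3}=-d_{1,x_\alpha}$. For the normal component, differentiate $\divv v=0$ in $x_3$: $h_{3,x_3}=-h_{1,x_1}-h_{2,x_2}=d_{1,x_1x_1}+d_{1,x_2x_2}=\Delta' d_1$, where I used that on $S_2(-a)$ the tangential derivatives $\partial_{x_1}, \partial_{x_2}$ of the boundary data $h_\alpha=-d_{1,x_\alpha}$ are intrinsic. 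The same argument on $S_2^T(a)$ produces the conditions with $d_2$.

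The initial condition $h|_{t=0}=v_{0,x_3}=h(0)$ is immediate. The only genuine subtlety is the legitimacy of evaluating tangential derivatives of the slip identity on $S_2$ and of the identity $v_3=d_i$ on $S_2$; this is justified by the regularity $v\in W_\sigma^{2+s,1+s/2}(\Om^t)$ supplied by Theorem \ref{t1.1}, which ensures that traces of $\nabla v$ on $S$ exist in the required sense and that differentiation of the boundary identities is meaningful. Assembling the interior PDE, the divergence-free condition, the $S_1$ slip conditions, the $S_2$ Dirichlet-type conditions on $h_\alpha$, the continuity-equation condition on $h_{3,x_3}$, and the initial data completes the verification of (\ref{2.6}).
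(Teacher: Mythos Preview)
Your proof is correct and follows essentially the same route as the paper: differentiate the interior equations in $x_3$, use the $x_3$-independence of $\bar n,\bar\tau_\alpha$ on $S_1$ to transfer the slip conditions, and on $S_2$ combine $v_3|_{S_2(a_j)}=d_j$ with $(v_{i,x_3}+v_{3,x_i})|_{S_2}=0$ to get $h_i=-d_{j,x_i}$, then use $\divv h=0$ to obtain $h_{3,x_3}=\Delta'd_j$. Your additional remark on the regularity needed to justify taking tangential derivatives of boundary identities is a nice touch that the paper leaves implicit.
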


\begin{proof}
$(\ref{2.6})_{1,2,3,6}$ directly follow from $(\ref{1.1})_{1,2,3,4,8}$ by differentiation with respect to $x_3$. To show $(\ref{2.6})_{4,5}$ we recall that
$$\eqal{
&v_3|_{S_2(-a)}=d_1,\quad &(v_{i,x_3}+v_{3,x_i})|_{S_2(-a)}=0,\ \ i=1,2,\cr
&v_3|_{S_2(a)}=d_2,\quad &(v_{i,x_3}+v_{3,x_i})|_{S_2(a)}=0,\ \ i=1,2.\cr}
$$
Hence $v_{i,x_3}|_{S_2(a_j)}=-d_{j,x_i}$, $i,j=1,2$. Then
$$
v_{3,x_3x_3}|_{S_2(a_j)}=-(v_{1,x_1x_3}+v_{2,x_2x_3})|_{S_2(a_j)}=d_{j,x_1x_1}+ d_{j,x_2x_2}=\Delta'd_j,\ \ j=1,2.
$$
Therefore, $(\ref{2.6})_{4,5}$ holds. This ends the proof.
\end{proof}

To formulate a problem for $\chi$ we introduce
\begin{equation}\eqal{
&\bar n|_{S_1}={\nabla\varphi_0\over|\nabla\varphi_0|},\ \ \bar\tau_1|_{S_1}={\nabla^\perp\varphi_0\over|\nabla^\perp\varphi_0|},\ \ \bar\tau_2|_{S_1}=(0,0,1)\equiv\bar e_3,\cr
&\bar n|_{S_2(a_j)}=(-1)^j\bar e_3,\ \ j=1,2,\ \ a_1=-a,\ \ a_2=a,\cr
&\bar\tau_1|_{S_2(a_j)}=(1,0,0)\equiv\bar e_1,\ \ \bar\tau_2|_{S_2(a_j)}=(0,1,0)\equiv\bar e_2.\cr}
\label{2.7}
\end{equation}

\begin{lemma}\label{l2.7}
Let $\ro$, $v$, $h$ be given. Then $\chi$ is a solution to the problem
\begin{equation}\eqal{
&\ro(\chi_t+v\cdot\nabla\chi)-\nu\Delta\chi=\ro(F+\chi h_3-v_{3,x_1}h_2+v_{3,x_2}h_1)\cr
&\quad+\ro_{x_1}(v_{2,t}+v\cdot\nabla v_2+f_2)-\ro_{x_2}(v_{1,t}+v\cdot\nabla v_1+f_1)\quad &{\rm in}\ \ \Om^T,\cr
&\chi=v_i(n_{i,x_j}\tau_{1j}+\tau_{1i,x_j}n_j)+v\cdot\bar\tau_1(\tau_{12,x_1}- \tau_{11,x_2})\cr
&\quad+{\gamma\over\nu}v_j\tau_{1j}\equiv\chi_*\quad &{\rm on}\ \ S_1^T,\cr
&\chi_{,x_3}=0\quad &{\rm on}\ \ S_2^T,\cr
&\chi|_{t=0}=\chi(0)\quad &{\rm in}\ \ \Om,\cr}
\label{2.8}
\end{equation}
where the summation convention over the repeated indices is assumed.
\end{lemma}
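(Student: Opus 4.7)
The interior equation $(\ref{2.8})_1$ is obtained by the standard vorticity computation adapted to the variable-density setting. Concretely, I would write the momentum equation in components as $\rho v_{i,t}+\rho v_k v_{i,x_k}-\nu\Delta v_i+p_{,x_i}=\rho f_i$ (using $\divv\mathbb{D}(v)=\Delta v$ together with $\divv v=0$), apply $\partial_{x_1}$ to the $i=2$ equation, apply $\partial_{x_2}$ to the $i=1$ equation, and subtract. The pressure terms cancel because mixed partials commute, and the viscous term collapses to $\nu\Delta\chi$. The convective term decomposes into $\rho v\cdot\nabla\chi$ plus the quadratic expression $\rho(v_{k,x_1}v_{2,x_k}-v_{k,x_2}v_{1,x_k})$, which I would simplify by collecting the $v_{1,x_\cdot},v_{2,x_\cdot}$ pieces to $(v_{2,x_1}-v_{1,x_2})(v_{1,x_1}+v_{2,x_2})=-\chi h_3$ via $\divv v=0$ and leaving $v_{3,x_1}h_2-v_{3,x_2}h_1$. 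The terms containing $\rho_{x_1},\rho_{x_2}$ appear directly from differentiating the coefficient $\rho$, producing precisely the mixed expressions in $(\ref{2.8})_1$, while on the right-hand side the force derivatives combine into $\rho F$. Rearranging yields $(\ref{2.8})_1$.

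For the boundary condition on $S_2^T$, I would exploit the fact that on the flat top/bottom caps the tangent directions are $\bar\tau_1=\bar e_1$, $\bar\tau_2=\bar e_2$ so that the tangential slip condition $\bar n\cdot\D(v)\cdot\bar\tau_\alpha=0$ reduces to $v_{\alpha,x_3}+v_{3,x_\alpha}=0$ for $\alpha=1,2$. Differentiating tangentially in $x_1$ (respectively $x_2$) gives $v_{\alpha,x_3 x_\beta}=-v_{3,x_\alpha x_\beta}=-d_{j,x_\alpha x_\beta}$. Substituting $\chi_{,x_3}=v_{2,x_1 x_3}-v_{1,x_2 x_3}$ then produces $-d_{j,x_2 x_1}+d_{j,x_1 x_2}=0$, which is exactly $(\ref{2.8})_3$. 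The initial condition $(\ref{2.8})_4$ is a tautology from the definition $\chi=(\rot v)_3$.

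The main obstacle, and the computation I would carry out most carefully, is the boundary condition $(\ref{2.8})_2$ on the curved lateral surface $S_1^T$. Here I would start from the slip condition $\nu\bar n\cdot\D(v)\cdot\bar\tau_1+\gamma v\cdot\bar\tau_1=0$, write $\bar n\cdot\D(v)\cdot\bar\tau_1=n_i(v_{i,x_j}+v_{j,x_i})\tau_{1j}$, and reorganize it as a combination of $\chi$ and lower-order terms involving derivatives of the frame vectors $\bar n,\bar\tau_1$. The key identity I would use is that for any smooth vector field $w$ with $\bar n\perp\bar\tau_1$ at each point of $S_1$, one has
$$
n_i w_{i,x_j}\tau_{1j}+n_i w_{j,x_i}\tau_{1j}
= -(\tau_{12,x_1}-\tau_{11,x_2})(w\cdot\bar\tau_1)-w_i(n_{i,x_j}\tau_{1j}+\tau_{1i,x_j}n_j)+(\rot w)_3,
$$
valid on $S_1$ where $\bar\tau_2=\bar e_3$; this identity is obtained by carefully converting the normal-tangential derivative combination into the planar vorticity plus geometric correction terms produced by differentiating the orthonormal pair $(\bar n,\bar\tau_1)$. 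Plugging $w=v$ into this identity and combining with the slip condition $\nu\bar n\cdot\D(v)\cdot\bar\tau_1=-\gamma v\cdot\bar\tau_1$ gives exactly $\chi_*$ as stated in $(\ref{2.8})_2$. The technical hurdle is bookkeeping the curvature terms correctly; everything else follows from routine differentiation and the incompressibility constraint.
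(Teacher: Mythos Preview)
Your proposal is correct and follows essentially the same approach as the paper: the paper's own proof just says the interior equation comes from applying the two-dimensional rotation to the first two momentum equations, defers the $S_1$ boundary condition $(\ref{2.8})_2$ entirely to the reference \cite{Z1}, and verifies $(\ref{2.8})_3$ by the same one-line computation you give (using $v_{\alpha,x_3}|_{S_2}=-d_{,x_\alpha}$ and equality of mixed partials of $d$). Your plan is actually more explicit than the paper's proof, since you outline the curvature-correction identity on $S_1$ that the paper leaves to the cited work.
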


\begin{proof}
Applying the two-dimensional rotation to the first two equations of $(\ref{1.1})_1$ yields $(\ref{2.8})_1$. The boundary condition $(\ref{2.8})_2$ is proved in \cite{Z1}. To prove $(\ref{2.8})_3$ we calculate
$$
(v_{2,x_1}-v_{1,x_2})_{,x_3}=v_{1,x_1x_3}-v_{1,x_2x_3}=d_{,x_2x_1}-d_{,x_1x_2}=0,
$$
where $(\ref{1.1})_9$ and $v_3|_{S_2(a_i)}=d|_{S_2(a_i)}=d_i$, $i=1,2$. This concludes the proof.
\end{proof}

Consider the problem
\begin{equation}
\Delta\varphi=f\quad {\rm in}\ \ \Om,\ \ \bar n\cdot\nabla\varphi|_S=0,\ \ \intop_\Om\varphi dx=0
\label{2.9}
\end{equation}

\begin{lemma}[see \cite{RZ1}]\label{l2.8}
Assume that $f\in L_{p,\mu}(\Om)$, $p\in(1,\infty)$, $\mu\in\R_+$. Then $\varphi\in V_{p,\mu}^2(\Om)$ and $\|\nabla^2\varphi\|_{L_{p,\mu}(\Om)}\le c\|f\|_{L_{p,\mu}(\Om)}$.
\end{lemma}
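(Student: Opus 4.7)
The plan is to combine a partition-of-unity localization with Muckenhoupt-weighted Calder\'on--Zygmund theory and one-dimensional Hardy inequalities; this is precisely the strategy already executed in \cite{RZ1}, so strictly speaking the statement is a reference, but I sketch the shape of the argument so that the role of each hypothesis is visible.

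First, I would cover $\Om$ by a finite collection of open sets and a subordinate partition of unity $\{\zeta_k\}$ so that each $\zeta_k$ is supported either (a) in a region where $\ro$ is bounded away from $0$ (interior pieces and pieces meeting only $S_1$), or (b) in a small neighborhood of a point of $S_2(-a)$ or $S_2(a)$, including the edges $L(a_i)$. On the type (a) pieces the weight $\ro^{p\mu}$ is comparable to a constant, so the classical $W_p^2$ estimate for the Neumann problem for the Laplacian, combined with the Poincar\'e-type bound coming from $\intop_\Om\varphi dx=0$, gives the full $V_{p,\mu}^2$ control on these pieces with no use of the weight.

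Second, on a type (b) piece I would straighten $S_2(a_i)$ into a flat piece of $\{x_3=a_i\}$ and even-reflect the localized problem across it; the Neumann condition $\varphi_{x_3}|_{S_2}=0$ is preserved by even reflection, so the reflected function satisfies a local Poisson equation in a ball of $\R^3$ with weight $|x_3-a_i|^{p\mu}=\ro^{p\mu}$. Provided $\mu$ lies in the admissible range that makes $\ro^{p\mu}$ a Muckenhoupt $A_p$ weight, the weighted Calder\'on--Zygmund theorem applied to the Newton-potential representation of $\nabla^2\varphi$ in terms of $\tilde f$ yields $\|\nabla^2\varphi\|_{L_{p,\mu}(\Om)}\le c\|f\|_{L_{p,\mu}(\Om)}$, which is the claimed top-order estimate.

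Third, to obtain the lower-order terms in the $V_{p,\mu}^2$ norm, I would fiber $\Om$ in the $x_3$-direction and apply the one-dimensional Hardy inequality on each fiber: the Neumann condition $\varphi_{x_3}|_{x_3=\pm a}=0$ together with the zero-mean condition $\intop_\Om\varphi dx=0$ supplies the anchoring that lets one bound $\|\ro^{-1}\nabla\varphi\|_{L_{p,\mu}(\Om)}$ and $\|\ro^{-2}\varphi\|_{L_{p,\mu}(\Om)}$ by $\|\nabla^2\varphi\|_{L_{p,\mu}(\Om)}$ plus a harmless multiple of $\|f\|_{L_{p,\mu}(\Om)}$. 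The main obstacle is verifying that $\mu$ sits in the Muckenhoupt window for the chosen $p$ and checking that the flatten-and-reflect procedure is compatible with the boundary data at the edges $L(a_i)$; however, since the weight only measures the distance to $S_2$ and not to $L(a_i)$, the edge introduces no additional singularity and the analysis near the edges reduces, after localization, to the same half-space model handled in the second step.
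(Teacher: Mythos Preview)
The paper does not supply its own proof of this lemma; it is stated purely as a citation to \cite{RZ1}, with no argument given in the present text. So there is nothing in the paper to compare your proposal against, and you have already correctly observed that the statement functions as a reference.

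Your sketch (localization, flatten-and-reflect across $S_2$, weighted Calder\'on--Zygmund via $A_p$, then one-dimensional Hardy for the lower-order pieces of the $V_{p,\mu}^2$ norm) is a standard and workable route to results of this type. One caveat worth flagging: the Muckenhoupt approach imposes the range $-1/p<\mu<1-1/p$ on the weight exponent, whereas the lemma as stated allows any $\mu\in\R_+$. The wider range is what Kondratiev-type weighted elliptic theory (Mellin transform in the normal variable, shifting of the weight index) delivers, and that is presumably the mechanism actually used in \cite{RZ1}; the pure $A_p$ argument you outline would only cover a sub-range of exponents. In the paper the lemma is invoked in Section~\ref{s3} with $p=3$ and $\mu$ slightly above $2/3$, so the application sits right at the boundary of the $A_p$ window and the Kondratiev framework is the safer justification.
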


\section{Energy estimate for solutions to (\ref{1.1})}\label{s3}

In this Section we define and analyze weak solutions in order to obtain the energy type estimate for solutions to problem (\ref{1.1}). To accomplish this, we have to integrate by parts but this requires homogeneous Dirichlet boundary conditions for $v.$ Therefore, to make $(\ref{1.1})_6$ homogeneous we introduce the Hopf function $\eta$,
\begin{equation}
\eta(\sigma;\varepsilon,\varkappa)=\begin{cases}
1\ &0\le\sigma\le\varkappa e^{-1/\varepsilon}\equiv r,\cr
-\varepsilon\ln{\sigma\over\varkappa}\ &r<\sigma\le\varkappa,\cr
0\ &\varkappa<\sigma<\infty.\cr
\end{cases}
\label{3.1}
\end{equation}
We find the derivative
$$
{d\eta\over d\sigma}=\eta'(\sigma;\varepsilon,\varkappa)=\begin{cases}
0\ &0<\sigma\le r,\cr
-{\varepsilon\over\sigma}\ &r<\sigma\le\varkappa,\cr
0\ &\varkappa<\sigma<\infty,\cr
\end{cases}
$$
so that $|\eta'(\sigma;\varepsilon,\varkappa)|\le{\varepsilon\over\sigma}$. We define locally functions $\eta_i$ in an internal neighborhood of $S_2$ by setting
$$
\eta_i=\eta(\sigma_i;\varepsilon,\varkappa),\ \ i=1,2,
$$
where $\sigma_i$ denotes a local coordinate defined on a small neighborhood of
\begin{equation}\eqal{
&S_2(a_1,\varkappa)=\{x\in\Om\colon x_3\in(-a,-a+\varkappa)\}\cr
&S_2(a_2,\varkappa)=\{x\in\Om\colon x_3\in(a-\varkappa,a)\},\cr}
\label{3.2}
\end{equation}
$\sigma_1=-x_3$, $x_3\in(-a,-a+\varkappa)$ and $\sigma_2=x_3$, $x_3\in(a-\varkappa,a)$. Hence, $\sigma_i$ $i=1,2$, are positive. We extend functions $d_1$, $d_2$ so that
\begin{equation}
\tilde d_i|_{S_2(a_i)}=d_i,\ \ i=1,2,\ \ a_1=-a,\ \ a_2=a.
\label{3.3}
\end{equation}
Next we set
\begin{equation}
\alpha=\sum_{i=1}^2\tilde d_i\eta_i,\ \ b=\alpha\bar e_3,\ \ \bar e_3=(0,0,1).
\label{3.4}
\end{equation}
Then we introduce the function
\begin{equation}
u=v-b.
\label{3.5}
\end{equation}
Therefore
$$
\divv u=-\divv b=-\alpha_{x_3}\quad {\rm in}\ \ \Om,\ \ u\cdot\bar n|_S=0.
$$
Thus boundary conditions for $u$ are homogeneous but $u$ is not divergence free. In order to correct this we define $\varphi$ as a solution to the Neumann problem for the Poisson equation
\begin{equation}\eqal{
&\Delta\varphi=-\divv b\quad &{\rm in}\ \ \Om,\cr
&\bar n\cdot\nabla\varphi=0\quad &{\rm on}\ \ S,\cr
&\intop_\Om\varphi dx=0.\cr}
\label{3.6}
\end{equation}
Next, we set
\begin{equation}
w=u-\nabla\varphi=v-(b+\nabla\varphi)\equiv v-\delta.
\label{3.7}
\end{equation}
Hence, $w$ is divergence free and $w|_S=0$.

Consequently, for a given density $\ro$, a pair $(w,p)$ is a solution to the problem
\begin{equation}\eqal{
&\ro(w_t+w\cdot\nabla w+w\cdot\nabla\delta+\delta\cdot\nabla w)-\divv\T(w,p)\cr
&=\ro f-\ro(\delta_t+\delta\cdot\nabla\delta)+\nu\divv\D(\delta)\equiv \bar{F}(f,\ro,\delta,t)\quad &{\rm in}\ \ \Om^T,\cr
&\divv w=0\quad &{\rm in}\ \ \Om^T,\cr
&w\cdot\bar n=0\quad &{\rm on}\ \ S^T,\cr
&\nu\bar n\cdot\D(w)\cdot\bar\tau_\alpha+\gamma w\cdot\bar\tau_\alpha=-\nu\bar n\cdot\D(\delta)\cdot\bar\tau_\alpha-\gamma\delta\cdot\bar\tau_\alpha\cr
&\equiv B_{1\alpha}(\delta),\ \ \alpha=1,2,\quad &{\rm on}\ \ S_1^T,\cr
&\bar n\cdot\D(w)\cdot\bar\tau_\alpha=-\bar n\cdot\D(\delta)\cdot\bar\tau_\alpha\equiv B_{2\alpha}(\delta),\ \ \alpha=1,2,\quad &{\rm on}\ \ S_2^T,\cr
&w|_{t=0}=v(0)-\delta(0)\equiv w(0)\equiv w_0\quad &{\rm in}\ \ \Om,\cr}
\label{3.8}
\end{equation}
where we used that $\divv\delta=0$. Moreover, we have
\begin{equation}\eqal{
&\bar n|_{S_1}= {(\varphi_{0,x_1},\varphi_{0,x_2},0)\over\sqrt{\varphi_{0,x_1}^2+\varphi_{0,x_2}^2}}, \quad \bar\tau_1|_{S_1}={(-\varphi_{0,x_2},\varphi_{0,x_1},0)\over \sqrt{\varphi_{0,x_1}^2+\varphi_{0,x_2}^2}},\cr
&\bar\tau_2|_{S_1}=(0,0,1)=\bar e_3,\quad \bar n|_{S_2(-a)}=-\bar e_3,\cr
&\bar n|_{S_2(a)}=\bar e_3,\ \ \bar\tau_1|_{S_2(a_j)}=\bar e_1,\ \ \bar\tau_2|_{S_2(a_j)}=\bar e_2,\ \ j=1,2.\cr}
\label{3.9}
\end{equation}
where $a_1=-a$, $a_2=a$, $\bar e_1=(1,0,0)$, $\bar e_2=(0,1,0)$.

Since Dirichlet boundary conditions for $w$ are homogeneous and $w$ is divergence free, we can define weak solutions to problem (\ref{3.8}).

\begin{definition}\label{d3.1}
We call $w$ a weak solution to problem (\ref{3.8}) if for any sufficiently smooth function $\psi$ such that
$$
\divv\psi|_\Om=0,\quad \psi\cdot\bar n|_S=0
$$
the integral identity holds:
$$\eqal{
&\intop_{\Om^T}\ro(w_t+w\cdot\nabla w+w\cdot\nabla\delta+\delta\cdot\nabla w)\cdot\psi dxdt+\nu\intop_{\Om^T}\D(v)\cdot\D(\psi)dxdt\cr
&\quad+\gamma\sum_{\alpha=1}^2\intop_{S_1^T}w\cdot\bar\tau_\alpha\psi\cdot\bar\tau_\alpha dS_1dt-\sum_{\alpha,\sigma=1}^2\intop_{S_\sigma^T}B_{\sigma\alpha}\psi\cdot\bar\tau_\alpha dS_\sigma dt=\intop_{\Om^T}\bar{F}\cdot\psi dxdt\cr}
$$
\end{definition}

Exploiting ideas from \cite[Ch. 3]{RZ1},  we have
\eject
\begin{lemma}\label{l3.2}
Assume that $(w,\ro)$ is a solution to (\ref{3.8}), (\ref{2.1}) and there exist constants $\ro_*$, $\ro^*$, $0<\ro_*<\ro^*$ such that $\ro_*\le\ro\le\ro^*$. Let $\ro_1\in L_\infty(S_1^t)$, $d_1\in L_6(0,t;L_3(S_2))$, $\tilde d=(\tilde d_1,\tilde d_2)\in L_\infty(0,t;W_{3,\infty}^1(\Om))\cap L_2(0,t;W_{3,\infty}^1(\Om))$, $\tilde d_t\in L_2(0,t;W_{6/5}^1(\Om))$, $f\in L_2(0,t;L_{6/5}(\Om))$, $w(0)\in L_2(\Om)$, $t\le T$.\\
Then, for some increasing positive functions $\phi_1$, $\phi$ and $t<T$ the following a priori inequality holds
\begin{equation}\eqal{
&\ro_*|w(t)|_{2,\Om}^2+{\nu\over 4}\|w(t)\|_{1,2,\Om^t}^2+\gamma |w\cdot\bar\tau_\alpha|_{2,S_1^t}^2\cr
&\le\phi_1(|\ro_1|_{\infty,S_2^t},|d_1|_{3,6,S_2^t},\ro_*)\{\phi(\|\tilde d\|_{L_\infty(0,t;W_{3,\infty}^1(\Om))},\ro^*)\cr
&\quad\cdot[\|\tilde d\|_{L_2(0,t;W_{3,\infty}^1(\Om))}^2+\|\tilde d_t\|_{L_2(0,t;W_{6/5}^1(\Om))}^2+\|f\|_{L_2(0,t;L_{6/5}(\Om))}^2]\cr
&\quad+\ro^*|w(0)|_{2,\Om}^2\}.\cr}
\label{3.10}
\end{equation}
\end{lemma}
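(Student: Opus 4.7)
\noindent The plan is to substitute $\psi=w$ in the weak formulation of Definition~\ref{d3.1}; this is admissible since $\divv w=0$ and $w\cdot\bar n|_S=0$. The key identity for the inertial terms arises by combining integration by parts with the continuity equation $\ro_t+v\cdot\nb\ro=0$; using $\divv v=0$ one rewrites this as $\divv(\ro v)=-\ro_t$, and a short manipulation with $v=w+\delta$ (both divergence-free, both with vanishing normal trace on $S_1$) yields
\begin{equation*}
\intop_\Om\ro(w_t+w\cdot\nb w+\delta\cdot\nb w)\cdot w\,dx=\frac12\frac{d}{dt}\intop_\Om\ro|w|^2\,dx+\frac12\intop_{S_2}\ro\,v\cdot\bar n\,|w|^2\,dS.
\end{equation*}
The outflow piece $\frac12\intop_{S_2(a)}\ro d_2|w|^2\,dS$ is non-negative and is kept on the LHS, while the inflow piece $-\frac12\intop_{S_2(-a)}\ro_1 d_1|w|^2\,dS$ is the principal obstruction: $d_1$ is not small, so it must be closed through trace, interpolation, and Gronwall rather than directly.

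For the viscous contribution I split $v=w+\delta$ inside $\nu\intop\D(v)\cdot\D(w)\,dx$; the quadratic piece $\nu\intop|\D(w)|^2\,dx$ together with $\gamma\sum_\alpha\intop_{S_1}|w\cdot\bar\tau_\alpha|^2\,dS_1$ dominates $c\|w\|_{1,2,\Om}^2$ by the Korn inequality (Lemma~\ref{l2.5}), under the non-axisymmetry hypothesis on $\Om$. The cross terms $\nu\intop\D(\delta)\cdot\D(w)\,dx$ and the slip data $B_{i\alpha}(\delta)$ are absorbed on the RHS by Cauchy--Schwarz with a small parameter, producing quadratic-in-$\delta$ contributions controlled by $\|\tilde d\|_{W^1_{3,\infty}(\Om)}^2$, with the dependence of $\nabla\varphi$ on $\tilde d$ provided by Lemma~\ref{l2.8} applied to (\ref{3.6}). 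The source $\intop\bar F\cdot w\,dx$ is handled by $L_{6/5}$--$L_6$ duality combined with $H^1(\Om)\hookrightarrow L_6(\Om)$: $\ro f$, $\ro\delta_t$, and $\nu\divv\D(\delta)$ (after integration by parts) contribute $\|f\|_{L_2(0,t;L_{6/5})}^2$, $\|\tilde d_t\|_{L_2(0,t;W^1_{6/5})}^2$, and $\|\tilde d\|_{L_2(0,t;W^1_{3,\infty})}^2$ respectively, while the transport term $\intop\ro(w\cdot\nb\delta)\cdot w\,dx$ is controlled via Ladyzhenskaya's interpolation $|w|_{3,\Om}^2\le c|w|_{2,\Om}\|w\|_{1,2,\Om}$ and Young, yielding a time-weight $g_1(t)\sim\|\tilde d(t)\|_{W^1_{3,\infty}(\Om)}^2$ multiplying $|w|_{2,\Om}^2$ that is integrable by assumption.

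The decisive step is the closure of the inflow boundary term. By the trace $H^1(\Om)\to H^{1/2}(S_2)$ combined with the two-dimensional Sobolev embedding $H^{1/3}(S_2)\hookrightarrow L_3(S_2)$, one has $|w|_{3,S_2(-a)}\le c\|w\|_{W^{5/6}_2(\Om)}$, and standard interpolation then gives
\begin{equation*}
|w|_{3,S_2(-a)}^2\le c\|w\|_{1,2,\Om}^{5/3}\,|w|_{2,\Om}^{1/3}.
\end{equation*}
Young's inequality with conjugate exponents $6/5$ and $6$ therefore yields
\begin{equation*}
|\ro_1|_{\infty,S_2^t}|d_1|_{3,S_2(-a)}|w|_{3,S_2(-a)}^2\le\eps\|w\|_{1,2,\Om}^2+c(\eps)|\ro_1|_{\infty,S_2^t}^6|d_1(t)|_{3,S_2(-a)}^6|w|_{2,\Om}^2.
\end{equation*}
After integration in time, absorption of the $\eps$-term into the LHS viscous norm, and a final application of Gronwall with majorant $g(t)=g_1(t)+c|\ro_1|_{\infty,S_2^t}^6|d_1(t)|_{3,S_2(-a)}^6$, which lies in $L_1(0,t)$ by the hypotheses $\tilde d\in L_2(0,t;W^1_{3,\infty}(\Om))$ and $d_1\in L_6(0,t;L_3(S_2))$, one obtains the exponential factor forming $\phi_1(|\ro_1|_{\infty,S_2^t},|d_1|_{3,6,S_2^t},\ro_*)$ in (\ref{3.10}). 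This inflow closure is the main obstacle, since $d_1$ is large: only the sharp trace-plus-Sobolev bound above makes the sixth power of $|d_1|_{3,S_2(-a)}$ time-integrable and hence admissible inside the Gronwall exponent.
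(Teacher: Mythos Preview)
Your overall architecture---energy identity, Korn for the viscous term, trace interpolation producing the sixth power $|d_1|_{3,S_2}^6$ in the Gronwall weight---matches the paper's, and your closure of the inflow boundary term is essentially identical to what the paper does (the paper writes $|w|_{3,S_2(-a)}^2\le\varepsilon^{1/3}|\nabla w|_{2,\Om}^2+c\varepsilon^{-5/3}|w|_{2,\Om}^2$, which is the same trace-plus-interpolation bound you use).

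The genuine divergence is in the handling of $\intop_\Om\ro\,w\cdot\nabla\delta\cdot w\,dx$. You propose to estimate it by $\ro^*|\nabla\delta|_{3,\Om}|w|_{3,\Om}^2$ and feed $|\nabla\delta|_{3,\Om}^2$ into the Gronwall weight. The paper does something more elaborate: it exploits the Hopf structure of $\delta=b+\nabla\varphi$ together with the Hardy inequality (legitimate because $w_3|_{S_2}=0$) in \emph{weighted} spaces $L_{3,\mu}$, $\mu>2/3$, to obtain the bound $c\ro^*\|w\|_{1,\Om}^2\,E^2(\varepsilon,\varkappa)$ with $E^2(\varepsilon,\varkappa)\to0$ as $\varepsilon,\varkappa\to0$; this term is then \emph{absorbed into the viscous left side} rather than pushed into Gronwall. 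The parameters $\varepsilon,\varkappa$ are chosen in (\ref{3.25}) so that $c\ro^*(\varepsilon+\varkappa)\|\tilde d\|_{W^1_{3,\infty}}\le\nu/2$, and the resulting blow-up of the compensating factors $\varepsilon^2\varkappa^{-1}e^{1/\varepsilon}$ in (\ref{3.19})--(\ref{3.23}) is what generates the function $\phi(\|\tilde d\|_{L_\infty(0,t;W^1_{3,\infty})},\ro^*)$ in (\ref{3.10}).

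Your route is simpler but yields a structurally different estimate: the Gronwall exponent would acquire an extra contribution $c(\ro^*)^2\|\tilde d\|_{L_2(0,t;W^1_{3,\infty})}^2$, so your $\phi_1$ would depend on $\|\tilde d\|_{L_2(0,t;W^1_{3,\infty})}$ and $\ro^*$ as well, not only on $|\ro_1|_{\infty,S_2^t}$, $|d_1|_{3,6,S_2^t}$, $\ro_*$ as stated in (\ref{3.10}). For the downstream applications in the paper this weaker form would still suffice, but it is not the inequality as written. You should also make explicit that $|\nabla\delta|_{3,\Om}\le c\|\tilde d\|_{W^1_{3,\infty}(\Om)}$ requires keeping the Hopf parameters $\varepsilon,\varkappa$ bounded away from zero (the derivative of $\eta$ behaves like $\varepsilon/\sigma$ on $(r,\varkappa)$), and that the Neumann estimate $|\nabla^2\varphi|_{3,\Om}\le c|\divv b|_{3,\Om}$ is available here despite the edges $L_i=\bar S_1\cap\bar S_2$ because the dihedral angle is $\pi/2$.
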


\begin{proof}
We multiply $(\ref{3.8})_1$ by $w$ and integrate over $\Om$. Then we have
\begin{equation}\eqal{
&\intop_\Om\ro(w_t\cdot w+w\cdot\nabla w\cdot w+w\cdot\nabla\delta\cdot w+\delta\cdot\nabla w\cdot w)dx\cr
&\quad-\intop_\Om\divv\T(w+\delta,p)\cdot wdx=\intop_\Om(f-\ro(\delta_t+\delta\cdot\nabla\delta))\cdot wdx.\cr}
\label{3.11}
\end{equation}
Taking into account problem (\ref{2.1}) we obtain
\begin{equation}\eqal{
&\intop_\Om\ro(w_t+w\cdot\nabla w+\delta\cdot\nabla w)\cdot wdx=\intop_\Om\ro(w_t+v\cdot\nabla w)\cdot wdx\cr
&={1\over 2}\intop_\Om(\ro\partial_tw^2+\ro v\cdot\nabla w^2)dx={1\over 2}{d\over dt}\intop_\Om\ro w^2dx\cr
&\quad+{1\over 2}\intop_{S_2(-a)}\ro_1v\cdot\bar nw^2dS_2+{1\over 2}\intop_{S_2(a)}\ro v\cdot\bar nw^2dS_2\cr
&={1\over 2}{d\over dt}\intop_\Om\ro w^2dx-{1\over 2}\intop_{S_2(-a)}\ro_1d_1w^2dS_2+{1\over 2}\intop_{S_2(a)}\ro d_2w^2dS_2.\cr}
\label{3.12}
\end{equation}
We have to examine the following term from the first term on the l.h.s. of (\ref{3.11})
$$\eqal{
I_1&=\intop_\Om\ro w\cdot\nabla\delta\cdot wdx\cr
&=\intop_\Om\ro w\cdot\nabla b\cdot wdx+\intop_\Om\ro w\cdot\nabla^2\varphi\cdot wdx\equiv I_3+I_4.\cr}
$$
In order to estimate $I_3$ and $I_4$ we introduce the sets:
$$\eqal{
&\tilde S_2(a_1,r,\varkappa)=\{x\in\Om\colon x_3\in(-a+r,-a+\varkappa)\},\cr
&\tilde S_2(a_2,r,\varkappa)=\{x\in\Om\colon x_3\in(a-\varkappa,a-r)\}.\cr}
$$
In view of (\ref{3.4}) and the Hardy inequality we have
$$\eqal{
|I_3|&=\bigg|\intop_\Om\ro w\cdot\nabla\sum_{i=1}^2(\tilde d_i\eta_i)w_3dx\bigg|\le\ro^*|w|_{6,\Om}\bigg|\nabla\sum_{i=1}^2\tilde d_i\eta_i\bigg|_{L_{3,\mu}(\Om)}|w_3|_{L_{2,-\mu}(\Om)}\cr
&\le c\ro^*|w|_{6,\Om}\|w_{3,x_3}\|_{L_{2,1-\mu}(\Om)}\bigg[\varepsilon \bigg(\sum_{i=1}^2\intop_{S_2(a_i,r,\varkappa)}|\tilde d_i|^3{\sigma_i^{3\mu}\over\sigma_i^3}dx\bigg)^{1/3}\cr
&\quad+\bigg(\sum_{i=1}^2\intop_{S_2(a_i,\varkappa)}|\tilde d_{i,x_3}|^3|\sigma_i|^{3\mu}dx\bigg)^{1/3}\bigg]\equiv I'_3\cr}
$$
Since $\Om$ is bounded we have
\begin{equation}\eqal{
I'_3&\le c\ro^*\|w\|_{1,\Om}^2\bigg[\varepsilon\sum_{i=1}^2\bigg(\sup_{x_3} \intop_{S_2(a_i,\varkappa)}|\tilde d_i|^3dx'\intop_r^\varkappa {\sigma_i^{3\mu}\over\sigma_i^3}d\sigma_i\bigg)^{1/3}\cr
&\quad+\sum_{i=1}^2\bigg(\sup_{x_3}\intop_{S_2(a_i,\varkappa)}|\tilde d_{i,x_3}|^3dx'\intop_0^\varkappa\sigma_i^{3\mu}d\sigma_i\bigg)^{1/3}\bigg]\cr
&\le c\varepsilon^*\|w\|_{1,\Om}^2\bigg[\varepsilon\varkappa^{\mu-2/3} \sum_{i=1}^2\sup_{x_3}|\tilde d_i|_{3,S_2(a_i)}+c\varkappa^{\mu+1/3} \sum_{i=1}^2 \sup_{x_3}|\tilde d_{i,x_3}|_{3,S_2(a_i)}\bigg]\cr
&\equiv c\ro^*\|w\|_{1,\Om}^2E^2(\varepsilon,\varkappa),\cr}
\label{3.13}
\end{equation}
where $\sigma_i=\dist\{S_2(a_i),x\}$, $s\in S_2(a_i,\varkappa)$. Consider $I_4$,
$$
|I_4|=\bigg|\intop_\Om\ro w\cdot\nabla^2\varphi\cdot wdx\bigg|\le\ro^*|w|_{6,\Om}\|w\|_{L_{2,-\mu}(\Om)} \|\nabla^2\varphi\|_{L_{3,\mu}(\Om)}\equiv I'_4,
$$
where $\varphi$ is a solution to problem (\ref{3.6}). In view of Lemma \ref{l2.5} and the Hardy inequality we have
\begin{equation}
I'_4\le c\ro^*\|w\|_{H^1(\Om)}^2|\divv b|_{3,\mu,\Om}\equiv I_4^2.
\label{3.14}
\end{equation}
Employing the definition of $b$ and the properties of function $\eta$ we have
$$\eqal{
\|\divv b\|_{L_{3,\mu}(\Om)}&\le c\varepsilon\bigg(\sum_{i=1}^2\intop_{S_2(a_i,r,\varkappa)}|\tilde d_i|^3{\sigma_i^{3\mu}\over\sigma_i^3}dx\bigg)^{1/3}\cr
&\quad+\bigg(\sum_{i=1}^2\intop_{S_2(a_i,\varkappa)}|\tilde d_{i,x_3}|^3|\sigma_i(x)|^{3\mu}dx\bigg)^{1/3}\equiv I_5.\cr}
$$
Repeating the considerations performed in (\ref{3.13}) implies
\begin{equation}
I_5\le cE^2(\varepsilon,\varkappa).
\label{3.15}
\end{equation}
In view of boundary conditions $(\ref{3.8})_{4,5}$ the second integral on the l.h.s. of (\ref{3.11}) is reformulated as follows
$$\eqal{
&-\intop_\Om\divv\T(w+\delta,p)\cdot wdx=-\intop_\Om\divv[\nu\D(w+\delta)-p\I]\cdot wdx\cr
&=-\intop_\Om\divv[\nu\D(w+\delta)]\cdot wdx+\intop_\Om\nabla p\cdot wdx\cr
&=\nu\intop_\Om D_{ij}(w+\delta)w_{j,x_i}dx-\nu\intop_\Om\partial_{x_j}[D_{ij}(w+\delta)w_i] dx+\intop_\Om\divv(pw)dx\cr
&\equiv I.\cr}
$$
The first term in $I$ equals
\begin{equation}
{\nu\over 2}|D_{ij}(w)|_{2,\Om}^2+\nu\intop_\Om D_{ij}(\delta)w_{j,x_i}dx,
\label{3.16}
\end{equation}
where the summation over repeated indices is assumed. By the Green theorem the second term in $I$ takes the form
\begin{equation}\eqal{
&-\nu\intop_{S_1}n_jD_{ij}(w+\delta)w_idS_1-\nu\intop_{S_2}n_jD_{ij}(w+\delta) w_idS_2\cr
&=-\nu\intop_{S_1}n_jD_{ij}(w+\delta)(w_{\tau_\alpha}\tau_{\alpha i}+w_nn_i)dS_1\cr
&\quad-\nu\intop_{S_2}n_jD_{ij}(w+\delta)(w_{\tau_\alpha}\tau_{\alpha i}+w_nn_i)dS_2\cr
&=\gamma\intop_{S_1}(|w_{\tau_\alpha}|^2+w_{\tau_\alpha}\delta_{\tau_\alpha})dS_1,\cr}
\label{3.17}
\end{equation}where $w_{\tau_\alpha}=w\cdot\bar\tau_\alpha$, $\alpha=1,2$, $w_n=w\cdot\bar n$ and conditions $(\ref{3.8})_{4,5}$ were used.

Using estimates (\ref{3.12})--(\ref{3.17}) in (\ref{3.11}), the Korn inequality and that
$$
\sum_{i=1}^2\sup_{x_3}|\tilde d_i|_{3,S_2(a_i)}+\sum_{i=1}^2\sup_{x_3}|\tilde d_{i,x_3}|_{3,S_2(a_i)}<\infty,
$$
we obtain for sufficiently small $\varepsilon$ and $\varkappa$ the inequality
\begin{equation}\eqal{
&{1\over 2}{d\over dt}\intop_\Om\ro w^2dx+\nu\|w\|_{1,\Om}^2+\gamma\sum_{\alpha=1}^2|w\cdot\bar\tau_\alpha|_{2,S_1}^2\cr
&\le{1\over 2}\intop_{S_2(-a)}\ro_1d_1w^2dS_2-{1\over 2}\intop_{S_2(a)}\ro d_2w^2dS_2\cr
&\quad+c\ro^*\|w\|_{1,\Om}^2\bigg[\varepsilon\varkappa^{\mu-2/3} \sum_{i=1}^2\sup_{x_3}|\tilde d_i|_{3,S_2(a_i)}\cr
&\quad+\varkappa^{\mu+1/3}\sum_{i=1}^2\sup_{x_3}|\tilde d_{i,x_3}|_{3,S_2(a_i)}+c\sum_{\alpha=1}^2|\delta\cdot\bar\tau_\alpha|_{2,S_1}^2+c |\D(\delta)|_{2,\Om}^2\cr
&\quad+\bigg|\intop_\Om(f-\ro(\delta_{t}+ \delta\cdot\nabla\delta))\cdot wdx\bigg|\bigg].\cr}
\label{3.18}
\end{equation}
From \cite{RZ1} Ch. 3, (\ref{3.17}) we have
\begin{equation}
\sum_{\alpha=1}^2|\delta\cdot\bar\tau_\alpha|_{2,S_1}^2\le c\|\tilde d\|_{1,3/2,\Om}^2+c{\varepsilon^2\over\varkappa^{2/3}}\exp\bigg({2\over 3\varepsilon}\bigg)\sup_{x_3}|\tilde d|_{3/2,S_2}^2.
\label{3.19}
\end{equation}
Next (\ref{3.18}) from \cite{RZ1}, Ch. 3 yields
\begin{equation}
|\D(\delta)|_{2,\Om}^2\le c\sum_{i=1}^2[\|\tilde d_i\|_{1,2,\Om}^2+{\varepsilon^2\over\varkappa}e^{1/\varepsilon}\sup_{x_3}|\tilde d_i|_{2,S_2}^2].
\label{3.20}
\end{equation}
Estimating the last integral on the r.h.s. of (\ref{3.18}) implies
\begin{equation}\eqal{
&\intop_\Om\ro(f-\delta_t-\delta\cdot\nabla\delta)\cdot wdx\le\varepsilon_1|w|_{6,\Om}^2\cr
&\quad+c(1/\varepsilon_1)(\ro^*)^2[|f|_{6/5,\Om}^2+|\delta_t|_{6/5,\Om}^2]+ \bigg|\intop_\Om\ro\delta\cdot\nabla\delta\cdot wdx\bigg|.\cr}
\label{3.21}
\end{equation}
In view of (\ref{3.19}) from \cite{RZ1}, Ch. 3 we derive
\begin{equation}
|\delta_t|_{6/5,\Om}\le\|\tilde d_t\|_{1,6/5,\Om}+c{\varepsilon\over\varkappa^{1/6}}e^{1/6\varepsilon}\sup_{x_3} |\tilde d_t|_{6/5,S_2}.
\label{3.22}
\end{equation}
Finally, we examine (see \cite[Ch. 3 (3.20)]{RZ1})
\begin{equation}\eqal{
&\bigg|\iintop_\Om\ro\delta\cdot\nabla\delta\cdot wdx\bigg|\cr
&\le\varepsilon_2|w|_{6,\Om}^2+c(1/\varepsilon_2)(\ro^*)^2(\|\tilde d\|_{1,2,\Om}^4+{\varepsilon^4\over\varkappa^2}e^{2/\varepsilon}\sup_{x_3}|\tilde d|_{2,S_2}^4).\cr}
\label{3.23}
\end{equation}
Using estimates (\ref{3.19})--(\ref{3.23}) in (\ref{3.18}) yields
\begin{equation}\eqal{
&{d\over dt}\intop_\Om\ro w^2dx+\nu\|w\|_{1,\Om}^2+\gamma\sum_{\alpha=1}^2|w\cdot\bar\tau_\alpha|_{2,S_1}^2\le \intop_{S_2(-a)}\ro_1d_1w^2dS_2\cr
&\quad-\intop_{S_2(a)}\ro d_2w^2dS_2+c\ro^*\|w\|_{1,\Om}^2(\varepsilon\varkappa^{\mu-2/3}+ \varkappa^{\mu+1/3})\sum_{i=1}^2\|\tilde d_i\|_{W_{3,\infty}^1(\Om)}^2\cr
&\quad+c\sum_{i=1}^2\bigg(\|\tilde d_i\|_{1,2,\Om}^2+{\varepsilon^2\over\varkappa}e^{1/\varepsilon}\|\tilde d_i\|_{2,\infty,\Om}^2\bigg)\cr
&\quad+c(\ro^*)^2\bigg[|f|_{6/5,\Om}^2+\bigg(1+{\varepsilon\over\varkappa^{1/6}} e^{1/6\varepsilon}\bigg)\sum_{i=1}^2\|\tilde d_{i,t}\|^2_{1,6/5,\Om}\bigg]\cr
&\quad+c(\ro^*)^2\bigg(1+{\varepsilon^4\over\varkappa^2}e^{2/\varepsilon}\bigg) \sum_{i=1}^2\|\tilde d_i\|_{1,2,\Om}^4.\cr}
\label{3.24}
\end{equation}
Since $\mu>2/3$ and $\varkappa<1$ we have
$$
\varepsilon\varkappa^{\mu-2/3}+\varkappa^{\mu+1/3}\le\varepsilon+\varkappa.
$$
Assuming that $\varepsilon$ and $\varkappa$ are so small that
$$
c\ro^*(\varepsilon+\varkappa)\|\tilde d\|_{W_{3,\infty}^1(\Om)}\le{\nu\over 2},
$$
where $\tilde d$ replaces $(\tilde d_1,\tilde d_2)$, we obtain
\begin{equation}\eqal{
&\varepsilon={\nu\over 4c\ro^*\|\tilde d\|_{W_{3,\infty}^1(\Om)}},\cr
&\varkappa={\nu\over 4c\ro^*\|\tilde d\|_{W_{3,\infty}^1(\Om)}}\cr}
\label{3.25}
\end{equation}
and we conclude the following inequality
\begin{equation}\eqal{
&{d\over dt}\intop_\Om\ro w^2dx+\nu\|w\|_{1,\Om}^2+\gamma\sum_{\alpha=1}^2|w\cdot\bar\tau_\alpha|_{2,S_1}^2\cr
&\le\intop_{S_2(-a)}\ro_1d_1w^2dS_2-\intop_{S_2(a)}\ro d_2w^2dS_2\cr
&\quad+\phi(\|\tilde d\|_{W_{3,\infty}^1(\Om)},\ro^*)\cdot(\|\tilde d\|_{W_{3,\infty}^1(\Om)}^2+\|d_t\|_{1,6/5,\Om}^2+|f|_{6/5,\Om}^2),\cr}
\label{3.26}
\end{equation}
where $\phi$ is an increasing positive function.

Estimating the first term on the r.h.s of (\ref{3.26}) we have
$$
\intop_{S_2(-a)}\ro_1d_1w^2dS_2\le|\ro_1|_{\infty,S_2(-a)} |d_1|_{3,S_2(-a)}|w|_{3,S_2(-a)}^2\equiv I.
$$
By the interpolation
$$
|w|_{3,S_2(-a)}^2\le\varepsilon^{1/3}|\nabla w|_{2,\Om}^2+c\varepsilon^{-5/3}|w|_{2,\Om}^2
$$
we obtain
$$
I\le\varepsilon|\nabla w|_{2,\Om}^2+c(1/\varepsilon) |\ro_1|_{\infty,S_2(-a)}^6|d_1|_{3,S_2(-a)}^6|w|_{2,\Om}^2.
$$
Using the estimate in (\ref{3.26}) and assuming that $\varepsilon$ is sufficiently small we derive the inequality
\begin{equation}\eqal{
&{d\over dt}\intop_\Om\ro w^2dx+{\nu\over 2}\|w\|_{1,\Om}^2+\gamma\sum_{\alpha=1}^2|w\cdot\bar\tau_\alpha|_{2,S_1}^2\cr
&\le c|\ro_1|_{\infty,S_2(-a)}^6|d_1|_{3,S_2(-a)}^6|w|_{2,\Om}^2\cr
&\quad+\phi(\|\tilde d\|_{W_{3,\infty}^1(\Om)},\ro^*)\cdot(\|\tilde d\|_{W_{3,\infty}^1(\Om)}+\|\tilde d_t\|_{1,6/5,\Om}^2+|f|_{6/5,\Om}^2).\cr}
\label{3.27}
\end{equation}
Let $T>0$ be fixed. In order to obtain an energy type estimate in time interval $(0,T)$ we observe that
$$
\intop_\Om|w|^2dx\le\intop_\Om{\ro\over\ro_*}|w|^2dx\le{1\over\ro_*} \intop_\Om\ro|w|^2dx.
$$
\goodbreak

\noindent
Thus, employing this in (\ref{3.27}), yields
\begin{equation}\eqal{
&{d\over dt}\intop_\Om\ro w^2dx+{\nu\over 2}\|w\|_{1,\Om}^2+\gamma\sum_{\alpha=1}^2|w\cdot\bar\tau_\alpha|_{2,S_1}^2\cr
&\le c|\ro_1|_{\infty,S_2(-a)}^6|d_1|_{3,S_2(-a)}^6\intop_\Om\ro |w|^2dx\cr
&\quad+\phi(\|\tilde d\|_{W_{3,\infty}^1(\Om)},\ro^*)\cdot(\|\tilde d\|_{W_{3,\infty}^1(\Om)}+\|\tilde d_t\|_{1,6/5,\Om}^2+|f|_{6/5,\Om}^2).\cr}
\label{3.28}
\end{equation}
Now, we consider (\ref{3.28}) in the time interval $(0,T)$. Then
\begin{equation}\eqal{
&{d\over dt}\bigg\{\intop_\Om\ro w^2dx\exp\bigg[{\nu\over 4}t-{c\over\ro_*}\intop_0^t|\ro_1|_{\infty,S_2(-a)}^6|d_1|_{3,S_2(-a)}^6 dt'\bigg]\bigg\}\cr
&\le\phi(\|\tilde d\|_{W_{3,\infty}^1(\Om)},\ro^*)\cdot(\|\tilde d\|_{W_{3,\infty}^1(\Om)}+\|\tilde d_t\|_{1,6/5,\Om}^2+|f|_{6/5,\Om}^2)\cr
&\quad\cdot\exp\bigg[{\nu\over 4}t-{c\over\ro_*}\intop_0^t|\ro_1|_{\infty,S_2(-a)}^6 |d_1|_{3,S_2(-a)}^6dt'\bigg].\cr}
\label{3.29}
\end{equation}
Integrating (\ref{3.29}) with respect to time yields
\begin{equation}\eqal{
&\intop_\Om\ro(t)w(t)^2dx\le\exp\bigg[\intop_0^t|\ro_1|_{\infty,S_2(-a)}^6 |d_1|_{3,S_2(-a)}^6dt'\bigg]\cr
&\quad\cdot\intop_0^t\phi(\|\tilde d\|_{W_{3,\infty}^1(\Om)},\ro^*)\cdot(\|\tilde d\|_{W_{3,\infty}^1(\Om)}+\|\tilde d_t\|_{1,6/5,\Om}^2+|f|_{6/5,\Om}^2)dt'\cr
&\quad+\exp\bigg[-{\nu\over 4}t+{c\over\ro_*}\intop_0^t|\ro_1|_{\infty,S_2(-a)}^6 |d_1|_{3,S_2(-a)}^6dt'\bigg]\intop_\Om\ro(0)w(0)^2dx,\cr}
\label{3.30}
\end{equation}
where $t\le T$. Integrating (\ref{3.28}) with respect to time from 0 to $t\le T$ and using (\ref{3.30}) we have
\begin{equation}\eqal{
&\intop_\Om\ro(t)w^2(t)dx+{\nu\over 4}\|w\|_{1,2,\Om^t}^2+\gamma\sum_{\alpha=1}^2|w\cdot\bar\tau_\alpha|_{2,S_1^t}^2\cr
&\le{c\over\ro_*}\intop_0^t|\ro_1|_{\infty,S_2}^6|d_1|_{3,S_2}^6dt' \bigg\{\sup_{t'\le t}\exp\bigg[{c\over\ro_*}\intop_0^{t'}|\ro_1|_{\infty,S_2}^6 |d_1|_{3,S_2}^6d\tau\bigg]\cr
&\quad\cdot\intop_0^{t'}\phi(\|\tilde d\|_{W_{3,\infty}^1(\Om)},\ro^*)\cdot(\|\tilde d\|_{W_{3,\infty}^1(\Om)}^2+\|\tilde d_t\|_{1,6/5,\Om}^2+|f|_{6/5,\Om}^2)d\tau\cr
&\quad+\exp\bigg[-{\nu\over 4}+{c\over\ro_*}\intop_0^{t'}|\ro_1|_{\infty,S_2(-a)}^6 |d_1|_{3,S_2(-a)}^6d\tau\bigg]\intop_\Om\ro(0)w(0)^2dx\bigg\}\cr
&\quad+\intop_0^t\phi(\|\tilde d\|_{W_{3,\infty}^1(\Om)},\ro^*)\cdot(\|\tilde d\|_{W_{3,\infty}^1(\Om)}+\|\tilde d_t\|_{1,6/5,\Om}^2+|f|_{6/5,\Om}^2)dt'\cr
&\quad+\intop_\Om\ro(0)w(0)^2dx.\cr}
\label{3.31}
\end{equation}
Simplifying (\ref{3.31}), we get
\begin{equation}\eqal{
&\ro^*|w(t)|_{2,\Om}^2+{\nu\over 4}\|w\|_{1,2,\Om^t}^2+\gamma \sum_{\alpha=1}^2|w\cdot\bar\tau_\alpha|_{2,S_1^t}^2\cr
&\le\phi_1(|\ro_1|_{\infty,S_2^t},|d_1|_{3,6,S_2^t},\ro_*)\{\phi (\sup_t\|\tilde d\|_{W_{3,\infty}^1(\Om)},\ro^*)\cr
&\quad\cdot[\|\tilde d\|_{L_2(0,t;W_{3,\infty}^1(\Om))}+\|\tilde d_t\|_{1,6/5,2,\Om^t}^2+|f|_{6/5,2,\Om^t}^2]+\ro^*|w(0)|_{2,\Om}^2\},\ \ t\le T.\cr}
\label{3.32}
\end{equation}
The above inequality implies (\ref{3.10}) and concludes the proof of Lemma \ref{l3.2}.
\end{proof}

Finally, we conclude the energy estimate for $(v,p,\ro)$.
\begin{lemma}\label{l3.3}
Let the assumptions of Lemma \ref{l3.2} hold. Then a solution $(v,p,\ro)$ of (\ref{1.1}) satisfies the inequality
\begin{equation}\eqal{
\|v\|_{V(\Om^t)}^2&\le\phi_1(|\ro_1|_{\infty,S_2^t},|d_1|_{3,6,S_2^t}, \ro_*,\ro^*)\{\phi(\sup_t\|\tilde d\|_{W_{3,\infty}^1(\Om)},\ro^*)\cdot\cr
&\quad\cdot[\|\tilde d\|_{L_2(0,t;W_{3,\infty}^1(\Om))}^2+\|\tilde d_t\|_{1,6/5,2,\Om^t}^2+|f|_{6/5,2,\Om^t}^2] +\ro^*|v(0)|_{2,\Om}^2\},\cr}
\label{3.33}
\end{equation}
where $\ro_*\le\ro\le\ro^*$ and $\ro_*$, $\ro^*$ are described in Lemma \ref{l2.4}.
\end{lemma}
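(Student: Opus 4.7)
{\it (of Lemma \ref{l3.3} -- proposal)}
The plan is to transfer the estimate (\ref{3.10}) from $w$ to the original velocity $v$ via the decomposition $v=w+\delta$, where $\delta=b+\nabla\varphi$ is constructed in (\ref{3.4}),(\ref{3.6}),(\ref{3.7}). The two building blocks are (i) the estimate (\ref{3.32}) already established for $w$ in Lemma \ref{l3.2}, and (ii) bounds on $\delta$ expressed solely in terms of the extensions $\tilde d_1,\tilde d_2$ of the boundary fluxes.

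First I would record, by the triangle inequality applied to $v=w+\delta$,
$$
\|v\|_{V(\Om^t)}^2\le 2\|w\|_{V(\Om^t)}^2+2\|\delta\|_{V(\Om^t)}^2,
$$
and likewise $|v(0)|_{2,\Om}^2\le 2|w(0)|_{2,\Om}^2+2|\delta(0)|_{2,\Om}^2$, so that by Lemma \ref{l3.2}, after using $\ro_*|w(t)|^2_{2,\Om}\le\intop_\Om\ro w^2dx$ on the left and $\ro(0)\le\ro^*$ on the right of (\ref{3.10}), it suffices to estimate $\|\delta\|_{V(\Om^t)}^2$ and $|\delta(0)|_{2,\Om}^2$ by the right-hand side of (\ref{3.33}).

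Next I would estimate $\delta=b+\nabla\varphi$. Since $b=\sum_i\tilde d_i\eta_i\bar e_3$, straightforward pointwise bounds (exactly as in (\ref{3.19}), (\ref{3.20}) borrowed from \cite{RZ1}) give
$$
|b|_{2,\Om}^2+|\nabla b|_{2,\Om}^2\le c\sum_{i=1}^2\Bigl(\|\tilde d_i\|_{1,2,\Om}^2+\frac{\eps^2}{\varkappa}e^{1/\eps}\sup_{x_3}|\tilde d_i|_{2,S_2}^2\Bigr),
$$
and for $\nabla\varphi$, the Neumann problem (\ref{3.6}) combined with the standard elliptic estimate and $|\divv b|_{2,\Om}\le c\|\tilde d\|_{W^1_{3,\infty}(\Om)}$ yields an analogous bound. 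Choosing $\eps,\varkappa$ as in (\ref{3.25}) (which depends only on $\ro^*$ and $\sup_t\|\tilde d\|_{W^1_{3,\infty}(\Om)}$) makes the exponential factors harmless and produces, after integration in time,
$$
\|\delta\|_{V(\Om^t)}^2\le\phi(\sup_t\|\tilde d\|_{W^1_{3,\infty}(\Om)},\ro^*)\cdot\|\tilde d\|_{L_2(0,t;W^1_{3,\infty}(\Om))}^2,
$$
and the same type of bound for $|\delta(0)|_{2,\Om}^2$ via $\tilde d(0)$. Both are absorbable into the right-hand side of (\ref{3.33}).

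Inserting these bounds into $\|v\|_{V(\Om^t)}^2\le 2\|w\|_{V(\Om^t)}^2+2\|\delta\|_{V(\Om^t)}^2$ and using (\ref{3.32}) gives (\ref{3.33}). The main (but routine) point to watch is that the increasing functions $\phi_1,\phi$ appearing in the estimate for $w$ already dominate the constants produced by the $\delta$ bound, since both $\delta$-estimates and the choice (\ref{3.25}) of $\eps,\varkappa$ depend on $\ro^*$ and $\sup_t\|\tilde d\|_{W^1_{3,\infty}(\Om)}$ only; hence a single increasing $\phi_1$ and a single $\phi$ suffice on the right-hand side. No new smallness or additional data norm is needed beyond those already present in Lemma \ref{l3.2}.
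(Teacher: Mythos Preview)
Your proposal is correct and follows essentially the same route as the paper's proof: decompose $v=w+\delta$, invoke Lemma~\ref{l3.2} (i.e.\ (\ref{3.32})) for the $w$ part, and control $\|\delta\|_{V(\Om^t)}$ in terms of $\tilde d$ only. The paper condenses your $\delta$-estimate into a single citation of estimate~(3.40) from \cite[Ch.~3]{RZ1} (producing (\ref{3.36})), whereas you spell out the ingredients (the $b$ and $\nabla\varphi$ bounds and the choice (\ref{3.25}) of $\varepsilon,\varkappa$), but the content is the same.
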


\begin{proof}
To prove the lemma we have to estimate the norm
\begin{equation}
\|v\|_{V(\Om^t)}^2\le\|w\|_{V(\Om^t)}^2+\|\delta\|_{V(\Om^t)}^2,
\label{3.34}
\end{equation}
where the first norm on the r.h.s. has been already considered in Lemma \ref{l3.2}, so
\begin{equation}\eqal{
\|\delta\|_{V(\Om^t)}^2&\le\|b\|_{V(\Om^t)}^2+\|\nabla\varphi\|_{V(\Om^t)}^2\cr
&\le|b|_{2,\infty,\Om^t}^2+|\nabla b|_{2,\Om^t}^2+|\nabla\varphi|_{2,\infty,\Om^t}^2+|\nabla^2\varphi|_{2,\Om^t}^2.\cr}
\label{3.35}
\end{equation}
To estimate norms of functions on the r.h.s. of (\ref{3.35}) we use estimate (3.40) in \cite[Ch. 3]{RZ1}. Then we have
\begin{equation}\eqal{
\|\delta\|_{V(\Om^t)}^2&\le c\bigg(|\tilde d|_{2,\infty,\Om^t}^2+\|\tilde d\|_{1,2,2,\Om^t}^2+\|\tilde d\|_{L_\infty(0,t;W_{3,\infty}^1(\Om))}^4\cr
&\quad\cdot\exp\bigg(c\sup_t\|\tilde d\|_{L_\infty(0,t;W_{3,\infty}^1(\Om))}\bigg)\bigg)\cdot|\tilde d|_{L_2(0,t;L_{2,\infty}(\Om))}^2\cr
&\quad+\sup_{t'\le t}|d(t')|_{2,\infty,\Om}^2.\cr}
\label{3.36}
\end{equation}
From (\ref{3.34}), (\ref{3.36}) and (\ref{3.10}) we obtain  (\ref{3.33}). This concludes the proof.
\end{proof}

Summarizing, we formulate the theorem on weak solutions

\begin{theorem}[see Lemma \ref{l3.3}]\label{t3.4}
Assume that there exist constants $\ro_*$, $\ro^*$, $0<\ro_*<\ro^*$ such that $\ro_*\le\ro\le\ro^*$. Assume that $\ro_1\in L_\infty(S_1^t)$, $d_1\in L_6(0,t;L_3(S_2(-a)))$. Assume that $\tilde d_i$ is an extension of $d_i$ into $\Om$, $i=1,2$ and $\tilde d=(\tilde d_1,\tilde d_2)$ is such that $\tilde d\in L_\infty(0,t;W_{3,\infty}^1(\Om))\cap L_2(0,t;W_{3,\infty}^1(\Om))$, $\tilde d_t\in L_2(0,t;W_{6/5}^1(\Om))$, $f\in L_2(0,t;L_{6/5}(\Om))$, $t\le T$ and $v(0)\in L_2(\Om)$.
Then for solutions to (\ref{1.1}) there exist positive increasing functions $\phi$, $\phi_1$ such that
\begin{equation}\eqal{
\|v\|_{V(\Om^t)}^2&\le\phi_1(|\ro_1|_{\infty,S_2^t},|
d_1|_{3,6,S_2^t(-a)},\ro_*,\ro^*)\cdot\cr
&\quad\cdot\{\phi(\|\tilde d\|_{L_\infty(0,t;W_{3,\infty}^1(\Om))})[\|\tilde d\|_{L_2(0,t;W_{3,\infty}^1(\Om))}^2\cr
&\quad+\|\tilde d_t\|_{1,6/5,2,\Om^t}^2+|f|_{6/5,2,\Om^t}^2]+ |v(0)|_{2,\Om}^2\}\equiv A_1^2\cr}
\label{3.37}
\end{equation}
\end{theorem}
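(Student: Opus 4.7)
The plan is to reduce Theorem~\ref{t3.4} to Lemma~\ref{l3.2} by using the decomposition $v=w+\delta$ introduced in (\ref{3.4})--(\ref{3.7}), together with the density bounds from Lemma~\ref{l2.4} and direct estimates on the auxiliary field $\delta=b+\nabla\varphi$. Since Lemma~\ref{l3.2} already delivers an energy bound for the divergence-free, Dirichlet-homogeneous remainder $w$, the residual work at this stage is just to pass from the $w$-estimate to a $v$-estimate.

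First, I would invoke Lemma~\ref{l2.4} to obtain the uniform bounds $\ro_*\le\ro\le\ro^*$ from the data, which is exactly what is needed in order to work with the $\ro$-weighted norm $(\intop_\Om\ro w^2 dx)^{1/2}$ as a proxy for $|w|_{2,\Om}$. Next, I would recall the construction in Section~\ref{s3}: the Hopf-type cutoff $\alpha=\sum_{i=1}^2\tilde d_i\eta_i$ supported in a $\varkappa$-thin layer near $S_2(\pm a)$ produces $b=\alpha\bar e_3$, which absorbs the inhomogeneous flux data $d_i$ at $S_2$, and the Neumann potential $\varphi$ from (\ref{3.6}) corrects for $\divv b\neq 0$, so that $w=v-b-\nabla\varphi$ is divergence-free with $w\cdot\bar n|_S=0$. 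Applying Lemma~\ref{l3.2} to $w$ gives the $V(\Om^t)$-bound (\ref{3.10}). The key analytic obstacle was already overcome inside the proof of Lemma~\ref{l3.2}: the troublesome convective term $\int_\Om\ro\, w\cdot\nabla b\cdot w\,dx$ is controlled by the Hardy inequality with weight $\ro^\mu$, producing factors $\varepsilon\varkappa^{\mu-2/3}+\varkappa^{\mu+1/3}$ which are absorbed by the dissipation $\nu\|w\|_{1,\Om}^2$ via the Korn inequality of Lemma~\ref{l2.5} after the calibration (\ref{3.25}) is made.

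Then I would estimate $\|\delta\|_{V(\Om^t)}^2\le\|b\|_{V(\Om^t)}^2+\|\nabla\varphi\|_{V(\Om^t)}^2$ directly. For $b$ the bounds are elementary since $b$ is an explicit expression in $\tilde d$ and $\eta$, with all $L_p$ norms computed in the thin strips $\tilde S_2(a_i,r,\varkappa)$; the resulting bounds depend exponentially on $1/\varepsilon$ through $r=\varkappa e^{-1/\varepsilon}$, but once $\varepsilon$ and $\varkappa$ are fixed by (\ref{3.25}) in terms of $\ro^*$ and $\|\tilde d\|_{L_\infty(0,t;W_{3,\infty}^1(\Om))}$, these exponentials become pieces of the increasing function $\phi(\sup_t\|\tilde d\|_{W_{3,\infty}^1(\Om)},\ro^*)$ on the right-hand side of (\ref{3.37}). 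For $\nabla\varphi$ I would use Lemma~\ref{l2.8} applied to (\ref{3.6}) to transfer estimates on $\divv b$ in $L_{3,\mu}$ to estimates on $\nabla^2\varphi$, together with standard $L_2$-theory of the Neumann problem; this is exactly the estimate (3.40) of \cite[Ch. 3]{RZ1} quoted as (\ref{3.36}).

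Finally, combining the two contributions via the triangle inequality $\|v\|_{V(\Om^t)}^2\le 2(\|w\|_{V(\Om^t)}^2+\|\delta\|_{V(\Om^t)}^2)$, and replacing $|w(0)|_{2,\Om}^2\le 2(|v(0)|_{2,\Om}^2+|\delta(0)|_{2,\Om}^2)$ with the latter term absorbed into the $\tilde d$-dependent part of the bound, yields (\ref{3.37}) with the claimed structure $\phi_1\cdot(\phi\cdot[\cdots]+|v(0)|_{2,\Om}^2)$. The only step that might look delicate is the tracking of how the exponential factor $\exp(c/\varepsilon)$ from the Hopf construction combines with the self-improving Gronwall argument (\ref{3.29})--(\ref{3.31}), but both are monotone in $\sup_t\|\tilde d\|_{W_{3,\infty}^1(\Om)}$ and $|\ro_1|_{\infty,S_2^t}|d_1|_{3,6,S_2^t}$, so they can be swept into $\phi_1$ and $\phi$; no new cancellation is required at this stage. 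In summary, at this point the heavy lifting is done in Lemma~\ref{l3.2} and Lemma~\ref{l2.8}, and Theorem~\ref{t3.4} is really a consolidated restatement of Lemma~\ref{l3.3}.
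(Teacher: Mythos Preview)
Your proposal is correct and follows essentially the same route as the paper: Theorem~\ref{t3.4} is indeed just a restatement of Lemma~\ref{l3.3}, whose proof combines the $w$-estimate (\ref{3.10}) from Lemma~\ref{l3.2} with the direct $\delta$-estimate (\ref{3.36}) (quoted from \cite[Ch.~3, (3.40)]{RZ1}) via the triangle inequality (\ref{3.34}), exactly as you outline. The only minor redundancy is your invocation of Lemma~\ref{l2.4}, since the bounds $\ro_*\le\ro\le\ro^*$ are already assumed in the hypotheses of Theorem~\ref{t3.4}.
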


\begin{remark}\label{r3.5}
We have to emphasize that there is no restriction on time $T$.
\end{remark}

\section{Energy estimate for solutions to problem (\ref{2.6})}\label{s4}

In this Section, we consider function $h =v_{x_3}$ and equations for $h$, i.e. (\ref{2.6}).  To find an energy type estimate for solutions to problem (\ref{2.6}) we have to make the Dirichlet boundary conditions $(\ref{2.6})_{4,5}$ homogeneous. For this purpose we introduce a function $\tilde h$ such that (see \cite{RZ1}, (4.14))
\begin{equation}\eqal{
&\divv\tilde h=0\quad &{\rm in}\ \ \Om,\cr
&\tilde h=0\quad &{\rm on}\ \ S_1,\cr
&\tilde h_i=-d_{x_i},\ \ i=1,2\quad &{\rm on}\ \ S_2,\cr
&\tilde h_3=0\quad &{\rm on}\ \ S_2.\cr}
\label{4.1}
\end{equation}

\begin{lemma} (see Lemma 4.3 from Ch. 4 \cite{RZ1}).\label{l4.1}
Assume that $d=(d_1,d_2)$, $d_{x'}\in W_\sigma^1(S_2)$, $d_{x't}\in L_\sigma(S_2)$, $\sigma\in(1,\infty)$. Then there exists a solution to problem (\ref{4.1}) such that $\tilde h\in W_\sigma^1(\Om)$, $\tilde h_t\in L_\sigma(\Om)$, and
\begin{equation}\eqal{
&\|\tilde h\|_{1,\sigma,\Om}\le c\|d_{x'}\|_{1,\sigma,S_2},\cr
&|\tilde h_t|_{\sigma,\Om}\le c|d_{x't}|_{\sigma,S_2},\cr}
\label{4.2}
\end{equation}
where $c$ does not depend on $\tilde h$. Let us introduce the function
\begin{equation}
k=h-\tilde h.
\label{4.3}
\end{equation}
Then $k$ is a solution to the problem
\begin{equation}\eqal{
&\ro k_t-\divv\T(h,q)=-\ro(v\cdot\nabla h+h\cdot\nabla v)-\ro\tilde h_t+\ro g\cr
&\quad-\ro_{x_3}(v_t+v\cdot\nabla v-f)\equiv G\quad &{\rm in}\ \ \Om^T,\cr
&\divv k=0\quad &{\rm in}\ \ \Om^T,\cr
&\bar n\cdot k=0,\ \ \nu\bar n\cdot\D(h)\cdot\bar\tau_\alpha+\gamma h\cdot\bar\tau_\alpha=0,\ \ \alpha=1,2\quad &{\rm on}\ \ S_1^T,\cr
&k_i=0,\ \ i=1,2,\ \ h_{3,x_3}=\Delta'd\quad &{\rm on}\ \ S_2^T,\cr
&k|_{t=0}=h(0)-\tilde h(0)\equiv k(0)\quad &{\rm in}\ \ \Om,\cr}
\label{4.4}
\end{equation}
where $g=f_{x_3}$, $\Delta'=\partial_{x_1}^2+\partial_{x_2}^2$ and $v$ is a solution to problem (\ref{1.1}).
\end{lemma}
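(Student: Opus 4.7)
The proof splits cleanly into two parts: first the construction of the lifting $\tilde h$ satisfying (\ref{4.1}) with the estimates (\ref{4.2}), and then the purely algebraic verification that $k = h - \tilde h$ satisfies (\ref{4.4}).

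For the first part I would follow the standard extension-plus-Bogovskii construction (as in \cite{RZ1}). Begin by extending $-d_{x_i}$ from $S_2$ into $\Om$ to obtain $H_i$ ($i=1,2$) vanishing on $S_1$ and satisfying the trace estimate $\|H_i\|_{W^1_\sigma(\Om)}\le c\|d_{x_i}\|_{W^{1-1/\sigma}_\sigma(S_2)}\le c\|d_{x'}\|_{W^1_\sigma(S_2)}$, together with the analogous bound $\|H_{i,t}\|_{L_\sigma(\Om)}\le c\|d_{x_i t}\|_{L_\sigma(S_2)}$ obtained by applying the same linear extension to the time derivative of the data. The edge compatibility at $L_1, L_2 = \bar S_1\cap\bar S_2$ (assumption 4 of the Main Theorem) is what allows the boundary prescription ($-d_{x_i}$ on $S_2$, $0$ on $S_1$) to be matched into a single $W^{1-1/\sigma}_\sigma$ function. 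Set $H=(H_1,H_2,0)$; this has the right Dirichlet data but is not divergence-free. Since $H_3=0$ on $S_2$ and $H=0$ on $S_1$, we have $\bar n\cdot H|_S=0$, so $\intop_\Om\divv H\,dx=0$, and the Bogovskii operator yields $\phi$ with $\divv\phi=-\divv H$, $\phi|_S=0$, and $\|\phi\|_{W^1_\sigma(\Om)}\le c\|\divv H\|_{L_\sigma(\Om)}\le c\|d_{x'}\|_{W^1_\sigma(S_2)}$. Then $\tilde h:=H+\phi$ satisfies every line of (\ref{4.1}) and gives (\ref{4.2})$_1$; the $t$-derivative estimate (\ref{4.2})$_2$ follows by linearity of both the extension map and Bogovskii's operator applied to $d_{x't}$.

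For the second part, I would substitute $h=k+\tilde h$ into problem (\ref{2.6}). The momentum equation becomes $\ro k_t-\divv\T(h,q)=-\ro(v\cdot\nabla h+h\cdot\nabla v-g)-\ro_{x_3}(v_t+v\cdot\nabla v-f)-\ro\tilde h_t$, which is precisely $(\ref{4.4})_1$ after collecting terms into $G$. The constraint $\divv k=0$ follows from $\divv h=\divv\tilde h=0$. On $S_1$, since $\tilde h=0$, we have $\bar n\cdot k=\bar n\cdot h=0$, and the slip condition is kept in the form involving $h$ itself since it is not modified. On $S_2$ the cancellation $k_i=h_i-\tilde h_i=-d_{x_i}-(-d_{x_i})=0$ for $i=1,2$ gives $(\ref{4.4})_4$, while the condition $h_{3,x_3}=\Delta' d$ involves $h$ directly and is carried over unchanged. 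Finally, $k|_{t=0}=h(0)-\tilde h(0)$ by the definition (\ref{4.3}).

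The main obstacle is the construction of the lifting $\tilde h$: the compatibility at the edges $L_1, L_2$ must be handled carefully in the chosen trace class, and the linear dependence of the whole construction on the data must be tracked so that the estimates (\ref{4.2}) come out uniformly in $\sigma$ and apply simultaneously to $\tilde h$ and $\tilde h_t$. Once this lifting is in place, both the Bogovskii divergence correction and the algebraic derivation of (\ref{4.4}) are routine bookkeeping.
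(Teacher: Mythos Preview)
Your proposal is correct and matches what the paper does in spirit. In fact, the paper gives no self-contained proof of this lemma at all: it simply cites Lemma~4.3 of Chapter~4 in \cite{RZ1} for the existence of the lifting $\tilde h$ and the estimates~(\ref{4.2}), and treats the derivation of problem~(\ref{4.4}) as immediate from the substitution $k=h-\tilde h$ into~(\ref{2.6}). Your extension-plus-Bogovskii sketch is precisely the standard argument one expects to find in \cite{RZ1}, and your verification of each line of~(\ref{4.4}) is accurate.

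One small caveat: the edge compatibility you invoke is not quite assumption~4 of the Main Theorem (that condition reads $\nu n_\alpha d_{i,x_\alpha}+\gamma d_i=0$ on $L_i$, which concerns the normal derivative of $d$, not the vanishing of $d_{x'}$ along the edge). What is actually needed for the piecewise trace $(-d_{x_i}$ on $S_2$, $0$ on $S_1)$ to glue into a single $W^{1-1/\sigma}_\sigma(\partial\Om)$ function depends on $\sigma$; for low $\sigma$ no pointwise condition is required, while for larger $\sigma$ one needs $d_{x'}=0$ along $L_i$, which follows from the boundary conditions on $v$ at the edge rather than from assumption~4 directly. This is a bookkeeping detail that \cite{RZ1} handles, and your identification of the edge as the delicate point is correct.
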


In view of decomposition (\ref{4.3}) we write problem (\ref{4.4}) in the form
\begin{equation}\eqal{
&\ro k_t+\ro v\cdot\nabla k-\divv\T(h,q)=-\ro(k\cdot\nabla v+\tilde h\cdot\nabla v+v\cdot\nabla\tilde h)\cr
&\quad-\ro\tilde h_t+\ro g-\ro_{x_3}(v_t+v\cdot\nabla v-f)\equiv G\quad &{\rm in}\ \ \Om^T,\cr
&\divv k=0\quad &{\rm in}\ \ \Om^T,\cr
&\bar n\cdot k=0,\ \ \nu\bar n\cdot\D(h)\cdot\bar\tau_\alpha+\gamma h\cdot\bar\tau_\alpha=0,\ \ \alpha=1,2\quad &{\rm on}\ \ S_1^T,\cr
&k_i=0,\ \ i=1,2,\ \ h_{3,x_3}=\Delta'd\quad &{\rm on}\ \ S_2^T,\cr
&k|_{t=0}=k(0)\quad &{\rm in}\ \ \Om.\cr}
\label{4.5}
\end{equation}
Projecting div $k$ on $S_2$ yields
\begin{equation}
\divv k|_{S_2}=k_{3,x_3}|_{S_2}=0
\label{4.6}
\end{equation}

\begin{lemma}\label{l4.2}
Assume that $d_{x'}\in L_2(0,t;W_3^1(S_2))$, $d_t\in L_2(0,t;W_2^1(S_2))$, $f_3\in L_2(0,t;L_{4/3}(S_2))$, $g\in L_2(\Om^t)$, $v\in L_2(0,t;W_3^1(\Om))$, $v_t\in L_2(\Om^t)$, $v\in L_\infty(0,t;L_\infty(\Om))$, $\ro_*\le\ro\le\ro^*$, where $\ro_*$, $\ro^*$ are positive constants from Lemma \ref{l2.4}. Assume that $\ro_{x_3}\in L_\infty(0,t;L_3(\Om))$, $k(0)\in L_2(\Om)$.\\
Let $\La_1=\|d_{x'}\|_{1,3,2,S_2^t}+|d_{x'}|_{2,\infty,S_2^t}+\|d_t\|_{1,2,S_2^t}+ |f_3|_{4/3,2,S_2^t}+|g|_{2,\Om^t}$, $D_1=|d_1|_{3,2,S_2^t}$, $V_1=\|v\|_{1,3,2,\Om^t}$ and $A_1$ is defined in (\ref{3.37}). Then
\begin{equation}\eqal{
\|k\|_{V(\Om^t)}&\le c\phi(\ro_*,\ro^*,D_1,V_1,A_1)[\La_1+ |\ro_{x_3}|_{3,\infty,\Om^t}(|v_t|_{2,\Om^t}+|f|_{2,\Om^t}\cr
&\quad+|v|_{\infty,\infty,\Om^t})+|k(0)|_{2,\Om}].\cr}
\label{4.7}
\end{equation}
\end{lemma}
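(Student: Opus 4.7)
{\it (of Lemma 4.2, proposal)}
The plan is a standard energy method applied to system (\ref{4.5}), with careful bookkeeping of the boundary contributions arising from the slip condition on $S_1$ and the inflow/outflow on $S_2$. I would multiply $(\ref{4.5})_1$ by $k$ and integrate over $\Om$. The left-hand side produces, via the continuity equation $(\ref{2.1})_1$ applied to $\ro$ and the identity $\partial_t(\ro|k|^2)+\divv(\ro v|k|^2)=2\ro k\cdot(k_t+v\cdot\nabla k)$, the term
$$
\frac12\frac{d}{dt}\intop_\Om\ro|k|^2\,dx-\frac12\intop_{S_2(-a)}\ro_1 d_1|k|^2\,dS_2+\frac12\intop_{S_2(a)}\ro d_2|k|^2\,dS_2,
$$
together with the viscous part $-\intop_\Om\divv\T(h,q)\cdot k\,dx$. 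Writing $h=k+\tilde h$ and integrating by parts, using $\divv k=0$, the slip condition on $S_1$, and $k_i=0$, $i=1,2$ on $S_2$, reduces the viscous part to $\frac{\nu}{2}|\D(k)|_{2,\Om}^2+\gamma\sum_\alpha|k\cdot\bar\tau_\alpha|_{2,S_1}^2$ plus lower-order terms coupling $k$ with $\tilde h$ (controlled through (\ref{4.2})) and a boundary integral on $S_2$ involving $h_{3,x_3}=\Delta'd$ paired with $k_3$, which through the condition $k_{3,x_3}|_{S_2}=0$ (see (\ref{4.6})) and trace/Hardy-type estimates is absorbed into $\La_1$.

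Next I would estimate the right-hand side $G$ term by term. The forcing terms $\ro g$ and the contribution of $\ro_{x_3}(v_t+v\cdot\nabla v-f)$ are split with Hölder's inequality as
$$
\bigl|\intop_\Om\ro_{x_3}(v_t+v\cdot\nabla v-f)\cdot k\,dx\bigr|\le|\ro_{x_3}|_{3,\Om}\bigl(|v_t|_{2,\Om}+|v|_{\infty,\Om}|\nabla v|_{3,\Om}+|f|_{2,\Om}\bigr)|k|_{6,\Om},
$$
and after applying Young's inequality with a small coefficient before $|k|_{6,\Om}^2\le c\|k\|_{1,\Om}^2$ (by Sobolev), the $\|k\|_{1,\Om}^2$ part is absorbed into the Korn/dissipation term. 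The convective-type terms $\ro(k\cdot\nabla v+\tilde h\cdot\nabla v+v\cdot\nabla\tilde h)$ are handled analogously, extracting $V_1=\|v\|_{1,3,2,\Om^t}$ from $\nabla v$ and $\La_1$ from $\tilde h$, $\tilde h_t$ via (\ref{4.2}).

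For the inflow boundary integral $\intop_{S_2(-a)}\ro_1 d_1|k|^2\,dS_2$ I would employ an interpolation trace inequality
$$
|k|_{3,S_2(-a)}^2\le\eps|\nabla k|_{2,\Om}^2+c(1/\eps)|k|_{2,\Om}^2,
$$
so that with $\eps$ small this contributes the multiplier $|\ro_1|_{\infty,S_2^t}^a|d_1|_{3,S_2(-a)}^a$ to $|k|_{2,\Om}^2$. Collecting all estimates and using the Korn inequality from Lemma \ref{l2.5} yields a differential inequality of the form
$$
\frac{d}{dt}\intop_\Om\ro|k|^2\,dx+\frac\nu2\|k\|_{1,\Om}^2\le\Psi(t)\intop_\Om\ro|k|^2\,dx+R(t),
$$
where $\intop_0^t\Psi\,dt'$ is controlled by $\phi(\ro_*,\ro^*,D_1,V_1,A_1)$ and $\intop_0^tR\,dt'$ by the square of the bracket in (\ref{4.7}). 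A Gronwall argument then gives (\ref{4.7}) after returning from $k$ to $h$ via (\ref{4.3}) (with the cost $\|\tilde h\|_{V(\Om^t)}\le c\La_1$ from (\ref{4.2})).

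The main obstacle I anticipate is the term $\ro_{x_3}(v_t+v\cdot\nabla v-f)$: unlike the other right-hand side contributions, it is not multiplied by a small parameter but only by $|\ro_{x_3}|_{3,\infty,\Om^t}$, which is a large quantity, and therefore it must appear linearly (not absorbed) on the right-hand side of (\ref{4.7}). Keeping it linear forces a careful use of Young's inequality with small $\eps$ only on the part of the bound multiplying $|k|_{6,\Om}$, and relying on $|v|_{\infty,\infty,\Om^t}$ (which is available from Theorem \ref{t1.1}-type regularity) to handle the convective piece $v\cdot\nabla v$ without spending the small parameter.
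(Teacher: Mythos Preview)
Your overall energy strategy, the Gronwall structure, the handling of the inflow integral $\intop_{S_2(-a)}\ro_1 d_1|k|^2\,dS_2$ via trace interpolation, and the treatment of the $\ro_{x_3}(v_t+v\cdot\nabla v-f)$ term all match the paper's proof.

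There is, however, a genuine gap in your treatment of the viscous boundary contribution on $S_2$. After integrating $-\intop_\Om\divv\T(h,q)\cdot k\,dx$ by parts, the $S_2$ boundary term is not simply ``$h_{3,x_3}=\Delta'd$ paired with $k_3$''. Since $k_1=k_2=0$ but $k_3\neq 0$ on $S_2$, the full normal stress $\T_{33}(h,q)=2\nu h_{3,x_3}-q$ enters, giving
$$
-\sum_{i=1}^2\intop_{S_2(a_i)}(2\nu h_{3,x_3}-q)\,k_3\,dS_2
= -\sum_{i=1}^2\intop_{S_2(a_i)}(2\nu\Delta'd_i-q)\,k_3\,dS_2.
$$
The pressure trace $q=p_{x_3}$ appears here, and there is no a priori control on it; neither trace/Hardy estimates nor the condition $k_{3,x_3}|_{S_2}=0$ from (\ref{4.6}) remove it. The paper's essential device is to project the third component of $(\ref{1.1})_1$ onto $S_2$, which yields the identity
$$
2\nu\Delta'd_i-q=\ro\,(d_{i,t}+v'\cdot\nabla'd_i+d_i h_3-f_3)\quad\text{on }S_2(a_i),
$$
so that the uncontrollable $q$ is exchanged for data terms $d_t$, $d_{x'}$, $f_3$ (this is precisely why $|d_t|_{4/3,2,S_2^t}$ and $|f_3|_{4/3,2,S_2^t}$ enter $\La_1$) together with a quadratic boundary term $\intop_{S_2}\ro\,d_i\,k_3^2\,dS_2$ which is handled by exactly the same trace interpolation you already use for the inflow integral, contributing the factor $D_1^6$ in $\phi$. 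Without this substitution the boundary term cannot be ``absorbed into $\La_1$'' and the differential inequality does not close.
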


\begin{proof}
Multiply $(\ref{4.5})_1$ by $k$, use problem (\ref{2.1}) and integrate over $\Om$. Then we obtain
\begin{equation}\eqal{
&{1\over 2
}{d\over dt}\intop_\Om\ro k^2dx+\intop_\Om\divv(\ro vk^2)dx-\intop_\Om\divv\T(h,q)\cdot kdx\cr
&=-\intop_\Om\ro k\cdot\nabla v\cdot kdx-\intop_\Om\ro\tilde h\cdot\nabla v\cdot kdx-\intop_\Om\ro v\cdot\nabla\tilde h\cdot kdx\cr
&\quad-\intop_\Om\ro\tilde h_t\cdot kdx+\intop_\Om\ro g\cdot kdx-\intop_\Om\ro_{x_3}(v_t+v\cdot\nabla v-f)\cdot kdx\equiv\intop_\Om Gkdx.\cr}
\label{4.8}
\end{equation}
Applying the Green theorem the second term on the l.h.s. of (\ref{4.8}) equals
$$
-\intop_{S_2(-a)}\ro_1d_1k_3^2dS_2+\intop_{S_2(a)}\ro d_2k_3^2dS_2,
$$
where boundary condition $(\ref{4.5})_4$ is used.

Integrating by parts, the third term on the l.h.s. of (\ref{4.8}) takes the form
$$\eqal{
&-\intop_{S_1}\bar n\cdot\T(h,q)\cdot kdS_1-\sum_{i=1}^2\intop_{S_2(a_i)}\bar n\cdot\T(h,q)\cdot kdS_2\cr
&\quad+{\nu\over 2}\intop_\Om\D(h)\cdot\D(k)dx\equiv I_1+I_2+I_3,\cr}
$$
where
$$\eqal{
I_1&=-\intop_{S_1}\bar n\cdot\T(h,q)\cdot\bar\tau_\alpha k\cdot\bar\tau_\alpha dS_1=\gamma\intop_{S_1}h\cdot\bar\tau_\alpha k\cdot\bar\tau_\alpha dS_1\cr
&=\gamma|k\cdot\bar\tau_\alpha|_{2,S_1}^2+\gamma\intop_{S_1}\tilde h\cdot\bar\tau_\alpha k\cdot\bar\tau_\alpha dS_1,\cr}
$$
$$\eqal{
I_2&=-\sum_{i=1}^2\intop_{S_2(a_i)}\T_{33}(h,q)k_3dS_2=-\sum_{i=1}^2\intop_{S_2(a_i)} (2\nu h_{3,x_3}-q)k_3dS_2\cr
&=-\sum_{i=1}^2\intop_{S_2(a_i)}(2\nu\Delta'd_i-q)k_3dS_2\equiv I_2^1+I_2^2.\cr}
$$
To examine the last integral we use the third component of $(\ref{1.1})_1$ projected on $S_2$. On $S_2(-a)$ we have
$$
\ro_1d_{1,t}+\ro_1v'\cdot\nabla'd_1+\ro_1d_1h_3-\ro_1f_3= 2\nu\Delta'd_1-q,
$$
where $v'=(v_1,v_2)$, $\nabla'=(\partial_{x_1},\partial_{x_2})$.

Hence,
$$
I_2^1=\intop_{S_2(-a)}(-\ro_1d_{1,t}+\ro_1f_3)k_3dS_2-\intop_{S_2(-a)} \ro_1v'd_{1,x'}k_3dS_2-\intop_{S_2(-a)}\ro_1d_1k_3^2dS_2,
$$
where we utilized that $\tilde h_3|_{S_2}=0$.

We estimate the first term in $I_2^1$ by
$$
\varepsilon_1|k_3|_{4,S_2}^2+c(1/\varepsilon_1)(\ro_1^*)^2(|d_{1,t}|_{4/3,S_2}^2+ |f_3|_{4/3,S_2}^2),
$$
the second by
$$
\varepsilon_2|k_3|_{4,S_2}^2+c(1/\varepsilon_2)(\ro_1^*)^2|v'|_{4,S_2}^2 |d_{1,x'}|_{2,S_2}^2
$$
and the last one as follows (see \cite[Ch. 2, Sect. 10]{BIN})
$$\eqal{
&\intop_{S_2(-a)}\ro_1d_1k_3^2dS_2\le\ro_1^*|d_1|_{3,S_2}|k_3|_{3,S_2}^2\cr
&\le(\varepsilon^{1/3}|\nabla k_3|_{2,\Om}^2+c\varepsilon^{-5/3}|k_3|_{2,\Om}^2)\ro_1^*|d_1|_{3,S_2}\cr
&\le\varepsilon_3^{1/3}|\nabla k_3|_{2,\Om}^2+c\varepsilon_3^{-5/3}(\ro_1^*)^6|d_1|_{3,S_2}^6|k_3|_{2,\Om}^2.\cr}
$$
To consider $I_2^2$ we calculate
$$
(2\nu\Delta'd_2-q)|_{S_2}=\ro d_{1,t}+\ro v'\cdot\nabla'd_2+\ro d_2k_3-\ro f_3.
$$
Then
$$
I_2^2=\intop_{S_2(a)}(-\ro d_{2,t}+\ro f_3)k_3dS_2-\intop_{S_2(a)}\ro v'd_{2,x'}k_3dS_2-\intop_{S_2(a)}\ro d_2k_3^2dS_2.
$$
To estimate $I_2^2$ we use Lemma \ref{l2.4}. Similarly as in the estimate of $I_2^1$ the first term in $I_2^2$ is bounded by
$$
\varepsilon_4|k_3|_{4,S_2}^2+c(1/\varepsilon_4)(\ro^*)^2(|d_{2,t}|_{4/3,S_2}^2+ |f_3|_{4/3,S_2}^2),
$$
the second by
$$
\varepsilon_5|k_3|_{4,S_2}^2+c(1/\varepsilon_5)(\ro^*)^2|v'|_{4,S_2}^2 |d_{2,x'}|_{2,S_2}^2
$$
and finally the last one by
$$
\varepsilon_6^{1/3}|\nabla k_3|_{2,\Om}^2+c\varepsilon_6^{-5/3}(\ro^*)^6|d_2|_{3,S_2}^6|k_3|_{2,\Om}^2.
$$
Employing the above estimates in (\ref{4.8}) with $\varepsilon_1-\varepsilon_6$ sufficiently small, using the Korn inequality (see Lemma \ref{l2.5}) and exploiting the notation $d=(d_1,d_2)$ we derive the inequality
\begin{equation}\eqal{
&{d\over dt}\intop_\Om\ro k^2dx+\nu\|k\|_{1,\Om}^2+\gamma|k\cdot\bar\tau_\alpha|_{2,S_1}^2\le \intop_{S_2(-a)}\ro_1d_1k_3^2dS_2\cr
&\quad+c|\tilde h\cdot\bar\tau_\alpha|_{2,S_2}^2+c|\D(\tilde h)|_{2,\Om}^2+c(\ro^*)^2(|d_t|_{4/3,S_2}^2+|f_3|_{4/3,S_2}^2\cr
&\quad+|v'|_{4,S_2}^2|d_{x'}|_{2,S_2}^2)+c(\ro^*)^6|d|_{3,S_2}^6|k_3|_{2,\Om}^2+ \bigg|\intop_\Om G\cdot kdx\bigg|,\cr}
\label{4.9}
\end{equation}
where we used that $\ro_1^*\le\ro^*$ and $d=(d_1,d_2)$.

The first term on the r.h.s. of (\ref{4.9}) can be estimated by the same bound as the third terms in $I_2^1$, $I_2^2$. In view of Lemma \ref{l4.1} the second and the third terms on the r.h.s. of (\ref{4.9}) are bounded by
$$
c\|d_{x'}\|_{1,2,S_2}^2.
$$
Using the above estimates in (\ref{4.9}) yields
\begin{equation}\eqal{
&{d\over dt}\intop_\Om\ro k^2dx+\nu\|k\|_{1,\Om}^2+\gamma|k\cdot\bar\tau_\alpha|_{2,S-2}^2\le c(\ro^*)^6|d_1|_{3,S_2}^6|k_3|_{2,\Om}^2\cr
&\quad+c\|d_{x'}\|_{1,2,S_2}^2+c(\ro^*)^2(|d_t|_{4/3,S_2}^2+|f_3|_{4/3,S_2}^2+ |v'|_{4,S_2}^2|d_{x'}|_{2,S_2}^2)\cr
&\quad+\bigg|\intop_\Om G\cdot kdx\bigg|.\cr}
\label{4.10}
\end{equation}
Finally, we shall estimate the last term on the r.h.s. of (\ref{4.10}). To this end we use the r.h.s. of (\ref{4.8}). We estimate the first term by
$$
\ro^*\intop_\Om|\nabla v|k^2dx\le\varepsilon_1|k|_{6,\Om}^2+c(1/\varepsilon_1)(\ro^*)^2|\nabla v|_{3,\Om}^2|k|_{2,\Om}^2,
$$
the second term by
$$\eqal{
&\varepsilon_2|k|_{6,\Om}^2+c(1/\varepsilon_2)(\ro^*)^2|\nabla v|_{3,\Om}^2|\tilde h|_{2,\Om}^2\cr
&\le\varepsilon_2|k|_{6,\Om}^2+c(1/\varepsilon_2)(\ro^*)^2|\nabla v|_{3,\Om}^2|d_{x'}|_{2,S_2}^2,\cr}
$$
where Lemma \ref{l4.1} was used.

Continuing, we estimate the third term by
$$\eqal{
&\varepsilon_3|k|_{6,\Om}^2+c(1/\varepsilon_3)(\ro^*)^2|v|_{2,\Om}^2|\nabla\tilde h|_{3,\Om}^2\cr
&\le\varepsilon_3|k|_{6,\Om}^2+c(1/\varepsilon_3)(\ro^*)^2A_1^2 \|d_{x'}\|_{1,3,S_2}^2,\cr}
$$
where Lemmas \ref{l3.3} and \ref{l4.1} were used.

Next, the fourth term is bounded by
$$
\varepsilon_4|k|_{2,\Om}^2+c/\varepsilon_4(\ro^*)^2|\tilde h_t|_{2,\Om}^2
\le\varepsilon_4|k|_{2,\Om}^2+c/\varepsilon_4(\ro^*)^2 |d_{x't}|_{2,S_2}^2
$$
and the last but one term by
$$
\varepsilon_5|k|_{2,\Om}^2+c/\varepsilon_5(\ro^*)^2|g|_{2,\Om}^2.
$$
Finally, the last term on the r.h.s. of (\ref{4.8}) is bounded by
$$
\varepsilon_6|k|_{6,\Om}^2+c(1/\varepsilon_6)|\ro_{x_3}|_{3,\Om}^2 (|v_t|_{2,\Om}^2+|v|_{\infty,\Om}^2|\nabla v|_{2,\Om}^2+|f|_{2,\Om}^2).
$$
Using the aobve estimates with sufficiently small $\varepsilon_1-\varepsilon_6$ in (\ref{4.10}) we derive the inequality
\begin{equation}\eqal{
&{d\over dt}\intop_\Om\ro k^2dx+\nu\|k\|_{1,\Om}^2+\gamma|k\cdot\bar\tau_\alpha|_{2,S_1}^2\le c((\ro^*)^6|d_1|_{3,S_2}^6\cr
&\quad+|\nabla v|_{3,\Om}^2)|k|_{2,\Om}^2+c(\ro^*)^2(|\nabla v|_{3,\Om}^2+|v'|_{4,S_2}^2)|d_{x'}|_{2,S_2}^2\cr
&\quad+c(\ro^*)^2(1+A_1^2)[\|d_{x'}\|_{1,3,S_2}^2+\|d_t\|_{1,2,S_2}^2+ |f_3|_{4/3,S_2}^2+|g|_{2,\Om}^2]\cr
&\quad+c|\ro_{x_3}|_{3,\Om}^2(|v_t|_{2,\Om}^2+|f|_{2,\Om}^2)+c |\ro_{x_3}|_{3,\Om}^2|v|_{\infty,\Om}^2|\nabla v|_{2,\Om}^2.\cr}
\label{4.11}
\end{equation}
Let $\nu_1=\nu/\ro^*$. Then (\ref{4.11}) implies the inequality
\begin{equation}\eqal{
&{d\over dt}\intop_\Om\ro k^2dx+\nu_1\intop_\Om\ro k^2dx-\phi(\ro_*,\ro^*)(|d_1|_{3,S_2}^6+\|v\|_{1,3,\Om}^2) \intop_\Om\ro k^2dx\cr
&\le c(\ro^*)^2\|v\|_{1,3,\Om}^2|d_{x'}|_{2,S_2}^2\cr
&\quad+c(\ro^*)^2(1+A_1^2)[\|d_{x'}\|_{1,3,S_2}^2+\|d_t\|_{1,2,S_2}^2+ |f_3|_{4/3,S_2}^2+|g|_{2,\Om}^2]\cr
&\quad+c|\ro_{x_3}|_{3,\Om}^2(|v_t|_{2,\Om}^2+|f|_{2,\Om}^2)+c |\ro_{x_3}|_{3,\Om}^2|v|_{\infty,\Om}^2|\nabla v|_{2,\Om}^2.\cr}
\label{4.12}
\end{equation}
Introduce the notation
\begin{equation}
D_1(t)=|d_1|_{3,2,S_2^t},\quad V_1(t)=\|v\|_{1,3,2,\Om^t}
\label{4.13}
\end{equation}
Then (\ref{4.12}) can be written in the form
\begin{equation}\eqal{
&{d\over dt}\bigg[\intop_\Om\ro k^2dx\exp(\nu_1t-\phi(\ro_*,\ro^*)(D_1^6(t)+V_1^2(t)))\bigg]\cr
&\le c\{(\ro^*)^2\|v\|_{1,3,\Om}^2|d_{x'}|_{2,S_2}^2\cr
&\quad+(\ro^*)^2(1+A_1^2)[\|d_{x'}\|_{1,3,S_2}^2+\|d_t\|_{1,2,S_2}^2+ |f_3|_{4/3,S_2}^2+|g|_{2,\Om}^2]\cr
&\quad+|\ro_{x_3}|_{3,\Om}^2(|v_t|_{2,\Om}^2+|f|_{2,\Om}^2)+ |\ro_{x_3}|_{3,\Om}^2|v|_{\infty,\Om}^2|\nabla v|_{2,\Om}^2\}\cdot\cr
&\quad\cdot\exp[\nu_1t-\phi(\ro_*,\ro^*)(D_1^6(t)+V_1^2(t))].\cr}
\label{4.14}
\end{equation}
Integrating (\ref{4.14}) with respect to time from $0$ to $t\le T$ and introducing the notation
\begin{equation}
\La_1 \equiv \La_1(t)=\|d_{x'}\|_{1,3,2,S_2^t}+|d_{x'}|_{2,\infty,S_2^t}+ \|d_t\|_{1,2,S_2^t}+|f_3|_{4/3,2,S_2^t}+|g|_{2,\Om^t}
\label{4.15}
\end{equation}
we obtain
\begin{equation}\eqal{
&\intop_\Om\ro k^2dx\le\phi(\ro_*,\ro^*)(V_1^2(t)+1+A_1^2)\cdot\cr
&\quad\cdot\exp[\phi(\ro_*,\ro^*)(D_1^6(t)+V_1^2(t))][\La_1^2(t)\cr
&\quad+|\ro_{x_3}|_{3,\infty,\Om^t}^2(|v_t|_{2,\Om^t}^2+ |f|_{2,\Om^t}^2)+|\ro_{x_3}|_{3,\infty,\Om^t}^2|v|_{\infty,\infty,\Om^t}^2 A_1^2]\cr
&\quad+\exp[-\nu_1t+\phi(\ro_*,\ro^*)(D_1^6(t)+V_1^2(t))]\intop_\Om \ro(0)k^2(0)dx.\cr}
\label{4.16}
\end{equation}
Integrating (\ref{4.11}) with respect to time and using (\ref{4.16}) yield
\begin{equation}\eqal{
\|k\|_{V(\Om^t)}^2&\le c\phi(\ro_*,\ro^*)(D_1^6+V_1^2)(1+V_1^2+A_1^2)\cdot\cr
&\quad\cdot\{\exp[\phi(\ro_*,\ro^*)(D_1^6+V_1^2)][\La_1^2+ |\ro_{x_3}|_{3,\infty,\Om^t}^2(|v_t|_{2,\Om^t}^2\cr
&\quad+|f|_{2,\Om^t}^2+|v|_{\infty,\infty,\Om^t}^2A_1^2)]\cr
&\quad+\exp[-\nu_1t+\phi(\ro_*,\ro^*)(D_1^6+V_1^2)]\intop_\Om \ro(0)k^2(0)dx\}\cr
&\quad+\phi(\ro_*,\ro^*)(1+V_1^2+A_1^2)[\La_1^2+ |\ro_{x_3}|_{3,\infty,\Om^t}^2(|v_t|_{2,\Om^t}^2+|f|_{2,\Om^t}^2\cr
&\quad+|v|_{\infty,\infty,\Om^t}^2A_1^2)]+\intop_\Om\ro(0)k^2(0)dx.\cr}
\label{4.17}
\end{equation}
Simplifying (\ref{4.17}) implies
\begin{equation}\eqal{
\|k\|_{V(\Om^t)}^2&\le c\phi(\ro_*,\ro^*,D_1,V_1,A_1)[\La_1^2+ |\ro_{x_3}|_{3,\infty,\Om^t}^2(|v_t|_{2,\Om^t}^2\cr
&\quad+|f|_{2,\Om^t}^2+|v|_{\infty,\infty,\Om^t}^2)+|k(0)|_{2,\Om}^2].\cr}
\label{4.18}
\end{equation}
Inequality (\ref{4.18}) implies (\ref{4.7}) and concludes the proof of Lemma \ref{l4.2}.
\end{proof}

\begin{corollary}\label{c4.3}
Since
$$\eqal{
&\|\tilde h\|_{V(\Om^t)}\le c(|d_{x'}|_{2,\infty,S_2^t}+\|d_{x'}\|_{1,2,S_2^t}),\cr
&|\tilde h(0)|_{2,\Om}\le c|d_{x'}(0)|_{2,S_2}\le c|d_{x'}|_{2,\infty,S_2^t}\cr}
$$
we obtain from (\ref{4.7}) that
\begin{equation}\eqal{
\|h\|_{V(\Om^t)}&\le c\phi(\ro_*,\ro^*,D_1,V_1,A_1)[\La_1+ |\ro_{x_3}|_{3,\infty,\Om^t}(|v_t|_{2,\Om^t}\cr
&\quad+|f|_{2,\Om^t}+|v|_{\infty,\infty,\Om^t})]+|h(0)|_{2,\Om}.\cr}
\label{4.19}
\end{equation}
\end{corollary}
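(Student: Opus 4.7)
The corollary is essentially a repackaging of Lemma~4.2 in terms of the original unknown $h=v_{x_3}$ via the decomposition $h=k+\tilde h$ introduced in~(\ref{4.3}). I would proceed as follows.

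First, by the triangle inequality applied to the $V(\Om^t)$ norm,
$$
\|h\|_{V(\Om^t)}\le\|k\|_{V(\Om^t)}+\|\tilde h\|_{V(\Om^t)},
$$
and similarly $|k(0)|_{2,\Om}\le|h(0)|_{2,\Om}+|\tilde h(0)|_{2,\Om}$. Thus it suffices to estimate the two auxiliary quantities involving $\tilde h$ and then plug these into~(\ref{4.7}).

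Second, to bound $\|\tilde h\|_{V(\Om^t)}$, I would use the lifting estimate~(\ref{4.2}) from Lemma~\ref{l4.1} with $\sigma=2$. The gradient part of the $V$ norm is controlled by integrating in time:
$$
\|\nabla\tilde h\|_{L_2(\Om^t)}\le c\|d_{x'}\|_{L_2(0,t;W_2^1(S_2))}\le c\|d_{x'}\|_{1,2,S_2^t},
$$
while the $L_\infty(0,t;L_2(\Om))$ part is controlled by taking the supremum in time:
$$
\sup_{t'\le t}|\tilde h(t')|_{2,\Om}\le c\sup_{t'\le t}|d_{x'}(t')|_{2,S_2}=c|d_{x'}|_{2,\infty,S_2^t}.
$$
Combining these yields the first displayed inequality stated in the corollary. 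Evaluating the first bound at $t=0$ also gives $|\tilde h(0)|_{2,\Om}\le c|d_{x'}(0)|_{2,S_2}\le c|d_{x'}|_{2,\infty,S_2^t}$, which is the second displayed inequality.

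Finally, I would substitute these two inequalities into~(\ref{4.7}). Both extra contributions $\|\tilde h\|_{V(\Om^t)}$ and $|\tilde h(0)|_{2,\Om}$ are bounded by $c\bigl(|d_{x'}|_{2,\infty,S_2^t}+\|d_{x'}\|_{1,2,S_2^t}\bigr)$, which is dominated by $\La_1$ as defined in~(\ref{4.15}). Absorbing these into the $\La_1$ term in~(\ref{4.7}) (and keeping the unchanged $|h(0)|_{2,\Om}$ summand from the triangle inequality applied to $|k(0)|_{2,\Om}$) gives~(\ref{4.19}). No step here is a real obstacle, since the density, velocity and force norms already appear on the right of~(\ref{4.7}) through $\phi(\ro_*,\ro^*,D_1,V_1,A_1)$; the work is purely bookkeeping, converting the estimate for the divergence-free corrected field $k$ back to one for $h$ by controlling the boundary lift $\tilde h$ with Lemma~\ref{l4.1}.
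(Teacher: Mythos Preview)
Your proposal is correct and follows exactly the approach implicit in the paper: the corollary there is not given a separate proof beyond asserting the two displayed bounds on $\tilde h$ (which come from the lifting estimate~(\ref{4.2}) with $\sigma=2$) and invoking~(\ref{4.7}) together with $h=k+\tilde h$. Your write-up simply makes the triangle-inequality bookkeeping explicit, which is precisely what the authors leave to the reader.
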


\section{A priori estimates for first derivatives of $\ro$}\label{s5}

Let $x'=(x_1,x_2)$ and $X_r(t)=(|\ro_{x'}(t)|_{r,\Om}^r+|\ro_t(t)|_{r,\Om}^r)^{1/r}$ for $\ro$ - the solution to (\ref{2.1}).

\begin{lemma}\label{l5.1}
Let $X_r(0)<\infty$, $r\in(1,\infty)$ be finite. Assume that $$|\ro_{1,x'}|_{r,S_2^t(-a)}+|\ro_{1,t}|_{r,S_2^t(-a)}<\infty,$$ $v\in L_\infty(\Om^t)$, $v_x,v_t\in L_1(0,t;L_\infty(\Om))$,$d_1\in L_\infty(S_2^t(-a))$, $t\le T$ and there exists a positive constant $d_*$ such that $v_3\ge d_*>0$.\\
Then there exists a positive increasing function $\phi$ such that solutions to (\ref{2.1}) satisfy
\begin{equation}\eqal{
X_r(t)&\le\phi(|v|_{\infty,\Om^t},|v_x|_{\infty,1,\Om^t}, |v_t|_{\infty,1,\Om^t})\cdot\cr
&\quad\cdot[|d_1|_{\infty,S_2^t}(|\ro_{1,x'}|_{r,S_2^t(-a)}+ |\ro_{1,t}|_{r,S_2^t(-a)})+X_r(0)].\cr}
\label{5.1}
\end{equation}
\end{lemma}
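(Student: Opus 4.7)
The plan is to derive differential inequalities for the $L_r$-norms of $\ro_{x'}$ and $\ro_t$ by differentiating the continuity equation $(\ref{2.1})_1$ and testing against appropriate powers of these derivatives, then close them via the observation that on inflow ($v\cdot\bar n=-d_1<0$) the boundary data is prescribed while on outflow ($v\cdot\bar n=d_2>0$) the boundary term carries the favourable sign. The assumption $v_3\ge d_*>0$ will enter only at the end, to control the stray $|\ro_{x_3}|_r$ that appears in the right-hand side but is not part of $X_r$.

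Concretely, I would first differentiate $(\ref{2.1})_1$ in $x_\alpha$, $\alpha=1,2$, obtaining $\pa_t\ro_{x_\alpha}+v\cdot\nb\ro_{x_\alpha}=-v_{,x_\alpha}\cdot\nb\ro$, multiply by $|\ro_{x_\alpha}|^{r-2}\ro_{x_\alpha}$ and integrate over $\Om$. Using $\divv v=0$, the transport contribution becomes the surface integral $\intop_Sv\cdot\bar n\,|\ro_{x_\alpha}|^r dS$; the boundary data $v\cdot\bar n|_{S_1}=0$, $v\cdot\bar n|_{S_2(-a)}=-d_1$, $v\cdot\bar n|_{S_2(a)}=d_2$ together with $\ro|_{S_2(-a)}=\ro_1$ (so that $\ro_{x_\alpha}|_{S_2(-a)}=\ro_{1,x_\alpha}$ for $\alpha=1,2$, since $\pa_{x_\alpha}$ is tangential there) reduce this to $\intop_{S_2(a)}d_2|\ro_{x_\alpha}|^r dS-\intop_{S_2(-a)}d_1|\ro_{1,x_\alpha}|^r dS$. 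Summing in $\alpha$, repeating the procedure with $\pa_t$ in place of $\pa_{x_\alpha}$ (where $\ro_t|_{S_2(-a)}=\ro_{1,t}$), and adding the identities gives
\begin{equation*}
\tfrac{d}{dt}X_r^r+r\intop_{S_2(a)}d_2\bigl(|\ro_{x'}|^r+|\ro_t|^r\bigr)dS\le r\intop_\Om\bigl(|v_{x'}|+|v_t|\bigr)|\nb\ro|\bigl(|\ro_{x'}|^{r-1}+|\ro_t|^{r-1}\bigr)dx+r\intop_{S_2(-a)}d_1\bigl(|\ro_{1,x'}|^r+|\ro_{1,t}|^r\bigr)dS,
\end{equation*}
and the outflow term on the left has a good sign, so I would simply drop it.

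By H\"older, the bulk term on the right is at most $c(|v_x|_\infty+|v_t|_\infty)|\nb\ro|_r X_r^{r-1}$. Here is the only place where $v_3\ge d_*$ is used: solving $(\ref{2.1})_1$ algebraically for the third partial yields $\ro_{x_3}=-(1/v_3)(\ro_t+v_\alpha\ro_{x_\alpha})$, whence $|\ro_{x_3}|_r\le d_*^{-1}(|\ro_t|_r+|v|_\infty|\ro_{x'}|_r)\le d_*^{-1}(1+|v|_\infty)X_r$, and therefore $|\nb\ro|_r\le\bigl(1+d_*^{-1}(1+|v|_\infty)\bigr)X_r$. Substituting, integrating in time and majorising the inflow surface term by $|d_1|_{\infty,S_2^t(-a)}(|\ro_{1,x'}|_{r,S_2^t(-a)}^r+|\ro_{1,t}|_{r,S_2^t(-a)}^r)$ produces
\begin{equation*}
X_r^r(t)\le X_r^r(0)+c|d_1|_{\infty,S_2^t(-a)}\bigl(|\ro_{1,x'}|_{r,S_2^t(-a)}^r+|\ro_{1,t}|_{r,S_2^t(-a)}^r\bigr)+c\intop_0^t\bigl(|v_x(\tau)|_\infty+|v_t(\tau)|_\infty\bigr)\bigl(1+\tfrac{1+|v|_\infty}{d_*}\bigr)X_r^r(\tau)\,d\tau.
\end{equation*}
Applying Gronwall and extracting an $r$-th root delivers (\ref{5.1}), with $\phi$ being essentially an exponential of a polynomial expression in $|v_x|_{\infty,1,\Om^t}+|v_t|_{\infty,1,\Om^t}$, scaled by a factor depending on $|v|_{\infty,\Om^t}$ (and implicitly on $1/d_*$, which is absorbed into $\phi$).

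The main obstacle is the appearance of $\ro_{x_3}$ in the right-hand side, a derivative that is \emph{not} part of $X_r$ and for which no independent boundary datum is available. The positivity of $v_3$ is precisely what resolves this: recovering $\ro_{x_3}$ algebraically from the continuity equation, rather than trying to propagate it, is the decisive move and requires the pointwise lower bound $v_3\ge d_*>0$ supplied by Thesis 2. Everything else is a routine $L_r$-energy / Gronwall argument for the transport equation, and the favourable sign of the outflow boundary integral is automatic from $d_2>0$.
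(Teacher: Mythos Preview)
Your proposal is correct and follows essentially the same route as the paper: differentiate the continuity equation in $x_\alpha$ ($\alpha=1,2$) and in $t$, test against $|\ro_{x_\alpha}|^{r-2}\ro_{x_\alpha}$ and $|\ro_t|^{r-2}\ro_t$, exploit the sign of the outflow boundary term, recover $\ro_{x_3}$ algebraically from $(\ref{2.1})_1$ via $v_3\ge d_*$, and close by Gronwall. The only cosmetic difference is that the paper tracks the velocity components a bit more finely (obtaining the factor $(1+|v'|_{\infty,\Om}^r)$ in the Gronwall coefficient, see (\ref{5.12})--(\ref{5.13})), whereas you bound $|\nabla\ro|_r$ by $X_r$ once and for all before applying H\"older; both lead to an increasing $\phi$ of the required arguments.
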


\begin{proof}
Differentiate $(\ref{2.1})_1$ with respect to $x_\alpha$, $\alpha=1,2$, multiply by\break $\ro_{x_\alpha}|\ro_{x_\alpha}|^{r-2}$, $r\ge 2$, and integrate over $\Om$. Then we obtain
\begin{equation}
{1\over r}{d\over dt}|\ro_{x_\alpha}|_{r,\Om}^r+{1\over r}\intop_\Om v\cdot\nabla|\ro_{x_\alpha}|^rdx+\intop_\Om v_{x_\alpha}\cdot\nabla\ro\ro_{x_\alpha}|\ro_{x_\alpha}|^{r-2}dx=0.
\label{5.2}
\end{equation}
Using that $v$ is divergence free, the second term in (\ref{5.2}) equals
$$
{1\over r}\intop_\Om\divv(v|\ro_{x_\alpha}|^r)dx=-{1\over r}\intop_{S_2(-a)}d_1|\ro_{1,x_\alpha}|^rdS_2+{1\over r}\intop_{S_2(a)}d_2|\ro_{x_\alpha}|^rdS_2.
$$
The last integral in (\ref{5.2}) has the form
$$
\sum_{\beta=1}^2\intop_\Om v_{\beta,x_\alpha}\ro_{x_\beta}\ro_{x_\alpha}|\ro_{x_\alpha}|^{r-2}dx+ \intop_\Om v_{3,x_\alpha}\ro_{x_3}\ro_{x_\alpha} |\ro_{x_\alpha}|^{r-2}dx.
$$
In view of the above expressions we derive from (\ref{5.2}) the inequality
\begin{equation}\eqal{
&{1\over r}{d\over dt}|\ro_{x_\alpha}|_{r,\Om}^r\le{1\over r}\intop_{S_2(-a)}d_1|\ro_{1,x_\alpha}|^rdS_2+\sum_{\beta=1}^2\intop_\Om |v_{\beta,x_\alpha}|\,|\ro_{x_\beta}|\,|\ro_{x_\alpha}|^{r-1}dx\cr
&\quad+\intop_\Om|v_{3,x_\alpha}|\,|\ro_{x_3}|\,|\ro_{x_\alpha}|^{r-1}dx,\cr}
\label{5.3}
\end{equation}
where $\alpha=1,2$.

Introduce the notation
$$\eqal{
&x'=(x_1,x_2),\ \ \ro_{x'}=(\ro_{x_1},\ro_{x_2}),\ \ |\ro_{x'}|=|\ro_{x_1}|+|\ro_{x_2}|,\ \ v'=(v_1,v_2),\cr
&v'_{x'}=(v_{1,x_1},v_{1,x_2},v_{2,x_1},v_{2,x_2}),\ \ |v'_{x'}|=|v_{1,x_1}|+|v_{1,x_2}|+|v_{2,x_1}|+|v_{2,x_2}|.\cr}
$$
Then (\ref{5.3}) can be written in the form
\begin{equation}\eqal{
&{1\over r}{d\over dt}|\ro_{x'}|_{r,\Om}^r\le{1\over r}\intop_{S_2(-a)} d_1|\ro_{1,x'}|^rdS_2+|v'_{x'}|_{\infty,\Om}|\ro_{x'}|_{r,\Om}^r\cr
&\quad+|v_{3,x'}|_{\infty,\Om} |\ro_{x_3}|_{r,\Om}|\ro_{x'}|_{r,\Om}^{r-1}.\cr}
\label{5.4}
\end{equation}
Differentiate $(\ref{2.1})_1$ with respect to $t$, multiply by $\ro_t|\ro_t|^{r-2}$ and integrate over $\Om$. Then we have
\begin{equation}\eqal{
&{1\over r}{d\over dt}|\ro_t|_{r,\Om}^r-{1\over r}\intop_{S_2(-a)}d_1|\ro_{1,t}|^rdS_2+{1\over r}\intop_{S_2(a)}d_2|\ro_t|^rdS_2\cr
&\quad+\intop_\Om v_t\cdot\nabla\ro\ro_t|\ro_t|^{r-2}dx=0.\cr}
\label{5.5}
\end{equation}
Simplifying, we write (\ref{5.5}) in the form
\begin{equation}\eqal{{1\over r}{d\over dt}|\ro_t|_{r,\Om}^r&\le{1\over r}\intop_{S_2(-a)}d_1|\ro_{1,t}|^rdS_2+ |v'_t|_{\infty,\Om}|\ro_{x'}|_{r,\Om}|\ro_t|_{r,\Om}^{r-1}\cr
&\quad+|v_{3,t}|_{\infty,\Om}|\ro_{x_3}|_{r,\Om}|\ro_t|_{r,\Om}^{r-1}.\cr}
\label{5.6}
\end{equation}
Inequalities (\ref{5.4}) and (\ref{5.6}) imply
$$\eqal{
&{1\over r}{d\over dt}(|\ro_{x'}|_{r,\Om}^r+|\ro_t|_{r,\Om}^r)\le{1\over r}\intop_{S_2(-a)}d_1(|\ro_{1,x'}|^r+|\ro_{1,t}|^r)dS_2\cr
&\quad+|v'_{x'}|_{\infty,\Om}|\ro_{x'}|_{r,\Om}^r+ |v_{3,x'}|_{\infty,\Om}|\ro_{x_3}|_{r,\Om}|\ro_{x'}|_{r,\Om}^{r-1}\cr
&\quad+|v'_t|_{\infty,\Om}|\ro_{x'}|_{r,\Om}|\ro_t|_{r,\Om}^{r-1}+ |v_{3,t}|_{\infty,\Om}|\ro_{x_3}|_{r,\Om}|\ro_t|_{r,\Om}^{r-1}.\cr}
$$
Simplifying, we get
$$\eqal{
&{1\over r}{d\over dt}(|\ro_{x'}|_{r,\Om}^r+|\ro_t|_{r,\Om}^r)\le{1\over r}\intop_{S_2(-a)}d_1(|\ro_{1,x}|^r+|\ro_{1,t}|^r)dS_2\cr
&\quad+|v_{x'}|_{\infty,\Om}\bigg({2r-1\over r}|\ro_{x'}|_{r,\Om}^r+{1\over r}|\ro_{x_3}|_{r,\Om}^r\bigg)\cr
&\quad+|v_t|_{\infty,\Om}\bigg({1\over r}|\ro_{x'}|_{r,\Om}^r+{1\over r}|\ro_{x_3}|_{r,\Om}^r+{2(r-1)\over r}|\ro_t|_{r,\Om}^r\bigg).\cr}
$$
Simplifying again, yields
\begin{equation}\eqal{
&{1\over r}{d\over dt}(|\ro_{x'}|_{r,\Om}^r+|\ro_t|_{r,\Om}^r)\le{1\over r}\intop_{S_2(-a)}d_1(|\ro_{1,x'}|^r+|\ro_{1,t}|^r)dS_2\cr
&\quad+2(|v_{x'}|_{\infty,\Om}+|v_t|_{\infty,\Om})\bigg( |\ro_{x'}|_{r,\Om}^r+{1\over r}|\ro_{x_3}|_{r,\Om}^r+{r-1\over r}|\ro_t|_{r,\Om}^r\bigg).\cr}
\label{5.7}
\end{equation}
We write the equation of continuity in the form
\begin{equation}
\ro_t+v_3\ro_{x_3}+v_\alpha\ro_{x_\alpha}=0,
\label{5.8}
\end{equation}
where $\alpha=1,2$ and the summation convention with respect to repeated $\alpha$ is assumed. We use that
\begin{equation}
v_3\ge d_*>0
\label{5.9}
\end{equation}
holds in whole domain $\Om$ and $d_*$ is a constant.

Using (\ref{5.9}) in (\ref{5.8}) yields
\begin{equation}
\ro_{x_3}=-{1\over v_3}(\ro_t+v_\alpha\ro_{x_\alpha}).
\label{5.10}
\end{equation}
Using (\ref{5.10}) in (\ref{5.7}) implies
\begin{equation}\eqal{
&{1\over r}{d\over dt}(|\ro_{x'}|_{r,\Om}^r+|\ro_t|_{r,\Om}^r)\le {1\over r}\intop_{S_2(-a)}d_1(|\ro_{1,x'}|^r+|\ro_{1,t}|^rdS_2\cr
&\quad+2(|v_{x'}|_{\infty,\Om}+|v_t|_{\infty,\Om})\bigg[ |\ro_{x'}|_{r,\Om}^r+{1\over rd_*^r}|\ro_t+v_\alpha\ro_{x_\alpha}|_{r,\Om}^r+{r-1\over r}|\ro_t|_{r,\Om}^r\bigg].\cr}
\label{5.11}
\end{equation}
Hence, (\ref{5.11}) takes the form
\begin{equation}\eqal{
&{1\over r}{d\over dt}(|\ro_x'|_{r,\Om}^r+|\ro_t|_{r,\Om}^r)\le{1\over r}\intop_{S_2(-a)}d_1(|\ro_{1,x'}|^r+|\ro_{1,t}|^r)dS_2\cr
&\quad+2(|v_x|_{\infty,\Om}+|v_t|_{\infty,\Om})\bigg[\bigg(1+{2^r\over rd_*^r}|v'|_{\infty,\Om}^r\bigg)|\ro_{x'}|_{r,\Om}^r\cr
&\quad+\bigg({2^r\over rd_*^r}+{r-1\over r}\bigg)|\ro_t|_{r,\Om}^r\bigg].\cr}
\label{5.12}
\end{equation}
For any finite $r$, (\ref{5.12}) implies
\begin{equation}\eqal{
{d\over dt}X_r^r&\le\intop_{S_2(-a)}d_1(|\ro_{1,x}|^r+|\ro_{1,t}|^r)dS_2\cr
&\quad+c(|v_x|_{\infty,\Om}+|v_t|_{\infty,\Om})(1+|v'|_{\infty,\Om}^r)X_r^r.\cr}
\label{5.13}
\end{equation}
Integrating with respect to time yields
\begin{equation}\eqal{
X_r^r(t)&\le\exp\bigg[c\intop_0^t(|v_x|_{\infty,\Om}+|v_t|_{\infty,\Om})(1+ |v'|_{\infty,\Om}^r)dt'\bigg]\cdot\cr
&\quad\cdot\bigg[\intop_0^tdt'\intop_{S_2(-a)}d_1(|\ro_{1,x'}|^r+ |\ro_{1,t}|^r)dS_2+X_r^r(0)\bigg].\cr}
\label{5.14}
\end{equation}
The above inequality implies (\ref{5.1}) and concludes the proof.
\end{proof}

\begin{remark}\label{r5.2}
From (\ref{5.10}) we have
\begin{equation}
|\ro_{x_3}|_{r,\infty,\Om^t}\le{1\over d_*}(1+|v'|_{\infty,\Om^t})X_r(t).
\label{5.15}
\end{equation}
\end{remark}

\begin{corollary}\label{c5.3}
Inequalities (\ref{5.1}) and (\ref{5.15}) imply
\begin{equation}\eqal{
&|\ro_x(t)|_{r,\Om}+|\ro_t(t)|_{r,\Om}\le\phi(1/d_*, |v|_{\infty,\Om^t},|v_x|_{\infty,1,\Om^t},|v_t|_{\infty,1,\Om^t})\cdot\cr
&\quad\cdot[|\ro_{1,x'}|_{r,S_2^t(-a)}+|\ro_{1,t}|_{r,S_2^t(-a)}+ |\ro_{x'}(0)|_{r,\Om}+|\ro_t(0)|_{r,\Om}].\cr}
\label{5.16}
\end{equation}
\end{corollary}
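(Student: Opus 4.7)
The plan is to prove this corollary by directly combining the two displayed inequalities referenced in its statement, together with a trivial triangle-type split of $|\ro_x|_{r,\Om}$ into a tangential and a vertical part. Since Lemma \ref{l5.1} already controls $X_r(t)=(|\ro_{x'}(t)|_{r,\Om}^r+|\ro_t(t)|_{r,\Om}^r)^{1/r}$ and Remark \ref{r5.2} already controls $|\ro_{x_3}|_{r,\infty,\Om^t}$ in terms of $X_r(t)$, no new differential inequality needs to be integrated; the work is purely algebraic.

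First I would write $|\ro_x(t)|_{r,\Om}\le|\ro_{x'}(t)|_{r,\Om}+|\ro_{x_3}(t)|_{r,\Om}$ and observe that, by the elementary inequality $a+b\le 2^{1-1/r}(a^r+b^r)^{1/r}$ applied to $a=|\ro_{x'}(t)|_{r,\Om}$ and $b=|\ro_t(t)|_{r,\Om}$, one has $|\ro_{x'}(t)|_{r,\Om}+|\ro_t(t)|_{r,\Om}\le 2^{1-1/r}X_r(t)$. Next I would insert (\ref{5.15}) to obtain $|\ro_{x_3}(t)|_{r,\Om}\le|\ro_{x_3}|_{r,\infty,\Om^t}\le\frac{1}{d_*}(1+|v'|_{\infty,\Om^t})X_r(t)$, noting that this step is where the positivity $v_3\ge d_*>0$ (Thesis 2) is used, via the rewriting (\ref{5.10}) of the continuity equation.

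Combining these two bounds gives $|\ro_x(t)|_{r,\Om}+|\ro_t(t)|_{r,\Om}\le\phi_0(1/d_*,|v|_{\infty,\Om^t})X_r(t)$ for an elementary increasing function $\phi_0$. Finally I would invoke (\ref{5.1}) to estimate $X_r(t)$, absorbing the factor $|d_1|_{\infty,S_2^t(-a)}$ (finite by hypothesis of Lemma \ref{l5.1}) and the dependencies on $|v|_{\infty,\Om^t}$, $|v_x|_{\infty,1,\Om^t}$, $|v_t|_{\infty,1,\Om^t}$ into a new increasing $\phi$; the initial value term is then bounded via $X_r(0)\le|\ro_{x'}(0)|_{r,\Om}+|\ro_t(0)|_{r,\Om}$, yielding exactly (\ref{5.16}).

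Since each step is a direct substitution, I do not expect any genuine obstacle. The only small bookkeeping point is that the prefactor $|d_1|_{\infty,S_2^t(-a)}$ from (\ref{5.1}) has been silently absorbed into $\phi$ in the statement of the corollary; this is legitimate because $d_1$ is part of the data and its $L_\infty$ norm is assumed finite in the hypotheses of Lemma \ref{l5.1}, so it can be treated as one more (non-small) data quantity on which $\phi$ is allowed to depend.
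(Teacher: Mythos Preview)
Your proposal is correct and matches the paper's approach: the paper gives no explicit proof for this corollary, simply asserting that (\ref{5.1}) and (\ref{5.15}) imply (\ref{5.16}), which is precisely the direct algebraic combination you have written out. Your observation about the silent absorption of $|d_1|_{\infty,S_2^t(-a)}$ into $\phi$ is also accurate and consistent with the paper's stated convention that $\phi$ may depend on norms of data not assumed small.
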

\eject

\section{A lower bound for $v_3$}\label{s7}

To prove Lemma \ref{l5.1} we have to know that there exists a positive constant $d_*$ such that
\begin{equation}
v_3(x,t)\ge d_*.
\label{7.1}
\end{equation}
We can expect that (\ref{7.1}) holds because
$$
v_3|_{x_3=-a}=d_1>0,\quad v_3|_{x_3=a}=d_2>0.
$$
In this Section we prove (\ref{7.1}). From (\ref{1.1}) it follows the following problem for $v_3$,
\begin{equation}\eqal{
&\ro v_{3,t}+\ro v\cdot\nabla v_3-\nu\Delta v_3+q=\ro f_3,\cr
&v_3|_{x_3=a_\alpha}=d_\alpha,\ \ \alpha=1,2,\cr
&v_3|_{t=0}=v_3(0),\cr}
\label{7.2}
\end{equation}
where $q=p_{x_3}$.

To find (\ref{7.1}) for solutions to (\ref{7.2}) we need to know that $\ro$ is a solution to the problem
\begin{equation}\eqal{
&\ro_t+v\cdot\nabla\ro=0\quad {\rm in}\  \Om^T, \cr
&\ro=\ro_1, \quad {\rm on}\ S_2^T,\cr
& \ro|_{t=0}=\ro_0.\cr}
\label{7.3}
\end{equation}

\begin{lemma}\label{l7.1}
Assume that $\bar d_0\ge v_3(0)\ge d_0$, where $d_0$, $\bar d_0$ are positive constants and assume that $d_i\ge d_\infty$, $i=1,2$, where $d_\infty$ is also a positive constant.\\
Assume that $p_{x_3},f_3\in L_1(0,t;L_\infty(\Omega))$. Then there exists
$$
d_*=\phi\bigg(\exp\bigg[-{1\over\varrho_*}(|p_{x_3}|_{\infty,1,\Omega^t}+ |f_3|_{\infty,1,\Omega^t})\bigg],{d_{\infty}d_0\over 3\bar d_0+d_{\infty}}\bigg)>0
$$
such that
\begin{equation}
v_{3}\ge d_*.
\label{7.4}
\end{equation}
\end{lemma}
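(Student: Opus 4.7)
The approach I propose is to construct a positive subsolution $\underline v(x,t)$ of the linear parabolic problem~(\ref{7.2}) satisfied by $v_3$ and to apply the comparison principle for mixed Dirichlet--Robin boundary conditions.

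First I would rewrite the slip condition on $S_1$ as a Robin condition purely for $v_3$: taking $\bar\tau_2=\bar e_3$ in $\nu\bar n\cdot\D(v)\cdot\bar\tau_2+\gamma v\cdot\bar\tau_2=0$ yields $\nu n_\beta(v_{\beta,x_3}+v_{3,x_\beta})+\gamma v_3=0$ on $S_1$; since $S_1$ is cylindrical and $\bar n$ is independent of $x_3$, differentiating the impermeability $v\cdot\bar n=0$ tangentially in $x_3$ gives $n_\beta v_{\beta,x_3}=0$, so the condition reduces to $\nu\bar n'\cdot\nb' v_3+\gamma v_3=0$ on $S_1^T$ with $\bar n'=(n_1,n_2,0)$. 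I would then pick a barrier $\zeta\in C^2(\bar\Om)$ as the solution of the auxiliary elliptic mixed problem $\Delta\zeta=0$ in $\Om$, $\zeta|_{S_2}=1$, $\nu\bar n'\cdot\nb'\zeta+\gamma\zeta=0$ on $S_1$; the strong maximum principle and Hopf's lemma then give $c_0\le\zeta\le 1$ on $\bar\Om$ with $c_0\in(0,1]$ depending only on $\Om$ and $\gamma/\nu$. Seeking the subsolution in the separated form $\underline v(x,t)=A(t)\zeta(x)$, the Robin condition matches that of $v_3$ identically, the $S_2$ and initial compatibilities reduce to $A(t)\le d_\infty$ and $A(0)\le d_0$, and substituting into~(\ref{7.2})$_1$ (using $\Delta\zeta=0$) reduces the subsolution inequality to the scalar first-order linear ODE
\[
\ro\zeta\,A'(t)+A(t)\,\ro v\cdot\nb\zeta\le\ro f_3-q,
\]
which I would enforce by the majorant $A'+BA\le -(\ro_*c_0)^{-1}(|q(t)|_{\infty,\Om}+\ro^*|f_3(t)|_{\infty,\Om})$ with bounded $B=B(\|v\|_\infty,\|\nb\zeta\|_\infty,c_0)$. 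Initializing with $A(0)=\tfrac{d_\infty d_0}{3\bar d_0+d_\infty}$ (a constant strictly below $\min(d_0,d_\infty)$ whose specific form arises from a quantitative Harnack-type comparison between the barrier profile $\zeta$ and the data range $[d_0,\bar d_0]$ of $v_3(0)$) and integrating the ODE yields the lower envelope $A(t)\ge A(0)\exp[-\ro_*^{-1}(|p_{x_3}|_{\infty,1,\Om^t}+|f_3|_{\infty,1,\Om^t})]$ after absorbing the bounded factors into the increasing function $\phi$.

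The comparison step is then straightforward: $w:=v_3-\underline v$ satisfies $\ro w_t+\ro v\cdot\nb w-\nu\Delta w\ge 0$ in $\Om^T$, the homogeneous Robin condition on $S_1^T$, $w\ge 0$ on $S_2^T$, and $w|_{t=0}\ge 0$. Testing with $-w_-$ and mimicking the integration-by-parts and continuity-equation computation from Lemma~\ref{l3.2} (the Robin boundary term $\gamma\int_{S_1}w_-^2$ and the gradient energy $\nu\int|\nb w_-|^2$ both appear with favorable sign on the left) yields $\tfrac{d}{dt}\|\sqrt\ro\,w_-\|_{L_2(\Om)}^2\le 0$; hence $w_-\equiv 0$, so $v_3\ge\underline v\ge c_0 A(t)\ge d_*>0$ with $d_*$ of the stated form.

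The main obstacle I anticipate is twofold. First, constructing the barrier $\zeta$ with a strictly positive uniform lower bound $c_0$ in a cylindrical domain with edges $L_1,L_2=\bar S_1\cap\bar S_2$ requires the edge compatibility assumptions of the Main Theorem together with a careful elliptic regularity argument near the corners. Second, the linear ODE for $A(t)$ has a genuinely positive solution only under some control of the forcing integrals, so an honest proof either refines the ansatz (e.g.\ $A(t)\zeta(x)\exp(-\mu(t))$, converting the ODE to one of the form $\psi'+B\psi=-G/(A c_0)$ in $\psi=e^{-\mu}$) or uses Hopf's lemma directly to close the gap; this refinement is also what produces the precise combinatorial constant $\tfrac{d_\infty d_0}{3\bar d_0+d_\infty}$ instead of the cruder $\min(d_0,d_\infty)$.
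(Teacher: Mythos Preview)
Your comparison--principle strategy is different from the paper's, and as written it contains a genuine gap in the subsolution step.

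The paper does not construct a barrier at all.  It multiplies $(\ref{7.2})_1$ by $v_3|v_3|^{-s-1}$ and integrates over $\Om$.  The point is that this choice of multiplier turns the convection term $\ro v\cdot\nb v_3$ into $\frac{1}{1-s}\int_\Om\divv(\ro v\,|v_3|^{-s+1})$, a pure boundary integral on $S_2$ (via $\divv v=0$ and $(\ref{2.1})$), so the full velocity $v$ enters only through finite boundary integrals whose $1/s$-th power tends to $1$ as $s\to\infty$.  Likewise $\int_\Om q\,v_3^{-s}$ and $\int_\Om \ro f_3\,v_3^{-s}$ are estimated by $\frac{1}{\ro_* d_*}(|q|_{\infty,\Om}+|f_3|_{\infty,\Om})\int_\Om\ro v_3^{-s+1}$, i.e.\ \emph{multiplicatively} against $X^s=\int_\Om\ro v_3^{-s+1}$ itself.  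Gronwall then gives $X^s(t)\le e^{\int\alpha}[\,\cdots\,]$, and passing $s\to\infty$ yields $\sup|1/v_3|\le e^{a(t)/d_*}(3/d_\infty+1/d_0)$, whence the implicit equation $d_*=b\,e^{-a(t)/d_*}$ with $b=\frac{d_0d_\infty}{3d_0+d_\infty}$.  The factor $3$ is simply the count of three surviving boundary terms, not a Harnack constant.

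Your approach breaks at the ODE for $A$.  For $\underline v=A(t)\zeta(x)$ to be a subsolution you need, pointwise,
\[
A'(t)\ \le\ \frac{\ro f_3-q}{\ro\zeta}-A\,\frac{v\cdot\nb\zeta}{\zeta},
\]
so the sufficient scalar inequality is $A'+B(t)A\le -G(t)$ with $G(t)=(\ro_*c_0)^{-1}(|q(t)|_{\infty,\Om}+\ro^*|f_3(t)|_{\infty,\Om})\ge 0$ and $B(t)\le c\,\|v(t)\|_{L_\infty(\Om)}$.  Two problems follow.  First, the forcing $-G$ is \emph{additive} and nonpositive, so the solution $A(t)=e^{-\int_0^tB}A(0)-\int_0^t e^{-\int_s^tB}G(s)\,ds$ can hit zero in finite time; your claimed lower bound $A(t)\ge A(0)\exp[-\ro_*^{-1}(|p_{x_3}|_{\infty,1}+|f_3|_{\infty,1})]$ does not follow from this ODE, and the exponential ansatz $\underline v=A_0\zeta e^{-\mu}$ you propose reduces (with $\psi=e^{-\mu}$) to exactly the same linear ODE $\psi'+B\psi=-G/A_0$, so it cannot repair the sign.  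Second, even when $A$ stays positive, the resulting $d_*$ depends on $\int_0^tB\sim\|v\|_{L_1(0,t;L_\infty(\Om))}$ through the factor $e^{-\int B}$, whereas the lemma's $d_*$ depends only on $|p_{x_3}|_{\infty,1,\Om^t}$, $|f_3|_{\infty,1,\Om^t}$ and the boundary/initial data; the hypothesis list does not include $v\in L_\infty(\Om^t)$.  The paper's multiplication by $v_3^{-s}$ is precisely the device that converts the $q,f_3$ contributions from additive to multiplicative (hence exponentiable) and eliminates the bulk dependence on $v$.
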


\begin{proof}
Multiply $(\ref{7.2})_1$ by $\displaystyle{{v_3\over|v_3|^{s+1}}}$ and integrate over $\Om$. Then we have
\begin{equation}\eqal{
&\intop_\Om\ro v_{3,t}{v_3\over|v_3|^{s+1}}dx+\intop_\Om\ro v\cdot\nabla v_3{v_3\over|v_3|^{s+1}}dx\cr
&\quad-\nu\intop_\Om\Delta v_3{v_3\over|v_3|^{s+1}}dx+\intop_\Om q{v_3\over|v_3|^{s+1}}dx=\varrho\intop_\Om f_3{v_3\over|v_3|^{s+1}}dx.\cr}
\label{7.5}
\end{equation}
Now, we examine the particular terms in (\ref{7.5}),
$$\eqal{
&v_{3,t}{v_3\over|v_3|^{s+1}}={1\over 2}{\partial_t|v_3|^2\over|v_3|^{s+1}}= \partial_t|v_3|{1\over|v_3|^s}={1\over-s+1}\partial_t|v_3|^{-s+1},\cr
&\nabla v_3{v_3\over|v_3|^{s+1}}={1\over-s+1}\nabla|v_3|^{-s+1}.\cr}
$$
Therefore, the sum of the first two terms in the l.h.s. of (\ref{7.5}) takes the form
$$
I_1={1\over-s+1}\intop_\Om(\ro\partial_t|v_3|^{-s+1}+\ro v\cdot\nabla|v_3|^{-s+1})dx.
$$
Using the equation of continuity
$$
\ro_{t}+\divv(v\ro)=0
$$
in $I_1$ yields
$$
I_1={1\over-s+1}{d\over dt}\intop_\Om\ro|v_3|^{-s+1}dx+{1\over-s+1}\intop_\Om\divv(\ro v|v_3|^{-s+1})dx.
$$
Since $v\cdot\bar n|_{S_1}=0,$ the second term in $I_1$ equals
$$\eqal{
&{1\over-s+1}\intop_{S_2(-a)}\ro v\cdot\bar n|v_3|^{-s+1}dS_2+{1\over-s+1}\intop_{S_2(a)}\ro v\cdot\bar n|v_3|^{-s+1}dS_2\cr
&=-{1\over-s+1}\intop_{S_2(-a)}\ro_1d_1|v_3|^{-s+1}dS_2+{1\over-s+1} \intop_{S_2(a)}\ro d_2|v_3|^{-s+1}dS_2,\cr}
$$
where $d_i\ge d_{\infty}>0$, $i=1,2$.

The third term on the l.h.s. of (\ref{7.5}) takes the form
$$\eqal{
&-\nu\intop_\Om\divv(\nabla v_3v_3|v_3|^{-s-1})dx+\nu\intop_\Om|\nabla v_3|^2|v_3|^{-s-1}dx\cr
&\quad+\nu\intop_\Om v_3\nabla v_3\nabla|v_3|^{-s-1}dx\equiv I_2+I_3+I_4.\cr}
$$
In view of assumptions integral $I_2$ equals
$$\eqal{
I_2&=-\nu\intop_\Om\divv(\nabla v_3v_3^{-s})dx=-{\nu\over-s+1}\intop_\Om \divv(\nabla v_3^{-s+1})dx\cr
&=-{\nu\over-s+1}\intop_S\bar n\cdot\nabla v_3^{-s+1}dS=-\nu\intop_Sv_3^{-s}\bar n\cdot\nabla v_3dS\cr
&=\nu\intop_{S_2(-a)}d_1^{-s}v_{3,x_3}dS_2-\nu\intop_{S_2(a)}d_2^{-s}v_{3,x_3} dS_2-\nu\intop_{S_1}v_3^{-s}\bar n\cdot\nabla v_3dS_1\cr
&\equiv J_1+J_2+J_3.\cr}
$$
Using that $v$ is divergence free
$$\eqal{
J_1&=-\nu\intop_{S_2(-a)}d_1^{-s}v_{\alpha,x_\alpha}dS_2=-\nu\intop_{S_2(-a)} (d_1^{-s}v_\alpha)_{,x_\alpha}dS_2\cr
&\quad-\nu s\intop_{S_2(-a)}d_1^{-s-1}d_{1,x_\alpha}v_\alpha dS_2=-\nu\intop_{\partial S_2(-a)}d_1^{-s}v_\alpha \cdot\bar{n}v_\alpha|_{S_1}dL_1\cr
&\quad-\nu s\intop_{S_2(-a)}d_1^{-s-1}d_{1,x_\alpha}v_\alpha dS_2,\cr}
$$
where the first integral vanishes because $L_1\subset S_1$ and $v\cdot\bar n|_{S_1}=0$.

Similarly,
$$
J_2=\nu s\intop_{S_2(a)}d_2^{-s-1}d_{2,x_\alpha}v_\alpha dS_2.
$$
To examine $J_3$ we recall that condition $(\ref{1.1})_5$ for $\bar\tau_\alpha=\bar e_3$ on $S_1$ has the form
$$
\nu v_{3,n}+\gamma v_3=0.
$$
Then
$$
J_3=\gamma\intop_{S_1}v_3^{-s+1}dS_1.
$$
Summarizing,
$$\eqal{
I_2&=-\nu s\intop_{S_2(-a)}d_1^{-s-1}d_{1,x_\alpha}v_\alpha dS_2+\nu s\intop_{S_2(a)}d_2^{-s-1}d_{2,x_\alpha}v_\alpha dS_2\cr
&\quad+\gamma\intop_{S_1}v_3^{-s+1}dS_1.\cr}
$$
Next, we calculate
$$\eqal{
I_3&=\nu\intop_\Om|\nabla v_3|^2v_3^{-s-1}dx=\nu\intop_\Om|v_3^{-{s\over 2}-1/2}\nabla v_3|^2dx\cr
&={4\nu\over(-s+1)^2}\intop_\Om|\nabla v_3^{-s/2+1/2}|^2dx\cr}
$$
and
$$
I_4=-(s+1)\nu\intop_\Om|\nabla v_3|^2v_3^{-s-1}dx=-{4\nu(s+1)\over(-s+1)^2}\intop_\Om|\nabla v_3^{-s/2+1/2}|^2dx.
$$
Hence,
$$
I_3+I_4=-{4\nu s\over(-s+1)^2}\intop_\Om|\nabla v_3^{-s/2+1/2}|^2dx.
$$
Using the above results in (\ref{7.5}) yields
\begin{equation}\eqal{
&{1\over-s+1}{d\over dt}\intop_\Om\ro v_3^{-s+1}dx-{1\over-s+1}\intop_{S_2(-a)}\ro_1d_1^{-s+2}dS_2\cr
&\quad+{1\over-s+1}\intop_{S_2(a)}\ro d_2^{-s+2}dS_2-\nu s\intop_{S_2(-a)}d_1^{-s-1}d_{1,x_\alpha}v_\alpha dS_2\cr
&\quad+\nu s\intop_{S_2(a)}d_2^{-s-1}d_{2,x_\alpha}v_\alpha dS_2-{4\nu s\over(-s+1)^2}\intop_\Om|\nabla v_3^{-s/2+1/2}|^2dx\cr
&\quad+\intop_\Om qv_3^{-s}dx=\varrho \intop_\Om f_3v_3^{-s}dx.\cr}
\label{7.6}
\end{equation}
In view of the assumptions of this lemma the last term on the l.h.s. of (\ref{7.6}) is bounded by
\begin{equation}
|q|_{\infty,\Om}{1\over\ro_*d_*}\intop_\Om\ro v_3^{-s+1}dx
\label{7.7}
\end{equation}
and the r.h.s. term by
\begin{equation}
|f_3|_{\infty,\Om}{1\over\ro_*d_*}\intop_\Om\ro v_3^{-s+1}dx,
\label{7.8}
\end{equation}
where $d_*=\min_\Omega v_3$. The existence of this quantity is not proved yet. It will be established of the end of this proof.

We introduce the notation
\begin{equation}
X^s=\intop_\Om\ro v_3^{-s+1}dx,
\label{7.9}
\end{equation}
then we multiply (\ref{7.6}) by $-s+1$ and exploit estimates (\ref{7.7}) and (\ref{7.8}) to conclude
\begin{equation}\eqal{
&{d\over dt}X^s-{4\nu s\over-s+1}\intop_\Om|\nabla v_3^{-s/2+1/2}|^2dx\cr
&\le(|q|_{\infty,\Om}+|f_3|_{\infty,\Om})(s-1){1\over\ro_*d_*}X^s+\intop_{S_2(-a)}\ro_1d_1^{-s+2}dS_2\cr
&\quad-\intop_{S_2(a)}\ro d_2^{-s+2}dS_2-\nu s(s-1)\intop_{S_2(-a)}d_1^{-s-1}d_{1,x_\alpha}v_\alpha dS_2\cr
&\quad+\nu s(s-1)\intop_{S_2(a)}d_2^{-s-1}d_{2,x_\alpha}v_\alpha dS_2.\cr}
\label{7.10}
\end{equation}
Let
$$
\alpha(t)=(|q(t)|_{\infty,\Om}+|f_3(t)|_{\infty,\Om}){s-1\over d_*\ro_*}.
$$
Then (\ref{7.10}) implies
\begin{equation}\eqal{
&{d\over dt}\bigg(X^s\exp\bigg(-\intop_0^t\alpha(t')dt'\bigg)\bigg)\cr
&\le\bigg[ \intop_{S_2(-a)}\ro_1d_1^{-s+2}dS_2+\nu s(s-1)\bigg(\intop_{S_2(-a)}d_1^{-s-1}|d_{1,x'}|\,|v'|dS_2\cr
&\quad+\intop_{S_2(a)}d_2^{-s-1}|d_{2,x'}|\,|v'|dS_2\bigg)\bigg]\exp\bigg( -\intop_0^t\alpha(t')dt'\bigg),\cr}
\label{7.11}
\end{equation}
where $d_{x'}=(d_{x_1},d_{x_2})$, $v'=(v_1,v_2)$.

Integrating (\ref{7.11}) with respect to time yields
\begin{equation}\eqal{
X^s&\le\exp\bigg(\intop_0^t\alpha(t')dt'\bigg)\intop_0^t\bigg[\intop_{S_2(-a)} \ro_1d_1^{-s+2}dS_2\cr
&\quad+\nu s(s-1)\bigg(\intop_{S_2(-a)}d_1^{-s-1}|d_{1,x'}|\,|v'|dS_2+\intop_{S_2(a)} d_2^{-s-1}|d_{2,s'}|\,|v'|dS_2\bigg)\bigg]\cr
&\quad\cdot\exp\bigg(-\intop_0^{t'}\alpha(t'')dt''\bigg)dt'+\exp\bigg(\intop_0^t \alpha(t')dt'\bigg)X^s(0).\cr}
\label{7.12}
\end{equation}
Hence
\begin{equation}\eqal{
X&\le\exp\bigg({1\over s}\intop_0^t\alpha(t')dt'\bigg){1\over d_{\infty}}\bigg[\bigg(\intop_{S_2^t(-a)}\ro_1d_1^2dS_2dt'\bigg)^{1/s}\cr
&\quad+(\nu s(s-1))^{1/s}\bigg(\bigg(\intop_{S_2^t(-a)}d_1^{-1}|d_{1,x'}|\, |v'|dS_2dt'\bigg)^{1/s}\cr
&\quad+\bigg(\intop_{S_2^t(a)}d_2^{-1}|d_{2,x'}|\,|v'|dS_2dt' \bigg)^{1/s}\bigg)\bigg]\cr &\quad +\exp\bigg({1\over s}\intop_0^t\alpha(t')dt'\bigg)X(0).\cr}
\label{7.13}
\end{equation}
Passing with $s\to\infty$ implies
$$
\sup_{x\in\Om}\left|{1\over v_3}\right|\le\exp\bigg[{1\over d_*\ro_*}(|q|_{\infty,1,\Om^t}+ |f_3|_{\infty,1,\Om^t})\bigg]\bigg(\frac{3}{d_{\infty}}+\left|{1\over v_3(0)}\right|_{\infty,\Om}\bigg).
$$
Hence, in view of properties of the local solution, we have
\begin{equation}\eqal{
&v_3\ge\inf_{x\in\Omega} v_3(x)\cr
&\ge\exp\bigg(-\bigg[{1\over d_{*\varrho_*}}(|q|_{\infty,1,\Omega^t}+ |f_3|_{\infty,1,\Omega^t})\bigg]\bigg){d_\infty\inf_{x\in\Omega}|v_3(0)|\over 3\inf_{x\in\Omega}|v_3(0)|+d_\infty}\cr
&\ge\exp\bigg(-\bigg[{1\over d_*\varrho_*}(|q|_{\infty,1,\Omega^t}+ |f_3|_{\infty,1,\Omega^t})\bigg]\bigg){d_\infty d_0\over 3 d_0+d_\infty}\cr
&\equiv d_*.\cr}
\label{7.14}
\end{equation}
We introduce the notation
$$\eqal{
&a(t)={1\over\varrho_*}(|q|_{\infty,1,\Omega^t}+|f_3|_{\infty,1,\Omega^t}),\cr
&b={d_0d_{\infty}\over 3d_0+d_{\infty}}.\cr}
$$

It is clear that $a(t)$ and $b$ are positive and $a(t)$ can be estimated by
$$
a(t)\le{1\over\varrho_*}t^{1/\sigma'}(|q|_{\infty,\sigma,\Omega^t}+ |f|_{\infty,\sigma,\Omega^t}),\quad {1\over\sigma}+{1\over\sigma'}=1.
$$
Since $\sigma>1$, $a(t)$ is small for small $t$.

Then (\ref{7.14}) implies the following equation for $d_*$,
\begin{equation}
\exp\bigg(-{1\over d_*}a(t)\bigg)b=d_*.
\label{7.15}
\end{equation}
Therefore
\begin{equation}
\exp(-a(t))=\bigg({d_*\over b}\bigg)^{d_*}
\label{7.16}
\end{equation}
The function
$$
h(x)=\bigg({x\over b}\bigg)^x
$$
equals to 1 for $x=0$ and $x=b$.

 In the interval $(0,b),$ the function $h(x)<1$ and  attains minimum at $x=be^{-b}=b_*.$ Moreover, ${dh\over dx}<0$ for $x\in(0,b_*)$ and ${dh\over dx}>0$ for $x\in(b_*,b)$.

At the minimum
$$
h(be^{-b})=e^{-b^2e^{-b}}\equiv h_*.
$$
Since (\ref{7.16}) holds we have the restriction
$$
e^{-b^2e^{-b}}\le e^{-a(t)}.
$$
Hence
\begin{equation}
e^ba(t)\le b^2
\label{7.17}
\end{equation}
Therefore, (\ref{7.17}) implies that $t$ must be sufficiently small. We have to emphasize that considerations in this Section has been made for a local solution which can be extended in time if the global estimate holds.

Since $h_*<e^{-a(t)}<1,$ there exists a point $x_*\in (0,b)$ such that
$$
\bigg({x_*\over b}\bigg)^{x_*}=e^{-a(t)}.
$$
Hence $x_*=d_*$, which is a solution to (\ref{7.15}).

Moreover, $$d_*=\phi(e^{-a(t)},b) = \phi\bigg(\exp\left[-\frac{|q|_{\infty,1,\Omega^t}+|f_3|_{\infty,1,\Omega^t}}{\varrho_*}\right],
{d_0d_{\infty}\over 3\bar d_0+d_{\infty}}\bigg).$$
Thus, we need to find estimates for $\|q\|_{L_1(0,t;L_\infty(\Om))}= \|p_{x_3}\|_{L_1(0,t;L_\infty(\Om))}$ to conclude the proof and we accomplish this through the global estimate (\ref{1.8}), established in Section~\ref{s11}.
\end{proof}

\section{Estimates for $\chi$ - the third component\\ of vorticity}\label{s8}

We consider system (\ref{2.8}) for function $\chi$, i.e. the third component of vorticity to get the energy type estimate.  For this purpose, we need to derive energy estimate for problem with homogenous Dirichlet boundary conditions on $S_1$. Thus, we introduce the function $\tilde\chi$ as a solution to the problem
\begin{equation}\eqal{
&\tilde\chi_t-\nu\Delta\tilde\chi=0\quad &{\rm in}\ \ \Om^T,\cr
&\tilde\chi=\chi_*\quad &{\rm on}\ \ S_1^T,\cr
&\tilde\chi_{,x_3}=0\quad &{\rm on}\ \ S_2^T,\cr
&\tilde\chi|_{t=0}=\chi(0)\quad &{\rm in}\ \ \Om,\cr}
\label{8.1}
\end{equation}
where $\chi(0)=v_2(0)_{,x_1}-v_1(0)_{,x_2}$ and $\chi_*$ is described by $(\ref{2.8})_2$.

To show the existence of solutions to (\ref{8.1}) and derive appropriate estimates we need the following compatibility conditions
\begin{equation}
\chi(0)|_{S_1}=\chi_*|_{t=0}
\label{8.2}
\end{equation}
and
\begin{equation}
\chi_{*,x_3}=0\quad {\rm on}\ \ \bar S_1\cap\bar S_2.
\label{8.3}
\end{equation}
Calculating (\ref{8.2}) explicitly we have
\begin{equation}\eqal{
&v_{2,x_1}(0)-v_{1,x_2}(0)=v_i(0)(n_{i,x_j}\tau_{1j}+\tau_{1i,x_j}n_j)\cr
&\quad+v(0)\cdot\bar\tau_1(\tau_{12,x_1}- \tau_{11,x_2})+{\gamma\over\nu}v_j(0)\tau_{1j}.\cr}
\label{8.4}
\end{equation}
To satisfy (\ref{8.3}) we differentiate $\chi_*$ with respect to $x_3$. It is possible because $S_1$ is the part of the boundary of $\Om$ which is parallel to the $x_3$-axis. Moreover, vectors $\bar n|_{S_1}$ and $\bar\tau_1|_{S_1}$ do not depend on $x_3$. Therefore, we need to differentiate the components of velocity only. In $\chi_*$ only two-components of velocity $v_1$ and $v_2$ appear. Differentiating them with respect to $x_3$, projecting on $S_2$, and using $(\ref{2.6})_{4,5}$, we obtain the compatibility condition (\ref{8.3}) in the form
\begin{equation}\eqal{
\chi_{*,x_3}|_{\bar S_1\cap\bar S_2}&=-\sum_{i,j=1}^2\bigg[d_{x_i}(n_{i,x_j}\tau_{1j}+\tau_{1i,x_j}n_j)+ {\gamma\over\nu}d_{x_j}\tau_{1j}\cr
&\quad+d_{x_i}\tau_{1i}(\tau_{12,x_1}-\tau_{11,x_2})\bigg]=0.\cr}
\label{8.5}
\end{equation}
Then, we can introduce the new function $\chi'=\chi-\tilde\chi$, which is a solution to the problem
\begin{equation}\eqal{
&\ro(\chi'_t+v\cdot\nabla\chi')-\nu\Delta\chi'=\ro F+\ro h_3\chi-\ro v\cdot\nabla\tilde\chi\cr
&\quad-\ro(h_2v_{3,x_1}-h_1v_{3,x_2})+\ro_{x_1}(v_{2,t}+v\cdot\nabla v_2+f_2)\cr
&\quad-\ro_{x_2}(v_{1,t}+v\cdot\nabla v_1+f_1)\quad &{\rm in}\ \ \Om^T,\cr
&\chi'=0\quad &{\rm on}\ \ S_1^T,\cr
&\chi'_{,x_3}=0\quad &{\rm on}\ \ S_2^T,\cr
&\chi'|_{t=0}=0\quad &{\rm in}\ \ \Om.\cr}
\label{8.6}
\end{equation}
First, we describe properties of solutions to problem (\ref{8.1}).

\begin{lemma}(see Lemma 4.7 from \cite{RZ1})\label{l8.1}
For solutions to problem (\ref{8.1}) we have
\begin{equation}\eqal{
|\tilde\chi|_{3,\infty,\Om^t}&\le c|\chi_*|_{3,\infty,S_1^t}+|\chi(0)|_{3,\Om}\cr
&\le c\|v'\|_{5/6,2,\infty,\Om^t}+|\chi(0)|_{3,\Om},\cr}
\label{8.7}
\end{equation}
\begin{equation}\eqal{
\|\tilde\chi\|_{1,2,\Om^t}&\le c(\|\chi_*\|_{W_2^{1/2,1/4}(S_1^t)}+|\chi(0)|_{2,\Om})\cr
&\le c(\|v'\|_{W_2^{1/2,1/4}(S_1^t)}+|\chi(0)|_{2,\Om})\cr
&\le c(\|v'\|_{W_2^{1,1/2}(\Om^t)}+|\chi(0)|_{2,\Om}).\cr}
\label{8.8}
\end{equation}
\end{lemma}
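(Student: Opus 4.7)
The plan is to treat (\ref{8.1}) as a linear, homogeneous heat equation with mixed boundary conditions (inhomogeneous Dirichlet on $S_1$, homogeneous Neumann on $S_2$) and standard initial data, and then apply standard parabolic regularity in each of the two scales. Both chains of inequalities then close by combining (i) a parabolic a priori estimate in the appropriate norm and (ii) trace/Sobolev embeddings that let us bound $\chi_*$ by norms of $v'|_{S_1}$ and then by interior norms of $v'$. Throughout, I would use the compatibility conditions (\ref{8.4})--(\ref{8.5}) to guarantee that the boundary data are admissible and no corner singularities appear.

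For (\ref{8.7}) I would first lift the Dirichlet trace. Define $\Psi(\cdot,t)$ for each $t$ as the solution of the elliptic mixed problem $\Delta\Psi=0$ in $\Om$ with $\Psi|_{S_1}=\chi_*$ and $\Psi_{,x_3}|_{S_2}=0$; by the $L_p$-theory for such elliptic mixed problems (applicable thanks to (\ref{8.5})) one has
$$
|\Psi(t)|_{3,\Om}\le c|\chi_*(t)|_{3,S_1}.
$$
Then $z=\tilde\chi-\Psi$ vanishes on $S_1$, has zero $x_3$-derivative on $S_2$, and satisfies $z_t-\nu\Delta z=-\Psi_t$. Testing with $|z|z$ and integrating on $\Om$ (the boundary terms drop thanks to $z|_{S_1}=0$ and $z_{,x_3}|_{S_2}=0$) gives an $L_3$-energy estimate. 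Taking the supremum in time and combining with the elliptic bound for $\Psi$ yields the first inequality of (\ref{8.7}). The second inequality follows from the pointwise identity in $(\ref{2.8})_2$: $\chi_*$ is linear in the tangential components of $v$ on $S_1$ with $L_\infty$ geometric coefficients, so $|\chi_*(t)|_{3,S_1}\le c|v'(t)|_{3,S_1}$, and then the trace embedding $W_2^{5/6}(\Om)\hookrightarrow W_2^{1/3}(S_1)$ together with the 2-D Sobolev embedding $W_2^{1/3}(S_1)\hookrightarrow L_3(S_1)$ (note $1/3-1/2=-1/6$ so $L_p(S_1)$ for $p<6$) gives $|v'(t)|_{3,S_1}\le c\|v'(t)\|_{5/6,2,\Om}$; take $L_\infty$ in $t$.

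For (\ref{8.8}) I would apply the standard anisotropic Sobolev-Slobodetskii regularity result for the heat equation with inhomogeneous Dirichlet data on $S_1$ and homogeneous Neumann data on $S_2$: since $W_2^{1/2,1/4}(S_1^T)$ is the natural parabolic trace space of $W_2^{1,1/2}(\Om^T)$, the bound
$$
\|\tilde\chi\|_{W_2^{1,1/2}(\Om^t)}\le c\bigl(\|\chi_*\|_{W_2^{1/2,1/4}(S_1^t)}+|\chi(0)|_{2,\Om}\bigr)
$$
holds (compatibility condition (\ref{8.4}) guarantees the data are admissible). Then linearity of $\chi_*$ in $v'|_{S_1}$ with smooth coefficients yields $\|\chi_*\|_{W_2^{1/2,1/4}(S_1^t)}\le c\|v'\|_{W_2^{1/2,1/4}(S_1^t)}$, and the parabolic trace theorem $W_2^{1,1/2}(\Om^t)\hookrightarrow W_2^{1/2,1/4}(S_1^t)$ finishes the chain.

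The main obstacle is the behavior at the edges $L_1,L_2=\bar S_1\cap\bar S_2$, where the type of the boundary condition changes. Without the compatibility relations (\ref{8.3})--(\ref{8.5}), which make $\chi_*$ and the Neumann data compatible along $L_i$, neither the elliptic $L_3$ lifting for $\Psi$ nor the parabolic $W_2^{1,1/2}$ estimate holds in the clean form stated, and corner singularities would introduce boundary terms that cannot be absorbed. Once these compatibility conditions are verified (as in (\ref{8.4})--(\ref{8.5})), the rest is a standard application of linear parabolic theory and trace theorems, exactly as in Lemma 4.7 of \cite{RZ1}.
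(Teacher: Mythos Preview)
Your argument for (\ref{8.8}) is fine and matches the paper's: both reduce the bound on $\chi_1$ (the part with inhomogeneous Dirichlet data and zero initial data) to standard parabolic potential theory, and the trace chain $\|\chi_*\|_{W_2^{1/2,1/4}(S_1^t)}\le c\|v'\|_{W_2^{1/2,1/4}(S_1^t)}\le c\|v'\|_{W_2^{1,1/2}(\Om^t)}$ is correct.

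For (\ref{8.7}), however, your elliptic lifting has a genuine gap. When you set $z=\tilde\chi-\Psi$ with $\Delta\Psi=0$, the equation for $z$ becomes $z_t-\nu\Delta z=-\Psi_t$. Testing with $|z|z$ gives
\[
\frac{d}{dt}|z|_{3,\Om}\le c|\Psi_t|_{3/2,\Om}\le c|\chi_{*,t}|_{3/2,S_1},
\]
so after integrating in time you need $\chi_{*,t}\in L_1(0,t;L_{3/2}(S_1))$. But the stated hypothesis in (\ref{8.7}) is only $|\chi_*|_{3,\infty,S_1^t}$, with no time regularity whatsoever on $\chi_*$. The elliptic lift is static in time and therefore inevitably throws a time derivative of the boundary datum into the forcing; this cannot be absorbed by the right-hand side of (\ref{8.7}).

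The paper avoids this by a different (purely parabolic) decomposition: $\tilde\chi=\chi_1+\chi_2$, where $\chi_1$ solves the heat equation with boundary data $\chi_*$ and \emph{zero} initial data, and $\chi_2$ solves it with \emph{homogeneous} boundary data and initial data $\chi(0)$. For $\chi_2$ one multiplies by $\chi_2|\chi_2|$ and obtains directly $|\chi_2|_{3,\infty,\Om^t}\le|\chi(0)|_{3,\Om}$ (no forcing, no boundary term). For $\chi_1$ the estimate $|\chi_1|_{3,\infty,\Om^t}\le c|\chi_*|_{3,\infty,S_1^t}$ is taken from Lemma~4.7 of \cite{RZ1}, which relies on heat potential (layer potential) representations rather than an energy argument; that is precisely what lets one avoid any time derivative of $\chi_*$. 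If you want to repair your approach, you must either replace the elliptic lift by this parabolic one (which is then just the paper's $\chi_1$), or supply an independent $L_3$ bound for the Dirichlet heat problem that does not go through energy estimates.
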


\begin{proof}
We restrict our considerations to prove (\ref{8.7}) only because a proof of (\ref{8.8}) follows from a potential theory.

We write $\tilde\chi=\chi_1+\chi_2$, where
\begin{equation}
\chi_{1,t}-\nu\Delta\chi_1=0,\quad \chi_1|_{S_1}=\chi_*,\quad \chi_{1,x_3}|_{S_2}=0,\quad \chi_1|_{t=0}=0
\label{8.9}
\end{equation}
and
\begin{equation}
\chi_{2,t}-\nu\Delta\chi_2=0,\quad \chi_2|_{S_1}=0,\quad \chi_{2,x_3}|_{S_2}=0,\quad \chi_1|_{t=0}=\chi(0).
\label{8.10}
\end{equation}
For solutions to (\ref{8.9}) we obtain (\ref{8.7}) and (\ref{8.8}) for $\chi(0)=0$ (see Lemma 4.7 \cite{RZ1}).

Multiply (\ref{8.10}) by $\chi_2$ and integrate over $\Om^t$ yields
$$
|\chi_2|_{2,\infty,\Om^t}^2+\nu|\nabla\chi_2|_{2,\Om}^2=|\chi_2(0)|_{2,\Om}^2.
$$
Multiplying (\ref{8.10}) by $\chi_2^2$ and integrating over $\Om^t$ gives
$$
|\chi_2|_{3,\infty,\Om^t}^3+\nu|\nabla|\chi_2|^{3/2}|_{2,\Om}^2= |\chi_2(0)|_{3,\Om}^3.
$$
Using that $\tilde\chi=\chi_1+\chi_2$ we derive (\ref{8.7}) and (\ref{8.8}). This concludes the proof.
\end{proof}

Consider problem (\ref{2.8})

\begin{lemma}\label{l8.2}
Assume that there exists a local solution to problem (\ref{1.1}) described by Theorem \ref{t1.1}. Assume that there exists positive constants $\ro_*$, $\ro^*$, $D^*$ such that $\ro_*\le\ro\le\ro^*$, $d_i\le \bar{d}, i=1,2$. Assume that $v'\in W_2^{1,1/2}(\Om^t)\cap L_\infty(0,t;W_2^{5/6}(\Om))$, $h\in L_\infty(0,t;L_3(\Om))$, $F=(\rot f)_3 \in L_2(0,t;L_{6/5}(\Om))$, $\chi(0)\in L_3(\Om)$, $\nabla\ro\in L_\infty(0,t;L_r(\Om))$, $(f_1,f_2)\in L_2(0,t;L_{6r/(5r-6)}(\Om))$, $r>3$. Moreover, $v'_t\in L_2(0,t;L_{6r/(5r-6)}(\Om))$, $v\in L_\infty(0,t;L_{3r/(r-3)}(\Om))$ and $t\le T$.\\
Then solutions to (\ref{2.8}) satisfy
\begin{equation}\eqal{
&|\chi|_{2,\infty,\Om^t}^2+\|\chi\|_{1,2,\Om^t}^2\le\phi (1/\ro_*,\ro^*,\bar{d},A_1)[\|v'\|_{W_2^{1,1/2}(\Om^t)}^2\cr
&\quad+\|v'\|_{5/6,2,\infty,\Om^t}^2+A_1^2+|h|_{3,\infty,\Om^t}^2+ |F|_{6/5,2,\Om^t}^2+|\chi(0)|_{3,\Om}^2\cr
&\quad+|\nabla\ro|_{r,\infty,\Om^t}^2(\|v'\|_{5/6,2,\infty,\Om^t}^2+ \|v'\|_{W_2^{1,1/2}(\Om^t)}^2+|\chi(0)|_{3,\Om}^2)\cr
&\quad+|\ro_{x'}|_{r,\infty,\Om^t}^2(|v'_t|_{6r/(5r-6),2,\Om^t}^2+ |v|_{3r/(r-3),\infty,\Om^t}^2\cr
&\quad+|(f_1,f_2)|_{6r/(5r-6),2,\Om^t}^2)].\cr}
\label{8.11}
\end{equation}
\end{lemma}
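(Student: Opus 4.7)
The plan is to reduce to a homogeneous Dirichlet problem by setting $\chi' = \chi - \tilde\chi$, where $\tilde\chi$ solves the linear heat problem (\ref{8.1}) and is controlled via Lemma~\ref{l8.1}. The compatibility conditions (\ref{8.4}) and (\ref{8.5}) are already checked, so $\tilde\chi$ has the bounds (\ref{8.7})--(\ref{8.8}) in terms of a trace of $v'$ and $\chi(0)$. Then $\chi'$ satisfies (\ref{8.6}) with $\chi'|_{S_1}=0$ and $\chi'_{,x_3}|_{S_2}=0$, so standard energy techniques apply. The final bound on $\chi$ comes from the triangle inequality $\chi=\chi'+\tilde\chi$.

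To estimate $\chi'$, I would multiply $(\ref{8.6})_1$ by $\chi'$ and integrate over $\Omega$. Combining the convective term with the $\rho_t$ part of the continuity equation $(\ref{2.1})_1$ produces $\tfrac{1}{2}\tfrac{d}{dt}\int_\Omega\rho(\chi')^2\,dx$ plus boundary terms on $S_2$, which are controlled by $\bar d\,\rho^*|\chi'|_{2,S_2}^2$ (and the $S_2(-a)$ term carrying $\rho_1 d_1$ is absorbed through a trace inequality $|\chi'|_{3,S_2}^2\le \varepsilon|\nabla\chi'|_{2,\Omega}^2 + c(1/\varepsilon)|\chi'|_{2,\Omega}^2$ followed by a Gronwall argument, exactly as in Lemma~\ref{l4.2}). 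The diffusion term, integrated by parts, gives $\nu|\nabla\chi'|_{2,\Omega}^2$ with vanishing boundary contributions thanks to the two homogeneous conditions on $\chi'$.

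The right-hand side of $(\ref{8.6})_1$ is handled term by term using H\"older, the embedding $H^1(\Omega)\hookrightarrow L_6(\Omega)$, the Korn inequality, and the a priori estimate $\|v\|_{V(\Omega^t)}\le A_1$ from Theorem~\ref{t3.4}. The terms $\rho F\chi'$ pair as $L_{6/5}$--$L_6$. The vortex-stretching terms $\rho h_3\chi\,\chi'$ and $\rho(h_2 v_{3,x_1}-h_1 v_{3,x_2})\chi'$ are bounded via $|h|_{3,\Omega}$, $|\nabla v|_{2,\Omega}$ and $|\chi'|_{6,\Omega}$, after splitting $\chi=\chi'+\tilde\chi$ and using (\ref{8.7}) to dispose of the $\tilde\chi$ piece. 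The drift term $\rho v\cdot\nabla\tilde\chi\,\chi'$ is bounded directly through (\ref{8.8}). The most delicate terms are $\rho_{x_\alpha}(v_{\beta,t}+v\cdot\nabla v_\beta+f_\beta)\chi'$ with $\alpha,\beta\in\{1,2\}$, which I would treat by four-fold H\"older with $\rho_{x'}\in L_r(\Omega)$ (using $r>3$): pairing $L_r\cdot L_{6r/(5r-6)}\cdot L_6$ for the $v'_t$ and $f'$ contributions, and $L_r\cdot L_{3r/(r-3)}\cdot L_2\cdot L_6$ for the convective one. These are precisely the combinations of norms visible on the right of (\ref{8.11}). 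After absorbing all $|\nabla\chi'|_{2,\Omega}^2$ and $|\chi'|_{6,\Omega}^2$ contributions into the dissipation via Korn/Sobolev with small $\varepsilon$, and using $|\chi'|_{2,\Omega}^2\le \rho_*^{-1}\int_\Omega\rho(\chi')^2\,dx$, a Gronwall inequality in time (with exponent controlled by $\rho_*,\rho^*,\bar d,A_1$) yields a bound on $\|\chi'\|_{V(\Omega^t)}$ in exactly the form of (\ref{8.11}).

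The main obstacle, as in Section~\ref{s4}, is the bookkeeping forced by the $\rho_{x_\alpha}$ terms: they mandate $\nabla\rho\in L_\infty(0,t;L_r(\Omega))$ with $r>3$ and the specific mixed Lebesgue exponents $L_{6r/(5r-6)}$, $L_{3r/(r-3)}$ on velocity and force. These are norms that are not directly controlled by $V(\Omega^t)$ and must be reintroduced as hypotheses; they will later be estimated through higher-regularity bounds on $v$ (and then absorbed via smallness of $\Lambda$ in Thesis~4). A secondary care is needed so that the $S_2(-a)$-boundary contribution, which carries the large factor $\rho_1 d_1$ as in Lemma~\ref{l4.2}, is handled by the Gronwall exponential without any smallness assumption on $d_1$.
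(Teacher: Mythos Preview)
Your overall architecture matches the paper's: subtract $\tilde\chi$, test $(\ref{8.6})_1$ with $\chi'$, and estimate the right-hand side term by term. Most of your pairings (the $F$ term, the $h$-terms, and the $\ro_{x'}$ terms with the $L_r\cdot L_{6r/(5r-6)}\cdot L_6$ and $L_r\cdot L_{3r/(r-3)}\cdot L_2\cdot L_6$ H\"older splits) are exactly what the paper does.

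There is, however, a genuine gap in your treatment of the drift term $\int_\Om\ro\, v\cdot\nabla\tilde\chi\,\chi'\,dx$. You propose to bound it ``directly through (\ref{8.8})''. But (\ref{8.8}) only places $\nabla\tilde\chi$ in $L_2(\Om^t)$, while the energy bound $A_1$ only places $v$ in $L_2(0,t;L_6(\Om))$. If you pair these as $|v|_{6,\Om}\,|\nabla\tilde\chi|_{2,\Om}\,|\chi'|_{3,\Om}$ and absorb $|\chi'|_{3,\Om}\le c\|\chi'\|_{1,\Om}$ by Young, the leftover source $|v|_{6,\Om}^2\,|\nabla\tilde\chi|_{2,\Om}^2$ is a product of two functions that are each merely $L_1$ in time; it is not integrable in $t$ using only $A_1$ and (\ref{8.8}). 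No rearrangement of the H\"older exponents fixes this without invoking an $L_\infty$-in-time bound on $v$ in $L_6$ or better, which is not supplied by $A_1$ and, more importantly, would insert a ``large'' higher-order norm of $v$ into (\ref{8.11}) without a $|\nabla\ro|$ prefactor, breaking the smallness structure used later in Sections~\ref{s9}--\ref{s11}.

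The paper avoids this by integrating by parts twice:
\[
-\int_\Om\ro v\cdot\nabla\tilde\chi\,\chi'\,dx
=-\int_\Om\ro v\cdot\nabla(\tilde\chi\chi')\,dx+\int_\Om\ro v\cdot\nabla\chi'\,\tilde\chi\,dx,
\]
and then, using $\divv v=0$, rewrites the first integral as a boundary term on $S_2$ plus $\int_\Om (v\cdot\nabla\ro)\,\tilde\chi\chi'\,dx$. Both resulting volume integrals carry $\tilde\chi$ rather than $\nabla\tilde\chi$, so the $L_\infty(0,t;L_3(\Om))$ bound (\ref{8.7}) applies and the time integration closes with $A_1$ alone. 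The price is the appearance of $\nabla\ro$, and this is precisely the origin of the term $|\nabla\ro|_{r,\infty,\Om^t}^2\bigl(\|v'\|_{5/6,2,\infty,\Om^t}^2+\|v'\|_{W_2^{1,1/2}(\Om^t)}^2+|\chi(0)|_{3,\Om}^2\bigr)$ in (\ref{8.11}), which your proposal does not account for. So this integration by parts is not optional; it is what makes the estimate close and what produces the stated inequality.

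A secondary remark: for the $S_2(-a)$ boundary contribution the paper does \emph{not} run a Gronwall as in Lemma~\ref{l4.2}. Instead it passes from $\chi'$ back to $\chi$, uses the trace interpolation $|\chi|_{2,S_2^t}^2\le\varepsilon|\nabla\chi|_{2,\Om^t}^2+c(1/\varepsilon)|\chi|_{2,\Om^t}^2$, and then exploits that $|\chi|_{2,\Om^t}\le|\nabla v|_{2,\Om^t}\le A_1$ to absorb everything. Your Gronwall route would also work, but would introduce an exponential of $|d_1|_{3,S_2}^6$ into $\phi$; the paper's shortcut is a bit cleaner here because $\chi$ is itself a first derivative of $v$.
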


\begin{proof}
First we examine problem (\ref{8.6}). Multiply $(\ref{8.6})_1$ by $\chi'$, integrate over $\Om$, use boundary conditions and problem (\ref{2.1}). Then we have
\begin{equation}\eqal{
&{1\over 2}{d\over dt}\intop_\Om\ro\chi'^2dx+{1\over 2}\intop_\Om\divv(\ro v\chi'^2)dx+\nu|\nabla\chi'|_{2,\Om}^2\cr
&=\intop_\Om\ro F\chi'dx+\intop_\Om\ro h_3\chi\chi'dx-\intop_\Om\ro v\cdot\nabla\tilde\chi\chi'dx\cr
&\quad-\intop_\Om\ro(h_2v_{3,x_1}-h_1v_{3,x_2})\chi'dx\cr
&\quad+\intop_\Om[\ro_{x_1}(v_{2,t}+v\cdot\nabla v_2+f_2)-\ro_{x_2}(v_{1,t}+v\cdot\nabla v_1+f_1)]\chi'dx.\cr}
\label{8.12}
\end{equation}
The first term on the r.h.s. of (\ref{8.12}) is bounded by
$$
\varepsilon_1|\chi'|_{6,\Om}^2+c(1/\varepsilon_1)(\ro^*)^2|F|_{6/5,\Om}^2,
$$
the second by
$$
\varepsilon_2|\chi'|_{6,\Om}^2+c(1/\varepsilon_2)(\ro^*)^2|h_3|_{3,\Om}^2 |\chi|_{2,\Om}^2.
$$
Integrating by parts in the third term on the r.h.s. of (\ref{8.12}) yields
$$
-\intop_\Om\ro v\cdot\nabla(\tilde\chi\chi')dx+\intop_\Om\ro v\cdot\nabla\chi'\tilde\chi dx\equiv I_1+I_2,
$$
where
$$
|I_2|\le\varepsilon|\nabla\chi'|_{2,\Om}^2+c(1/\varepsilon)(\ro^*)^2 |v|_{6,\Om}^2|\tilde\chi|_{3,\Om}^2.
$$
Integrating by parts in $I_1$ implies
$$
I_1=-\intop_\Om\divv(\ro v\tilde\chi\chi')dx+\intop_\Om\ro\cdot\nabla v\tilde\chi\chi'dx\equiv I_1^1+I_1^2,
$$
where
$$
|I_1^2|\le\varepsilon|\chi'|_{6,\Om}^2+c(1/\varepsilon)|\nabla\ro|_{r,\Om}^2 |v|_{6,\Om}^2|\tilde\chi|_{3r/(2r-3),\Om}^2.
$$
Next, by the Green theorem, we have
$$
I_1^1=\intop_{S_2(-a)}\ro_1d_1\tilde\chi\chi'dS_2-\intop_{S_2(a)}\ro d_2\tilde\chi\chi'dS_2.
$$
Hence,
$$
|I_1^1|\le 2\ro^*\bar{d}\intop_{S_2}|\tilde\chi\chi'|dS_2\le\varepsilon |\chi'|_{4,S_2}^2+c(1/\varepsilon,\ro^*,\bar{d})|\tilde\chi|_{4/3,S_2}^2.
$$
Summarizing, the third term on the r.h.s. of (\ref{8.12}) is bounded by
$$\eqal{
&\varepsilon_3\|\chi'\|_{1,\Om}^2+c(1/\varepsilon_3,\ro^*,\bar{d})[|v|_{6,\Om}^2 |\tilde\chi|_{3,\Om}^2\cr
&\quad+|\nabla\ro|_{r,\Om}^2(|v|_{6,\Om}^2|\tilde\chi|_{3r/(2r-3),\Om}^2+ |\tilde\chi|_{4/3,S_2}^2)].\cr}
$$
We estimate the fourth term on the r.h.s. of (\ref{8.12}) by
$$
\varepsilon_4|\chi'|_{6,\Om}^2+c(1/\varepsilon_4,\ro^*)|h|_{3,\Om}^2 |v_{3,x'}|_{2,\Om}^2.
$$
Finally, the last term on the r.h.s. of (\ref{8.12}) is bounded by
$$\eqal{
&|\ro_{x'}|_{\lambda_1,\Om}(|v'_t|_{\lambda_2,\Om}+ |v|_{\lambda_2\mu_1,\Om}|\nabla v'|_{\lambda_2\mu_2,\Om}+|(f_1,f_2)|_{\lambda_2,\Om})|\chi'|_{6,\Om}\cr
&\le\varepsilon_5|\chi'|_{6,\Om}^2+c(1/\varepsilon_5) |\ro_{x'}|_{\lambda_1,\Om}^2(|v'_t|_{\lambda_2,\Om}^2+ |v|_{\lambda_2\mu_1,\Om}^2|\nabla v'|_{\lambda_2\mu_2,\Om}^2\cr
&\quad+|(f_1,f_2)|_{\lambda_2,\Om}^2),\cr}
$$
where $1/\lambda_1+1/\lambda_2+1/6=1$, $1/\mu_1+1/\mu_2=1$.

Let $\lambda_1=r$. Then $\lambda_2={6r\over 5r-6}$. We also need that $\lambda_2\mu_2=2$ so ${6r\over 5r-6}\mu_2=2$ and $\mu_2={5r-6\over 3r}$. Then $\mu_1={5r-6\over 2r-6}$ and $\lambda_2\mu_1={3r\over r-3}$.

The middle term on the l.h.s. of (\ref{8.12}) equals
$$\eqal{
&\intop_\Om\divv(\ro v\chi'^2)dx=\intop_{S_2(-a)}\ro_1v\cdot\bar n\chi'^2dS_2+\intop_{S_2(a)}\ro v\cdot\bar n\chi'^2dS_2\cr
&=-\intop_{S_2(-a)}\ro_1d_1\chi'^2dS_2+\intop_{S_2(a)}\ro d_2\chi'^2dS_2.\cr}
$$
Using the above estimates in (\ref{8.12}) and assuming that $\varepsilon_1-\varepsilon_5$ are sufficiently small we obtain the inequality
\begin{equation}\eqal{
&{d\over dt}\intop_\Om\ro\chi'^2dx+\nu\|\chi'\|_{1,\Om}^2\le \intop_{S_2(-a)}\ro_1d_1\chi'^2dS_2\cr
&\quad+c(\ro^*)^2[|F|_{6/5,\Om}^2+|h|_{3,\Om}^2(|\chi|_{2,\Om}^2+ |v_{3,x'}|_{3,\Om}^2)]\cr
&\quad+\phi(\ro^*,\bar{d})[|v|_{6,\Om}^2|\tilde\chi|_{3,\Om}^2+ |\nabla\ro|_{r,\Om}^2(|v|_{6,\Om}^2|\tilde\chi|_{3r/(2r-3),\Om}^2+ |\tilde\chi|_{4/3,S_2})]\cr
&\quad+c|\ro_{x'}|_{r,\Om}^2[|v'_t|_{6r/(5r-6),\Om}^2+ |v|_{3r/(r-3),\Om}|\nabla v'|_{2,\Om}^2+|(f_1,f_2)|_{6r/(5r-6),\Om}^2],\cr}
\label{8.13}
\end{equation}
where $r>3$.

Integrating (\ref{8.13}) with respect to time and using Theorem \ref{t3.4} yield
\begin{equation}\eqal{
&\intop_\Om\ro\chi^2dx+\nu\|\chi\|_{1,2,\Om^t}^2\le\phi(\ro^*,\bar{d},A_1) [|\chi'|_{2,S_2^t}^2\cr
&\quad+|\tilde\chi|_{2,\infty,\Om^t}^2+\|\tilde\chi\|_{1,2,\Om^t}^2+ |h|_{3,\infty,\Om^t}^2+|F|_{6/5,2,\Om^t}^2\cr
&\quad+|\tilde\chi|_{3,\infty,\Om^t}^2+|\nabla\ro|_{r,\infty,\Om^t}^2 (|\tilde\chi|_{3r/(2r-3),\infty,\Om^t}^2+|\tilde\chi|_{4/3,2,S_2^t}^2)\cr
&\quad+c|\ro_{x'}|_{r,\infty,\Om^t}^2(|v'_t|_{6r/(5r-6),2,\Om^t}^2+ |v|_{3r/(r-3),\infty,\Om^t}^2\cr
&\quad+|(f_1,f_2)|_{6r/(5r-6),2,\Om^t}^2)],\cr}
\label{8.14}
\end{equation}
where $r>3$ and $A_1\ge 1$.

Using that $\chi'=\chi-\tilde\chi$ we have
\begin{equation}\eqal{
|\chi'|_{2,S_2^t}^2&=|\chi-\tilde\chi|_{2,S_2^t}^2\le|\chi|_{2,S_2^t}^2+ |\tilde\chi|_{2,S_2^t}^2\cr
&\le\varepsilon|\nabla\chi|_{2,\Om^t}^2+c(1/\varepsilon)A_1^2+ \|\tilde\chi\|_{1,2,\Om^t}^2.\cr}
\label{8.15}
\end{equation}
Moreover, Lemma \ref{l8.1} implies
\begin{equation}\eqal{
&|\tilde\chi|_{2,S_2^t}^2+|\tilde\chi|_{4/3,2,S_2^t}^2\le c(\|v'\|_{W_2^{1,1/2}(\Om^t)}+|\chi(0)|_{2,\Om}^2),\cr
&|\tilde\chi|_{2,\infty,\Om^t}^2+|\tilde\chi|_{3,\infty,\Om^t}^2+ |\tilde\chi|_{3r/(2r-3),\infty,\Om^t}^2\cr
&\le c|\tilde\chi|_{3,\infty,\Om^t}^2\le c(\|v'\|_{5/6,2,\infty,\Om^t}^2+|\chi(0)|_{3,\Om}^2).\cr}
\label{8.16}
\end{equation}
Using (\ref{8.15}) and (\ref{8.16}) in the r.h.s. of (\ref{8.14}) implies
\begin{equation}\eqal{
&|\chi|_{2,\infty,\Om^t}^2+\|\chi\|_{1,2,\Om^t}^2\le\phi (1/\ro_*,\ro^*,\bar{d},A_1)\cdot\cr
&\quad\cdot[\|v'\|_{W_2^{1,1/2}(\Om^t)}^2+\|v'\|_{5/6,2,\infty,\Om^t}^2+ A_1^2+|h|_{3,\infty,\Om^t}^2\cr
&\quad+|F|_{6/5,2,\Om^t}^2+|\chi(0)|_{3,\Om}^2\cr
&\quad+|\nabla\ro|_{r,\infty,\Om^t}^2(\|v'\|_{5/6,2,\infty,\Om^t}^2+ \|v'\|_{W_2^{1,1/2}(\Om^t)}^2+|\chi(0)|_{3,\Om}^2)\cr
&\quad+|\ro_{x'}|_{r,\infty,\Om^t}^2(|v'_t|_{6r/(5r-6),2,\Om^t}^2+ |v|_{3r/(r-3),\infty,\Om^t}^2+|(f_1,f_2)|_{6r/(5r-6),2,\Om^t}^2)],\cr}
\label{8.17}
\end{equation}
where $r>3$. The above inequality implies (\ref{8.11}) and concludes the proof.
\end{proof}
\goodbreak

\section{Auxiliary results}\label{s9}

In this Section, we establish some relation for norms of $h,$ $\chi$ and $\ro$ that are useful for considerations of Sections~\ref{s10} and \ref{s11}.

First we recall positive constants $\ro_*$, $\ro^*$, $d_*$, $\bar{d}$ such that
\begin{equation}
\ro_*\le\ro\le\ro^*,\quad d_*\le d_i\le \bar{d},\ \ i=1,2.
\label{9.1}
\end{equation}
Let
\begin{equation}
D_1(t)=|d_1|_{3,2,S_2^t}.
\label{9.2}
\end{equation}
The following quantity is defined in (\ref{4.15})
\begin{equation}
\La_1=\|d_{x'}\|_{1,3,2,S_2^t}+|d_{x'}|_{2,\infty,S_2^t}+ \|d_t\|_{1,2,S_2^t}+|f_3|_{4/3,2,S_2^t}+|g|_{2,\Om^t}.
\label{9.3}
\end{equation}
In (\ref{Lambda}), we have been introduced the second quantity assumed to be small
\begin{equation}
\La_2=|\ro_{1,x'}|_{r,S_2^t}+|\ro_{1,t}|_{r,S_2^t}+ |\ro_{0,x}|_{r,\Om}.
\label{9.4}
\end{equation}
Inequality (\ref{5.14}) has the form
\begin{equation}
|\ro_{x'}|_{r,\infty,\Om^t}+|\ro_t|_{r,\infty,\Om^t}\le\Phi_1\cdot \phi_1\cdot\La_2,
\label{9.5}
\end{equation}
where $\Phi_1$ and $\phi_1$ have the forms
\begin{equation}\eqal{
&\Phi_1=(1+|d_1|_{\infty,S_2^t(a_1)})(1+|v_0|_{\infty,\Om}),\cr
&\phi_1=\exp\bigg[{c\over r}(|v_x|_{\infty,1,\Om^t}+|v_t|_{\infty,1,\Om^t})(1+|v'|_{\infty,\Om^t}) \bigg].\cr}
\label{9.6}
\end{equation}
From (\ref{5.15}) it follows that
\begin{equation}\eqal{
|\ro_{x_3}|_{r,\infty,\Om^t}&\le{1\over d_*}(1+|v'|_{\infty,\Om^t})\cdot\Phi_1\cdot\phi_1\cdot\La_2\cr
&\equiv\Phi_1\cdot\phi_2\cdot\La_2,\cr}
\label{9.7}
\end{equation}
where
$$
\phi_2={1\over d_*}(1+|v'|_{\infty,\Om^t})\phi_1.
$$

\begin{lemma}\label{l9.1}
Let $v\in W_\sigma^{2+s,1+s/2}(\Om^t)$, $s\in(0,1)$, $\sigma>3/s$, $t\le T$.\\
Then there exist increasing positive functions
$$\eqal{
&\Phi_2=\Phi_2(\ro_*,\ro^*,A_1,D_1(t),|d_1|_{\infty,S_2^t(a_1)}, |v_0|_{\infty,\Om},|f|_{2,\Om^t}),\cr
&\phi_3=\phi_3(t^a\|v\|_{W_\sigma^{2+s,1+s/2}(\Om^t)}),\cr}
$$
where $a>0$ such that
\begin{equation}
\|h\|_{V(\Om^t)}\le\Phi_2\cdot\phi_3\cdot(\La_1+\La_2+|h(0)|_{2,\Om})
\label{9.8}
\end{equation}
and
\begin{equation}
|\ro_x|_{r,\infty,\Om^t}+|\ro_t|_{r,\infty,\Om^t}\le\Phi_1\cdot\phi_2 \cdot\La_2,
\label{9.9}
\end{equation}
where $t\le T$.
\end{lemma}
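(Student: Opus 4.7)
The lemma is essentially a repackaging of two already established bounds: Corollary~4.3 (inequality (\ref{4.19})) for $\|h\|_{V(\Om^t)}$ and Lemma~\ref{l5.1} together with Remark~\ref{r5.2} (inequalities (\ref{5.1}) and (\ref{5.15})) for the first derivatives of $\ro$. The work consists in: (i) substituting (\ref{9.7}) into (\ref{4.19}) to eliminate $|\ro_{x_3}|_{3,\infty,\Om^t}$; (ii) absorbing the $v$-dependent quantities that appear both inside $\phi_1,\phi_2$ and on the right-hand side of (\ref{4.19}) (namely $|v|_{\infty,\Om^t}$, $|v_x|_{\infty,1,\Om^t}$, $|v_t|_{\infty,1,\Om^t}$, $|v|_{\infty,\infty,\Om^t}$, $|v_t|_{2,\Om^t}$, $V_1=\|v\|_{1,3,2,\Om^t}$) into a single increasing function $\phi_3$ of $t^a\|v\|_{W_\sigma^{2+s,1+s/2}(\Om^t)}$. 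Estimate (\ref{9.9}) for $(\ro_x,\ro_t)$ is just (\ref{9.5}) for the tangential and time derivatives combined with (\ref{9.7}) for $\ro_{x_3}$.

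\textbf{Step 1 (proof of (\ref{9.9})).} Lemma~\ref{l5.1} yields (\ref{9.5}), i.e.\ $|\ro_{x'}|_{r,\infty,\Om^t}+|\ro_t|_{r,\infty,\Om^t}\le\Phi_1\phi_1\La_2$. Remark~\ref{r5.2} applied to (\ref{5.8}) uses the positivity $v_3\ge d_*$ from Lemma~\ref{l7.1} to give (\ref{9.7}): $|\ro_{x_3}|_{r,\infty,\Om^t}\le\Phi_1\phi_2\La_2$. Since $|\ro_x|\le|\ro_{x'}|+|\ro_{x_3}|$ and $\phi_2\ge\phi_1/d_*$, adding the two inequalities produces (\ref{9.9}).

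\textbf{Step 2 (proof of (\ref{9.8})).} Start from (\ref{4.19}), which reads
$$
\|h\|_{V(\Om^t)}\le c\phi(\ro_*,\ro^*,D_1,V_1,A_1)\bigl[\La_1+|\ro_{x_3}|_{3,\infty,\Om^t}(|v_t|_{2,\Om^t}+|f|_{2,\Om^t}+|v|_{\infty,\infty,\Om^t})\bigr]+|h(0)|_{2,\Om}.
$$
Apply (\ref{9.7}) with $r=3$ to replace $|\ro_{x_3}|_{3,\infty,\Om^t}$ by $\Phi_1\phi_2\La_2$. Under the assumption $s\in(0,1)$, $\sigma>3/s$, the embeddings displayed in the introduction give
$$
|v|_{\infty,\Om^t},\ |v|_{\infty,\infty,\Om^t},\ |v_x|_{\infty,1,\Om^t},\ |v_t|_{\infty,1,\Om^t},\ |v_t|_{2,\Om^t},\ V_1 \ \le\ ct^{a}\|v\|_{W_\sigma^{2+s,1+s/2}(\Om^t)},
$$
for a suitable $a>0$ (one uses $\tfrac{5}{\sigma}<2+s$, $\tfrac{3}{\sigma}<1+s$ and $\tfrac{3}{\sigma}<s$, the last of which is ensured by $\sigma>3/s$). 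Consequently $\phi_1$, $\phi_2$ and the bracket on the right of (\ref{4.19}) are all dominated by an increasing function of $t^a\|v\|_{W_\sigma^{2+s,1+s/2}(\Om^t)}$, which we absorb into the factor $\phi_3$. Collecting the data constants ($\ro_*,\ro^*,A_1,D_1,|d_1|_{\infty,S_2^t(a_1)},|v_0|_{\infty,\Om},|f|_{2,\Om^t}$) into $\Phi_2$ and using $\La_1+\La_2\ge\La_1$ yields the desired inequality (\ref{9.8}).

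\textbf{Main obstacle.} There is no genuinely hard point; the work is purely bookkeeping. The only delicate issue is that the coefficient $\phi_2$ on the right-hand side of (\ref{9.7}) contains $1/d_*$, and $d_*$ itself depends on $\|p_{x_3}\|_{L_1(0,t;L_\infty(\Om))}$ through Lemma~\ref{l7.1}. This dependence must be carried along but does not affect the statement here, because $1/d_*$ enters $\phi_3$ (via $\phi_2$) as a factor multiplying $\La_2$ and will be controlled later in Section~\ref{s11} via the global estimate (\ref{1.8}) in the final bootstrap argument.
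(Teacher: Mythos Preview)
Your proposal is correct and follows essentially the same route as the paper: start from Corollary~\ref{c4.3} (inequality (\ref{4.19})), insert the bound (\ref{9.7}) for $|\ro_{x_3}|_{3,\infty,\Om^t}$, and absorb all the $v$-dependent quantities into a single increasing function $\phi_3$ via the Sobolev imbeddings (the paper records these as (\ref{9.12})); inequality (\ref{9.9}) is obtained exactly as you say by summing (\ref{9.5}) and (\ref{9.7}). The only cosmetic difference is the order of presentation (the paper does (\ref{9.8}) first, then (\ref{9.9})).
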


\begin{proof}
Inequality (\ref{4.19}) implies
\begin{equation}\eqal{
\|h\|_{V(\Om^t)}&\le\Phi'_1(\ro_*,\ro^*,A_1,D_1)\cdot\Phi'_1 (\|v\|_{1,3,2,\Om^t})\cdot\cr
&\quad\cdot[\La_1+|\ro_{x_3}|_{3,\infty,\Om^t}\cdot(|v_t|_{2,\Om^t}+ |v|_{\infty,\infty,\Om^t}+|f|_{2,\Om^t})\cr
&\quad+|h(0)|_{2,\Om}].\cr}
\label{9.10}
\end{equation}
Using (\ref{9.7}) yields
\begin{equation}\eqal{
\|h\|_{V(\Om^t)}&\le\Phi'_1[\La_1+\Phi_1\cdot\phi_2\cdot(|v_t|_{2,\Om^t}+ |v|_{\infty,\infty,\Om^t}\cr
&\quad+|f|_{2,\Om^t})\La_2+|h(0)|_{2,\Om}].\cr}
\label{9.11}
\end{equation}
Consider the imbeddings
\begin{equation}\eqal{
&|v|_{3,2,\Om^t}\le c t^{\frac{\si_1-2}{2\si_1}}\|v\|_{W_{\sigma_1}^{2,1}(\Om^t)},\ \ &\sigma_1\ge 2,\cr
&|v|_{\infty,\Om^t}\le c\|v\|_{W_{\sigma_2}^{2,1}(\Om^t)},\ \ &\sigma_2>5/2,\cr
&|v_x|_{\infty,1,\Om^t}\le t^{1-1/\sigma_3}|v_x|_{\infty,\sigma_3,\Om^t}\cr
&\le t^{1-1/\sigma_3}\|v\|_{W_{\sigma_3}^{2,1}(\Om^t)},\ \ &\sigma_3>3,\cr
&|v_t|_{\infty,1,\Om^t}\le t^{1-1/\sigma_4}|v_t|_{\infty,\sigma_4,\Om^t}\cr
&\le t^{1-1/\sigma_4}\|v\|_{W_{\sigma_4}^{2+s,1+s/2}(\Om^t)},\ \ &\sigma_4>3/s,\ \ s\in(0,1).\cr}
\label{9.12}
\end{equation}
In view of the above imbeddings and the properties of $\phi_2$ there exists function $\phi_3$ such that
\begin{equation}
\phi_2\cdot(|v_t|_{2,\Om^t}+|v|_{\infty,\infty,\Om^t})\le\phi_3 (\|v\|_{W_\sigma^{2+s,1+s/2}(\Om^t)}),
\label{9.13}
\end{equation}
where $\sigma>3/s$, $s\in(0,1)$.

Moreover,
\begin{equation}
\Phi'_1\Phi_1\le\Phi_2.
\label{9.14}
\end{equation}
Hence (\ref{9.8}) is proved. From (\ref{9.5}), (\ref{9.7}) and the above considerations we prove (\ref{9.9}). This concludes the proof.
\end{proof}

\begin{lemma}\label{l9.2}
Let $v\in W_\sigma^{2+s,1+s/2}(\Om^t)$, $s\in(0,1)$, $\sigma>3/s$, $t\le T$. Let
\begin{equation}\eqal{
&D_2=A_1+|F|_{6/5,2,\Om^t}+|\chi(0)|_{3,\Om},\cr
&D_3=|(f_1,f_2)|_{6r/(5r-6),2,\Om^t}+|\chi(0)|_{3,\Om}.\cr}
\label{9.15}
\end{equation}
Then
\begin{equation}\eqal{
\|\chi\|_{V(\Om^t)}&\le\Phi_2\cdot[\|v'\|_{W_2^{1,1/2}(\Om^t)}+ \|v'\|_{5/6,2,\infty,\Om^t}\cr
&\quad+|h|_{3,\infty,\Om^t}+D_2+(\phi_3+D_3\phi_2)\La_2],\cr}
\label{9.16}
\end{equation}
where $v'=(v_1,v_2)$, $\phi_3$ is defined in (\ref{9.20}), $\Phi_2$ is (\ref{9.21}) and $\phi_2$ appears in (\ref{9.9}).
\end{lemma}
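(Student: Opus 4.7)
The plan is to deduce (\ref{9.16}) directly from the already-proved bound (\ref{8.11}) of Lemma~\ref{l8.2} by substituting the density estimate (\ref{9.9}) of Lemma~\ref{l9.1} and controlling the velocity norms that sit next to density factors via anisotropic Sobolev imbeddings into $W_\sigma^{2+s,1+s/2}(\Om^t)$. No new PDE estimate is needed; the task is to reorganise (\ref{8.11}) into the small-parameter form demanded by (\ref{9.16}).

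First I would take the square root of (\ref{8.11}) using $\sqrt{\sum a_i^2}\le\sum|a_i|$. The resulting bound for $\|\chi\|_{V(\Om^t)}$ contains, on the non-small side, the three terms $\|v'\|_{W_2^{1,1/2}(\Om^t)}$, $\|v'\|_{5/6,2,\infty,\Om^t}$, $|h|_{3,\infty,\Om^t}$ together with the data block $A_1+|F|_{6/5,2,\Om^t}+|\chi(0)|_{3,\Om}$, which is exactly $D_2$ by (\ref{9.15}). The remaining two groups are multiplied by $|\nabla\ro|_{r,\infty,\Om^t}$ and $|\ro_{x'}|_{r,\infty,\Om^t}$ respectively, and it is precisely these density factors that carry the smallness.

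Next I would apply (\ref{9.9}), which bounds both $|\nabla\ro|_{r,\infty,\Om^t}$ and $|\ro_{x'}|_{r,\infty,\Om^t}$ by $\Phi_1\phi_2\La_2$. In the first group the velocity part $\|v'\|_{5/6,2,\infty,\Om^t}+\|v'\|_{W_2^{1,1/2}(\Om^t)}$ is absorbed into the increasing function $\phi_3$ (since for $\sigma>3/s$, $s\in(0,1)$, the usual anisotropic imbeddings of (\ref{9.12}) send $W_\sigma^{2+s,1+s/2}(\Om^t)$ continuously into each of the relevant mixed-norm Sobolev spaces for $v'$), while the leftover $|\chi(0)|_{3,\Om}$ is part of $D_3$. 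In the second group, the fact that $r>3$ makes $6r/(5r-6)<2$ and $3r/(r-3)<\infty$ finite exponents; the same family of imbeddings absorbs $|v'_t|_{6r/(5r-6),2,\Om^t}+|v|_{3r/(r-3),\infty,\Om^t}$ into $\phi_3$, and $|(f_1,f_2)|_{6r/(5r-6),2,\Om^t}$ is the other constituent of $D_3$. Combining these two groups produces the contribution $(\phi_3+D_3\phi_2)\La_2$, with the factor $\Phi_1$ absorbed, along with $\phi$ from Lemma~\ref{l8.2}, into $\Phi_2=\Phi_2(\ro_*,\ro^*,A_1,D_1,|d_1|_{\infty,S_2^t(a_1)},|v_0|_{\infty,\Om},|f|_{2,\Om^t})$.

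The only real obstacle is bookkeeping: one must verify that the five mixed-norm exponents appearing in (\ref{8.11}), namely $\|\cdot\|_{W_2^{1,1/2}}$, $\|\cdot\|_{5/6,2,\infty}$, $|v'_t|_{6r/(5r-6),2,\Om^t}$, $|v|_{3r/(r-3),\infty,\Om^t}$ and $|h|_{3,\infty,\Om^t}$, all fit into imbeddings of $W_\sigma^{2+s,1+s/2}(\Om^t)$ with $\sigma>3/s$, $s\in(0,1)$, $r>3$; once that scale-counting is checked, the remaining steps are purely algebraic rearrangement and give precisely (\ref{9.16}).
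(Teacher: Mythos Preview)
Your proposal is correct and follows essentially the same route as the paper: insert the density bound (\ref{9.9}) into (\ref{8.11}) to obtain the intermediate inequality (\ref{9.17}), then use the anisotropic imbeddings (\ref{9.18})--(\ref{9.19}) to absorb the velocity norms paired with $\phi_2\La_2$ into the function $\phi_3$ (this is exactly (\ref{9.20})), and finally merge $\Phi\cdot\Phi_1$ into $\Phi_2$ via (\ref{9.21}). One small slip in your last paragraph: $|h|_{3,\infty,\Om^t}$ does not need to be imbedded into $W_\sigma^{2+s,1+s/2}$, since it remains a free-standing term in (\ref{9.16}); only the four $v$-norms multiplied by the density factor require that imbedding check.
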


\begin{proof}
Using (\ref{9.9}) in (\ref{8.11}) yields
\begin{equation}\eqal{
\|\chi\|_{V(\Om^t)}&\le\Phi(1/\ro_*,\ro^*,\bar{d},A_1)\cdot [\|v'\|_{W_2^{1,1/2}(\Om^t)}\cr
&\quad+\|v'\|_{5/6,2,\infty,\Om^t}+|h|_{3,\infty,\Om^t}+D_2\cr
&\quad+\Phi_1\cdot\phi_2\La_2\cdot(\|v'\|_{W_2^{1,1/2}(\Om^t)}+ \|v'\|_{5/6,2,\infty,\Om^t}\cr
&\quad+|v'_t|_{6r/(5r-6),2,\Om^t}+|v|_{3r/(r-3),\infty,\Om^t}+D_3)].\cr}
\label{9.17}
\end{equation}
The following imbeddings hold
\begin{equation}\eqal{
&\|v\|_{5/6,2,\infty,\Om^t}+\|v\|_{W_2^{1,1/2}(\Om^t)}+ |v|_{3r/(r-3),\infty,\Om^t}\cr
&\le ct^a\|v\|_{W_\sigma^{2,1}(\Om^t)},\cr}
\label{9.18}
\end{equation}
where $a>0$, $\sigma>3$, and
\begin{equation}
\|v_t\|_{6r/(5r-6),2,\Om^t}\le t^{1/2-1/\sigma_4}\|v\|_{W_{\sigma_4}^{2+s,1+s/2}(\Om^t)},
\label{9.19}
\end{equation}
where $\sigma_4\ge 3/s$, $s\in(0,1)$. In view of (\ref{9.18}) and (\ref{9.19}) there exists an increasing positive function $\phi_3$ defined by
\begin{equation}\eqal{
&\phi_2\cdot(\|v'\|_{5/6,2,\infty,\Om^t}+]\|v'\|_{W_2^{1,1/2}(\Om^t)}+ |v'_t|_{6r/(5r-6),2,\Om^t}\cr
&\quad+|v|_{3r/(r-3),\infty,\Om^t}\le c\phi_2\cdot t^a\|v\|_{W_\sigma^{2+s,1+s/2}(\Om^t)}\cr
&\le\phi_3(t^a\|v\|_{W_\sigma^{2+s,1+s/2}(\Om^t)}),\ \ \sigma>3/s,\ \ s\in(0,1).\cr}
\label{9.20}
\end{equation}
Using that
\begin{equation}
\Phi\Phi_1\le\Phi_2(1/\ro_*,\ro^*,d_*,A_1),\ \ \Phi\sim\Phi',
\label{9.21}
\end{equation}
we obtain from (\ref{9.17}) inequality (\ref{9.16}). This concludes the proof.
\end{proof}

\begin{lemma}\label{l9.3}
Assume that the r.h.s. of (\ref{9.9}) is finite. Let $\ro_*\le\ro\le\ro^*$.\\
Then
\begin{equation}
\|\ro\|_{C^{\alpha}(\Om^t)}\le c\|\ro\|_{W_{r,\infty}^{1,1}(\Om^t)}\le\Phi_1\cdot\phi_2\cdot\La_2 (t,r)+c\ro^*,
\label{9.22}
\end{equation}
where the above imbedding holds because
$$
{3\over r}+\alpha<1,\ \ r>3
$$
and
$$
\|\ro\|_{W_{r,\infty}^{1,1}(\Om^t)}=|\ro|_{r,\infty,\Om^t}+ |\ro_x|_{r,\infty,\Om^t}+|\ro_t|_{r,\infty,\Om^t}.
$$

\end{lemma}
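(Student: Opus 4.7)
The plan is to prove the two inequalities in (\ref{9.22}) separately. The second, $\|\ro\|_{W_{r,\infty}^{1,1}(\Om^t)}\le\Phi_1\phi_2\La_2(t,r)+c\ro^*$, is essentially immediate from results already assembled. By Definition~\ref{d2.1},
\[
\|\ro\|_{W_{r,\infty}^{1,1}(\Om^t)}=|\ro|_{r,\infty,\Om^t}+|\ro_x|_{r,\infty,\Om^t}+|\ro_t|_{r,\infty,\Om^t}.
\]
The first term is bounded via Lemma~\ref{l2.4}: the pointwise estimate $\ro\le\ro^*$ yields $|\ro|_{r,\infty,\Om^t}\le|\Om|^{1/r}\ro^*$, which is absorbed into the $c\ro^*$ term on the right. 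The remaining two terms are precisely what (\ref{9.9}) controls, giving the $\Phi_1\phi_2\La_2(t,r)$ contribution. Summation of these three bounds produces the second inequality in (\ref{9.22}).

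The first inequality $\|\ro\|_{C^\alpha(\Om^t)}\le c\|\ro\|_{W_{r,\infty}^{1,1}(\Om^t)}$ is an anisotropic Sobolev--Morrey embedding, valid in the regime $r>3$, $3/r+\alpha<1$, and standard in references on anisotropic function spaces. To verify it directly, I would handle the spatial and temporal H\"older seminorms separately. For the spatial part, with $\nabla\ro(\cdot,t)\in L_r(\Om)$ uniformly in $t$ and $r>3$, the classical Morrey embedding $W^1_r(\Om)\hookrightarrow C^{1-3/r}(\Om)$ applied slicewise yields a spatial H\"older seminorm of any exponent $\alpha\le 1-3/r$, with constant independent of $t$. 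For the temporal part, one exploits the transport structure $(\ref{2.1})_1$: since $\ro$ is constant along the characteristics of $v$, the time oscillation of $\ro$ at a fixed Eulerian point reduces to the spatial oscillation of $\ro$ between two nearby Lagrangian positions, and Lipschitz-in-time regularity of the characteristic flow (available under $v\in L_1(0,t;W^1_\infty(\Om))$ from Theorem~\ref{t1.1}) converts the spatial H\"older bound into a time H\"older bound of the same exponent.

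The main obstacle is precisely this temporal component: the assumption $\ro_t\in L_\infty(0,t;L_r(\Om))$ alone does not yield pointwise-in-$x$ time control, so the naive estimate $|\ro(x,t')-\ro(x,t'')|\le\int_{t''}^{t'}|\ro_t(x,s)|\,ds$ is inadequate since the right-hand side is only controlled in $L_r$-norm over $\Om$. The transport argument sketched above circumvents this by trading time oscillation for spatial oscillation along the flow, where the spatial H\"older seminorm is already under control. Once the embedding is established by this route (or invoked directly from the literature on anisotropic Sobolev spaces), chaining it with the Sobolev norm bound proved in the first step yields (\ref{9.22}).
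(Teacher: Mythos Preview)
The paper does not supply a proof for Lemma~\ref{l9.3}; it records the embedding $W_{r,\infty}^{1,1}(\Om^t)\hookrightarrow C^\alpha(\Om^t)$ as a known fact under the index condition $3/r+\alpha<1$, and the second inequality in (\ref{9.22}) is read off directly from (\ref{9.9}) together with $\ro\le\ro^*$. Your treatment of the second inequality is exactly right.

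For the first inequality your spatial argument via Morrey is the standard one. Where you diverge from the paper is in the temporal part: you invoke the transport structure (\ref{2.1}) to trade time oscillation for spatial oscillation along characteristics. This works, but it is specific to this particular $\ro$, produces a constant depending on $|v|_{\infty,\Om^t}$ rather than the universal $c$ in (\ref{9.22}), and has a loose end near $S_2$ where characteristics may exit or enter the domain. The paper instead treats the embedding as purely functional-analytic, with constant independent of $v$. A short direct argument you may have overlooked: for fixed $x$ compare $\ro(x,t)$ with its spatial mean $\bar\ro_\delta(x,t)=|B_\delta|^{-1}\int_{B_\delta(x)}\ro(y,t)\,dy$. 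The spatial Morrey bound gives $|\ro(x,t)-\bar\ro_\delta(x,t)|\le c\delta^{1-3/r}|\ro_x|_{r,\infty,\Om^t}$, while $|\partial_t\bar\ro_\delta(x,t)|\le c\delta^{-3/r}|\ro_t|_{r,\infty,\Om^t}$ by H\"older, so $|\bar\ro_\delta(x,t')-\bar\ro_\delta(x,t'')|\le c\delta^{-3/r}|t'-t''|\,|\ro_t|_{r,\infty,\Om^t}$. Choosing $\delta\sim|t'-t''|$ yields time H\"older exponent $1-3/r>\alpha$ with a constant depending only on $\Om,r,\alpha$. This is presumably what the paper has in mind, and it avoids both the boundary issue and the $v$-dependence of your route.
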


\section{Increasing regularity for velocity and pressure}\label{s10}

In this Section we increase regularity of $v$ and $p$ step by step, using mainly results on the Stokes system proven in Appendix (Lemmas~\ref{l10.2}-\ref{l10.5}). First, we make use of rot-div problem (\ref{1.15}) to conclude the estimate for $v'$ in higher norms.

\begin{lemma}\label{l10.1}
Assume $D_2$, $D_3$ are defined in (\ref{9.15}), $\phi_3$ in (\ref{9.20}) and $\Phi_2$ in (\ref{9.21}). Moreover, $\La_1$ is introduced in (\ref{9.3}) and $\La_2$ in (\ref{9.4}). Finally, it is assumed that $h\in L_\infty(0,t;L_3(\Om))$, $h(0)=v_{0,x_3}\in L_2(\Om)$, $v'\in L_2(\Om;H^{1/2}(0,t))$, where $v'=(v_1,v_2)$, $t\le T$.\\
Then
\begin{equation}\eqal{
\|v'\|_{V^1(\Om^t)}&\le\Phi_2[\|v'\|_{L_2(\Om;H^{1/2}(0,t))}\cr
&\quad+|h|_{3,\infty,\Om^t}+D_2+(1+D_3)\phi_3(\La_1+\La_2+ \|h(0)\|_{L_2(\Om)})].\cr}
\label{10.1}
\end{equation}
\end{lemma}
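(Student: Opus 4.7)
The plan is to treat the elliptic rot--div system (\ref{1.15}) cross-section by cross-section, obtain $V^1$-regularity of $v'$ in terms of $\chi$ and $h$, and then substitute the previously established bounds (\ref{9.16}) and (\ref{9.8}) for $\|\chi\|_{V(\Om^t)}$ and $\|h\|_{V(\Om^t)}$. The two-dimensional rot--div system in $\Om'$ with the Neumann-type condition $v'\cdot\bar n'|_{S_1'}=0$ is elliptic and can, e.g., be reduced to a Poisson problem for a stream function; the standard a priori bounds read, for each fixed $x_3\in(-a,a)$ and $t$,
$$
\|v'(\cdot,x_3,t)\|_{H^1(\Om')}\le c(\|\chi(\cdot,x_3,t)\|_{L_2(\Om')}+\|h_3(\cdot,x_3,t)\|_{L_2(\Om')}),
$$
$$
\|v'(\cdot,x_3,t)\|_{H^2(\Om')}\le c(\|\chi(\cdot,x_3,t)\|_{H^1(\Om')}+\|h_3(\cdot,x_3,t)\|_{H^1(\Om')}).
$$

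Squaring, integrating over $x_3$ and adding the trivial identity $\partial_{x_3}v'=h'$ gives the 3D estimates $\|v'\|_{H^1(\Om)}\le c(\|\chi\|_{L_2(\Om)}+\|h\|_{L_2(\Om)}+\|v'\|_{L_2(\Om)})$ and an analogous $H^2$ bound in terms of $\|\chi\|_{H^1(\Om)}$ and $\|h\|_{H^1(\Om)}$. Taking $L_\infty$ in time in the first inequality and $L_2$ in time in the second, and recalling the definition of $V^k$ in Definition~\ref{d2.1}, one obtains
$$
\|v'\|_{V^1(\Om^t)}\le c\bigl(\|\chi\|_{V(\Om^t)}+\|h\|_{V(\Om^t)}+A_1\bigr),
$$
where Theorem~\ref{t3.4} was used to absorb the lower-order contribution $\|v'\|_{L_\infty(0,t;L_2(\Om))}$ into $A_1$.

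Inserting (\ref{9.16}) and (\ref{9.8}) yields a bound of the form
$$\begin{aligned}
\|v'\|_{V^1(\Om^t)}&\le c\Phi_2\bigl[\|v'\|_{W_2^{1,1/2}(\Om^t)}+\|v'\|_{5/6,2,\infty,\Om^t}+|h|_{3,\infty,\Om^t}+D_2\\
&\quad+(\phi_3+D_3\phi_2)\La_2+\phi_3(\La_1+\La_2+|h(0)|_{2,\Om})\bigr]+cA_1.
\end{aligned}$$
The two $v'$ norms on the right are handled by interpolation. Using Definition~\ref{d2.2} one splits $\|v'\|_{W_2^{1,1/2}(\Om^t)}=\|v'\|_{L_2(0,t;H^1(\Om))}+\|v'\|_{L_2(\Om;H^{1/2}(0,t))}$; the first summand is interpolated between $L_2(0,t;L_2)$ and $L_2(0,t;H^2)$, giving by Young's inequality
$$
\|v'\|_{L_2(0,t;H^1(\Om))}\le c\|v'\|_{L_\infty(0,t;L_2)}^{1/2}\|v'\|_{V^1(\Om^t)}^{1/2}\le\varepsilon\|v'\|_{V^1(\Om^t)}+c(1/\varepsilon)A_1,
$$
and analogously
$$
\|v'\|_{5/6,2,\infty,\Om^t}\le c\|v'\|_{L_\infty(0,t;L_2)}^{1/6}\|v'\|_{L_\infty(0,t;H^1)}^{5/6}\le\varepsilon\|v'\|_{V^1(\Om^t)}+c(1/\varepsilon)A_1.
$$
Choosing $\varepsilon$ so that $c\Phi_2\varepsilon\le 1/2$ (admissible because $\Phi_2$ depends only on the fixed data), absorbing the resulting $\tfrac12\|v'\|_{V^1(\Om^t)}$ into the left-hand side, and grouping the factors $\phi_2$ and $\phi_3$ into a single increasing function (still denoted $\phi_3$) of $t^a\|v\|_{W_\sigma^{2+s,1+s/2}(\Om^t)}$ as in (\ref{9.20}), one arrives at (\ref{10.1}), with the extra $A_1$ reabsorbed into $\Phi_2$ and $D_2$ (note that $A_1$ is already part of $D_2$ by (\ref{9.15})).

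The main obstacle is the careful absorption of the $v'$ terms that enter through the $\chi$-estimate (\ref{9.16}): since $\Phi_2$ is not small, one cannot simply move them to the left, and one must exploit the fact that both $\|v'\|_{W_2^{1,1/2}(\Om^t)}$ and $\|v'\|_{5/6,2,\infty,\Om^t}$ are strictly subcritical with respect to $V^1(\Om^t)$, so that interpolation produces a genuinely absorbable term. The time-fractional piece $\|v'\|_{L_2(\Om;H^{1/2}(0,t))}$ is intrinsic to this splitting and cannot be absorbed at this stage; it is deliberately kept on the right of (\ref{10.1}), to be later traded for $\|v\|_{W_{5/3}^{2,1}(\Om^t)}$ via the interpolation inequality stated right after (\ref{v'-v1}).
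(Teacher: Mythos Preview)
Your proof is correct and follows essentially the same route as the paper: cross-sectional rot--div estimates to bound $\|v'\|_{V^1(\Om^t)}$ by $\|\chi\|_{V(\Om^t)}+\|h\|_{V(\Om^t)}$, then substitution of (\ref{9.16}) and (\ref{9.8}), followed by interpolation to absorb the subcritical $v'$ norms coming from the $\chi$-bound. The one cosmetic difference is your treatment of $\|v'\|_{L_2(0,t;H^1(\Om))}$: you interpolate it between $L_2(0,t;L_2)$ and $L_2(0,t;H^2)$ to produce an absorbable $\varepsilon\|v'\|_{V^1}$ term, whereas the paper simply observes that $\|v'\|_{L_2(0,t;H^1(\Om))}\le cA_1$ directly from the energy estimate (Theorem~\ref{t3.4}), which is shorter and avoids one more use of the absorption constant.
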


\begin{proof}
Let $\Om'$ be the cross-section of $\Om$ with the plane perpendicular to the $x_3$-axis and passing through the point $x_3\in(-a,a)$. Let $S'_1$ be the cross-section of $S_1$ with the same plane. Then $S'_1$ is the boundary of $\Om'$. Therefore, the elliptic system (rot,div) reduced to $\Om'$ yields the problem
\begin{equation}\eqal{
&v_{2,x_1}- v_{1,x_2}=\chi\quad &{\rm in}\ \ \Om',\cr
&v_{1,x_1}+v_{2,x_2}=-h_3\quad &{\rm in}\ \ \Om',\cr
&v'\cdot\bar n'=0\quad &{\rm on}\ \ S'_1,\cr}
\label{10.2}
\end{equation}
where $x_3$ is treated as parameter, $v'=(v_1,v_2)$ and $\bar n'$ is the unit outward vector normal to $S_1$ at a point of $S'_1$.

Solutions to (\ref{10.2}) satisfy the estimate
\begin{equation}\eqal{
&\sup_t\|v'\|_{L_2(-a,a;H^1(\Om'))}+\|v'\|_{L_2(0,t;L_2(-a,a;H^2(\Om'))}\cr
&\le c(\|\chi\|_{V(\Om^t)}+\|h_3\|_{V(\Om^t)}).\cr}
\label{10.3}
\end{equation}
From (\ref{9.8}) it follows
\begin{equation}
\|v'_{x_3}\|_{V(\Om^t)}\le\Phi_2\cdot\phi_3\cdot(\La_1+ \La_2+\|h(0)\|_{L_2(\Om)}).
\label{10.4}
\end{equation}
Hence, (\ref{10.3}) and (\ref{10.4}) imply
\begin{equation}
\|v'\|_{V^1(\Om^t)}\le c[\|\chi\|_{V(\Om^t)}+\Phi_2\cdot\phi_3\cdot(\La_1+ \La_2+\|h(0)\|_{L_2(\Om)})].
\label{10.5}
\end{equation}
Using (\ref{9.16}) yields
\begin{equation}\eqal{
\|v'\|_{V^1(\Om^t)}&\le\Phi_2\cdot[\|v'\|_{W_2^{1,1/2}(\Om^t)}+ \|v'\|_{6/5,2,\infty,\Om^t}\cr
&\quad+|h|_{3,\infty,\Om^t}+D_2+(\phi_3+D_3\phi_2)\La_2+\phi_3\cdot(\La_1\cr
&\quad+\La_2+\|h(0)\|_{L_2(\Om)})].\cr}
\label{10.6}
\end{equation}
Using that
$$\eqal{
\|v'\|_{W_2^{1,1/2}(\Om^t)}&\le\|v'\|_{L_2(0,t;H^1(\Om))}+ \|v'\|_{L_2(\Om;H^{1/2}(0,t))}\cr
&\le A_1+\|v'\|_{L_2(\Om;H^{1/2}(0,t))}\cr}
$$
and the interpolation
$$\eqal{
\|v'\|_{5/6,2,\infty,\Om^t}&\le\varepsilon\|v'\|_{1,2,\infty,\Om^t}+ c(1/\varepsilon)|v'|_{2,\Om^t}\cr
&\le\varepsilon\|v'\|_{1,2,\infty,\Om^t}+c(1/\varepsilon)A_1\cr}
$$
in (\ref{10.6}) implies the inequality for sufficiently small $\varepsilon$
\begin{equation}\eqal{
\|v'\|_{V^1(\Om^t)}&\le\Phi_2\cdot[\|v'\|_{L_2(\Om;H^{1/2}(0,t))}
+|h|_{3,\infty,\Om^t} \cr &\quad +D_2+(\phi_3+D_3\phi_2)\La_2+\phi_3(\La_1\cr
&\quad+\La_2+\|h(0)\|_{L_2(\Om)})].\cr}
\label{10.7}
\end{equation}
Since
$$
\phi_2\le\phi_3
$$
we obtain from (\ref{10.7}) inequality (\ref{10.1}). This concludes the proof.
\end{proof}

\begin{lemma}\label{l10.2}
Let the assumptions of Lemma \ref{l10.1} hold.\\
Let $v\in W_\sigma^{2+s,1+s/2}(\Om^t)$, $p\in L_{5/3}(\Om^t)$, $\si > 3/5,$ $\bar{a} > 0,$
\begin{equation}
H=|h|_{3,\infty,\Om^t}+|h|_{10/3,\Om^t}+\|h(0)\|_{L_2(\Om)},
\label{10.8}
\end{equation}
\begin{equation}
D_4=|f|_{5/3,\Om^t}+\|d\|_{W_{5/3}^{7/5,7/10}(S_2^t)}+ \|v_0\|_{W_{5/3}^{4/5}(\Om)}.
\label{10.9}
\end{equation}
Let $D_2$, $D_3$ be defined in (\ref{9.15}) and
\begin{equation}
\phi_5=\phi_5(\ro^*,A_1,t^{\bar{a}}\|v\|_{W_\sigma^{2+s,1+s/2}(\Om^t)}),
\label{10.10}
\end{equation}
where $\phi_5$ is combination of $\phi_2$, $\phi_3$ and function $\phi$ from (A.1). Moreover, $\phi_5(\ro^*,A_1,0)=0$.\\
Then
\begin{equation}\eqal{
&\|v\|_{W_{5/3}^{2,1}(\Om^t)}+\|\nabla p\|_{L_{5/3}(\Om^t)}\le\phi_5(\ro^*,A_1, t^{\bar{a}}\|v\|_{W_\sigma^{2+s,1+s/2}(\Om^t)})\cdot\cr
&\quad\cdot(\La_1+\La_2)\cdot[\|v\|_{W_{5/3}^{1,1/2}(\Om^t)}+ |p|_{5/3,\Om^t}+D_3]+c(\ro^*,A_1)\cdot\cr
&\quad\cdot[H+D_2+D_4+\phi_3\|h(0)\|_{L_2(\Om)}]\cr}
\label{10.11}
\end{equation}
and
\begin{equation}\eqal{
\|v'\|_{V^1(\Om^t)}&\le\phi_5(\ro^*,A_1, t^{\bar{a}}\|v\|_{W_\sigma^{2+s,1+s/2}(\Om^t)})\cdot(\La_1+\La_2)\cdot\cr
&\quad\cdot[\|v\|_{W_{5/3}^{1,1/2}(\Om^t)}+|p|_{5/3,\Om^t}+D_3]+c (\ro^*,A_1)[H+D_2+D_4\cr
&\quad+\phi_3\|h(0)\|_{L_2(\Om)}].\cr}
\label{10.12}
\end{equation}
\end{lemma}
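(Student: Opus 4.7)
The strategy is to treat the momentum equation in~(\ref{1.1}) as the linear Stokes system~(\ref{stokes}) with density $\ro$ as a variable coefficient, and to apply the $L_{5/3}$-maximal regularity result for Stokes from Appendix~A. Since that theory requires $\ro\in C^\alpha$, I would first invoke Lemma~\ref{l9.3}, which yields $\|\ro\|_{C^\alpha(\Om^t)}\le\Phi_{1}\phi_{2}\La_{2}+c\ro^{*}$ for any $\alpha$ with $3/r+\alpha<1$. The Appendix-A estimate then gives
$$\eqal{
&\|v\|_{W_{5/3}^{2,1}(\Om^t)}+\|\nabla p\|_{L_{5/3}(\Om^t)}\le\phi(\|\ro\|_{C^\alpha})\bigl[\|\bar F\|_{L_{5/3}(\Om^t)}\cr
&\qquad+\|d\|_{W_{5/3}^{7/5,7/10}(S_2^t)}+\|v_0\|_{W_{5/3}^{4/5}(\Om)}\bigr],\cr}
$$
where $\bar F=-\ro v'\!\cdot\!\nabla v-\ro v_{3}h+\ro f$; the boundary and initial contributions are already absorbed into $D_{4}$.

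Next I would estimate $\|\bar F\|_{L_{5/3}}$. The external-force part $\|\ro f\|_{L_{5/3}}\le\ro^{*}D_{4}$ is immediate. For $\|\ro v_{3}h\|_{L_{5/3}(\Om^t)}$ I would use H\"older in space-time, the imbedding $\|v_{3}\|_{L_{\infty}(\Om^t)}\le c\|v\|_{W_{\sigma}^{2+s,1+s/2}(\Om^t)}$, and $\|h\|_{L_{5/3}(\Om^t)}\le ct^{\bar a}|h|_{10/3,\Om^t}\le ct^{\bar a}H$, so that this piece enters the $c(\ro^{*},A_{1})H$ bracket of~(\ref{10.11}). The genuinely delicate piece is the convective term $\|\ro v'\!\cdot\!\nabla v\|_{L_{5/3}}$: splitting $\nabla v=(\nabla'v,h)$ and applying H\"older with carefully chosen exponents, I would arrange one factor to interpolate between $\|v\|_{W_{5/3}^{2,1}(\Om^t)}$ and the lower norm $\|v\|_{W_{5/3}^{1,1/2}(\Om^t)}$, and push the remaining factor into $\phi_{5}(\La_{1}+\La_{2})$ via the already-proved smallness of $\|h\|_{V(\Om^t)}$ in~(\ref{9.8}) and of $\ro_{x}$ in~(\ref{9.9}). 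Together with the pressure contribution from the Stokes identity this produces exactly the structure
$$\eqal{
&\phi_{5}(\La_{1}+\La_{2})\bigl[\|v\|_{W_{5/3}^{1,1/2}(\Om^t)}+|p|_{5/3,\Om^t}+D_{3}\bigr]\cr
&\qquad+c(\ro^{*},A_{1})\bigl[H+D_{2}+D_{4}+\phi_{3}\|h(0)\|_{L_{2}(\Om)}\bigr],\cr}
$$
claimed in~(\ref{10.11}); the $D_{3}$ enters through the coupling with the horizontal component $v'$ that was analysed earlier via the rot-div system in Lemma~\ref{l10.1}.

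For the second inequality~(\ref{10.12}), I would substitute~(\ref{10.11}) into the bound~(\ref{10.1}) of Lemma~\ref{l10.1}, applying the interpolation $\|v'\|_{L_{2}(\Om;H^{1/2}(0,t))}\le\varepsilon\|v\|_{W_{5/3}^{2,1}(\Om^t)}+c(\varepsilon^{-1})A_{1}$ and absorbing the $\varepsilon$-term on the left; the resulting right-hand side is identical to that of~(\ref{10.11}) up to constants depending on $\ro^{*}$ and $A_{1}$. The main obstacle will be the careful bookkeeping of H\"older/Sobolev exponents in the convective term: the $\La$-small factor has to be routed to the factor carrying the highest regularity of $v$, so that what remains on the right involves only norms of $v,p$ that are either captured by $H,D_{2},D_{3},D_{4}$ or of strictly lower order $W_{5/3}^{1,1/2}$ — this is essential for the bootstrap argument closing the global estimate in Lemma~\ref{11.3}. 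A secondary subtlety is that $|h|_{3,\infty,\Om^t}$ hiding inside $H$ is not controlled by $\|h\|_{V(\Om^t)}$ alone; it will ultimately be re-obtained from the $W_{5/3}^{2,1}$-bound for $h$ established in~(\ref{1.30}) via embedding.
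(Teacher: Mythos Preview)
Your overall strategy (Stokes maximal regularity from Lemma~A.1 plus control of the convective term) is right, and your derivation of~(\ref{10.12}) from~(\ref{10.11}) via the interpolation~(\ref{10.18}) matches the paper. But the handling of the convective terms contains a genuine gap.

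For $|v_3h|_{5/3,\Om^t}$ you propose $\|v_3\|_{L_\infty}\le c\|v\|_{W_\sigma^{2+s,1+s/2}}$. This would force the $H$-contribution to be multiplied by a factor depending on the highest norm of $v$, i.e.\ it would land in the $\phi_5$-bracket rather than the $c(\ro^*,A_1)$-bracket of~(\ref{10.11}). The paper instead pairs the exponents as $|v_3h|_{5/3}\le|v_3|_{10/3}|h|_{10/3}\le cA_1|h|_{10/3}$, using only the energy bound $\|v\|_{V(\Om^t)}\le A_1$ for the $v_3$ factor. This is what makes $H$ appear with a constant depending only on $(\ro^*,A_1)$, which is essential downstream.

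More seriously, your plan for $|v'\cdot\nabla v|_{5/3}$ does not work. Since $v'\cdot\nabla=v_1\partial_{x_1}+v_2\partial_{x_2}$, this term contains no $x_3$-derivative of $v$ and hence no $h$; the smallness of $\|h\|_{V}$ from~(\ref{9.8}) and of $\ro_x$ from~(\ref{9.9}) is irrelevant here. The paper's mechanism is different and is the key point of the lemma: one uses the imbedding $\|v'\|_{L_{10}(\Om^t)}\le c\|v'\|_{V^1(\Om^t)}$ (see~(\ref{10.15})) together with $|\nabla v|_{2,\Om^t}\le cA_1$ to get $|v'\cdot\nabla v|_{5/3}\le cA_1\|v'\|_{V^1(\Om^t)}$. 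Then Lemma~\ref{l10.1} (inequality~(\ref{10.7})) is fed into this \emph{already at the stage of proving~(\ref{10.11})}, not only for~(\ref{10.12}). The $\|v'\|_{L_2(\Om;H^{1/2}(0,t))}$ term coming out of~(\ref{10.7}) is interpolated by~(\ref{10.18}) and absorbed on the left; the $|h|_{3,\infty}$ and $D_2$ terms go into the $c(\ro^*,A_1)[H+D_2+\dots]$ bracket; and the $\phi_3(\La_1+\La_2)$ terms generate the $\phi_5\cdot(\La_1+\La_2)\cdot D_3$ contribution. Without this feedback of Lemma~\ref{l10.1} into the convective estimate you cannot produce the structure of~(\ref{10.11}) with $H$ multiplied only by $c(\ro^*,A_1)$.
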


\begin{proof}
Consider problem (\ref{1.1}) written in the form of the two following problems
\begin{equation}\eqal{
&\ro v_t-\divv\T(v,p)=-\ro v'\cdot\nabla v-\ro v_3h+\ro f\quad &{\rm in}\ \ \Om^T,\cr
&\divv v=0\quad &{\rm in}\ \ \Om^T,\cr
&v\cdot\bar n=0,\ \ \nu\bar n\cdot\D(v)\cdot\bar\tau_\alpha+\gamma v\cdot\bar\tau_\alpha=0,\ \ \alpha=1,2\quad &{\rm on}\ \ S_1^T,\cr
&v\cdot\bar n=d,\ \ \bar n\cdot\D(v)\cdot\bar\tau_\alpha=0,\ \ \alpha=1,2\quad &{\rm on}\ \ S_2^T,\cr
&v|_{t=0}=v_0\quad &{\rm in}\ \ \Om,\cr}
\label{10.13}
\end{equation}
where $\ro$ is treated as given and $\ro$ satisfies
\begin{equation}\eqal{
&\ro_t+v\cdot\nabla\ro=0\quad &{\rm in}\ \Om^T,\cr
&\divv v=0 \quad &{\rm in}\ \  \Om^T, \cr
&\ro=\ro_1\quad &{\rm on}\ \ S_2^T(a_1),\cr
&\ro|_{t=0}=\ro_0\quad &{\rm in}\ \ \Om,\cr}
\label{10.14}
\end{equation}
where $v$ is treated as given.

From the proof of Lemma 3.4 in \cite {Z4} we have
\begin{equation}
\|v'\|_{L_{10}(\Om^T)}\le c\|v'\|_{V^1(\Om^T)}.
\label{10.15}
\end{equation}
Then
\begin{equation}\eqal{
&|v'\cdot\nabla v|_{5/3,\Om^t}\le cA_1\|v'\|_{V^1(\Om^t)},\cr
&|v_3h|_{5/3,\Om^t}\le cA_1|h|_{10/3,\Om^t}.\cr}
\label{10.16}
\end{equation}
In view of (\ref{10.16}) and Lemma \ref{A.1} we have
\begin{equation}\eqal{
&\|v\|_{W_{5/3}^{2,1}(\Om^t)}+\|\nabla p\|_{L_{5/3}(\Om^t)}\le\phi_4(\ro^*,t^{\bar{a}}\|v\|_{W_\sigma^{2+s,1+s/2}(\Om^t)}) \La_2\cdot\cr
&\quad\cdot[\|v\|_{W_{5/3}^{1,1/2}(\Om^t)}+|p|_{5/3,\Om^t}]+c(\ro^*) [A_1\|v'\|_{V_2^1(\Om^t)}\cr
&\quad+A_1|h|_{10/3,\Om^t}+D_4].\cr}
\label{10.17}
\end{equation}
Using (\ref{10.7}) and the interpolation inequality
\begin{equation}\eqal{
\|v'\|_{L_2(\Om;H^{1/2}(0,t))}&\le\varepsilon\|v'\|_{W_{5/3}^{2,1}(\Om^t)}+ c(1/\varepsilon)|v'|_{2,\Om^t}\cr
&\le\varepsilon\|v'\|_{W_{5/3}^{2,1}(\Om^t)}+c(1/\varepsilon)A_1\cr}
\label{10.18}
\end{equation}
we obtain
\begin{equation}\eqal{
&\|v\|_{W_{5/3}^{2,1}(\Om^t)}+\|\nabla p\|_{L_{5/3}(\Om^t)}\le\phi_4(\ro^*,t^{\bar{a}}\|v\|_{W_\sigma^{2+s,1+s/2}(\Om^t)})\cdot\cr
&\quad\cdot\La_2[\|v\|_{W_{5/3}^{1,1/2}(\Om^t)}+|p|_{{5\over 3},\Om^t}]+c(\ro^*,A_1)\cdot\cr
&\quad\cdot[H+D_2+D_4+(2\phi_3+D_3\phi_2)\La_2+\phi_3(\La_1+ \|h(0)\|_{L_2(\Om)})]\cr
&\le\phi_5(\ro^*,A_1,\|v\|_{W_\sigma^{2+s,1+s/2}(\Om^t)})\cdot (\La_1+\La_2)\cdot[\|v\|_{W_{5/3}^{1,1/2}(\Om^t)}\cr
&\quad+|p|_{{5\over 3},\Om^t}+D_3]+c(\ro^*,A_1)(H+D_2+D_4+\phi_3\|h(0)\|_{L_2(\Om)}),\cr}
\label{10.19}
\end{equation}
where $\phi_5$ is a combination of $\phi_2$, $\phi_3$ and function $\phi$ from (A.1).

Using (\ref{10.19}) in (\ref{10.7}) yields
\begin{equation}\eqal{
\|v'\|_{V^1(\Om^t)}&\le\phi_5(\ro^*,A_1, t^{\bar{a}}\|v\|_{W_\sigma^{2+s,1+s/2}(\Om^t)})\cdot\cr
&\quad\cdot(\La_1+\La_2)\cdot[\|v\|_{W_{5/3}^{1,1/2}(\Om^t)}+ |p|_{5/3,\Om^t}+D_3]\cr
&\quad+c(\ro^*,A_1)(H+D_2+D_4+\phi_3\|h(0)\|_{L_2(\Om)}).\cr}
\label{10.20}
\end{equation}
Inequalities (\ref{10.19}) and (\ref{10.20}) imply (\ref{10.11}) and (\ref{10.12}), respectively. This ends the proof.
\end{proof}

To increase the above regularity we need

\begin{lemma}\label{l10.3}
Let the assumptions of Lemma \ref{l10.2} hold. Let
\begin{equation}
D_5=|f|_{2,\Om^t}+\|d\|_{W_2^{3/2,3/4}(S_2^t)}+\|v_0\|_{H^1(\Om)}.
\label{10.21}
\end{equation}
Then
\begin{equation}\eqal{
&\|v\|_{W_2^{2,1}(\Om^t)}+\|\nabla p\|_{L_2(\Om^t)}\le\phi_6(\ro^*,A_1, t^{\bar{a}}\|v\|_{W_\sigma^{2+s,1+s/2}(\Om^t)})\cdot\cr
&\quad\cdot(\La_1+\La_2)\cdot[\|v\|_{W_2^{1,1/2}(\Om^t)}^2+ \|v\|_{W_2^{1,1/2}(\Om^t)}+|p|_{2,\Om^t}^2+|p|_{2,\Om^t}\cr
&\quad+(H+D_2+D_4+\phi_3\cdot\|h(0)\|_{L_2(\Om)}+1)^2]+c(\ro^*,A_1) [H+D_2+D_4\cr
&\quad+D_5+\phi_3\|h(0)\|_{L_2(\Om)}],\cr}
\label{10.22}
\end{equation}
where $\phi_6$ is a combination of $\phi$ from (A.2) and $\phi_5$ and $\La_1$, $\La_2$ are assumed to be small.
\end{lemma}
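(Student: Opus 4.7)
The plan is to apply the $L_2$-version of the Stokes regularity result from Appendix~A (inequality (A.2)) to problem (\ref{10.13}), treating $\ro$ as a given H\"older-continuous coefficient and $-\ro v'\cdot\nabla v-\ro v_3h+\ro f$ together with the boundary data $d$ and initial data $v_0$ as forcing. The required H\"older control on $\ro$ is provided by (\ref{9.22}) of Lemma~\ref{l9.3}: choosing $\al\in(0,1)$ with $3/r+\al<1$ gives $\|\ro\|_{C^\al(\Om^t)}\le\Phi_1\phi_2\La_2+c\ro^*$, which enters the Stokes estimate through a monotone coefficient function.

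This yields a bound of the form
$$\eqal{
&\|v\|_{W_2^{2,1}(\Om^t)}+\|\nabla p\|_{L_2(\Om^t)}\le\phi(\|\ro\|_{C^\al})\bigl[\|\ro v'\cdot\nabla v\|_{L_2(\Om^t)}\cr
&\quad+\|\ro v_3h\|_{L_2(\Om^t)}+\ro^*|f|_{2,\Om^t}+\|d\|_{W_2^{3/2,3/4}(S_2^t)}+\|v_0\|_{H^1(\Om)}\bigr].\cr}
$$
The nonlinear terms would be split by H\"older so that one factor is controlled by a lower parabolic-Sobolev norm (of $W_2^{1,1/2}$-type, dominated by interpolation between the energy estimate and higher regularity) and the other by the norm $\|v\|_{W_{5/3}^{2,1}(\Om^t)}$, via the parabolic imbedding $W_{5/3}^{2,1}(\Om^t)\hookrightarrow L_\infty(\Om^t)$; the term $v_3h$ is treated analogously, with $|h|_{2,\Om^t}$ absorbed into $H$ from (\ref{10.8}) through $L_\infty(0,t;L_3(\Om))\cap L_{10/3}(\Om^t)\hookrightarrow L_2(\Om^t)$ on the bounded $\Om^t$. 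Substituting (\ref{10.11}) from Lemma~\ref{l10.2} for the $W_{5/3}^{2,1}$-factor produces an expression of the form $\phi_5(\La_1+\La_2)[\|v\|_{W_{5/3}^{1,1/2}}+|p|_{5/3,\Om^t}+D_3]+c(H+D_2+D_4+\phi_3\|h(0)\|_{L_2})$, and using $\|v\|_{W_{5/3}^{1,1/2}}\le c\|v\|_{W_2^{1,1/2}}$, $|p|_{5/3,\Om^t}\le c|p|_{2,\Om^t}$ on the bounded $\Om^t$, followed by AM-GM of the shape $ab\le\tfrac12(a^2+b^2)$ applied to cross-terms, reproduces exactly the bracket $\|v\|_{W_2^{1,1/2}}^2+\|v\|_{W_2^{1,1/2}}+|p|_{2,\Om^t}^2+|p|_{2,\Om^t}+(H+D_2+D_4+\phi_3\|h(0)\|_{L_2}+1)^2$ of (\ref{10.22}), multiplied by $\La_1+\La_2$. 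The purely linear pieces $\ro^*|f|_{2,\Om^t}+\|d\|_{W_2^{3/2,3/4}(S_2^t)}+\|v_0\|_{H^1(\Om)}$ assemble into $c(\ro^*,A_1)D_5$ according to (\ref{10.21}), and $\phi_6$ is the composition of the coefficient factor from (A.2) (depending on $\|\ro\|_{C^\al}$ through $\phi_1$ and $\phi_2$) with $\phi_5$ of Lemma~\ref{l10.2}.

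The main obstacle is the careful bookkeeping of the H\"older splittings and of the substitution of (\ref{10.11}): every appearance of the intermediate norm $\|v\|_{W_{5/3}^{2,1}(\Om^t)}$ must end up either inside a $\phi$-function (so that, via the definition of $\phi_6$, it is majorized by $t^{\bar a}\|v\|_{W_\sigma^{2+s,1+s/2}(\Om^t)}$ and can be absorbed in the final bootstrap when $\La$ is small) or multiplied by the small parameters $\La_1+\La_2$; otherwise the closing argument at the end of the proof of Thesis~4 would not go through. A secondary technical point is to verify that the chosen H\"older exponent $\al$ simultaneously meets $3/r+\al<1$ required in (\ref{9.22}) and the regularity demands of the $L_2$ Stokes theory in Appendix~A.
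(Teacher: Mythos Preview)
Your overall strategy is right --- apply Lemma~\ref{lA.1} with $r=2$ to (\ref{10.13}) and then feed in the output of Lemma~\ref{l10.2} --- but the H\"older splitting you propose for the nonlinear terms does not work. The parabolic imbedding $W_{5/3}^{2,1}(\Om^t)\hookrightarrow L_\infty(\Om^t)$ is \emph{false} in space dimension three: the condition for $W_p^{2,1}\hookrightarrow L_\infty$ is $p>5/2$, and $5/3$ is well below this threshold. Consequently neither $|v'|_\infty$ nor $|v_3|_\infty$ can be controlled by $\|v\|_{W_{5/3}^{2,1}(\Om^t)}$, and the product estimates $|v'\cdot\nabla v|_{2,\Om^t}$ and $|v_3h|_{2,\Om^t}$ as you describe them collapse.

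The paper avoids this by exploiting \emph{both} conclusions of Lemma~\ref{l10.2}, not just (\ref{10.11}). The valid imbeddings from $W_{5/3}^{2,1}(\Om^t)$ are $|v|_{5,\Om^t}+|\nabla v|_{5/2,\Om^t}\le c\|v\|_{W_{5/3}^{2,1}(\Om^t)}$, and these are paired with the $L_{10}$ control on $v'$ coming from (\ref{10.15}) and the $V^1$-estimate (\ref{10.12}):
\[
|v'\cdot\nabla v|_{2,\Om^t}\le|v'|_{10,\Om^t}|\nabla v|_{5/2,\Om^t}\le c\|v'\|_{V^1(\Om^t)}\|v\|_{W_{5/3}^{2,1}(\Om^t)},
\]
\[
|v_3h|_{2,\Om^t}\le|v_3|_{5,\Om^t}|h|_{10/3,\Om^t}\le c|h|_{10/3,\Om^t}\|v\|_{W_{5/3}^{2,1}(\Om^t)}.
\]
Only after this does one substitute (\ref{10.11}) and (\ref{10.12}) simultaneously, which is what produces the quadratic bracket in (\ref{10.22}). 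Your bookkeeping remarks about $\|v\|_{W_{5/3}^{1,1/2}}\le c\|v\|_{W_2^{1,1/2}}$, $|p|_{5/3}\le c|p|_2$ and the AM--GM step are then correct, but they must be preceded by the right Sobolev splitting and by invoking (\ref{10.12}) for $\|v'\|_{V^1(\Om^t)}$.
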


\begin{proof}
We have
$$
|v|_{5,\Om^t}+|\nabla v|_{5/2,\Om^t}\le c\|v\|_{W_{5/3}^{2,1}(\Om^t)}.
$$
Then
\begin{equation}\eqal{
&|v'\cdot\nabla v|_{2,\Om^t}\le|v'|_{10,\Om^t}|\nabla v|_{5/2,\Om^t}\le c\|v'\|_{V^1(\Om^t)}\|v\|_{W_{5/3}^{2,1}(\Om^t)},\cr
&|v_3h|_{2,\Om^t}\le|v_3|_{5,\Om^t}|h|_{10/3,\Om^t}\le c|h|_{10/3,\Om^t}\|v\|_{W_{5/3}^{2,1}(\Om^t)}.\cr}
\label{10.23}
\end{equation}
Applying Lemma \ref{A.1} to problem (\ref{10.13}) for $r=2$ and using (\ref{10.23}) we obtain
\begin{equation}\eqal{
&\|v\|_{W_2^{2,1}(\Om^t)}+\|\nabla p\|_{L_2(\Om^t)}\le\phi(\ro^*, t^{\bar{a}}\|v\|_{W_\sigma^{2+s,1+s/2}(\Om^t)})\cdot\cr
&\quad\cdot\La_2\cdot[\|v\|_{W_2^{1,1/2}(\Om^t)}+|p|_{2,\Om^t}]\cr
&\quad+c(\ro^*)[\|v'\|_{V^1(\Om^t)}\|v\|_{W_{5/3}^{2,1}(\Om^t)}+ |h|_{10/3,\Om^t}\|v\|_{W_{5/3}^{2,1}(\Om^t)}\cr
&\quad+D_5].\cr}
\label{10.24}
\end{equation}
Using (\ref{10.11}) and (\ref{10.12}) in (\ref{10.24}) we derive a qualitatively equivalent inequality
$$\eqal{
&\|v\|_{W_2^{2,1}(\Om^t)}+\|\nabla p\|_{L_2(\Om^t)}\le\phi_6(\ro^*, A_1, t^{\bar{a}}\|v\|_{W_\sigma^{2+s,1+s/2}(\Om^t)})\cdot\cr
&\quad\cdot(\La_1+\La_2)\cdot[\|v\|_{W_2^{1,1/2}(\Om^t)}^2+ \|v\|_{W_2^{1,1/2}(\Om^t)}+|p|_{2,\Om^t}^2\cr
&\quad+|p|_{2,\Om^t}+(H+D_2+D_4+\phi_3\cdot\|h(0)\|_{L_2(\Om)}+1)^2]\cr
&\quad+c(\ro^*,A_1)(H+D_2+D_4+\phi_3\|h(0)\|_{L_2(\Om)}+D_5),\cr}
$$
where $\phi_6$ is a combination of $\phi$ from (A.2) with $\phi_5$ and $\La_1,\La_2$ are assumed small. The above inequality implies (\ref{10.22}) and ends the proof.
\end{proof}

We proceed with

\begin{lemma}\label{l10.4}
Let the assumptions of Lemma \ref{l10.3} hold. Let
\begin{equation}\eqal{
&D_6=|f|_{5/2,\Om^t}+\|d\|_{W_{5/2}^{8/5,4/5}(S_2^t)}+ \|v_0\|_{W_{5/2}^{6/5}(\Om)},\cr
&A_1+|F|_{6/5,2,\Om^t}+|\chi(0)|_{3,\Om}+D_6\le D_7.\cr}
\label{10.25}
\end{equation}
Then
\begin{equation}\eqal{
&\|v\|_{W_{5/2}^{2,1}(\Om^t)}+\|\nabla p\|_{L_{5/2}(\Om^t)}\le\phi(\ro^*,A_1,t^{\bar{a}}\|v\|_{W_\sigma^{2+s,1+s/2}(\Om^t)})\cr
&\quad\cdot(\La_1+\La_2+\|h(0)\|_{L_2(\Om)})\cdot [\|v\|_{W_{5/2}^{1,1/2}(\Om^t)}^4+|p|_{5/2,\Om^t}^4\cr
&\quad+\|v\|_{W_{5/2}^{1,1/2}(\Om^t)}+|p|_{5/2,\Om^t}+(H+D_7+1)^4]\cr
&\quad+c(\ro^*,A_1)(H+D_7+1)^2.\cr}
\label{10.26}
\end{equation}
\end{lemma}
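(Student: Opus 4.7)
\textbf{Plan for the proof of Lemma \ref{l10.4}.} The strategy is exactly analogous to Lemmas \ref{l10.2} and \ref{l10.3}: treat (\ref{10.13}) as a Stokes system for variable density $\ro$ (with $\ro$-H\"older continuity ensured by Lemma \ref{l9.3} provided $\La_2$ is finite) and apply the Stokes estimate from Appendix A at the exponent $r=5/2$. The structure will be
$$\eqal{
&\|v\|_{W_{5/2}^{2,1}(\Om^t)}+\|\nabla p\|_{L_{5/2}(\Om^t)}\cr
&\le\phi(\ro^*,t^{\bar a}\|v\|_{W_\si^{2+s,1+s/2}(\Om^t)})\,\La_2\,\bigl[\|v\|_{W_{5/2}^{1,1/2}(\Om^t)}+|p|_{5/2,\Om^t}\bigr]\cr
&\quad+c(\ro^*)\bigl[|\ro v'\cdot\nb v|_{5/2,\Om^t}+|\ro v_3h|_{5/2,\Om^t}+D_6\bigr],\cr}$$
where the first term on the right is the contribution of the variable coefficient $\ro$ (as in (\ref{10.17}), (\ref{10.24})) and $D_6$ absorbs $f,d,v_0$.

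The main work is to estimate the two nonlinear right-hand side terms. First I would upgrade the parabolic embedding for $W_2^{2,1}(\Om^t)$ to conclude $v\in L_{10}(\Om^t)$ and $\nb v\in L_{10/3}(\Om^t)$ (a standard anisotropic Sobolev embedding in three space dimensions). Then by H\"older with $1/10+3/10=2/5$,
$$|v'\cdot\nb v|_{5/2,\Om^t}\le |v'|_{10,\Om^t}|\nb v|_{10/3,\Om^t}\le c\|v\|_{W_2^{2,1}(\Om^t)}^2.$$
Similarly, using that $|h|_{10/3,\Om^t}\le H$ (from (\ref{10.8})),
$$|v_3h|_{5/2,\Om^t}\le|v_3|_{10,\Om^t}|h|_{10/3,\Om^t}\le c\|v\|_{W_2^{2,1}(\Om^t)}H.$$
Both quantities are controlled by Lemma \ref{l10.3}. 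Substituting (\ref{10.22}) and using the elementary inequality $ab\le a^2+b^2$ to split the mixed $\|v\|_{W_2^{2,1}}H$ term produces a right-hand side that depends on $\|v\|_{W_2^{1,1/2}}$ and $|p|_{2,\Om^t}$ to the fourth power, which is exactly the quartic pattern visible in (\ref{10.26}). The norms at the $L_2$/$W_2^{1,1/2}$ level are then dominated by the $L_{5/2}$/$W_{5/2}^{1,1/2}$ counterparts via the boundedness of $\Om^t$, yielding the stated shape of the estimate.

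To get the constant $D_7$ I would absorb $D_6$ together with the vorticity/data quantities $A_1$, $|F|_{6/5,2,\Om^t}$, $|\chi(0)|_{3,\Om}$ that already appeared in $D_2$ from (\ref{9.15}); these enter through the $H$ and $\|v'\|_{V^1}$ factors inherited from Lemma \ref{l10.3} and from the $\chi$-estimate of Lemma \ref{l9.2}. Finally, the initial data term $\|h(0)\|_{L_2(\Om)}$ is carried along from (\ref{10.22}) and is exactly what produces the $\La_1+\La_2+\|h(0)\|_{L_2(\Om)}$ combination in the small-parameter prefactor.

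The main obstacle I expect is bookkeeping of the quartic nonlinearity: the square of Lemma \ref{l10.3}'s bound generates many cross terms of the form (small $\times$ large$^k$) that must be carefully repackaged so that every large quantity ($\|v\|_{W_\si^{2+s,1+s/2}}$, $H$, $D_7$, $A_1$) appears either inside a $\phi$ or inside a factor that is multiplied by a small $\La$. Ensuring that the norm $\|v\|_{W_{5/2}^{1,1/2}}$ on the right can later be absorbed back through an interpolation with $A_1$ (in the spirit of (\ref{10.18})) is what eventually makes the inequality usable in the closing argument of Lemma \ref{l10.5} and Section \ref{s11}.
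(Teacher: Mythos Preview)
Your proposal is correct and follows the same strategy as the paper: apply the Stokes estimate (Lemma \ref{lA.1}) at $r=5/2$, bound the convective nonlinearity in $L_{5/2}$ via the $W_2^{2,1}$ norm, substitute (\ref{10.22}), and collect the resulting quartic terms.

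The one simplification in the paper that you do not exploit: at this step it reverts from the split form (\ref{10.13}) to the unsplit problem (\ref{10.27}) with the full term $-\ro v\cdot\nabla v$, and bounds it in one shot as
$$|\ro v\cdot\nabla v|_{5/2,\Om^t}\le c\ro^*\|v\|_{W_2^{2,1}(\Om^t)}^2$$
(this is (\ref{10.28}), using the same $L_{10}\times L_{10/3}$ H\"older splitting you wrote down). Since $v_3h=v_3v_{,x_3}$ is already part of $v\cdot\nabla v$, no separate $|v_3h|_{5/2}$ term or cross-product $\|v\|_{W_2^{2,1}}\cdot H$ ever arises, and the bookkeeping you flagged as the main obstacle becomes a straight squaring of (\ref{10.22}). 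Your split route also works---$H$ is already dominated by the right-hand side of (\ref{10.22}) anyway---but the paper's version is cleaner.
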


\begin{proof}
We consider problem (\ref{10.13}) in the form
\begin{equation}\eqal{
&\ro v_t-\divv\T(v,p)=-\ro v\cdot\nabla v+\ro f\quad &{\rm in}\ \ \Om^T,\cr
&\divv v=0\quad &{\rm in}\ \ \Om^T,\cr
&v\cdot\bar n=0,\ \ \nu\bar n\cdot\D(v)\cdot\bar\tau_\alpha+\gamma v\cdot\bar\tau_\alpha=0,\ \ \alpha=1,2,\quad &{\rm on}\ \ S_1^T,\cr
&v\cdot\bar n=d,\ \ \bar n\cdot\D(v)\cdot\bar\tau_\alpha=0,\ \ \alpha=1,2\quad &{\rm on}\ \ S_2^T,\cr
&v|_{t=0}=v_0\quad &{\rm in}\ \ \Om.\cr}
\label{10.27}
\end{equation}
Using the estimate
\begin{equation}
|\ro v\cdot\nabla v|_{5/2,\Om^t}\le c\ro^*\|v\|_{W_2^{2,1}(\Om^t)}^2
\label{10.28}
\end{equation}
we apply Lemma \ref{A.1} for $r=5/2$ to problem (\ref{10.27}). Then we obtain
\begin{equation}\eqal{
&\|v\|_{W_{5/2}^{2,1}(\Om^t)}+\|\nabla p\|_{L_{5/2}(\Om^t)}\le\phi(\ro^*, t^{\bar{a}}\|v\|_{W_\sigma^{2+s,1+s/2}(\Om^t)})\cdot\cr
&\quad\cdot\La_2\cdot[\|v\|_{W_{5/2}^{1,1/2}(\Om^t)}+|p|_{5/2,\Om^t}]\cr
&\quad+c(\ro^*,A_1)[\|v\|_{W_2^{2,1}(\Om^t)}+D_7].\cr}
\label{10.29}
\end{equation}
Using (\ref{10.22}) in (\ref{10.29}) yields
\begin{equation}\eqal{
&\|v\|_{W_{5/2}^{2,1}(\Om^t)}+\|\nabla p\|_{L_{5/2}(\Om^t0}\le\phi(\ro^*,A_1, t^{\bar{a}}\|v\|_{W_\sigma^{2+s,1+s/2}(\Om^t)})\cdot\cr
&\quad\cdot(\La_1+\La_2)\cdot[\|v\|_{W_{5/2}^{1,1/2}(\Om^t)}^4+ |p|_{5/2,\Om^t}^4+\|v\|_{W_{5/2}^{1,1/2}(\Om^t)}\cr
&\quad+|p|_{5/2,\Om^t}+(H+D_2+D_4+\phi_3\cdot\|h(0)\|_{L_2(\Om)}+D_5+1)^4]\cr
&\quad+c(\ro^*,A_1)[(H+D_2+D_4+\phi_3\cdot\|h(0)\|_{L_2(\Om)}+D_5+1)^2+D_6].\cr}
\label{10.30}
\end{equation}
Simplifying (\ref{10.30}) implies (\ref{10.26}). This concludes the proof.
\end{proof}

Next, we derive

\begin{lemma}\label{l10.5}
Let $\si > 3/s.$ Let the assumptions of Lemma \ref{l10.4} holds. Let $5'< 5$ be a number close to 5 and let 
\begin{equation}
D_8=|f|_{5',\Om^t}+\|d\|_{W_{5'}^{2-1/5,1-1/10}(S_2^t)}+ \|v(0)\|_{W_{5'}^{2-2/5}(\Om)} 
\label{10.31}
\end{equation}
be finite.
Then
\begin{equation}\eqal{
&\|v\|_{W_{5'}^{2,1}(\Om^t)}+\|\nabla p\|_{L_{5'}(\Om^t)}\le\phi(\ro^*,A_1, t^{\bar{a}}\|v\|_{W_\sigma^{2+s,1+s/2}(\Om^t)})\cdot\cr
&\quad\cdot(\La_1+\La_2+\|h(0)\|_{L_2(\Om)})[\|v\|_{W_{5'}^{1,1/2}(\Om^t)}^8+ \|p\|_{L_{5'}(\Om^t)}^8+\|v\|_{W_{5'}^{1,1/2}(\Om^t)}\cr
&\quad+\|p\|_{L_{5'}(\Om^t)}+(H+D_7+1)^8]+c(\ro^*,A_1)[(H+D_7+1)^4+D_8].\cr}
\label{10.32}
\end{equation}
\end{lemma}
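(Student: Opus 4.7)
The plan is to follow the same scheme as Lemmas~\ref{l10.2}--\ref{l10.4}: I would rewrite (\ref{1.1}) in the Stokes form (\ref{10.27}), with right-hand side $-\ro v\cdot\nabla v+\ro f$ and $\ro$ treated as given, apply the regularity statement for the Stokes problem (Lemma~A.1 in the Appendix) at the exponent $r=5'$, and then close the resulting inequality by plugging in the bound (\ref{10.26}) obtained in Lemma~\ref{l10.4}. The only genuinely new ingredient required is an $L_{5'}$-estimate of the convective term in terms of $\|v\|_{W_{5/2}^{2,1}(\Om^t)}$.

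For that step I would use the anisotropic Sobolev embedding associated with $W_{5/2}^{2,1}(\Om^t)$, whose effective scaling dimension is $3+2=5$: it yields $v\in L_{q_1}(\Om^t)$ for every finite $q_1$, and $\nabla v\in L_{q_2}(\Om^t)$ for every $q_2<5$, with the corresponding norms bounded by $c\|v\|_{W_{5/2}^{2,1}(\Om^t)}$. Since $5'<5$, I can pick $q_1<\infty$ and $q_2\in(5',5)$ satisfying $1/q_1+1/q_2=1/5'$; H\"older's inequality then gives
\[
|\ro v\cdot\nabla v|_{5',\Om^t}\le c\,\ro^*\,\|v\|_{W_{5/2}^{2,1}(\Om^t)}^{2}.
\]
Applying Lemma~A.1 at $r=5'$ to (\ref{10.27}) produces, by the same argument that led to (\ref{10.29}), an inequality
\[
\|v\|_{W_{5'}^{2,1}(\Om^t)}+\|\nabla p\|_{L_{5'}(\Om^t)}\le \phi(\ro^*,t^{\bar a}\|v\|_{W_\sigma^{2+s,1+s/2}(\Om^t)})\,\La_2\,\bigl[\|v\|_{W_{5'}^{1,1/2}(\Om^t)}+|p|_{5',\Om^t}\bigr]+c(\ro^*,A_1)\bigl[\|v\|_{W_{5/2}^{2,1}(\Om^t)}^{2}+D_8\bigr],
\]
where the small-parameter coefficient reflects the H\"older modulus of $\ro$, estimated via (\ref{9.22}) under the standing hypothesis $r>3$.

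Next I would square (\ref{10.26}) and substitute the resulting control of $\|v\|_{W_{5/2}^{2,1}(\Om^t)}^{2}$. The fourth-power terms in (\ref{10.26}) square to the eighth-power terms appearing in (\ref{10.32}); since $5/2<5'$ and $\Om^t$ is bounded, $\|u\|_{W_{5/2}^{1,1/2}(\Om^t)}\le c\|u\|_{W_{5'}^{1,1/2}(\Om^t)}$, so every $W_{5/2}^{1,1/2}$ norm inherited from (\ref{10.26}) is dominated by the corresponding $W_{5'}^{1,1/2}$ norm. The data piece $c(H+D_7+1)^2$ in (\ref{10.26}) becomes $(H+D_7+1)^4$ after squaring. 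Collecting the small-parameter coefficients into the common factor $(\La_1+\La_2+\|h(0)\|_{L_2(\Om)})$ and absorbing the remaining pieces into $c(\ro^*,A_1)[(H+D_7+1)^4+D_8]$ reproduces (\ref{10.32}).

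The main obstacle I expect is the borderline character of the exponent $5$: the embedding $W_{5/2}^{2,1}\hookrightarrow L_\infty$ fails in the critical dimension, which forces $5'$ strictly below $5$ and makes the choice of the H\"older pair $(q_1,q_2)$ for the bound on $v\cdot\nabla v$ somewhat delicate (any attempt to take $5'=5$ would require $v\in L_\infty$, which is not available from $W_{5/2}^{2,1}$ alone). A secondary concern is the applicability of Lemma~A.1 at $r=5'$, which requires H\"older continuity of $\ro$; this is guaranteed by (\ref{9.22}) provided one keeps $r>3$ in $\La_2$. Once these points are handled, the remaining work is a routine repetition of the bootstrap pattern developed in Lemmas~\ref{l10.2}--\ref{l10.4}, with the smallness of $\La_1+\La_2$ absorbing the quadratic feedback generated by $\|v\|_{W_{5/2}^{2,1}(\Om^t)}^2$.
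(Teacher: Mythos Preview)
Your proposal is correct and follows essentially the same route as the paper: apply the Stokes estimate (A.2) from Lemma~A.1 at the exponent $r=5'$ to problem (\ref{10.27}), bound the convective term by $|v\cdot\nabla v|_{5',\Om^t}\le c\|v\|_{W_{5/2}^{2,1}(\Om^t)}^{2}$, and then insert (\ref{10.26}). Your justification of the convective bound via the parabolic embeddings $W_{5/2}^{2,1}\hookrightarrow L_{q_1}$ (any finite $q_1$) and $\nabla v\in L_{q_2}$ ($q_2<5$) is in fact more detailed than the paper's one-line claim, and correctly explains why the exponent must stay strictly below $5$.
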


\begin{proof}
To prove the lemma we use Lemma A.2. Since
$$
|v\cdot\nabla v|_{5',\Om^t}\le c\|v\|^2_{W_{5/2}^{2,1}(\Om^t)}
$$
inequality (A.2) takes the form
\begin{equation}\eqal{
&\|v\|_{W_{5'}^{2,1}(\Om^t)}+\|\nabla p\|_{L_{5'}(\Om^t)}\le\phi(\ro^*, t^{\bar{a}}\|v\|_{W_\sigma^{2+s,1+s/2}(\Om^t)})\cdot\cr
&\quad\cdot\La_2\cdot[\|v\|_{W_{5'}^{1,1/2}(\Om^t)}+|p|_{5',\Om^t}]\cr
&\quad+c(\ro^*)[\|v\|^2_{W_{5/2}^{2,1}(\Om^t)}+D_8].\cr}
\label{10.33}
\end{equation}
Using (\ref{10.26}) in (\ref{10.33}) yields (\ref{10.32}). This concludes the proof.
\end{proof}

Finally, we obtain

\begin{lemma}\label{l10.5}
Assume that $\si > 3/s,$ $v \in W^{2,1}_{5'}(\Om^t),$ $5'< 5$ but close to 5. Let
$$
D_9=\|f\|_{W_\sigma^{s,s/2}(\Om^t)}+ \|d\|_{W_\sigma^{2+s-1/\sigma,1+s/2-1/2\sigma}(S_2^t)}+ \|v(0)\|_{W_\sigma^{2+s-2/\sigma}(\Om)}
$$ be finite. 
Then
\begin{equation}\eqal{
&\|v\|_{W_\sigma^{2+s,1+s/2}(\Om^t)}+\|\nabla p\|_{W_\sigma^{s,s/2}(\Om^t)}\le\phi(\|v\|_{W_\sigma^{2+s,1+s/2}(\Om^t)})\cdot\cr
&\quad\cdot\La_2[\|v\|_{W_\sigma^{2+s,1+s/2}(\Om^t)}+ \|p\|_{W_\sigma^{s,s/2}(\Om^t)}+\|v\|_{W_\sigma^{2,1}(\Om^t)}^2\cr
&\quad+\|f\|_{W_\sigma^{s,s/2}(\Om^t)}]+c(\ro^*) [\|v\|_{W_{5'}^{2,1}(\Om^t)}^a+D_9+A_1],\cr}
\label{10.34}
\end{equation}
where $a>2.$
\end{lemma}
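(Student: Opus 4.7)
The plan is to apply the regularity theory for the Stokes system (\ref{stokes}) in the highest anisotropic Sobolev--Slobodetskii space $W_\sigma^{2+s,1+s/2}(\Om^t)$, treating $\ro$ as a given variable coefficient. Since $\sigma>3/s$ and $s\in(0,1)$, the imbedding $W_{r,\infty}^{1,1}(\Om^t)\hookrightarrow C^{\alpha}(\Om^t)$ for $3/r+\alpha<1$ gives, via Lemma~\ref{l9.3}, that $\ro$ is H\"older continuous with seminorm controlled by $\Phi_1\cdot\phi_2\cdot\La_2+c\ro^*$. This is the key input that makes the Appendix~A theory (for Stokes with non-constant but H\"older continuous density) applicable.

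First, I would invoke the Besov/Sobolev--Slobodetskii Stokes estimate from Appendix~A at the level $(2+s,1+s/2)$, with integrability $\sigma$, to obtain a bound of the schematic form
\begin{equation*}
\|v\|_{W_\sigma^{2+s,1+s/2}(\Om^t)}+\|\nabla p\|_{W_\sigma^{s,s/2}(\Om^t)}\le\phi(\|\ro\|_{C^{\alpha}(\Om^t)})\cdot\La_2\cdot[\|v\|_{W_\sigma^{2+s,1+s/2}(\Om^t)}+\|p\|_{W_\sigma^{s,s/2}(\Om^t)}]+c(\ro^*)[\|\text{r.h.s.}\|_{W_\sigma^{s,s/2}(\Om^t)}+D_9],
\end{equation*}
where the $\La_2$-factor comes from the H\"older norm of $\ro$ entering as a perturbation of the constant-coefficient Stokes operator, as in (\ref{10.17}), (\ref{10.24}), (\ref{10.29}), (\ref{10.33}). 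The data contribution $D_9$ collects the norms of $f,d,v_0$ in the appropriate spaces.

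Second, I would estimate the convective term $\ro\,v\cdot\nabla v$ in $W_\sigma^{s,s/2}(\Om^t)$. The factor $\ro$ is bounded by $\ro^*$, and its H\"older continuity allows treating it as a multiplier in the fractional space. For $v\cdot\nabla v$, since $5'<5$ is close to $5$ and $W_{5'}^{2,1}(\Om^t)$ imbeds into a H\"older/Besov space of positive smoothness (in particular $L_\infty$ together with $\nabla v\in L_q$ for some large $q$), the product estimate in the Sobolev--Slobodetskii space yields
\begin{equation*}
\|v\cdot\nabla v\|_{W_\sigma^{s,s/2}(\Om^t)}\le c\|v\|_{W_{5'}^{2,1}(\Om^t)}^{a},\qquad a>2.
\end{equation*}
Combined with $\|\ro f\|_{W_\sigma^{s,s/2}(\Om^t)}\le c\phi(\|\ro\|_{C^\alpha})\|f\|_{W_\sigma^{s,s/2}(\Om^t)}$, this produces, after substituting the H\"older bound for $\ro$ from Lemma~\ref{l9.3}, exactly the structure of (\ref{10.34}): the large-factor part $c(\ro^*)[\|v\|_{W_{5'}^{2,1}(\Om^t)}^{a}+D_9+A_1]$ plus the small-factor part $\phi(\|v\|_{W_\sigma^{2+s,1+s/2}(\Om^t)})\La_2[\cdots]$.

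The main obstacle is the product estimate in the anisotropic Sobolev--Slobodetskii norm $W_\sigma^{s,s/2}$: one has to carefully split $v\cdot\nabla v$ using paraproducts or the imbedding chain $W_{5'}^{2,1}\hookrightarrow C^{\beta,\beta/2}$ for a small $\beta>0$, so that the resulting bound depends only on $\|v\|_{W_{5'}^{2,1}(\Om^t)}$ (which was already controlled in Lemma~\ref{l10.5} first version by (\ref{10.32})) and not on stronger norms. A secondary point is verifying that the Appendix~A estimate indeed splits the coefficient perturbation cleanly into a $\La_2$-weighted remainder; this is precisely the reason Lemma~\ref{l9.3} is invoked with $r>3$ so that the H\"older exponent $\alpha$ is strictly positive. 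Once these two ingredients are in place, the desired inequality (\ref{10.34}) follows by collecting terms.
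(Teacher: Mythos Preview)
Your high-level strategy is the same as the paper's: apply the Appendix~A estimate (A.7) for the Stokes system at level $W_\sigma^{2+s,1+s/2}$, control $\|\ro\|_{C^\alpha}$ via Lemma~\ref{l9.3} so that the variable-coefficient perturbation carries a factor $\La_2$, and then estimate $\|\ro v\cdot\nabla v\|_{W_\sigma^{s,s/2}(\Om^t)}$. Where your argument has a genuine gap is in the treatment of this nonlinear term.

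You assert $\|v\cdot\nabla v\|_{W_\sigma^{s,s/2}(\Om^t)}\le c\|v\|_{W_{5'}^{2,1}(\Om^t)}^{a}$ via imbeddings or paraproducts. This fails for the time-fractional contribution. After the triple splitting, one of the pieces is (up to a factor $\ro^*|v|_{\infty,\Om^t}$) the norm $\|\nabla v\|_{L_\sigma(\Om;W_\sigma^{s/2}(0,t))}$. The space $W_{5'}^{2,1}(\Om^t)$ gives $\nabla v\in W_{5'}^{1,1/2}$, and the parabolic imbedding into $L_\sigma(\Om;W_\sigma^{s/2}(0,t))$ would require roughly $5/5'-5/\sigma\le 1/2-s/2$, i.e.\ $\sigma\le 10/(1+s)$; combined with $\sigma>3/s$ this forces $s>3/7$, which is not assumed. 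So for small $s$ the bound you claim simply does not hold. The paper handles this piece (called $J_2^3$) by interpolating
\[
\|\nabla v\|_{L_\sigma(\Om;W_\sigma^{s/2}(0,t))}\le c\,\|v\|_{W_\sigma^{2+s,1+s/2}(\Om^t)}^{\theta}\,\|v\|_{L_\infty(\Om^t)}^{1-\theta},\qquad \theta=\frac{1+s-5/\sigma}{2+s-5/\sigma},
\]
followed by Young's inequality, yielding an $\varepsilon\|v\|_{W_\sigma^{2+s,1+s/2}(\Om^t)}$ term absorbed on the left and a remainder $c\|v\|_{L_\infty(\Om^t)}^{(2-\theta)/(1-\theta)}\le c\|v\|_{W_{5'}^{2,1}(\Om^t)}^{a}$. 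This is the actual source of the exponent $a>2$ in (\ref{10.34}); it does not arise from a direct product rule.

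A second point you gloss over: the paper does not treat $\ro$ merely as a H\"older multiplier on $v\cdot\nabla v$. It expands the fractional seminorm of $\ro v\cdot\nabla v$ directly, and the pieces involving differences $\ro(x',t)-\ro(x'',t)$ or $\ro(x,t')-\ro(x,t'')$ are estimated through $\|\ro\|_{W_{r,\infty}^{1,1}(\Om^t)}$ and hence carry a factor $\La_2$ (via Corollary~\ref{c5.3}). This is why in (\ref{10.34}) the term $\|v\|_{W_\sigma^{2,1}(\Om^t)}^2$ sits inside the $\La_2$ bracket rather than among the large data terms. Your outline places all of the nonlinear contribution into the large-factor part, which would make the final absorption argument in Lemma~\ref{l11.4} fail.
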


\begin{proof}
To prove the lemma we use inequality (A.7). Then we have to estimate the first term under the second square bracket on the r.h.s. of (A.7) in terms of Corollary \ref{c5.3} and Lemma \ref{l10.5}. Hence, we consider
$$\eqal{
J&=\|\ro v\cdot\nabla v\|_{W_\sigma^{s,s/2}(\Om^t)}=\|\ro v\cdot\nabla v\|_{L_\sigma(0,t;W_\sigma^s(\Om))}\cr
&\quad+\|\ro v\cdot\nabla v\|_{L_\sigma(\Om;W_\sigma^{s/2}(0,t)}\equiv J_1+J_2.\cr}
$$
Consider $J_1$,
$$\eqal{
J_1&=\bigg(\intop_0^tdt\intop_\Om\intop_\Om dx'dx''{|\ro(x',t)v(x',t)\nabla v(x',t)\over|x'-x''|^{3+s\sigma}}\cr
&\hskip5,8cm-{\ro(x'',t)v(x'',t)\nabla v(x'',t)|^\sigma\over|x'-x''|^{3+ s\sigma}} \bigg)^{1/\sigma}\cr
&\le\bigg(\intop_0^tdt\intop_\Om\intop_\Om dx'dx''{|\ro(x',t)-\ro(x'',t)|^\sigma|v(x',t)|^\sigma|\nabla v(x',t)|^\sigma\over|x'-x''|^{3+s\sigma}}\bigg)^{1/\sigma}\cr
&\quad+\ro^*\bigg(\intop_0^tdt\intop_\Om\intop_\Om dx'dx''{|v(x',t)-v(x'',t)|^\sigma\over|x'-x''|^{3+s\sigma}}|\nabla v(x',t)|^\sigma\bigg)^{1/\sigma}\cr
&\quad+\ro^*\bigg(\intop_0^tdt\intop_\Om\intop_\Om dx'dx''{|v(x'',t)|^\sigma|\nabla v(x',t)-\nabla v(x'',t)|^\sigma\over|x'-x''|^{3+s\sigma}}\bigg)^{1/\sigma}\cr
&\equiv J_1^1+J_1^2+J_1^3.\cr}
$$
Consider $J_1^1$. By the H\"older inequality we have
$$\eqal{
J_1^1&\le\bigg(\intop dt\intop_\Om\intop_\Om dx'dx''{|\ro(x',t)-\ro(x'',t)|^{\sigma\lambda_1}\over |x'-x''|^{3+\sigma\lambda_1}[{1\over\sigma\lambda_1}({3\over 2}\lambda_1-3)+s]} \bigg)^{1/\sigma\lambda_1}\cdot\cr
&\quad\cdot\bigg(\intop dt\intop_\Om\intop_\Om dx'dx'' {|v(x',t)|^{\sigma\lambda_2}|\nabla v(x',t)|^{\sigma\lambda_2}\over |x'-x''|^{(3/2)\lambda_2}}\bigg)^{1/\sigma\lambda_2}\equiv L_1L_2,\cr}
$$
where $1/\lambda_1+1/\lambda_2=1$, $\lambda_2<2$ and $\lambda_1>2$.

Since
$$
L_1\le\|\ro\|_{L_{\sigma\lambda_1}(0,t;W_{\sigma\lambda_1}^{s'}(\Om))}\equiv L_1^1,
$$
where $s'={1\over\sigma\lambda_1}\big({3\over 2}\lambda_1-3\big)+s$ we use the imbedding
$$
L_1^1\le c\|\ro\|_{W_{r,\infty}^{1,1}(\Om^t)}
$$
which holds for
\begin{equation}
{3\over r}-{5\over\sigma\lambda_1}+s'\le 1
\label{10.35}
\end{equation}
Next,
$$
L_2\le c|v|_{\infty,\Om^t}|\nabla v|_{\sigma\lambda_2,\Om^t}\equiv L_2^1.
$$
To estimate $L_2^1$ we use the imbedding
$$
|\nabla v|_{\sigma\lambda_2,\Om^t}\le \|v\|_{W_{5'}^{2,1}(\Om^t)}
$$
which holds for
\begin{equation}
\frac{5}{5'}-{5\over\sigma\lambda_2}\le 1.
\label{10.36}
\end{equation}
Inequalities (\ref{10.35}) and (\ref{10.36}) imply
$$
{3\over r}-{8\over\sigma\lambda_1}+{3\over 2\sigma}+s\le 1,\quad \frac{5}{5'}-{5\over\sigma\lambda_2}\le 1.
$$
Multiplying the second inequality by $8/5$ and adding to the first we get
\begin{equation}
{3\over r}-{13\over 2\sigma}+s\le 1,
\label{10.37}
\end{equation}
so there is no restriction.

The first factor in $L_2^1$ we estimate by
$$
|v|_{\infty,\Om^t}\le c\|v\|_{W_5^{2,1}(\Om^t)}.
$$
Using Corollary \ref{c5.3} we obtain
\begin{equation}
J_1^1\le\phi(t^{\bar{a}}\|v\|_{W_{\si}^{2+s,1+s/2}(\Om^t)})\cdot\La_2\cdot \|v\|_{W_5^{2,1}(\Om^t)}.
\label{10.38}
\end{equation}
Consider $J_1^2$. By the H\"older inequality it holds
$$ J_1^2\le\ro^*\|v\|_{L_{\sigma\lambda_1}(0,t;W_{\sigma\lambda_1}^{s'}(\Om))} |\nabla v|_{\sigma\lambda_2,\Om^t}=N_1N_2,
$$
where $1/\lambda_1+1/\lambda_2=1$, $\lambda_1>2$, $\lambda_2<2$ and $s'={1\over\sigma\lambda_1}\big({3\over 2}\lambda_1-3\big)+s$.

Continuing
$$
N_1\le c\|v\|_{W_{5'}^{2,1}(\Om^t)}\quad {\rm for}\ \ \frac{5}{5'}-{5\over\sigma\lambda_1}+{3\over 2\sigma}-{3\over\sigma\lambda_1}+s\le 2
$$
and
$$
N_2\le c\|v\|_{W_5^{2,1}(\Om^t)}\quad {\rm for}\ \ \frac{5}{5'}-{5\over\sigma\lambda_2}\le 1.
$$
Therefore
$$
J_1^2\le c\|v\|_{W_{5'}^{2,1}(\Om^t)}
$$
holds for
\begin{equation}
s-{13\over 2\sigma} < 1.
\label{10.39}
\end{equation}
Finally
$$
J_1^3\le c|v|_{\sigma\lambda_1,\Om^t}\|\nabla v\|_{L_{\sigma\lambda_2}(0,t;W_{\sigma\lambda_2}^{s'}(\Om))}\le c\|v\|_{W_{5'}^{2,1}(\Om^t)}^2,
$$
where the last inequality holds for
\begin{equation}
s-{13\over 2\sigma}\le{8\over 5}.
\label{10.40}
\end{equation}
Summarizing,
\begin{equation}
J_1\le\phi(\|v\|_{W_\sigma^{2+s,1+s/2}(\Om^t)})\cdot\La_2\cdot \|v\|_{W_5^{2,1}(\Om^t)}+c\|v\|_{W_5^{2,1}(\Om^t)}.
\label{10.41}
\end{equation}
Next, we examine $J_2$,
$$\eqal{
J_2&\le\bigg(\intop_\Om dx\intop_0^t\intop_0^t {dt'dt''|\ro(x,t')v(x,t')\nabla v(x,t')\over|t'-t''|^{1+s\sigma/2}}\cr
&\hskip5,8cm-{\ro(x,t'')v(x,t'')\nabla v(x,t'')|^\sigma\over|t'-t''|^{1+s\sigma/2}}\bigg)^{1/\sigma}\cr
&\le\bigg(\intop_\Om dx\intop_0^t\intop_0^tdt'dt'' {|\ro(x,t')-\ro(x,t'')|^\sigma\over|t'-t''|^{1+s\sigma/2}} |v(x,t')|^\sigma|\nabla v(x,t')|^\sigma\bigg)^{1/\sigma}\cr
&\quad+\ro^*\bigg(\intop_\Om dx\intop_0^t\intop_0^t dt'dt'' {|v(x,t')-v(x,t'')|^\sigma|\nabla v(x,t')|^\sigma\over|t'-t''|^{1+s\sigma/2}}\bigg)^{1/\sigma}\cr
&\quad+\ro^*\bigg(\intop_\Om dx\intop_0^t\intop_0^t dt'dt'' {|v(x,t'')|^\sigma|\nabla v(x,t')-\nabla v(x,t'')|^\sigma\over|t'-t''|^{1+s\sigma/2}}\bigg)^{1/\sigma}\cr
&\equiv J_2^1+J_2^2+J_2^3.\cr}
$$
By the H\"older inequality we have
$$
J_2^1\le c|v|_{\infty,\Om^t} \|\ro\|_{L_{\sigma\lambda_1}(\Om;W_{\sigma\lambda_1}^{s'/2}(0,t))}|\nabla v|_{\sigma\lambda_2,\Om^t}\equiv J_2^{11},
$$
where $1/\lambda_1+1/\lambda_2=1$, $\lambda_2<2$, $s'={1\over\sigma\lambda_1}(\lambda_1-2)+s$. Let $\sigma\lambda_1=r$. Then $\sigma\lambda_2={\sigma r\over r-\sigma}$. Using the imbeddings
$$
\|\ro\|_{L_r(\Om;W_r^{s'/2}(0,t))}\le c\|\ro\|_{W_{r,\infty}^{1,1}(\Om^t)}
$$
which holds for $s'\le 2$, and
$$\eqal{
&|v|_{\infty,\Om^t}\le c\|v\|_{W_{5'}^{2,1}(\Om^t)},\ \ {5\over 5'}<2\cr
&|\nabla v|_{{\sigma r\over r-\sigma},\Om^t}\le c\|v\|_{W_{5'}^{2,1}(\Om^t)}\cr}
$$
which holds for ${5}/{5'}-5/\sigma+5/r<1$. The last inequality is valid for $r > \sigma$.

Then
$$
J_2^{11}\le\phi(t^{\bar{a}}\|v\|_{W_\sigma^{2+s,1+s/2}(\Om^t)})\Lambda_2 \|v\|_{W_5^{2,1}(\Om^t)}^2.
$$
Applying the H\"older inequality with respect to space variables we get
$$
J_2^2\le\ro^*\bigg(\intop_0^t\intop_0^t {|v(t')-v(t'')|_{\sigma\lambda_1,\Om}^\sigma|\nabla v(t')|_{\sigma\lambda_2,\Om}^\sigma\over|t'-t''|^{1+s\sigma/2}}dt'dt'' \bigg)^{1/\sigma}\equiv J_2^{21},
$$
where $1/\lambda_1+1/\lambda_2=1$.

Applying the H\"older inequality with respect to time yields
$$\eqal{
J_2^{21}&\le\ro^*\bigg(\intop_0^t\intop_0^t {|v(t')-v(t'')|_{\sigma\lambda_1,\Om}^{\sigma\lambda_1}\over |t'-t''|^{1+\sigma\lambda_1s'/2}}dt'dt''\bigg)^{1/\sigma\lambda_1}\cdot\cr
&\quad\cdot\bigg(\intop_0^t\intop_0^t{|\nabla v(t')|_{\sigma\lambda_2,\Om}^{\sigma\lambda_2}\over|t'-t''|^{\lambda_2/2}} dt'dt''\bigg)^{1/\sigma\lambda_2}\equiv J_2^{22},\cr}
$$
where $s'=s+{1\over\sigma\lambda_1}(\lambda_1-2)$. For $\lambda_2<2$ we can perform integration with respect to $t''$ in the second factor of $J_2^{22}$. Then we obtain
$$
J_2^{22}\le\ro^*\|v\|_{L_{\sigma\lambda_1}(\Om;W_{\sigma\lambda_1}^{s'}(0,t))} |\nabla v|_{\sigma\lambda_2,\Om^t}\equiv J_2^{23}.
$$
We use the imbeddings
$$
\|v\|_{L_{\sigma\lambda_1}(\Om;W_{\sigma\lambda_1}^{s'/2}(0,t))}\le c\|v\|_{W_{5'}^{2,1}(\Om^t)}
$$
for $\frac{5}{5'}-{5\over\sigma\lambda_1}+s'\le 2$ so $\frac{5}{5'}-{5\over\sigma\lambda_1}+s+{1\over\sigma}-{2\over\sigma\lambda_1}\le 2$, and
$$
\|\nabla v\|_{L_{\sigma\lambda_2}(\Om^t)}\le c\|v\|_{W_{5'}^{2,1}(\Om^t)}
$$
which holds for $\frac{5}{5'}-{5\over\sigma\lambda_2}\le 1$.

The above restrictions are satisfied. Hence
$$
J_2^{23}\le c\ro^*\|v\|_{W_{5'}^{2,1}(\Om^t)}^2.
$$
Finally,
$$
J_2^3\le\ro^*|v|_{\infty,\Om^t}\|\nabla v\|_{L_\sigma(\Om;W_\sigma^{s/2}(0,t))}
$$
We apply the interpolation
$$ \|\nb v\|_{L_\sigma(\Om;W_\sigma^{s/2}(0,t))} \le c \|v\|_{W_\sigma^{2+s,1+s/2}(\Om^t)}^{\theta}\|v\|^{1-\theta}_{L_{\infty}(\Om^t)}$$
where $\theta = \frac{1+s-5/\si}{2+s-5/\si},$ then this implies
$$ J_2^3\le \eps \|v\|_{W_\sigma^{2+s,1+s/2}(\Om^t)}+c(1/\eps,\ro^*)\|v\|^{\frac{2-\theta}{1-\theta}}_{L_{\infty}(\Om^t)}.$$
Summarizing the above estimates yields
\begin{equation}\eqal{
J_2&\le\phi(t^{\bar{a}}\|v\|_{W_\sigma^{2+s,1+s/2}(\Om^t)})\La_2\|v\|_{W_{5'}^{2,1}(\Om^t)}\cr
&\quad+  \eps \|v\|_{W_\sigma^{2+s,1+s/2}(\Om^t)}+c(1/\eps,\ro^*)\|v\|^{\frac{2-\theta}{1-\theta}}_{W_{5'}^{2,1}(\Om^t)}.\cr}
\label{10.42}
\end{equation}

In view of (\ref{10.41}) and (\ref{10.42}) we have
\begin{equation}\eqal{
J&\le J_1+J_2\le\phi(t^{\bar{a}}\|v\|_{W_\si^{2+s,1+s/2}(\Om^t)})\La_2 \|v\|_{W_{5'}^{2,1}(\Om^t)}^2\cr
&\quad+ \eps \|v\|_{W_\si^{2+s,1+s/2}(\Om^t)}+c(1/\eps,\ro^*)\|v\|^{\frac{2-\theta}{1-\theta}}_{W_{5'}^{2,1}(\Om^t)}.\cr}
\label{10.43}
\end{equation}
To apply (A.7) we need the estimate
\begin{equation}
\|\ro\|_{C^\alpha(\Om^t)}\le\|\ro\|_{W_{r,\infty}^{1,1}(\Om^t)}
\label{10.44}
\end{equation}
which holds for
$$
{3\over r}+\alpha<1.
$$
Using (A.7) and estimates (\ref{10.43}), (\ref{10.44}) we obtain (\ref{10.34}). This ends the proof.
\end{proof}

\section{Global estimate}\label{s11}
In this Section we estimate $H$ (using the Stokes regularity theory) and simplify formulas for constants $D_i$ in order to infer the global estimate.

\begin{remark}\label{r11.1}
Let $\La=\La_1+\La_2+\|h(0)\|_{L_2(\Om)}$. Let $\si > 3/s, s\in (0,1).$ Using (\ref{10.32}) in (\ref{10.34}) yields
\begin{equation}\eqal{
&\|v\|_{W_\sigma^{2+s,1+s/2}(\Om^t)}+\|\nabla p\|_{W_\sigma^{s,s/2}(\Om^t)}\cr
&\le\phi(t^{\bar{a}}\|v\|_{W_\sigma^{2+s,1+s/2}(\Om^t)},t^{\bar{a}}\|p\|_{W_\sigma^{s,s/2}(\Om^t)}, H,D_7)\cdot\La\cr
&\quad+c(\ro^*,A_1)[(H+D_7+1)^8+D_8^2+D_9+1],\cr}
\label{11.1}
\end{equation}
where $\bar{a}> 0$ and 
\begin{equation}
H=|h|_{3,\infty,\Om^t}+\|h\|_{1,2,\Om^t}+|h|_{10/3,\Om^t}\le c\|h\|_{W_{5/3}^{2,1}(\Om^t)}.
\label{11.2}
\end{equation}
\end{remark}
Finally, we have to find an estimate for $\|h\|_{W_{5/3}^{2,1}(\Om^t)}$.

\begin{lemma}\label{r11.2}
Assume that $D_2, D_4, D_{10}$ are finite, $s\in (0,1),$ $h \in W_{5/3}^{1,1/2}(\Om^t),$  $q \in L_{5/3}(\Om^t), v \in W_2^{1,1/2}(\Om^t), p \in L_2(\Om^t),$ $f \in W_\sigma^{s,s/2}(\Om^t),$ $A_1$ and $\La$ are defined in (\ref{1.7}) and (\ref{Lambda}). Then $(h,q)$ solutions to problem (\ref{2.6}) satisfy
\begin{equation}\eqal{
&\|h\|_{W_{5/3}^{2,1}(\Om^t)}+\|\nabla q\|_{L_{5/3}(\Om^t)}\le \phi(\ro^*,A_1,t^{\bar{a}}\|v\|_{W_\sigma^{2+s,1+s/2}(\Om^t)})\cdot\cr
&\quad\cdot\La\cdot[\|h\|_{W_{5/3}^{1,1/2}(\Om^t)}+\|q\|_{L_{5/3}(\Om^t)}+ \|v\|_{W_2^{1,1/2}(\Om^t)}^2\cr
&\quad+\|v\|_{W_2^{1,1/2}(\Om^t)}+\|p\|_{L_2(\Om^t)}^2+\|p\|_{L_2(\Om^t)}+ (H+D_2+D_4+1)^2\cr
&\quad+H+D_2+D_4+1+\|f\|_{W_\sigma^{s,s/2}(\Om^t)}]+c(\ro^*,A_1)D_{10}.\cr}
\label{11.7}
\end{equation}
\end{lemma}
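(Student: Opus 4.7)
The plan is to apply the variable-coefficient Stokes regularity estimate from the Appendix (inequality (A.1)) to system (\ref{2.6}), viewing $(h,q)$ as the unknowns while treating $\ro$, $v$, $p$ and the boundary datum $d$ as given, with all their norms already controlled by Lemmas \ref{l10.1}--\ref{l10.5} and Corollary \ref{c5.3}. First I would rewrite $(\ref{2.6})_1$ as a Stokes system with right-hand side
\[
G = -\ro(v\cdot\nb h + h\cdot\nb v) + \ro g - \ro_{x_3}(v_t + v\cdot\nb v - f),
\]
together with the inhomogeneous boundary data $h_i|_{S_2}=-d_{,x_i}$, $i=1,2$, and $h_{3,x_3}|_{S_2}=\Delta' d$. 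Applying (A.1) with $r=5/3$ yields the base inequality
\[
\|h\|_{W_{5/3}^{2,1}(\Om^t)}+\|\nb q\|_{L_{5/3}(\Om^t)}\le\phi(\ro^*,t^{\bar a}\|v\|_{W_\sigma^{2+s,1+s/2}})\La_2\bigl[\|h\|_{W_{5/3}^{1,1/2}}+\|q\|_{L_{5/3}}\bigr]+c(\ro^*)\bigl[\|G\|_{L_{5/3}(\Om^t)}+D_{10}\bigr],
\]
where $D_{10}$ collects the norms of $g$ in $L_{5/3}(\Om^t)$, of $d_{x'}$ in $W_{5/3}^{7/5,7/10}(S_2^t)$ and of $h(0)$ in $W_{5/3}^{4/5}(\Om)$.

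Next I would estimate $\|G\|_{L_{5/3}(\Om^t)}$ term by term. For the convective contributions $\ro v\cdot\nb h$ and $\ro h\cdot\nb v$, I use H\"older with exponent pair $(10,2)$ together with the embeddings $W_2^{1,1/2}(\Om^t)\hookrightarrow L_{10}(\Om^t)$ and $W_{5/3}^{1,1/2}(\Om^t)\hookrightarrow L_{10}(\Om^t)$, producing bounds of order $\ro^*\|v\|_{W_2^{1,1/2}}\|h\|_{W_{5/3}^{1,1/2}}$. For the $\ro_{x_3}$-contributions, Corollary \ref{c5.3} controls $\|\ro_{x_3}\|_{L_\infty(0,t;L_r)}\le\phi(v)\La_2$ with $r>3$, so H\"older in the target $L_{5/3}(\Om^t)$ with conjugate exponent $q$ satisfying $\frac1r+\frac1q=\frac35$ gives
\[
\|\ro_{x_3}(v_t+v\cdot\nb v-f)\|_{L_{5/3}(\Om^t)}\le\phi(v)\La_2\bigl[\|v_t\|_{L_q}+\|v\|_{L_\infty}\|\nb v\|_{L_q}+\|f\|_{L_q}\bigr].
\]
The $v_t$-term is substituted via the momentum equation $\ro v_t=\divv\T(v,p)-\ro v\cdot\nb v+\ro f$, producing the pressure contributions $\|p\|_{L_2}^2+\|p\|_{L_2}$; the product $\|v\|_{L_\infty}\|\nb v\|_{L_q}$ gives the $\|v\|_{W_2^{1,1/2}}^2$ contribution via Sobolev embedding; and $\|f\|_{L_q}$ is absorbed in $\|f\|_{W_\sigma^{s,s/2}}$ using $\sigma>3/s$. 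The contribution of $\ro g$ is immediate since $\|\ro g\|_{L_{5/3}}\le\ro^*\|g\|_{L_{5/3}}\subset D_{10}$.

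Inserting these bounds into the base estimate, collapsing the various factors into a single $\phi(\ro^*,A_1,t^{\bar a}\|v\|_{W_\sigma^{2+s,1+s/2}})$ and observing $\La_2\le\La$, one obtains (\ref{11.7}). The main technical obstacle is the choice of exponents in the $\ro_{x_3}(v_t+v\cdot\nb v-f)$ estimate: I must simultaneously (i) keep the small factor $\La_2$ (hence $\La$) multiplying every term that still contains an unknown norm, so it can later be absorbed on the left-hand side in the closing bootstrap of Section \ref{s11}; (ii) not exceed quadratic powers of $\|v\|_{W_2^{1,1/2}}$ and $\|p\|_{L_2}$, matching the regularity level already produced by Lemma \ref{l10.3}; and (iii) preserve a purely linear dependence on $\|f\|_{W_\sigma^{s,s/2}}$, which fixes the admissible range of $q$ via the anisotropic Sobolev inclusion. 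Once this is arranged, the same bootstrap strategy used in Lemmas \ref{l10.2}--\ref{l10.5} closes the estimate.
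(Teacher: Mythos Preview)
Your overall strategy---apply Lemma~A.1 to problem (\ref{2.6}) and estimate the right-hand side $G$ in $L_{5/3}(\Om^t)$---matches the paper, but the execution has a real gap in the convective terms.

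First, the embeddings you invoke are false in $3{+}1$ parabolic dimensions: $W_2^{1,1/2}(\Om^t)$ embeds only into $L_{10/3}(\Om^t)$, and $W_{5/3}^{1,1/2}(\Om^t)$ only into $L_{5/2}(\Om^t)$, not $L_{10}$. More importantly, even if your H\"older splitting worked, the resulting bound $\ro^*\|v\|_{W_2^{1,1/2}}\|h\|_{W_{5/3}^{1,1/2}}$ carries \emph{no} small factor $\La$, so it cannot sit inside the bracket of (\ref{11.7}). The paper's device here is different: it estimates
\[
|v\cdot\nb h|_{5/3,\Om^t}\le|v|_{10,\Om^t}|\nb h|_{2,\Om^t}\le c\|v\|_{W_2^{2,1}(\Om^t)}\|h\|_{V(\Om^t)},\qquad
|h\cdot\nb v|_{5/3,\Om^t}\le c\|v\|_{W_2^{2,1}(\Om^t)}\|h\|_{V(\Om^t)},
\]
and then uses (\ref{9.8}) (equivalently (\ref{11.5})) to get $\|h\|_{V(\Om^t)}\le\phi\cdot\La$. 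This is the step that manufactures the $\La$ in front of the convective contribution; without it the inequality cannot close.

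Second, the quadratic terms $\|v\|_{W_2^{1,1/2}}^2$, $\|p\|_{L_2}^2$ and $(H+D_2+D_4+1)^2$ in (\ref{11.7}) do not arise by substituting the momentum equation for $v_t$ (that would produce $\nabla p$ and $\Delta v$, not $\|p\|_{L_2}$). They come from bounding the remaining factor $\|v\|_{W_2^{2,1}(\Om^t)}$ via the already-established estimate (\ref{10.22}); the quadratic structure is inherited directly from that lemma. Likewise, the $\ro_{x_3}(v_t+v\cdot\nb v-f)$ term is handled by H\"older as you suggest, but the factors $|v_t|$, $|v|_\infty|\nb v|$, $|f|$ are simply absorbed into $\phi(\|v\|_{W_\sigma^{2+s,1+s/2}})$ and $\|f\|_{W_\sigma^{s,s/2}}$, with the small factor coming from $|\ro_{x_3}|_{r,\infty}\le\phi\La_2$; no equation substitution is needed.
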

\begin{proof}
Applying Lemma \ref{A.1} to problem (\ref{2.6}) implies
\begin{equation}\eqal{
&\|h\|_{W_{5/3}^{2,1}(\Om^t)}+\|\nabla q\|_{L_{5/3}(\Om^t)}\le\phi (\ro^*,t^{\bar{a}}\|v\|_{W_\sigma^{2+s,1+s/2}(\Om^t)})\cdot\cr
&\quad\cdot\La\cdot[\|h\|_{W_{5/3}^{1,1/2}(\Om^t)}+ \|q\|_{L_{5/3}(\Om^t)}]\cr
&\quad+c(\ro^*,A_1)[\|v\cdot\nabla h\|_{L_{5/3}(\Om^t)}+\|h\cdot\nabla v\|_{L_{5/3}(\Om^t)}+\|g\|_{L_{5/3}(\Om^t)}\cr
&\quad+\|\ro_{x_3}(v_t+v\cdot\nabla v-f)\|_{L_{5/3}(\Om^t)}+ \|d_{x'}\|_{W_{5/3}^{7/5,7/10}(S_2^t)}\cr
&\quad+\|h(0)\|_{W_{5/3}^{4/5}(\Om)}].\cr}
\label{11.3}
\end{equation}
In view of (\ref{9.8}) we have
$$\eqal{
|h\cdot\nabla v|_{5/3,\Om^t}&\le|h|_{10/3,\Om^t}|\nabla v|_{10/3,\Om^t}\le c\|h\|_{V(\Om^t)}\|v\|_{W_2^{2,1}(\Om^t)}\cr
&\le\phi(\|v\|_{W_\sigma^{2+s,1+s/2}(\Om^t)})\cdot\La,\cr
|v\cdot\nabla h|_{5/3,\Om^t}&\le|v|_{10,\Om^t}|\nabla h|_{2,\Om^t}\le c\|h\|_{V(\Om^t)}\|v\|_{W_2^{2,1}(\Om^t)}\cr
&\le\phi(\|v\|_{W_\sigma^{2+s,1+s/2}(\Om^t)})\cdot\La.\cr}
$$
$$\eqal{
&|\ro_{x_3}(v_t+v\cdot\nabla v-f)|_{5/3,\Om^t}\le|\ro_{x_3}|_{r,\infty,\Om^t}(|v_t|_{{5r\over 3r-5},{5\over 3},\Om^t}\cr
&\quad+|v|_{\infty,\Om^t}|\nabla v|_{{5r\over 3r-5},{5\over 3},\Om^t}+|f|_{{5r\over 3r-5},{5\over 3},\Om^t})\cr
&\le\phi(\|v\|_{W_\sigma^{2+s,1+s/2}(\Om^t)})\cdot\La\cdot(1+ \|f\|_{W_\sigma^{s,s/2}(\Om^t)}).\cr}
$$
In view of the above estimates and (\ref{10.22}) inequality (\ref{11.3}) takes the form
\begin{equation}\eqal{
&\|h\|_{W_{5/3}^{2,1}(\Om^t)}+\|\nabla q\|_{5/3(\Om^t)}\le\phi(\ro^*,t^{\bar{a}}\|v\|_{W_\sigma^{2+s,1+s/2}(\Om^t)})\cdot\cr
&\quad\cdot\La\cdot[\|h\|_{W_{5/3}^{1,1/2}(\Om^t)}+ \|q\|_{L_{5/3}(\Om^t)}]+c(\ro^*,A_1)\|h\|_{V(\Om^t)} \|v\|_{W_2^{2,1}(\Om^t)}\cr
&\quad+\phi(\ro^*,t^{\bar{a}}\|v\|_{W_\sigma^{2+s,1+s/2}(\Om^t)})\cdot\La\cdot (1+\|f\|_{W_\sigma^{s,s/2}(\Om^t)})+c(\ro^*,A_1)D_{10},\cr}
\label{11.4}
\end{equation}
where
\begin{equation}
D_{10}=|g|_{5/3,\Om^t}+\|d_{x'}\|_{W_{5/3}^{7/5,7/10}(S_2^t)}+ \|h(0)\|_{W_{5/3}^{4/5}(\Om)}.
\label{D10}
\end{equation}
Inequality (\ref{9.8}) implies
\begin{equation}
\|h\|_{V(\Om^t)}\le\phi(\ro^*,t^{\bar{a}}\|v\|_{W_\sigma^{2+s,1+s/2}(\Om^t)})\cdot \La
\label{11.5}
\end{equation}
and (\ref{10.22}) gives
\begin{equation}\eqal{
\|v\|_{W_2^{2,1}(\Om^t)}&\le\phi(\ro^*, t^{\bar{a}}\|v\|_{W_\sigma^{2+s,1+s/2}(\Om^t)})\cdot\La\cr
&\quad\cdot[\|v\|_{W_2^{1,1/2}(\Om^t)}^2+\|v\|_{W_2^{1,1/2}(\Om^t)}+ |p|_{2,\Om^t}^2+|p|_{2,\Om^t}\cr
&\quad+(H+D_2+D_4+1)^2]+c(\ro^*,A_1)[H+D_2+D_4+D_5].\cr}
\label{11.6}
\end{equation}
Using (\ref{11.5}) and (\ref{11.6}) in (\ref{11.4}) we obtain the inequality (\ref{11.7})
$$\eqal{
&\|h\|_{W_{5/3}^{2,1}(\Om^t)}+\|\nabla q\|_{L_{5/3}(\Om^t)}\le \phi(\ro^*,A_1,t^{\bar{a}}\|v\|_{W_\sigma^{2+s,1+s/2}(\Om^t)})\cdot\cr
&\quad\cdot\La\cdot[\|h\|_{W_{5/3}^{1,1/2}(\Om^t)}+\|q\|_{L_{5/3}(\Om^t)}+ \|v\|_{W_2^{1,1/2}(\Om^t)}^2\cr
&\quad+\|v\|_{W_2^{1,1/2}(\Om^t)}+\|p\|_{L_2(\Om^t)}^2+\|p\|_{L_2(\Om^t)}+ (H+D_2+D_4+1)^2\cr
&\quad+H+D_2+D_4+1+\|f\|_{W_\sigma^{s,s/2}(\Om^t)}]+c(\ro^*,A_1)D_{10}.\cr}
$$
\end{proof}

\begin{remark}\label{r11.3}
We decrease the number of constants $D_i$, $i=1,\dots,11$.
$$\eqal{
&D_1=|d_1|_{3,2,S_2^t},\quad D_2=A_1+|F|_{6/5,2,\Om^t}+|\chi(0)|_{3,\Om},\cr
&D_3=|(f_1,f_2)|_{{6r\over 5r-6},2,\Om^t}+|\chi(0)|_{3,\Om},\cr
&D_4=|f|_{5/3,\Om^t}+\|d\|_{W_{5/3}^{7/5,7/10}(S_2^t)}+ \|v(0)\|_{W_{5/3}^{4/5}(\Om)},\cr
&D_5=|f|_{2,\Om^t}+\|d\|_{W_2^{3/2,3/4}(S_2^t)}+\|v(0)\|_{1,\Om}.\cr}
$$
Hence, for $r\ge 3$ we have
$$
D_1+D_3+D_4\le D_2+D_5.
$$
Next,
$$
D_6=\|f\|_{L_{5/2}(\Om^t)}+\|d\|_{W_{5/2}^{8/5,4/5}(S_2^t)}+ \|v(0)\|_{W_{5/2}^{6/5}(\Om)},
$$
where $r\ge\sigma$, $s> 5/r$, $r>3$.

Continuing, $D_5\le D_6$ and
$$ \eqal{
D_7 &= D_2+ D_6 = A_1+|F|_{6/5,2,\Om^t}+|\chi(0)|_{3,\Om}+\|f\|_{L_{5/2}(\Om^t)} \cr
& \quad +\|d\|_{W_{5/2}^{8/5,4/5}(S_2^t)}+ \|v(0)\|_{W_{5/2}^{6/5}(\Om)},\cr
D_8 & =|f|_{5,\Om^t}+\|d\|_{W_5^{9/5,9/10}(S_2^t)}+\|v(0)\|_{W_5^{8/5}(\Om)}.\cr}
$$
Then
$$
D_7\le|F|_{6/5,2,\Om^t}+A_1+D_8\equiv\bar D_8 $$
since $$ \|\chi(0)\|_{L_3(\Om)} \le \|v(0)\|_{W^1_3(\Om)} \le \|v(0)\|_{W^{8/5}_5(\Om)}. $$

Next, we introduce
$$\eqal{
D_9&=\|f\|_{W_\sigma^{s,s/2}(\Om^t)}+ \|d\|_{W_\sigma^{2+s-1/\sigma,1+s/2-1/2\sigma}(S_2^t)}\cr
&\quad+\|v(0)\|_{W_\sigma^{2+s-2/\sigma}(\Om)}.\cr}
$$
Then,
$$
\bar D_8\le A_1+|F|_{6/5,2,\Om^t}+D_9.
$$
Finally
$$
D_{10}=\|g\|_{L_{5/3}(\Om^t)}+\|d_{x'}\|_{W_{5/3}^{7/5,7/10}(S_2^t)}+ \|h(0)\|_{W_{5/3}^{4/5}(\Om)}.
$$
Then we define the final constant which estimates all other constants
$$
D_{11}=D_9+D_{10}+|F|_{6/5,2,\Om^t}+A_1.
$$
\end{remark}

\begin{lemma}[global estimate]\label{l11.4}
Assume that $r>\sigma$, $3/r<s$, $5/\sigma<1+s,$
$$\eqal{
&D_9=\|f\|_{W_\sigma^{s,s/2}(\Om^t)}+ \|d\|_{W_\sigma^{2+s-1/\sigma,1+s/2-1/2\sigma}(S_2^t)}+ \|v(0)\|_{W_\sigma^{2+s-2/\sigma}(\Om)}<\infty,\cr
&D_{10}=\|g\|_{L_{5/3}(\Om^t)}+\|d_{x'}\|_{W_{5/3}^{7/5,7/10}(S_2^t)}+ \|h(0)\|_{W_{5/3}^{4/5}(\Om)}<\infty,\cr
&\|F\|_{L_2(0,t;L_{6/5}(\Om))}<\infty,\quad D_{11}=D_9+D_{10}+\|F\|_{L_2(0,t;L_{6/5}(\Om))}+A_1.\cr}
$$
Let
$$\eqal{
\La_1 & =\|d_{x'}\|_{L_2(0,t;W^1_3(S_2))}+\|d_{x'}\|_{L_\infty(0,t;L_2(S_2))}+\|d_t\|_{L_2(0,t;H^1(S_2))}\cr & \quad +\|f_3\|_{L_2(0,t;L_{4/3}(S_2))}+\|f_{x_3}\|_{L_2(\Om^t)},\cr
 \La_2 &  =\|\ro_{1,x'}\|_{L_r(S_2^t(-a))}+\|\ro_{1,t}\|_{L_r(S_2^t(-a))}+\|\ro_{0,x}\|_{L_r(\Om)},  \cr}
$$
Let $\La=\La_1+\La_2+\|h(0)\|_{L_2(\Om)}$ and $\phi(t)\La,$ where $\phi$ is an increasing positive function, be sufficiently small. Then
\begin{equation}\eqal{
&\|v\|_{W_\sigma^{2+s,1+s/2}(\Om^t)}+\|\nabla p\|_{W_\sigma^{s,s/2}(\Om^t)}+\|h\|_{W_{5/3}^{2,1}(\Om^t)}\cr
&\quad+\|\nabla q\|_{L_{5/3}(\Om^t)}\le\phi(\ro^*,A_1,D_{11}).\cr}
\label{11.8}
\end{equation}
\end{lemma}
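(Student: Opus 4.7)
The plan is to combine the two key nonlinear inequalities (\ref{11.1}) and (\ref{11.7}) into a single estimate of the form $X\le\phi(X)\Lambda+C$ for the combined norm
$$
X(t)=\|v\|_{W_\sigma^{2+s,1+s/2}(\Om^t)}+\|\nabla p\|_{W_\sigma^{s,s/2}(\Om^t)}+\|h\|_{W_{5/3}^{2,1}(\Om^t)}+\|\nabla q\|_{L_{5/3}(\Om^t)},
$$
and then close the estimate by absorbing the $\phi(X)\Lambda$ term using the smallness of $\Lambda$ and a standard continuity-in-time (bootstrap) argument.

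First I would add inequalities (\ref{11.1}) and (\ref{11.7}). On the right-hand side of (\ref{11.1}) the quantity $H$ appears inside the polynomial $(H+D_7+1)^8$; by (\ref{11.2}) we have $H\le c\|h\|_{W_{5/3}^{2,1}(\Om^t)}\le cX$, so after using the smallness of $\Lambda$ to keep the coefficient in front of $X^8$ small, this contribution can be moved into the $\phi(X)\Lambda$ part. The lower-order norms on the right of (\ref{11.7}), namely $\|v\|_{W_2^{1,1/2}(\Om^t)}$ and $\|p\|_{L_2(\Om^t)}$, are dominated by $\|v\|_{W_\sigma^{2+s,1+s/2}(\Om^t)}$ and $\|\nabla p\|_{W_\sigma^{s,s/2}(\Om^t)}$ (together with $A_1$) through standard Sobolev interpolation, so they contribute either to $X$ with a factor $\phi(X)\Lambda$ or to a constant depending only on $A_1$. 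The nonlinear factors $\|h\|_{W_{5/3}^{1,1/2}(\Om^t)}+\|q\|_{L_{5/3}(\Om^t)}$ are interpolated between $X$ and the energy norms bounded by $A_1$ and $\La$ via Lemma~\ref{l9.1}. In this way, after simplifying all constants using Remark~\ref{r11.3}, one arrives at
\begin{equation}\label{key}
X(t)\le\tilde\phi(X(t))\cdot\Lambda(t)+C(\ro^*,A_1,D_{11}),
\end{equation}
where $\tilde\phi$ is an increasing positive function independent of $t$.

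Next I would run a continuity-in-time argument. By Theorem~\ref{t1.1} and Thesis 2 (Lemma~\ref{l7.1}) the local solution exists on a small interval and $v_3\ge d_*>0$ there, so $X(t)$ is well-defined and continuous in $t$ on its interval of existence; moreover $X(0)$ is bounded by the data in terms of $D_{11}$. Let $C_0=2C(\ro^*,A_1,D_{11})$ and assume that the data is so small that $\tilde\phi(C_0)\Lambda\le C(\ro^*,A_1,D_{11})$, which is precisely the smallness assumption $\phi(t)\Lambda$ small. Then the set $\{t:X(t)\le C_0\}$ is nonempty, closed (by continuity) and, by (\ref{key}), open: indeed if $X(t)\le C_0$ then (\ref{key}) gives $X(t)\le \tilde\phi(C_0)\Lambda+C(\ro^*,A_1,D_{11})\le 2C(\ro^*,A_1,D_{11})=C_0$, with strict inequality off the boundary. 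Hence $X(t)\le C_0$ on the whole interval of existence, and the local solution can be continued, by the standard extension argument, to the full interval $[0,T]$; this yields (\ref{11.8}).

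The main obstacle is twofold. First, one must genuinely verify that \emph{all} nonlinear quantities appearing on the right-hand sides of (\ref{11.1}) and (\ref{11.7}) are controlled either by $X$ with a coefficient that is a power of $\Lambda$ (hence absorbable for small $\Lambda$), or by $A_1$ and $D_{11}$; the delicate terms are the polynomial expressions $(H+D_7+1)^8$ in (\ref{11.1}) and $(H+D_2+D_4+1)^2$ in (\ref{11.7}), which must be placed inside $\tilde\phi(X)\Lambda$ in (\ref{key}) rather than in the constant $C$. Second, one must check that the lower bound $v_3\ge d_*$ needed in Section~\ref{s5} for $\La_2$-type estimates remains in force along the continuation, which follows from Thesis~2 together with the bound $\|p_{x_3}\|_{L_1(0,t;L_\infty(\Om))}\le ct^{1/\si'}\|\nabla p\|_{W_\si^{s,s/2}(\Om^t)}\le ct^{1/\si'}X(t)$ being controlled by $C_0$; thus the bootstrap is self-consistent.
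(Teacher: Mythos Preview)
Your overall strategy---define $X$, combine (\ref{11.1}) and (\ref{11.7}) into an inequality of the form $X\le\tilde\phi(X)\Lambda+C$, then close by smallness of $\Lambda$---is exactly the paper's approach, and your added bootstrap/continuity discussion and the self-consistency check on $v_3\ge d_*$ are reasonable elaborations of what the paper leaves implicit.

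However, there is a genuine gap in how you handle the term $c(\ro^*,A_1)(H+D_7+1)^8$ in (\ref{11.1}). You write that from $H\le cX$ one gets a contribution $cX^8$ whose coefficient ``is kept small by the smallness of $\Lambda$''. This is false: in (\ref{11.1}) that term is multiplied by $c(\ro^*,A_1)$, \emph{not} by $\Lambda$, so replacing $H$ by $cX$ produces a term $c(\ro^*,A_1)X^8$ with a fixed coefficient, and no choice of small $\Lambda$ can absorb an $X^8$ into $X$. If one proceeds this way the bootstrap inequality becomes $X\le cX^8+\tilde\phi(X)\Lambda+C$, which does not close for large $X$.

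The correct substitution (which is what the paper's terse ``inequalities (\ref{11.1}) and (\ref{11.7}) imply'' is hiding) is to first use (\ref{11.7}) and (\ref{11.2}) to bound $H$ itself:
\[
H\le c\|h\|_{W_{5/3}^{2,1}(\Om^t)}\le \phi(\ro^*,A_1,t^{\bar a}X)\cdot\Lambda\cdot[\ldots]+c(\ro^*,A_1)D_{10}.
\]
Inserting this into $(H+D_7+1)^8$ and expanding gives a sum of a piece carrying at least one factor $\Lambda$ (which goes into $\tilde\phi(X)\Lambda$, using $\Lambda^k\le\Lambda$ for $\Lambda\le1$) and a pure data piece $c(D_{10}+D_7+1)^8\le\phi(D_{11})$. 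Only after this substitution does one legitimately arrive at (\ref{key}). The same mechanism handles the $(H+D_2+D_4+1)^2$ term inside the bracket of (\ref{11.7}), though there the issue is milder since that bracket is already multiplied by $\Lambda$.
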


\begin{proof}
Introduce the quantity
\begin{equation}\eqal{
X(t)&=\|v\|_{W_\sigma^{2+s,1+s/2}(\Om^t)}+\|\nabla p\|_{W_\sigma^{s,s/2}(\Om^t)}\cr
&\quad+\|h\|_{W_{5/3}^{2,1}(\Om^t)}+\|\nabla q\|_{L_{5/3}(\Om^t)}.\cr}
\label{11.9}
\end{equation}

Then inequalities (\ref{11.1}) and (\ref{11.7}) imply
$$
X\le\phi(t^{\bar{a}}X)\cdot\La+\phi(\ro^*,A_1,D_{11}).
$$
Hence for $\phi(t)\La$ sufficiently small (\ref{11.8}) holds.
\end{proof}

\section*{Appendix}\label{sA}
\setcounter{equation}{0}
\setcounter{theorem}{0}

\renewcommand*\thesection{A.\arabic{section}}
\renewcommand*\theequation{A.\arabic{equation}}
\renewcommand*\thetheorem{A.\arabic{theorem}}

In this Section we consider the Stokes problem corresponding to system (\ref{1.1}) and derive some estimates for solutions. Let $\zeta^{(k,l)}$ be the partition of unity introduced in the proof of Lemma 5.2 for \cite{RZ3}. Let $\zeta^{(k,l)}=\zeta^{(k)}(x)\zeta_0^{(l)}(t)$, $k,l\in\N$. Let
$$\eqal{
&\sup_k\diam\supp\zeta^{(k)}(x)\le\lambda,\cr
&\sup_l\diam\supp\zeta_0^{(l)}(t)\le\lambda,\cr}
$$
where $\lambda$ will be chosen later. Let $\xi^{(k)}$, $\xi_0^{(l)}$ be interior points of $\supp\zeta^{(k)}$ and $\supp\zeta_0^{(l)}$, respectively. If $\bar{\supp}\zeta^{(k)}\cap S$ then $\xi^{(k)}$ is an interior point of $\bar{\supp}\zeta^{(k)}\cap S$.

Let $\tilde v^{(k,l)}=v\zeta^{(k,l)}$, $\tilde p^{(k,l)}=p\zeta^{(k,l)}$, $\tilde f^{(k,l)}=f\zeta^{(k,l)}$. Then the localized problem (\ref{1.1}) takes the form
\begin{equation}\eqal{
&\ro(\xi^{(k)},\xi_0^{(l)})\tilde v_{t}^{(k,l)}-\nu\Delta\tilde v^{(k,l)}+\nabla\tilde p^{(k,l)}=[\ro(\xi^{(k)},\xi_0^{(l)})\cr
&\quad-\ro(x,t)]\tilde v_{t}^{(k,l)}+\ro v\zeta_{t}^{(k,l)}+p\nabla\zeta^{(k,l)}-2 \nu\nabla v\nabla\zeta^{(k,l)}\cr
&\quad-\nu v\Delta\zeta^{(k,l)}-\ro v\cdot\nabla v\zeta^{(k,l)}+\ro\tilde f^{(k,l)},\quad &{\rm in}\  \Om^T,\cr
&\divv\tilde v^{(k,l)}=v\cdot\nabla\zeta^{(k,l)},\quad &{\rm in}\  \Om^T,\cr
&\bar n\cdot\tilde v^{(k,l)}=0, \quad &{\rm on}\  S_1, \cr
&\nu\bar n\cdot\D(\tilde v^{(k,l)})\cdot\bar\tau_\alpha=\nu n_i(v_i\zeta_{x_j}^{(k,l)}+v_j\zeta_{x_i}^{(k,l)})\tau_{\alpha_j}-\gamma\tilde v^{(k,l)}\cdot\bar\tau_\alpha\quad &{\rm on}\ \ S_1,\cr
&\bar n\cdot\tilde v^{(k,l)}|=\tilde d^{(k,l)} \quad &{\rm on}\ \ S_2,\cr
&\bar n\cdot\D(\tilde v^{(k,l)})\cdot\bar\tau_\alpha= n_i(v_i\zeta_{x_j}^{(k,l)}+v_j\zeta_{x_i}^{(k,l)})\cdot\tau_{\alpha j}\quad &{\rm on}\ \ S_2,\cr
&\tilde v^{(k,l)}|_{t=0}=\tilde v^{(k,l)}(0).
\cr}
\label{A.1}
\end{equation}

\begin{lemma}\label{lA.1}
Assume that $v\in W_r^{2+\sigma,1+\sigma/2}(\Om^t)$, $v\in W_r^{1,1/2}(\Om^t)$, $p\in L_r(\Om^t)$, $v\cdot\nabla v\in L_r(\Om^t)$, $f\in L_r(\Om^t)$, $d\in W_r^{2-1/r,1-1/2r}(S_2^t)$, $v(0)\in W_r^{2-2/r}(\Om)$, $3/r<\sigma$, $r>3$, $\alpha<1-3/r$, $\La_2=\|\ro_{1,x'}\|_{L_r(S_2^t)}+\|\ro_{1,t}\|_{L_r(S_2^t)}+ \|\ro_{0,x}\|_{L_r(\Om)}$. Let $\phi$ be and increasing positive function such that $\phi(0)=0$. Then
\begin{equation}\eqal{
&\|v\|_{W_r^{2,1}(\Om^t)}+\|\nabla p\|_{L_r(\Om^t)}\le c(\ro^*)\phi(t^{\bar{a}}\|v\|_{W_r^{2+\sigma,1+\sigma/2}(\Om^t)})\cdot\La_2\cdot\cr
&\quad\cdot[\|v\|_{W_r^{1,1/2}(\Om^t)}+\|p\|_{L_r(\Om^t)}]+c(\ro^*)[\|v\cdot\nabla v\|_{L_r(\Om^t)}+\|f\|_{L_r(\Om^t)} \cr
&\quad+\|d\|_{W_r^{2-1/r,1-1/2r}(S_2^t)}+\|v(0)\|_{W_r^{2-2/r}(\Om)}].\cr}
\label{A.2}
\end{equation}
\end{lemma}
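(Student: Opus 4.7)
The plan is to apply the standard localization/freezing technique for variable-coefficient parabolic systems, adapted to the Stokes problem with the slip condition on $S_1$ and the mixed tangential-stress/inflow condition on $S_2$. Using the partition of unity $\zeta^{(k,l)}=\zeta^{(k)}(x)\zeta_0^{(l)}(t)$ with $\sup_{k,l}\diam\supp\zeta^{(k,l)}\le\lambda$, and replacing $\ro(x,t)$ by the frozen value $\ro(\xi^{(k)},\xi_0^{(l)})$ in the leading term, one arrives at the localized problem (\ref{A.1}) for $(\tilde v^{(k,l)},\tilde p^{(k,l)})$ with constant density coefficient bounded between $\ro_*$ and $\ro^*$. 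The $L_r$--regularity theory for this constant-coefficient Stokes system with the present boundary conditions is exactly what was established in \cite{RZ4}; combining it with a Bogovski\u\i--type lift to absorb the inhomogeneous divergence $\divv\tilde v^{(k,l)}=v\cdot\nabla\zeta^{(k,l)}$, one obtains on each patch
\begin{equation*}
\|\tilde v^{(k,l)}\|_{W_r^{2,1}(\Om^t)}+\|\nabla\tilde p^{(k,l)}\|_{L_r(\Om^t)}\le c(\ro_*,\ro^*)\bigl(\|\text{RHS}^{(k,l)}\|_{L_r}+\|\tilde d^{(k,l)}\|_{W_r^{2-1/r,1-1/2r}(S_2^t)}+\|\tilde v^{(k,l)}(0)\|_{W_r^{2-2/r}(\Om)}\bigr).
\end{equation*}

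After summing in $k,l$ with finite overlap, the cut-off commutators $\ro v\zeta_t^{(k,l)}$, $p\nabla\zeta^{(k,l)}$, $2\nu\nabla v\nabla\zeta^{(k,l)}$, $\nu v\Delta\zeta^{(k,l)}$, together with the corresponding boundary commutators generated by $n_i(v_i\zeta_{x_j}^{(k,l)}+v_j\zeta_{x_i}^{(k,l)})\tau_{\alpha j}$, yield (up to $\lambda$--dependent constants) norms of $v$ and $p$ of lower order, i.e.\ controlled by $\|v\|_{W_r^{1,1/2}(\Om^t)}+\|p\|_{L_r(\Om^t)}$. The explicit data terms $\ro v\cdot\nabla v$, $\ro f$, $\tilde d^{(k,l)}$, $\tilde v^{(k,l)}(0)$ produce, after using $\ro\le\ro^*$, exactly the terms in the second square bracket of (\ref{A.2}).

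The main obstacle, and the source of the prefactor $\phi\cdot\La_2$, is the freezing error $[\ro(\xi^{(k)},\xi_0^{(l)})-\ro(x,t)]\tilde v_t^{(k,l)}$. Here I would invoke Lemma~\ref{l9.3}: under the assumption $3/r+\alpha<1$,
\begin{equation*}
\sup_{(x,t)\in\supp\zeta^{(k,l)}}\bigl|\ro(x,t)-\ro(\xi^{(k)},\xi_0^{(l)})\bigr|\le \|\ro\|_{\dot C^{\alpha}(\Om^t)}\,\lambda^{\alpha}\le\bigl(\Phi_1\phi_2\La_2+c\ro^*\bigr)\lambda^{\alpha},
\end{equation*}
where $\Phi_1$ and $\phi_2$ come from (\ref{9.6})--(\ref{9.7}). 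Via the imbeddings (\ref{9.12}), $\Phi_1\phi_2$ is majorized by an increasing function of $t^{\bar a}\|v\|_{W_r^{2+\sigma,1+\sigma/2}(\Om^t)}$, which supplies the argument of $\phi$ in (\ref{A.2}). The ``baseline'' portion $c\ro^*\lambda^{\alpha}\|\tilde v_t^{(k,l)}\|_{L_r}$ is absorbed into the left-hand side $\|v\|_{W_r^{2,1}(\Om^t)}$ by fixing $\lambda=\lambda(\ro^*,r)$ small; the remaining $\Phi_1\phi_2\lambda^{\alpha}\La_2\,\|\tilde v_t^{(k,l)}\|_{L_r}$ is summed and then interpolated through
\begin{equation*}
\|v_t\|_{L_r(\Om^t)}\le \varepsilon\,\|v\|_{W_r^{2,1}(\Om^t)}+c(\varepsilon)\,\|v\|_{W_r^{1,1/2}(\Om^t)},
\end{equation*}
so that the $\varepsilon$--part is absorbed again and the $c(\varepsilon)$--part produces the term $\phi\cdot\La_2\cdot[\|v\|_{W_r^{1,1/2}(\Om^t)}+\|p\|_{L_r(\Om^t)}]$ claimed in (\ref{A.2}); the property $\phi(0)=0$ is inherited from this interpolation step, since the factor accompanying $\La_2$ carries a genuine power of the higher-order norm of $v$ rather than only data-dependent constants. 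Putting all contributions together yields (\ref{A.2}).
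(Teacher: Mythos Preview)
Your overall strategy---localize via $\zeta^{(k,l)}$, freeze $\ro$ at $(\xi^{(k)},\xi_0^{(l)})$, apply the constant-coefficient $L_r$ Stokes estimate from \cite{RZ4}, sum, and control the freezing error through the H\"older continuity of $\ro$---is exactly the paper's route. Two points, however, deserve correction.

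First, the freezing error $|\ro(x,t)-\ro(\xi^{(k)},\xi_0^{(l)})|$ is bounded by the \emph{homogeneous} seminorm $\|\ro\|_{\dot C^{\alpha}(\Om^t)}\lambda^{\alpha}$, and the paper uses directly (cf.\ (\ref{A.6}))
\[
\|\ro\|_{\dot C^{\alpha}(\Om^t)}\le c\|\ro\|_{\dot W_{r,\infty}^{1,1}(\Om^t)}\le\phi\bigl(t^{\bar a}\|v\|_{W_r^{2+\sigma,1+\sigma/2}(\Om^t)}\bigr)\La_2.
\]
There is no additive $c\ro^*$ contribution here; that constant in Lemma~\ref{l9.3} comes from the $L_\infty$ part of the full $C^{\alpha}$ norm and is irrelevant for a difference. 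So your ``baseline'' splitting is unnecessary.

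Second, and more seriously, the interpolation
\[
\|v_t\|_{L_r(\Om^t)}\le \varepsilon\|v\|_{W_r^{2,1}(\Om^t)}+c(\varepsilon)\|v\|_{W_r^{1,1/2}(\Om^t)}
\]
is false: $\|v_t\|_{L_r}$ sits at the top temporal order of $W_r^{2,1}$, with no room above it, so no $\varepsilon$-gain is possible. The paper avoids this issue entirely by letting $\lambda$ depend on $\|\ro\|_{\dot C^{\alpha}}$ through the single condition (\ref{A.4}), namely $c\|\ro\|_{\dot C^{\alpha}}\lambda^{\alpha}\le 1/2$; the \emph{whole} freezing error is then absorbed into the left-hand side in one step. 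The price is that the commutator terms $\ro v\zeta_t^{(k,l)}$, $p\nabla\zeta^{(k,l)}$, $\nabla v\nabla\zeta^{(k,l)}$, $v\Delta\zeta^{(k,l)}$ now carry factors $\lambda^{-1},\lambda^{-2}$ which become increasing functions of $\|\ro\|_{\dot C^{\alpha}}$ after (\ref{A.4}); this is precisely the $\phi(\|\ro\|_{\dot C^{\alpha}})$ in (\ref{A.5}), subsequently converted via (\ref{A.6}) into $\phi(t^{\bar a}\|v\|_{W_r^{2+\sigma,1+\sigma/2}})\La_2$. Replacing your fixed $\lambda(\ro^*,r)$ by this $\ro$-dependent choice repairs the argument and removes the need for the invalid interpolation.
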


\begin{proof}
From Theorem~6.1 \cite{RZ4} we have
\begin{equation}\eqal{
&\|\tilde v^{(k,l)}\|_{W_r^{2,1}(\Om^t)}+\|\nabla\tilde p^{(k,l)}\|_{L_r(\Om^t)}\cr
&\le c(\|(\ro(\xi^{(k)},\xi_0^{(l)})-\ro(x,t))\tilde v_t^{(k,l)}\|_{L_r(\Om^t)}+\|\ro v\zeta_{t}^{(k,l)}\|_{L_r(\Om^t)}\cr
&\quad+\|p\nabla\zeta^{(k,l)}\|_{L_r(\Om^t)}+\|\nabla v\nabla\zeta^{(k,l)}\|_{L_r(\Om^t)}+\|v\Delta\zeta^{(k,l)}\|_{L_r(\Om^t)}\cr
&\quad+\|\ro v\cdot\nabla v\|_{L_r(\Om^{(k,l)}\times(0,t))}+\|\ro\tilde f^{(k,l)}\|_{L_r(\Om^t)}\cr
&\quad+\|v\nabla\zeta^{(k,l)}\|_{W_r^{1-1/r,1/2-1/2r}(S_2^t)}+\|\tilde d^{(k,l)}\|_{W_r^{2-1/r,1-1/2r}(S_2^t)}\cr
&\quad+\|\tilde v^{(k,l)}(0)\|_{W_r^{2-2/r}(\Om)}),\cr}
\label{A.3}
\end{equation}
where $\Om^{(k,l)}=\Om^t\cap\supp\zeta^{(k,l)}$.

The first term on the r.h.s. of (\ref{A.3}) is bounded by
$$\eqal{
&\bigg(\sup_{x,x',t\in\Om^{(k,l)}}{|\ro(x,t)-\ro(x',t)|\over|x-x'|^\alpha} \lambda^\alpha\cr
&\quad+\sup_{x,t,t'\in\Om^{(k,l)}}{|\ro(x,t)-\ro(x,t')|\over |t-t'|^\alpha}\lambda^\alpha\bigg)\|\tilde v_{t}^{(k,l)}\|_{L_r(\Om^t)}\cr
&\le c\|\ro\|_{\dot C^\alpha(\Om^{(k,l)})}\lambda^\alpha\|\tilde v_{t}^{(k,l)}\|_{L_r(\Om^t)}.\cr}
$$
Summing up inequalities (\ref{A.3}) over all neighborhoods of the partition of unity, using that
\begin{equation}
c\|\ro\|_{\dot C^\alpha(\Om\times(0,t))}\lambda^\alpha\le{1\over 2}
\label{A.4}
\end{equation}
and using that $|\nabla\zeta^{(k,l)}|\le{c\over\lambda}$ and so on we obtain the inequality
\begin{equation}\eqal{
&\|v\|_{W_r^{2,1}(\Om^t)}+\|\nabla p\|_{L_r(\Om^t)}\le c(\ro^*)\phi(\|\ro\|_{\dot C^\alpha(\Om^t)})\cdot\cr
&\quad\cdot(\|v\|_{W_r^{1,1/2}(\Om^t)}+\|p\|_{L_r(\Om^t)})\cr
&\quad+c(\ro^*)(\|v\cdot\nabla v\|_{L_r(\Om^t)}+\|f\|_{L_r(\Om^t)}+ \|d\|_{W_r^{2-1/r,1-1/2r}(S_2^t)}\cr
&\quad+\|v(0)\|_{W_r^{2-2/r}(\Om)}).\cr}
\label{A.5}
\end{equation}
Let $r>3$. Then for $\alpha<1-3/r$ we obtain
\begin{equation}
\|\ro\|_{\dot C^\alpha(\Om^t)}\le c\|\ro\|_{\dot W_{r,\infty}^{1,1}(\Om^t)}\le\phi (t^{\bar{a}}\|v\|_{W_r^{2+\sigma,1+\sigma/2}(\Om^t)})\cdot\La_2,
\label{A.6}
\end{equation}
where $3/r<\sigma$. This ends the proof.
\end{proof}

\begin{lemma}\label{lA.2}
Assume that $r>\sigma$, $3/r<s$, $5/\sigma<1+s$.
 Assume also that $v\in W_\sigma^{2+s,1+s/2}(\Om^t)$, $p\in W_\sigma^{s,s/2}(\Om^t)$, $\ro \in W_{r,\infty}^{1,1}(\Om^t),$ $f\in W_\sigma^{s,s/2}(\Om^t)$, $d\in W_\sigma^{2+s-1/\sigma,1+s/2-1/2\sigma}(\Om^t)$, $v(0)\in W_\sigma^{2+s-2/\sigma}(\Om)$ and $A_1$ is introduced in (\ref{1.7}).
Then
\begin{equation}\eqal{
&\|v\|_{W_\sigma^{2+s,1+s/2}(\Om^t)}+\|\nabla p\|_{W_\sigma^{s,s/2}(\Om^t)}\le[\phi(t^{\bar{a}}\|v\|_{W_\sigma^{2+s,1+s/2}(\Om^t)})\cdot\cr
&\quad\cdot\La_2+c\ro^*](\|v\|_{W_\sigma^{1+s,1/2+s/2}(\Om^t)}+ \|p\|_{W_\sigma^{s,s/2}(\Om^t)})\cr
&\quad+\phi(\ro^*)[\|\ro v\cdot\nabla v\|_{W_\sigma^{s,s/2}(\Om^t)}+\|\ro\|_{C^\alpha(\Om^t)} \|f\|_{W_\sigma^{s,s/2}(\Om^t)}\cr
&\quad+A_1+\|d\|_{W_\sigma^{2+s-1/\sigma,1+s/2-1/2\sigma}(S_2^t)}+ \|v(0)\|_{W_\sigma^{2+s-2/\sigma}(\Om)}].\cr}
\label{A.7}
\end{equation}
\end{lemma}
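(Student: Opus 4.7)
The plan is to mimic the argument of Lemma A.1, localizing problem (\ref{1.1}) with the same partition of unity $\zeta^{(k,l)}$ of diameter $\lambda$, freezing the density at a representative point $(\xi^{(k)},\xi_0^{(l)})$, and then applying the regularity theory for the constant-coefficient Stokes problem on each piece. The difference is that, instead of the $L_r$-based estimate used before, I will invoke the $W_\sigma^{2+s,1+s/2}$, $W_\sigma^{s,s/2}$ estimate for Stokes with constant viscosity (the anisotropic Sobolev-Slobodetskii analog of the result in \cite{RZ4}, whose extension is mentioned right before problem (\ref{stokes})). The localized system (\ref{A.1}) is unchanged, so on each neighborhood I obtain
$$
\|\tilde v^{(k,l)}\|_{W_\sigma^{2+s,1+s/2}(\Om^t)}+\|\nabla\tilde p^{(k,l)}\|_{W_\sigma^{s,s/2}(\Om^t)}\le c\bigl(\text{r.h.s. of }(\ref{A.1})_1\text{ in }W_\sigma^{s,s/2}\bigr)+\text{boundary and initial terms},
$$
and summing over $(k,l)$ after absorbing the commutator terms generated by $\nabla\zeta^{(k,l)}$, $\Delta\zeta^{(k,l)}$, $\zeta^{(k,l)}_t$ into the $W_\sigma^{1+s,1/2+s/2}$ norm of $(v,p)$ on the left, yields the bulk of the right-hand side of (\ref{A.7}).

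The decisive term is the frozen-density remainder
$$
R^{(k,l)}=\bigl(\varrho(\xi^{(k)},\xi_0^{(l)})-\varrho(x,t)\bigr)\tilde v_t^{(k,l)},
$$
which must be estimated in $W_\sigma^{s,s/2}(\Om^t)$ by a small multiple of $\|\tilde v^{(k,l)}\|_{W_\sigma^{2+s,1+s/2}}$. Here I would use the fractional Leibniz rule together with the fact that on $\supp\zeta^{(k,l)}$
$$
\|\varrho(\xi^{(k)},\xi_0^{(l)})-\varrho(\cdot,\cdot)\|_{L_\infty(\supp\zeta^{(k,l)})}\le\|\varrho\|_{\dot C^\alpha(\Om^t)}\lambda^\alpha,
$$
so that
$$
\|R^{(k,l)}\|_{W_\sigma^{s,s/2}(\Om^t)}\le c\bigl(\|\varrho\|_{\dot C^\alpha}\lambda^\alpha+\|\varrho\|_{W_{r,\infty}^{1,1}}\lambda^{\alpha}\bigr)\|\tilde v_t^{(k,l)}\|_{W_\sigma^{s,s/2}(\Om^t)}.
$$
Using (\ref{9.22}) of Lemma~\ref{l9.3}, $\|\varrho\|_{W_{r,\infty}^{1,1}(\Om^t)}\le\Phi_1\phi_2\La_2+c\varrho^*$, choosing $\lambda$ so that the full prefactor is $\le 1/2$, and summing in $(k,l)$, I would reach an inequality of the form
$$
\bigl(\tfrac12+c\varrho^*\lambda^\alpha\bigr)\bigl(\|v\|_{W_\sigma^{2+s,1+s/2}}+\|\nabla p\|_{W_\sigma^{s,s/2}}\bigr)\le\bigl(\phi(t^{\bar a}\|v\|_{W_\sigma^{2+s,1+s/2}})\La_2+c\varrho^*\bigr)\bigl(\|v\|_{W_\sigma^{1+s,(1+s)/2}}+\|p\|_{W_\sigma^{s,s/2}}\bigr)+\text{data}.
$$
The term $c\varrho^*$ on the left is exactly what forces the $c\varrho^*$ contribution on the right of (\ref{A.7}), since absorbing the lower-order $(v,p)$ norms requires the Stokes estimate itself, not merely Hölder smallness.

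The hard part will be the $W_\sigma^{s,s/2}$ bound on $R^{(k,l)}$: the double integral defining the Slobodetskii seminorm splits into three pieces analogous to the $J_1^1,J_1^2,J_1^3$ decomposition of the proof of Lemma~\ref{l10.5}, and one must control a term of the form
$$
\iintop_{\Om^t}\iintop_{\Om^t}\frac{|\varrho(x,t)-\varrho(x',t)|^\sigma|\tilde v_t^{(k,l)}(x',t)|^\sigma}{|x-x'|^{3+s\sigma}}\,dx\,dx'\,dt
$$
and the symmetric time one, each of which I would estimate by Hölder's inequality with exponents $\lambda_1>2,\lambda_2<2$, $1/\lambda_1+1/\lambda_2=1$, using $\|\varrho\|_{W_{r,\infty}^{1,1}}$ for the density factor (this is where the restrictions $r>\sigma$, $3/r<s$, $5/\sigma<1+s$ enter, exactly paralleling (\ref{10.35})--(\ref{10.40})) and interpolating $\tilde v_t^{(k,l)}$ between $W_\sigma^{s,s/2}$ and higher-order norms so that the higher-order piece is multiplied by $\lambda^\alpha\La_2$. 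The other source terms on the right of $(\ref{A.1})_1$ --- $\varrho v\zeta_t^{(k,l)}$, $p\nabla\zeta^{(k,l)}$, $\nabla v\nabla\zeta^{(k,l)}$, $v\Delta\zeta^{(k,l)}$ --- are of lower order in $v$ and $p$ and, after summing, produce the $\|v\|_{W_\sigma^{1+s,(1+s)/2}}$ plus $A_1$ contributions. The divergence right-hand side $v\cdot\nabla\zeta^{(k,l)}$ and the slip/inflow boundary data generate the $\|d\|_{W_\sigma^{2+s-1/\sigma,1+s/2-1/2\sigma}(S_2^t)}$ and $\|v(0)\|_{W_\sigma^{2+s-2/\sigma}(\Om)}$ terms directly, while the convective term $\varrho v\cdot\nabla v$ is kept without further expansion, as in the statement, since its estimation has already been treated separately in the body of Lemma~\ref{l10.5}. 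Combining everything yields (\ref{A.7}).
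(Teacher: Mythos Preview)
Your overall strategy --- localize via the partition of unity $\zeta^{(k,l)}$, apply the $W_\sigma^{2+s,1+s/2}$ Stokes estimate from \cite{RZ4} on each piece, absorb the frozen-density remainder by choosing $\lambda$ small, and sum --- is exactly the paper's approach, and your bookkeeping for the commutator, boundary, and data terms matches (A.8)--(A.10).

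There is, however, one genuine inaccuracy in your treatment of $R^{(k,l)}$. You assert
\[
\|R^{(k,l)}\|_{W_\sigma^{s,s/2}}\le c\bigl(\|\varrho\|_{\dot C^\alpha}+\|\varrho\|_{W_{r,\infty}^{1,1}}\bigr)\lambda^\alpha\,\|\tilde v_t^{(k,l)}\|_{W_\sigma^{s,s/2}},
\]
but the $\lambda^\alpha$ factor is \emph{not} available on the seminorm cross terms. When the Slobodetskii seminorm of the product $(\varrho(\xi)-\varrho)\tilde v_t^{(k,l)}$ is expanded, the piece carrying $\varrho(x',t)-\varrho(x'',t)$ (and its time analogue) picks up no smallness from the support size: even on the near-diagonal one would need $\alpha>s$ to extract a positive power of $\lambda$, which the hypotheses $3/r<s$, $\alpha<1-3/r$ do not guarantee. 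The paper handles this differently. Only the $L_\infty$-times-seminorm piece $J_1$ carries $\lambda^\alpha$ and is absorbed into the left via the choice (\ref{A.4}); the cross pieces $J_2,J_3$ are instead estimated, after the H\"older split with exponents $\lambda_1,\lambda_2$, by
\[
J_2+J_3\le c\|\varrho\|_{W_{r,\infty}^{1,1}(\Om^t)}\,\|\tilde v_t^{(k,l)}\|_{W_\sigma^{s/2,s/4}(\Om^t)},
\]
a strictly \emph{lower}-order norm of $v_t$ (this is where $r>\sigma$ and $3/r<s$ are actually used). After summing, these terms sit on the right of (A.10), and then interpolation together with Corollary~\ref{c5.3} converts $\|\varrho\|_{W_{r,\infty}^{1,1}}$ into $\phi(t^{\bar a}\|v\|_{W_\sigma^{2+s,1+s/2}})\La_2+c\varrho^*$, producing precisely the first bracket in (\ref{A.7}). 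Your proposed step ``interpolate $\tilde v_t^{(k,l)}$ between $W_\sigma^{s,s/2}$ and higher-order norms so that the higher-order piece is multiplied by $\lambda^\alpha\La_2$'' therefore goes in the wrong direction: the point is that $J_2,J_3$ already land on a lower norm and require no absorption at all.
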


\begin{proof}
Applying Theorem 1.1 from \cite{RZ4} to (\ref{A.1}) yields
\begin{equation}\eqal{
&\|\tilde v^{(k,l)}\|_{W_\sigma^{2+s,1+s/2}(\Om^t)}+\|\nabla\tilde p^{(k,l)}\|_{W_\sigma^{s,s/2}(\Om^t)}\cr
&\le c[\|(\ro(\xi^{(k)},\xi_0^{(l)})-\ro(x,t))\tilde v_t^{(k,l)}\|_{W_\sigma^{s,s/2}(\Om^t)}\cr
&\quad+|\ro v\zeta_{t}^{(k,l)}\|_{W_\sigma^{s,s/2}(\Om^t)}+ \|p\nabla\zeta^{(k,l)}\|_{W_\sigma^{s,s/2}(\Om^t)}\cr
&\quad+\|\nabla v\nabla\zeta^{(k,l)}\|_{W_\sigma^{s,s/2}(\Om^t)}+ \|v\Delta\zeta^{(k,l)}\|_{W_\sigma^{s,s/2}(\Om^t)}\cr
&\quad+\|\ro v\cdot\nabla v\zeta^{(k,l)}\|_{W_\sigma^{s,s/2}(\Om^t)}+ \|\ro\tilde f^{(k,l)}\|_{W_\sigma^{s,s/2}(\Om^t)}\cr
&\quad+\|v\nabla\zeta^{(k,l)}\|_{W_\sigma^{1+s-1/\sigma,1/2+s/2-1/2\sigma}(S_2^t)}\cr
&\quad+\|\tilde v^{(k,l)}\|_{W_\sigma^{1+s-1/\sigma,1/2+s/2-1/2\sigma}(S_2^t)}\cr
&\quad+\|\tilde d^{(k,l)}\|_{W_\sigma^{2+s-1/\sigma,1+s/2-1/2\sigma}(S_2^t)}+ \|\tilde v^{(k,l)}(0)\|_{W_\sigma^{2+s-2/\sigma}(\Om)}].\cr}
\label{A.8}
\end{equation}
Let $u,v\in L_\infty(\Om^t)$. Then
\begin{equation}
\|uv\|_{W_\sigma^{s,s/2}(\Om^t)}\le\|u\|_{L_\infty(\Om^t)} \|v\|_{W_\sigma^{s,s/2}(\Om^t)}+\|v\|_{L_\infty(\Om^t)} \|u\|_{W_\sigma^{s,s/2}(\Om^t)}.
\label{A.9}
\end{equation}
Let $\Om_{k,l}^t=\Om^t\cap\supp\zeta^{(k,l)}$.

Using (\ref{A.9}), the second term on the r.h.s. of (\ref{A.8}) is bounded by
$$
{c\over\lambda}(\|\ro\|_{L_\infty(\Om^t)}\|v\|_{W_\sigma^{s,s/2}(\Om_{k,l}^t)}+ \|\ro\|_{W_\sigma^{s,s/2}(\Om_{k,l}^t)}\|v\|_{L_\infty(\Om^t)}+ \|\ro\|_{L_\infty(\Om^t)}\|v\|_{L_\infty(\Om_{k,l}^t)})
$$
and we use the imbedding
$$\eqal{
&\|\ro\|_{W_\sigma^{s,s/2}(\Om^t)}\le c\|\ro\|_{W_{r,\infty}^{1,1}(\Om^t)}\quad &{\rm for}\ \ {3\over r}-{5\over\sigma}+s<1,\cr
& \|\ro\|_{L_\infty(\Om^t)}\le c\|\ro\|_{W_{r,\infty}^{1,1}(\Om^t)}\quad &{\rm for}\ \ {3\over r}<1,\cr
&\|v\|_{L_\infty(\Om^t)}\le c\|v\|_{W_\sigma^{2+s,1+s/2}(\Om^t)}\quad &{\rm for}\ \ {5\over\sigma}<2+s.\cr}
$$
Similarly, the fifth term is bounded by
$$
{c\over\lambda}\|v\|_{W_\sigma^{2+s,1+s/2}(\Om_{k,l}^t)}.
$$
Assume that
$$
\|\nb v\|_{L_\infty(\Om^t)}\le c\|v\|_{W_\sigma^{2+s,1+s/2}(\Om^t)}
$$
which holds for $5/\sigma<1+s$.

Then the fourth term is bounded by
$$
{c\over\lambda}\|\nabla v\|_{W_\sigma^{2+s,1+s/2}(\Om^t)}.
$$
The seventh term on the r.h.s. of (\ref{A.8}) equals
$$
I\equiv\|\ro\tilde f^{(k,l)}\|_{L_\sigma(0,t;W_\sigma^s(\Om))}+ \|\ro\tilde f^{(k,l)}\|_{L_\sigma(\Om;W_\sigma^{s/2}(0,t))}\equiv I_1+I_2,
$$
where $I_1$ is bounded by
$$\eqal{
&\bigg({\displaystyle{\intop_0^tdt\intop_\Om\intop_\Om} dx'dx''{|\ro(x',t)-\ro(x'',t)}|^\sigma |\tilde f^{(k,l)}(x',t)|^\sigma\over|x'-x''|^{3+s\sigma/2}}\bigg)^{1/\sigma}\cr
&\quad+\bigg(\intop_0^tdt\intop_\Om\intop_\Om dx'dx''{|\ro(x'',t)|^\sigma|\tilde f^{(k,l)}(x',t)-\tilde f^{(k,l)}(x'',t)|^\sigma\over|x'-x''|^{3+s\sigma/2}}\bigg)^{1/\sigma}\cr
&\equiv I_1^1+I_1^2.\cr}
$$
Hence
$$
I_1^2\le c \|\ro\|_{L_\infty(\Om^t)}\|\tilde f^{(k,l)}\|_{L_\sigma(0,t;W_\sigma^s(\Om))}.
$$
To estimate $I_1^1$ we use the H\"older inequality. Then we obtain
$$\eqal{
I_1^1&\le\bigg[\intop_0^tdt\bigg(\intop_\Om\intop_\Om dx'dx'' {|\ro(x',t)-\ro(x'',t)|^{\sigma\lambda_1}\over|x'-x''|^{3+\sigma\lambda_1s'}} \bigg)\bigg]^{1/\sigma\lambda_1}\cdot\cr
&\quad\cdot\bigg[\intop_0^tdt\bigg(\intop_\Om\intop_\Om{|\tilde f^{(k,l)}(x',t)|^{\sigma\lambda_2}\over|x'-x''|^{3/2\lambda_2}}dx'dx''\bigg) \bigg]^{1/\sigma\lambda_2}\equiv I_1^{11},\cr}
$$
where $s'={1\over\sigma\lambda_1}\big({3\over 2}\lambda_1-3)+s$ and $1/\lambda_1+1/\lambda_2=1$. Using that $\lambda_2<2$ we obtain
$$
I_1^{11}\le\|\ro\|_{L_{\sigma\lambda_1}(0,t;W_{\sigma\lambda_1}^{s'}(\Om))} \|\tilde f^{(k,l)}\|_{L_{\sigma\lambda_2}(0,t;L_{\sigma\lambda_2}(\Om))}\equiv I_1^{12}.
$$
We use the imbeddings
$$
\|\ro\|_{L_{\sigma\lambda_1}(0,t;W_{\sigma\lambda_1}^{s'}(\Om))}\le c\|\ro\|_{W_{r,\infty}^{1,1}(\Om^t)}
$$
for ${3\over r}-{5\over\sigma\lambda_1}+s'\le 1$ and
$$
\|\tilde f^{(k,l)}|\|_{L_{\sigma\lambda_2}(\Om^t)}\le c\|\tilde f^{(k,l)}\|_{W_\sigma^{s,s/2}(\Om^t)}
$$
for ${5\over\sigma}-{5\over\sigma\lambda_2}\le s$.

The above restrictions imply
$$
{3\over r}+{3\over2\sigma}-{3\over\sigma\lambda_1}+s\le 1+s\quad {\rm so}\quad {3\over r}+{3\over 2\sigma}\le 1+{3\over\sigma\lambda_1}\le 1+{3\over 2\sigma}
$$
because $1/\lambda_1<1/2$. Hence we get ${3\over r}\le 1$.

Summarizing
$$
I_1\le c\|\ro\|_{W_{r,\infty}^{1,1}(\Om^t)}\|\tilde f^{(k,l)}\|_{W_\sigma^{s,s/2}(\Om^t)}\quad {\rm for}\ \ {3\over r}\le 1.
$$
Consider $I_2$,
$$\eqal{
I_2&=\bigg(\intop_\Om dx\intop_0^t\intop_0^tdt'dt''{|\ro(x,t')\tilde f^{(k,l)}(x,t')-\ro(x,t'')\tilde f^{(k,l)}(x,t'')|^\sigma\over|t'-t''|^{1+{\sigma}s/2}}\bigg)^{1/\sigma}\cr
&\le\bigg(\intop_\Om dx\intop_0^t\intop_0^t dt'dt''{|\ro(x,t')-\ro(x,t'')|^\sigma|\tilde f^{(k,l)}(x,t')|^\sigma\over|t'-t''|^{1+\sigma s/2}}\bigg)^{1/\sigma}\cr
&\quad+\bigg(\intop_\Om dx\intop_0^t\intop_0^t dt'dt''{|\ro(x,t'')|^\sigma|\tilde f^{(k,l)}(x,t')-\tilde f^{(k.l)}(x,t'')|^\sigma\over|t'-t''|^{1+\sigma s/2}} \bigg)^{1/\sigma}\cr
&\equiv I_2^1+I_2^2,\cr}
$$
where
$$
I_2^2\le \|\ro\|_{L_\infty(\Om^t)}\|\tilde f^{(k,l)}\|_{L_\sigma(\Om;W_\sigma^{s/2}(0,t))}
$$
and
$$\eqal{
I_2^1&\le\bigg(\intop_\Om dx\intop_0^t\intop_0^t dt'dt''{|t'-t''|^\sigma\sup_t|\ro_t(x,t)|^\sigma|\tilde f^{(k,l)}(x,t')|^\sigma\over|t'-t''|^{1+\sigma s/2}}\bigg)^{1/\sigma}\cr
&\le c\sup_t|\ro_t|_{\lambda_1\sigma,\Om}|\tilde f^{(k,l)}|_{\lambda_2\,\sigma,\sigma,\Om^t}\equiv I_2^{11},\cr}
$$
where $1/\lambda_1+1/\lambda_2=1$, $\lambda_1\sigma=r$, ${5\over\sigma}-{3\over\lambda_2\sigma}-{2\over\sigma}\le s$.

Hence ${3\over r}\le s$, $r>\sigma$. Then
$$
I_2^{11}\le c\|\ro\|_{W_{r,\infty}^{1,1}(\Om^t)}\|\tilde f^{(k,l)}\|_{W_\sigma^{s,s/2}(\Om^t)}.
$$
Summarizing, we have
$$
I\le c\|\ro\|_{W_{r,\infty}^{1,1}(\Om^t)}\|\tilde f^{(k,l)}\|_{W_\sigma^{s,s/2}(\Om^t)}
$$
for $r>\sigma$, ${3\over r}\le s$.

Similarly, the third term on the r.h.s. of (\ref{A.8}) is estimated by
$$
{c\over\lambda}\|p\|_{W_\sigma^{s,s/2}(\Om_{k,l}^t)}
$$
and the eight term by
$$
{c\over\lambda}\|v\|_{W_\sigma^{2+s,1+s/2}(\Om_{k,l}^t)}
$$
\goodbreak

\noindent
Finally, we estimate the first term on the r.h.s. of (\ref{A.8}). It is bounded by
$$\eqal{
&\sup_{\Om_{k,l}^t}|\ro(x,t)-\ro(\xi^{(k)},\xi_0^{(l)})|\,\|\tilde v_t^{(k,l)}\|_{W_\sigma^{s,s/2}(\Om^t)}\cr
&\quad+\bigg(\intop dt\intop_\Om\intop_\Om{|\ro(x',t)-\ro(x'',t)|^\sigma|\tilde v_t^{(k,l)}(x',t)|^\sigma\over|x'-x''|^{3+s\sigma}}dx'dx''\bigg)^{1/\sigma}\cr
&\quad+\bigg(\intop dx\intop_0^t\intop_0^t{|\ro(x,t')-\ro(x,t'')|^\sigma|\tilde v_t^{(k,l)}(x,t')|^\sigma\over|t'-t''|^{1+s\sigma/2}}dt'dt''\bigg)^{1/\sigma}\cr
&\equiv J_1+J_2+J_3,\cr}
$$
where
$$\eqal{
J_1&\le\|\ro\|_{\dot C^\alpha(\Om_{k,l}^t)}\lambda^\alpha\|\tilde v_t^{(k,l)}\|_{W_\sigma^{s,s/2}(\Om^t)},\cr
J_2+J_3&\le c\|\ro\|_{W_{r,\infty}^{1,1}(\Om^t)}\|\tilde v_t^{(k,l)}\|_{W_\sigma^{s/2,s/4}(\Om^t)},\cr}
$$
where the last inequality is obtained in the same way as it was done in the estimate of the seventh term.

Summing up over all neighborhoods of the partition of unity we get
\begin{equation}\eqal{
&\|v\|_{W_\sigma^{2+s,1+s/2}(\Om^t)}+\|\nabla p\|_{W_\sigma^{s,s/2}(\Om^t)}\cr
&\le\phi(\|\ro\|_{\dot C^\alpha(\Om^t)})[\|v\|_{W_\sigma^{1+s,1/2+s/2}(\Om^t)}+ \|p\|_{W_\sigma^{s,s/2}(\Om^t)}]\cr
&\quad+c(\ro^*)[\|\ro v\cdot\nabla v\|_{W_\sigma^{s,s/2}(\Om^t)}+ \|\ro\|_{C^\alpha(\Om^t)}\|f\|_{W_\sigma^{s,s/2}(\Om^t)}\cr
&\quad+\|v\|_{W_\sigma^{1+s,1/2+s/2}(\Om^t)}+ \|d\|_{W_\sigma^{2+s-1/\sigma,1+s/2-1/2\sigma}(S_2^t)}\cr
&\quad+\|v(0)\|_{W_\sigma^{2+s-2/\sigma}(\Om)}],\cr}
\label{A.10}
\end{equation}
where $\phi(0)=0$.

By some interpolations and Corollary \ref{c5.3} we obtain (\ref{A.7}). This ends the proof.
\end{proof}

\section*{The conflict of interest and the data availability statements}

On behalf of all authors, the corresponding author states that there is no conflict of interest.

\noindent The datasets generated during and/or analysed during the current study are available from the corresponding author on reasonable request.
\goodbreak

\begin{thebibliography}{XXXX}
\bibitem[A]{A} Adams, R.:{\it Sobolev Spaces}, Acad. Press, New York, 1975p.
\bibitem[BIN]{BIN} Besov O.V.; Il'in V.P.; Nikolskii, S.M.: {\it Integral Representations of Functions and Theorems of Imbedding}, Nauka, Moscow 1975 (in Russian); English trans., vol. I. Scripta Series in Mathematics, V.H.~Winston, New York, 1978.
\bibitem[BWY]{BWY} Bie, Q.; Wang, Q.; and Yao, Z.: {\it Global well-posedness of the 3D incompressible MHD equations with variable density}, Nonlinear Anal. Real World Appl., 47 (2019), 85--105.
\bibitem[CLX]{CLX} Chen, F.; Li, Y.; and Xu, H.: {\it Global solution to the 3D nonhomogeneous incompressible MHD equations with some large initial data}, Discrete Contin. Dyn. Syst., 36 (2016), 2945--2967.
\bibitem[DM]{DM} Danchin, R.; Mucha, P.B.: {\it A critical functional framework for the inhomogeneous Navier-Stokes equations in the half-space}, Journal of Functional Analysis, 256 (2009), Issue 3, Pages 881--927.
\bibitem[DZ]{DZ} Danchin, R.; Zhang, P.: {\it Inhomogeneous Navier-Stokes equations in the half-space, with only bounded density}, J. of Funct. Anal. 267 (2014), 2371--2436.
\bibitem[G]{G} Golovkin, K.K.: {\it On equivalent norms of fractional spaces}, Trudy Mat. Inst. Steklov 66 (1962), 364--383 (in Russian).
\bibitem[LS]{LS} Ladyzhenskaya, O.A.; Solonnikov, V.A.: {\it The unique solvability of an initial-boundary value problem for viscous incompressible inhomogeneous fluids}, Zapiski Nauchn. Sem. LOMI Vol. 52, pp. 52--109, 1975 (in Russian); English transl.: J. Soviet. Math. 9 (1978), \hbox{697--749.}
\bibitem[LSU]{LSU} Ladyzhenskaya, O.A.; Solonnikov, V.A.; Uraltseva, N.N.: {\it Linear and quasilinear equations of parabolic type}, Nauka, Moscow 1967 (in Russian); Translated from the Russian by S. Smith. Translations of Mathematical Monographs, vol. 23, xi+648 pp American Mathematical Society, Providence, R. I. 1968.
\bibitem[N]{N} Nikolskii, S.M.: {\it Approximation of Functions with many variables and Imbedding Theorems}, Nauka, Moscow 1977 (in Russian).
\bibitem[RZ1]{RZ1} Renc\l awowicz, J.; Zaj\c{a}czkowski, W.M.: {\it The Large Flux Problem to the Navier-Stokes Equations}, Lecture Notes in Math. Fluid Mechanics, Birkh\"auser, Springer Nature Switzerland AG 2019.
\bibitem[RZ2]{RZ2} Renc{\l}awowicz, J.; Zaj\c{a}czkowski, W.M.: {\it Large time regular solutions to the Navier-Stokes equations in cylindrical domains}, Topol. Meth. Nonlin. Anal. 32 (2008), 69--87.
\bibitem[RZ3]{RZ3} Renc\l awowicz, J.; Zaj\c{a}czkowski, W.M.: {\it On local existence of solutions to the nonhomogeneous Navier-Stokes equations with large flux}, https://arxiv.org/abs/2111.10182.
\bibitem[RZ4]{RZ4} Renc\l awowicz, J.; Zaj\c{a}czkowski, W.M.: {\it On the Stokes system in cylindrical domains}, J. Math. Fluid Mech. (2022), 24:64.
\bibitem[S1]{S1} Solonnikov, V.A.: {\it A priori estimates for second order parabolic equations}, Trudy Mat. Inst. Steklov 70 (1964), 133--212 (in Russian).
\bibitem[S2]{S2} Solonnikov, V.A.: {\it An initial-boundary value problem for the Stokes system that arises in the study of free boundary problem}, Trudy Steklov Mat. Inst., 188 (1990), 150--188.
\bibitem[SS]{SS} Solonnikov, V.A.; Shchadilov, V.E.: {\it On boundary value problem for a stationary Navier-Stokes equations}, Trudy Mat. Inst. Steklova 125 (1973), 196--210 (in Russian); English transl.: Proc. Steklov Inst. Math. 125 (1973), 186--199.
\bibitem[Tr1]{Tr1} Triebel, H.: {\it Theory of functions spaces}, Akademische Verlagsgesellschaft, Geest\&Portig K.-G., Leipzig, 1983, pp. 284.
\bibitem[Z1]{Z1} Zaj\c{a}czkowski, W.M.: {\it Long time existence of regular solutions to Navier-Stokes equations in cylindrical domains under boundary slip conditions}, Studia Math. 169 (2005), 243--285.
\bibitem[Z2]{Z2} Zaj\c{a}czkowski, W.M.: {\it On global regular solutions to the Navier-Stokes equations in cylindrical domains}, Top. Meth. Nonlin. Anal. 37 (2011), 55--85.
\bibitem[Z3]{Z3} Zaj\c{a}czkowski, W.M.: {\it Long time existence of regular solutions to nonhomogeneous Navier-Stokes equations}, Discr. Cont. Dyn. Syst. Series S, 6(5) (2013), 1427--1455.
\bibitem[Z4]{Z4} Zaj\c{a}czkowski, W.M.: {\it Global special regular solutions to the Navier-Stokes equations in a cylindrical domain without the axis of symmetry}, Top. Meth. Nonlin. anal., 24 (2004), 69--105.
\bibitem[ZZ1]{ZZ1} Zadrzy\'nska, E.; Zaj\c{a}czkowski, W.M.: {\it The Cauchy-Dirichlet problem for the heat equation in Besov spaces}, J. Math. Sc. 152 (5) (2008), 638--673.
\bibitem[ZZ2]{ZZ2} Zadrzy\'nska, E.; Zaj\c{a}czkowski, W.M.: {\it Nonstationary Stokes system in Besov spaces}, Math. Meth. Appl. Sci., 37 (2014), 360--383.
\bibitem[Z]{Z} Zhong, X.: {\it global solution and exponential decay for nonhomogeneous Navier-Stokes and magnetohydrodynamic equations}, Discrete Contin. Dyn. Syst. Ser. B 26 (2021), no. 7, 3563--3578.
\end {thebibliography}
\end{document}